\documentclass[12pt]{amsart}
\pagestyle{plain}
\usepackage{graphicx}
\usepackage{amsmath}
\usepackage{amstext}
\usepackage{amsfonts}
\usepackage{amsthm}
\usepackage{amssymb}

\setlength{\evensidemargin}{0.25in}
\setlength{\oddsidemargin}{0.25in}
\setlength{\textwidth}{6in}
\parskip0.2em


\newtheorem{theorem}{Theorem}
\newtheorem{definition}{Definition}

\newtheorem{lemma}{Lemma}

\newcommand{\Aff}{\mathcal{A}}
\newcommand{\Ar}{\mathbb{R}}

\newcommand{\Mod}{\mathcal{M}}

\newcommand{\Energy}{\mathcal{E}}

\newcommand{\ep}{\epsilon}

\newcommand{\C}{\mathbb{C}}

\newcommand{\Lag}{\mathcal{L}}
\newcommand{\Flat}{\mathcal{C}}
\newcommand{\delbar}{\overline{\partial}}

\newcommand{\GauS}{\mathcal{G}(\Sigma)}
\newcommand{\Gau}{\mathcal{G}}
\newcommand{\liea}{\mathfrak{g}}
\newcommand{\Moment}{\mu}
\newcommand{\harmo}{\mathcal{H}}

\begin{document}

\title{Gromov-Uhlenbeck Compactness}
\author{Max Lipyanskiy}
\date{}

\address{Simons Center for Geometry and Physics, Stony Brook University, Stony Brook, NY 11794 }
\email{mlipyan@gmail.com}
\maketitle
     
\section{Introduction}
\subsection{The Atiyah-Floer Conjecture}
In the early 1980's Casson (see \cite{Casson}) introduced a new invariant of 3-manifolds based on a "count" of representations of the fundamental group of a 3-manifold to $SU(2)$, or more generally a compact Lie group $G$.  Let us briefly recall the basic idea.   Let $Y$ be a closed oriented 3-manifold and let $$Y=H^+\cup_\Sigma H^-$$ be a Heegaard splitting of $Y$ along a genus $g$ surface $\Sigma$.  Let $\Mod(\Sigma)$ be the character variety of of $\Sigma$.  This is defined as the space of representations $$Hom(\pi_1(\Sigma), SU(2))/SU(2)$$ modulo conjugation by $SU(2)$. As observed by Atiyah and Bott \cite{AB}, $\Mod(\Sigma)$ is a compact K\"{a}hler manifold of dimension $6g-6$ (with singularities).   The surjections $$\pi_1(\Sigma)\rightarrow \pi_1(H^\pm) $$ induce injective maps $$\mathcal{L}^\pm\rightarrow \Mod(\Sigma)$$  where $\mathcal{L}^\pm$ is the set of representations of the free group  $ \pi_1(H^\pm) $.  In fact, each $\mathcal{L}^\pm$ has dimension $3g-3$ and thus has half the dimension of the character variety of $\Sigma$.  In fact, one may show that the spaces $\mathcal{L}^\pm$ are Lagrangian for the natural symplectic form on $\Mod(\Sigma)$.  By considering generic intersections of $\mathcal{L}^\pm$ (in fact, 1/2 of the number of intersections), Casson was able to define a  count of representations, in the case when $Y$ is a integral homology sphere.  \\\\
 Taubes \cite{Taubes} gave a gauge theoretic interpretation of Casson's results.  Let us briefly recall the basic idea.  Let $A$ be a connection on a trivial $SU(2)$-bundle over $Y$ and let $F_A$ be the curvature.  Connections with vanishing curvature are called flat and are classified by their holonomy.  Therefore, we have a natural correspondence betweeen flat connections (module gauge) and representations of $\pi_1(Y)$ into $SU(2)$.  By introducing suitable perturbations Taubes defines a gauge-theoretic count of such flat connections.  In fact, the Casson invariant is an infinite dimensional  analogue of  Euler characteristic of the space of all connections modulo gauge.    
\subsection{Categorifications}
We have arrived at two different geometric descriptions of the Casson invariant - one based on gauge theory and the other on symplectic geometry.  As the reader might anticipate,  both descriptions of the invariant have categorifications that express the invariant as the euler characteristic of a certain homology group.  In both cases, the corresponding homology theory is due to the groundbreaking work of Floer. \\\\  
Let us first discuss the Lagrangian viewpoint.  Given a pair of Lagrangians $\mathcal{L}^\pm$, in a symplectic manifold $\Mod(\Sigma)$, Floer constructs a chain complex $C_*(\mathcal{L}^-, \mathcal{L}^+)$ freely generated by the set $\mathcal{L}^-\cap \mathcal{L}^+$.  Given two intersection points $x, y \in \mathcal{L}^-\cap \mathcal{L}^+$, the differential on  $C_*(\mathcal{L}^-, \mathcal{L}^+)$ counts holomorphic strips $$u:[0,1]\times \Ar \rightarrow \Mod(\Sigma)$$ with $u(0,\cdot ) \in  \mathcal{L}^-$ and $u(1,\cdot ) \in  \mathcal{L}^-$.  Let us denote the resulting groups by $HF_*(\mathcal{L}^-, \mathcal{L}^+)$\\\\
From the gauge theory perspective, we define the chain complex $C_*(Y)$ as follows.  The generators are given by flat connections $A$ on $Y$.  The differential, on the other hand, is the signed count of solutions to the ASD equation $$*F_B=-F_B$$ on the 4-manifold $Y\times \Ar$.  These are required to have finite energy and converge to specified flat connections at the ends. Let us denote the resulting groups by $I_*(Y)$\\\\ Under the assumption of transversality, we note that the generators of the two chain complexes are identical.  However, the corresponding homology theories are based on solutions to nonlinear PDE's in dim 2 and 4 and a priori do not appear to be related.  \\\\
We have the following:\\\\
\textbf{Atiyah-Floer Conjecture \cite{Atiyah}:}  There exists an isomorphism $$I_*(Y) \cong HF_*(\mathcal{L}^-, \mathcal{L}^+)$$ 
The immediate problem with this conjecture is that the relevant group on the symplectic side has not been defined.  This is related to the fact that the presence of reducible representations cause singularities in the character variety (see however \cite{Kat2}). \\\\
 On the other hand, there are several ways of getting around this issue that lead to interesting and well defined groups.  If $b_1(Y)>0$, one approach is to consider nontrivial $U(2)$-bundles over $Y$ with odd $c_1$.  This way, all flat connections are irreducible the relevant spaces have well defined groups.  A proof of the analogue of the conjecture for the case of a mapping torus has been given in \cite{Sal} using adiabatic limits.  This work is part of a series to prove the conjecture for a general $Y$ with a nontrivial $U(2)$-bundle - thus covering all cases where the groups are well-defined.  Our approach is not based on adiabatic techniques but rather develops an analytic setting ((which we call Gromov-Uhlenbeck compactness) that combines the pseudo-holomorphic curves and ASD connections into a unified framework.  As a consequence of the general theory, one constructs an apriori map $$\Phi: I_*(Y)\rightarrow HF_*(\mathcal{L}^-, \mathcal{L}^+) $$ that can be shown to be an isomorphism.   The present work, is devoted to the analytic foundations of such a theory.  Applications to Floer homology will be addressed elsewhere.  We hope, however, that the present techniques are of independent interests and may be of use in other contexts where gauge theory and symplectic geometry interact.  
 
\subsection{Overview of the Results}
We give a brief outline of the main compactness results of this paper.  \\\\
Consider a compact Riemann surface $\Sigma$ with complex structure $j_\Sigma$ and Kahler metric $g_\Sigma$.  Let $E\rightarrow \Sigma$ be a 2-dimensional complex vector bundle with odd $$c_1(E)\in H^2(\Sigma; \mathbb{Z}) \cong \mathbb{Z}$$  Let $V \rightarrow \Sigma$ be the bundle of traceless endomorphisms of $E$.  Note that $V$ has structure group $SO(3)$.   We fix once and for all a unitary connection $\alpha_{det}$ on $det(E)$.  Once this choice of $\alpha_{det}$ is made, we have an identification between $SO(3)$-connections on $V$ and $U(2)$-connections on $E$ that induce $\alpha_{det}$ on $det(E)$.\\\\
Let $\GauS_E$ be the group of $U(2)$-gauge transformations of $E$ that descend to the identity on $det(E)$.  $\GauS$ has a natural induced action on the connections on $V$ and we let $\GauS\subset \GauS_V$ be its image.  We may identify the action of $\GauS$ with the action of $SO(3)$-gauge transformations on $V$ that lift to gauge transformations of $E$.      

\begin{definition}
Let $\Aff$ be the affine space of $SO(3)$-connections on $V$.  
\end{definition}
$\GauS$ acts on $\Aff$ by $$g^*(\alpha)=\alpha+g^{-1}d_{\alpha}g$$
Following Atiyah and Bott \cite{AB}, we note that this action is Hamiltonian with moment map $$\Moment: \Aff \rightarrow \Omega^0(\Sigma; \liea)$$ given by $\Moment(\alpha)=*_2F_\alpha$, where $F_\alpha$ is the curvature of $\alpha$.  
Let $\Flat \subset \Aff$ be the set of projectively flat connections.  
\begin{definition}
Let $\Mod=\Moment^{-1}(0)/\GauS=\Aff // \GauS$
\end{definition}
In fact, $\Mod$ is a compact Kahler manifold (see \cite{AB}).  $\Mod$ has a concrete description in terms of representations of $\pi_1(\Sigma)$.  Pick a point $p\in \Sigma$.  The space $\Mod$ is the space of representations $\pi_1(\Sigma-p)\rightarrow SU(2)$ that have holonomy $-I$ around $p$, modulo conjugation by $SU(2)$.  If we pick a standard homology basis  $\{\alpha_i \}_{i=1}^g,\{\beta_i \}_{i=1}^g$ , we may identify $\Mod$ with the space 
\begin{equation}
\{ g_i\in SU(2), h_i\in SU(2)|\Pi_{i=1}^g[g_i,h_i]=-I \}/SU(2)
\end{equation}

In general, given a symplectic manifold $(M,\omega)$ with a Hamiltonian group action $G$ and corresponding moment map $$\mu:M\rightarrow Lie(G)$$ one may form the symplectic reduction at $0$ by $$M//G=\mu^{-1}(0)/G$$ Provided $G$ acts freely on $\mu^{-1}(0)$, $(M//G,\omega_{M//G})$ inherits a symplectic structure from $M$.  Let $(M//G) ^-$ denote the symplectic manifold with the opposite form. There is a canonical Lagrangian $$\Lag=\mu^{-1}(0)\subset (M//G) ^-\times M$$ defined as the set of pairs $([m],m)$.  Here, $[m]=mG$ denotes the orbit of $m$.  This general construction applies to the case of interest where $M=\Aff$ and $G=\GauS$.  
\begin{definition}
Let $\Lag \subset \Mod\times \Aff$ be the set of pairs $([\alpha],\alpha)$ where $\alpha$ is a flat connection on $\Sigma$.  
\end{definition}   

\subsection{Matching Boundary Conditions}
Let $B_R(p) \subset \C$ be the closed disk of radius $R$ centered at the origin and let $$H^+=\{(s,t)\in \C| s \geq 0  \}$$ be the positive half-plane.  We define $D_R^+$ as $H^+\cap B_R$.  Let $D_R^-$ be the reflection of this disk in the $t$-axis.   In general, $\partial D_R^+=I_R\cup S_R$ where $$I_R=\{(0,t)\in D_R^+ \}$$ and $$S_R=\{(s,t)\in D_R^+| s^2+t^2=R^2\}$$    The \textbf{interior} of a disk $D_R^+$ is the set of points with $s^2+t^2 <R^2$ and will be denoted by $\mathring{D}_R^+$.  \\\\
Consider a holomorphic map $$u: D_R^- \rightarrow \Mod $$ and an ASD connection $A$ on $D_R^+ \times \Sigma$. For each $t\in I_R$, we may restrict $A$ to the slice $(0,t)\times \Sigma$.  This gives us a map $$R_{A}:I_R\rightarrow \Aff(\Sigma)$$
\begin{definition}
The pair $(u,A)$ is said to be matched if at each $t\in I_R$, $u(0,t)=[R_{A}(t)]$ where $[R_{A}(t)]$ denotes the gauge orbit of $R_{A}(t)$.
\end{definition}
Note that this is precisely the condition that $(u(0,t),R_A(t))\in \Lag$ for each $t$.  If $A$ was a holomorphic curve, this would amount to a Lagrangian boundary condition for the pair $$(\tilde{u},A):D_R^+\rightarrow \Aff(\Sigma)$$ where $\tilde{u}(s,t)=u(-s,t)$.   For convenience, we will refer to the pair $(u,A)$ as defined on $D_R^+\times \Sigma$.  Let $$e(s,t)=\frac{1}{2}|du(-s,t)|^2+\frac{1}{2}\int_\Sigma |F_A|^2$$ and $$\Energy(u,A)=\int_{D_R^+}e$$
The following gives a key a priori estimate for matched pairs:
\begin{theorem}

There exists $\hbar,C>0$ with the following property.  Let $(u,A)$ be a matched pair on some $D_R^+ \times \Sigma$.  If $$\Energy(u,A)< \hbar$$ then 
$$e(p)\leq C\Energy(u,A) R^{-2}$$
\end{theorem}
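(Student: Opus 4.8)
\emph{Strategy.} The plan is to prove the estimate as an $\ep$-regularity statement: first establish a Weitzenb\"ock-type differential inequality for the combined energy density, then feed it into the classical mean-value inequality for nonnegative subsolutions with small $L^1$-norm. Write $e=e_u+e_A$ on $D_R^+$, with $e_u(s,t)=\frac{1}{2}|du(-s,t)|^2$ and $e_A(s,t)=\frac{1}{2}\int_\Sigma|F_A|^2$. The main step is to show that, for a constant $\kappa$ depending only on $\Sigma$, $j_\Sigma$, $g_\Sigma$ and the geometry of $\Mod$ -- in particular not on $R$ -- and provided $\Energy(u,A)<\hbar$, the density satisfies
\begin{equation*}
\Delta e \ \geq\ -\,\kappa\,(e+e^2) \qquad\text{on } \mathring{D}_R^+ \ \ (\text{including across } I_R=\{s=0\}),
\end{equation*}
the inequality being understood distributionally. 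Granting this, the standard interior estimate for subsolutions -- Heinz's point-picking argument combined with the mean-value inequality, as in the theory of pseudoholomorphic curves -- yields $e(0)\leq C R^{-2}\int_{D_R^+}e$, which is the assertion with $p$ the center $0\in I_R$; the $R$-uniformity of $C$ follows by rescaling the disk factor $D_R^+\to D_1^+$.

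\emph{The interior inequality on $\{s>0\}$.} Here I treat $e_u$ and $e_A$ separately. Since $c_1(E)$ is odd there are no reducible flat connections, so $\Mod$ is a \emph{smooth} compact K\"ahler manifold of bounded geometry; the Bochner identity for the holomorphic (hence harmonic) map $u$ then gives $\Delta e_u\geq -c\,e_u^2$ on $\{s>0\}$, with $c$ controlled by the curvature tensor of $\Mod$. For $e_A$, differentiate under the integral sign and integrate the four-dimensional curvature Weitzenb\"ock formula -- obtained from the second Bianchi identity together with $\ast F_A=-F_A$, of the schematic shape $\nabla_A^*\nabla_A F_A = \{F_A,F_A\} + (\text{bounded zeroth order in } F_A)$ -- over the closed surface $\Sigma$. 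The $\Sigma$-directions of the Laplacian integrate to zero, leaving $\Delta_{(s,t)}e_A\geq -c\int_\Sigma|F_A|^3 - c\,e_A$, the last term linear since the product metric on $D_R^+\times\Sigma$ has $R$-independent curvature. To absorb the cubic term I would invoke Uhlenbeck's $\ep$-regularity for ASD connections: once $\Energy(u,A)<\hbar$, the connection $A$ admits, on the relevant balls, Coulomb slices in which interior elliptic estimates bound $\sup_{\Sigma_{(s,t)}}|F_A|$ by a multiple of $e_A^{1/2}$ over a slightly larger parameter region, whence $\int_\Sigma|F_A|^3\leq \kappa\, e_A^{3/2}\leq\kappa(e_A+e_A^2)$ and $\Delta e\geq-\kappa(e+e^2)$ on $\{s>0\}$.

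\emph{The matching interface.} This is the heart of the argument: extending the inequality across $I_R$. There the matched condition says $(u(0,t),R_A(t))\in\Lag$ for all $t$, i.e.\ $u|_{I_R}$ is the image of $R_A|_{I_R}$ under the Lagrangian correspondence carried by $\Lag$ -- in particular $R_A|_{I_R}$ takes values in the flat connections $\Flat$. Because $\Lag$ is Lagrangian (a totally real matching), I expect $e$ to carry no negative distributional mass on $I_R$, and would establish this by doubling: reflect across the $t$-axis, pairing the Schwarz-reflected holomorphic curve $\tilde u(s,t)=u(-s,t)$ on $\{s>0\}$ with the connection $A$ reflected and regauged to an honest ASD connection on $D_R^-\times\Sigma$ matched to $A$ along $I_R\times\Sigma$; the Lagrangian condition is precisely what makes the doubled data a matched pair on $B_R\times\Sigma$ whose energy density is the even extension of $e$, hence a genuine distributional subsolution on all of $\mathring{B}_R$ by the previous paragraph. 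Alternatively one runs a boundary mean-value argument directly, playing the outward normal derivatives of $e_u$ and $e_A$ at $s=0$ off against one another via the matching.

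\emph{Main obstacle.} I expect this interface step to be the principal difficulty: one must understand the local geometry of $\Lag$ near a matched pair well enough to set up the reflection, verify that the reflected connection is genuinely anti-self-dual (not self-dual) after the appropriate bundle automorphism, and check that $e$ lands in the Sobolev class for which the distributional inequality is valid across $I_R\times\Sigma$. Closely bound up with this is the \emph{boundary} form of Uhlenbeck's gauge-fixing theorem needed to push the $e_A$-Weitzenb\"ock estimate up to $I_R\times\Sigma$. In all of these steps the smallness threshold $\hbar$ is exactly what is being spent: it is what makes Coulomb gauge-fixing available and what lets the nonlinear curvature terms be absorbed into the linear part of the differential inequality.
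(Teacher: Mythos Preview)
Your overall architecture—Weitzenb\"ock inequality plus a Heinz point-picking/mean-value argument—matches the paper, and the ``alternative'' you mention in passing (playing the outward normal derivatives of $e_u$ and $e_A$ against one another via the matching) is in fact precisely what the paper does. But the two steps you actually develop both have genuine problems.

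\textbf{The cubic term.} Your proposal to absorb $\int_\Sigma|F_A|^3$ into $e_A+e_A^2$ by invoking Uhlenbeck $\ep$-regularity to get $\sup_{\Sigma_{(s,t)}}|F_A|\lesssim e_A^{1/2}$ is not justified. Uhlenbeck's estimate is four-dimensional: on an interior 4-ball $B_r$ it gives $|F_A|(\text{center})\lesssim r^{-2}\bigl(\int_{B_r}|F_A|^2\bigr)^{1/2}$, which carries an $r^{-2}$ and involves a 4D integral, not the slice integral $e_A(s,t)$; near $I_R\times\Sigma$ there are no interior 4-balls at all. The paper does \emph{not} attempt this absorption. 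It keeps the inequality in the form $\Delta_2 e\leq C(e+e^2+e^{1/2}v)$ with $v(s,t)=\bigl(\int_\Sigma|F_A|^4\bigr)^{1/2}$, and only later controls $\|v\|_{L^2(D_R)}$ via the 4D Sobolev embedding $L^2_1\hookrightarrow L^4$ and Kato's inequality, \emph{after} an integration-by-parts step has produced a bound on $\|\nabla_AF_A\|_{L^2}$. This extra auxiliary quantity $v$ is essential.

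\textbf{The interface.} The doubling you propose does not work. Reflection $s\mapsto -s$ reverses orientation on the 4-manifold, so an ASD connection reflects to a \emph{self-dual} one; there is no bundle automorphism that undoes this while preserving the equation. More basically, the matched problem couples a 2D PDE for $u$ with a 4D PDE for $A$, and there is no reflected object that is again a matched pair on a full disk. The paper's substitute is a direct normal-derivative computation: in carefully chosen gauges and lifts one shows that on $I_R$
\[
\partial_s e^{\pm}(0,t)=\pm\int_\Sigma \mathrm{tr}\bigl(\partial_t\alpha^{\pm}\wedge\partial_t^2\alpha^{\pm}\bigr),
\]
and then, by extending the gauge transformation relating $\alpha^+$ and $\alpha^-$ over $[-1,1]\times\Sigma$ and applying Stokes' theorem, that the sum satisfies $|\partial_s e(0,t)|\leq Ce(0,t)^{3/2}$. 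This cubic boundary control (not a vanishing or sign condition) is then fed, together with the interior inequality $\Delta_2 e\leq C(e+e^2+e^{1/2}v)$, into an explicit Green's-function argument on the half-disk. Your ``alternative'' is the right idea; the reflection is a dead end.
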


We now state our main compactness results.  We will consider a sequence of matched pairs $(u_i,A_i)$ on $D_R^+ \times \Sigma$.  By the regularity results of section $\ref{secreg}$, we may assume that the sequence consists of smooth elements. 
\begin{definition}
A \textbf{singular set} $S$ on $D_R\times \Sigma$ is a finite collection of points $x_i\in (D_R^--\partial D_R^-)$, $y_i\in (D_R-\partial D_R)\times \Sigma$, $z_i\times \Sigma \in (I_R-\partial I_R)\times \Sigma$.  The $z_i\times \Sigma$ are the \textbf{boundary slices} of $S$.      

\end{definition} 
\begin{theorem}
Assume that we have a uniform bound $E(u_i,A_i) <C$.  There exists a subsequence $(u_j,A_j)$ and a singular set $S$ with the following properties.  Let $K_0$ be a compact set in $\mathring{D}^-_R-S$ and $K_1$ be a compact set in $\mathring{D}_R \times \Sigma -S$.    We have that $u_j$ converges in any $C^k$ norm on $K_0$ and $A_j$ converges in any $C^k$-norm on $K_1$.   Finally, the energy loss at each singular point is at least  $\hbar$ for some sufficiently small $\hbar>0$ independent of the choice of sequence.  
\end{theorem}   

\textbf{Acknowledgement.}  We wish to thank Tom Mrowka, Dusa McDuff and Dennis Sullivan for useful conversations.  In addition, we would like to thank the Simons Center For Geometry and Physics for their hospitality while this work was being completed.

\section{Review of Sobolev Spaces}
\subsection{The Space $L^p_k$}
In this work we will need to consider Sobolev spaces of functions with an infinite dimensional Banach space as target.  Thus, we begin with a brief review of Sobolev spaces with the purpose of setting down notation as well as explaining how the results extend with minimal effort to the infinite dimensional case.  A general reference for Sobolev spaces is \cite{Taylor1}.\\\\
Let $M$ be a closed, oriented, Riemannian manifold of dimension $d$.  For $p>1$ and $k$ a nonnegative integer, one has the Sobolev space  $L^p_k(M)$ of real valued functions on $M$.  These are defined by completing the space of smooth functions on $M$ with respect to the norm:
\begin{equation}
\label{eq7}
\sum_{i=0}^k||\nabla^i f||_{L^p}
\end{equation}
More generally, given a vector bundle $V\rightarrow M$, one may consider the Sobolev space of sections of $V$.  The results of this section apply in this general context. We will often omit $M$ from the notation when the domain is clear in a particular discussion.  Let $C^k(M)$ stand for the Banach space of functions on $M$ with $k$ continuous derivatives. The norm on $f\in C^k$ is given by 
\begin{equation}
\sup_{x\in M, 0\leq  i \leq k}|\nabla^i f(x)|
\end{equation}

  Recall the following (see \cite{Taylor3} for a proof) fundamental theorems:
\begin{theorem}
\label{thm1}
 If $pk<d$, we have the embedding 
 \begin{equation}
 L^p_k\rightarrow L^{\frac{dp}{d-kp}}
 \end{equation}
 If $pk>d$, we have the embedding 
 \begin{equation}
 L^p_k\rightarrow C^0
 \end{equation}
 More generally, if $p(k-m)>d$, we have the embedding 
 \begin{equation}
L^p_k \rightarrow C^m
\end{equation}
\end{theorem}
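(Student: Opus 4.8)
The plan is to reduce the general statement to the two base cases $k=1$ and then bootstrap. I would first establish the $k=1$, $p<d$ case, i.e., the Gagliardo--Nirenberg--Sobolev inequality $\|f\|_{L^{dp/(d-p)}} \leq C(\|\nabla f\|_{L^p} + \|f\|_{L^p})$ on a closed manifold $M$. The standard route is: cover $M$ by finitely many coordinate charts diffeomorphic to balls in $\Ar^d$, subordinate to a partition of unity $\{\chi_a\}$; for each $\chi_a f$, extend by zero to all of $\Ar^d$, and invoke the classical Euclidean Sobolev inequality $\|g\|_{L^{dp/(d-p)}(\Ar^d)} \leq C\|\nabla g\|_{L^p(\Ar^d)}$ (which itself follows, for $p=1$, from the iterated one-dimensional fundamental-theorem-of-calculus estimate of Nirenberg, and for general $1<p<d$ by applying the $p=1$ case to $|g|^{\gamma}$ with $\gamma = (d-1)p/(d-p)$ and Hölder); then sum over $a$ using the triangle inequality and the fact that $\nabla(\chi_a f) = \chi_a \nabla f + (\nabla \chi_a) f$, which produces the lower-order term $\|f\|_{L^p}$. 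Since $M$ has finite volume, all the chart transition Jacobians are bounded above and below, so constants are uniform.

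Next I would iterate. If $pk<d$, apply the $k=1$ case to $\nabla^{k-1}f$ (more precisely, to each component of the $(k-1)$-st covariant derivative in local coordinates, absorbing Christoffel-symbol corrections into lower-order terms) to get $L^{p_1}_{k-1}$ with $\tfrac{1}{p_1} = \tfrac1p - \tfrac1d$, and repeat. After $k$ steps one lands in $L^{q}_0 = L^q$ with $\tfrac1q = \tfrac1p - \tfrac kd = \tfrac{d-kp}{dp}$, which is the asserted exponent. For the case $pk>d$: one cannot iterate all the way to $L^\infty$ directly, but one iterates the subcritical step as long as $p_j < d$, i.e., until some intermediate space $L^{p_m}_{k-m}$ with $p_m(k-m)=p k - m p \,$; choosing $m$ so that $p_m > d$ with $k-m \geq 1$, the final step is the borderline Morrey estimate $L^{p_m}_1 \hookrightarrow C^0$ for $p_m > d$, which one proves in a single chart from the representation $f(x) - \text{avg}(f) = \int \nabla f \cdot (\text{kernel})$ and Hölder, the kernel $|x-y|^{1-d}$ being in $L^{p_m'}_{loc}$ exactly when $p_m > d$. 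The $C^m$ statement for $p(k-m)>d$ follows by applying the $C^0$ statement to $\nabla^m f$, using that $\nabla^m f \in L^p_{k-m}$ whenever $f\in L^p_k$.

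Finally I would remark that the whole argument is insensitive to the finite-dimensionality of the fibers of $V$: every step uses only the triangle inequality, Hölder's inequality, and the one-variable fundamental theorem of calculus applied to $|f|$ (for a Banach-space-valued $f$, $x \mapsto |f(x)|$ is still a scalar function with $|\nabla |f|\,| \leq |\nabla f|$ a.e.), together with boundedness of transition functions; none of these references a basis of the fiber. Hence the embeddings hold verbatim for sections of an infinite-dimensional Banach vector bundle, which is the form in which they will be used later.

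The main obstacle is organizing the iteration bookkeeping cleanly — tracking how the exponent $p_j$ evolves, verifying the intermediate spaces are well-defined (one needs $k-j\geq 1$ at each subcritical step and must stop at the right moment before overshooting $d$), and handling the passage from the flat Euclidean inequalities to the manifold via partitions of unity without letting the lower-order $\|f\|_{L^p}$ terms proliferate uncontrollably. None of this is deep, but it is where all the care goes; the genuinely nontrivial analytic input (the Euclidean Sobolev and Morrey inequalities) I would simply cite from \cite{Taylor3} as the theorem statement already does.
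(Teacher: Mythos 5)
The paper does not prove this statement at all: it is presented as a recalled classical theorem with the proof delegated to the citation \cite{Taylor3} ("Recall the following (see \cite{Taylor3} for a proof) fundamental theorems"). Your sketch is essentially the standard textbook proof one would find there -- Gagliardo--Nirenberg--Sobolev in $\mathbb{R}^d$ via the iterated one-dimensional estimate, transfer to $M$ by a partition of unity, iteration on the exponent, and Morrey's estimate for the supercritical endpoint -- so there is no conflict with the paper, only more detail than the paper chose to supply. One point in your iteration deserves care: in the supercritical case $pk>d$ the running exponent $1/p_j = 1/p - j/d$ can land exactly on $1/d$ (e.g.\ $p=2$, $k=3$, $d=4$ gives $p_1=4=d$), and at that step the subcritical GNS inequality formally produces the exponent $\infty$, where it fails. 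You must instead use the borderline embedding $L^d_1\hookrightarrow L^q$ for every finite $q$, pick some $q>d$, and then apply Morrey; your phrase about "stopping at the right moment" gestures at this but does not cover the equality case as written. Your closing remark that the argument survives for Banach-space-valued sections is correct (Kato's inequality $|\nabla|f||\le|\nabla f|$ and Bochner integration are all that is used), but note that it goes beyond what Theorem \ref{thm1} asserts, and the paper develops the Banach-valued Sobolev theory it actually needs by a different mechanism in Section \ref{SobBan} (Fourier truncation and the identifications of Lemma \ref{lem1} and Lemma \ref{lem3}) rather than by rerunning the Euclidean embedding proofs.
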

We have the multiplication map $$L^p\cdot L^{p'}\rightarrow L^{\frac{pp'}{p+p'}}$$ as well as the following theorem:
\begin{theorem}
If $pk>d$, the spaces $L^p_k$ form a Banach algebra under the operation of pointwise multiplication.  
\end{theorem}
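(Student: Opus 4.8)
The plan is to reduce everything to the two tools already in hand: the Sobolev embeddings of Theorem \ref{thm1} and H\"older's inequality. Since $L^p_k(M)$ is already known to be a Banach space, and since $M$ is closed so that the constant function $1$ lies in $L^p_k$, it suffices to establish the estimate $\|fg\|_{L^p_k}\le C\,\|f\|_{L^p_k}\,\|g\|_{L^p_k}$ for all smooth $f,g$; commutativity, associativity and the unit property are immediate, and the constant $C$ is harmless since one may replace the norm by an equivalent multiple of it. By density of smooth functions, the bilinear estimate then extends to all of $L^p_k$, so multiplication is a well-defined bounded bilinear map.

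First I would expand $\nabla^m(fg)$ by the Leibniz rule. Since one is differentiating scalar functions (or, in the Banach-valued case, using a metric connection together with the bilinear multiplication pairing), no curvature terms intervene and one obtains $\nabla^m(fg)=\sum_{i+j=m}c_{ij}\,\nabla^i f\otimes\nabla^j g$ with universal constants $c_{ij}$, hence the pointwise bound $|\nabla^m(fg)|\le\sum_{i+j=m}|c_{ij}|\,|\nabla^i f|\,|\nabla^j g|$. It therefore suffices to bound $\big\|\,|\nabla^i f|\,|\nabla^j g|\,\big\|_{L^p}$ for every pair $(i,j)$ with $i+j\le k$ and then sum over the finitely many such pairs.

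The case $i=0$ (and symmetrically $j=0$) is handled directly by the hypothesis $pk>d$: Theorem \ref{thm1} gives the embedding $L^p_k\to C^0$, so $\big\|\,|f|\,|\nabla^j g|\,\big\|_{L^p}\le\|f\|_{C^0}\,\|\nabla^j g\|_{L^p}\le C\,\|f\|_{L^p_k}\,\|g\|_{L^p_k}$. For a pair with $i,j\ge 1$ and $i+j\le k$, note that $\nabla^i f\in L^p_{k-i}$ and $\nabla^j g\in L^p_{k-j}$ with norms controlled by $\|f\|_{L^p_k}$ and $\|g\|_{L^p_k}$. Applying Theorem \ref{thm1} to each factor: if $(k-i)p<d$ then $\nabla^i f\in L^{q_i}$ with $\tfrac1{q_i}=\tfrac1p-\tfrac{k-i}{d}$, and similarly for $\nabla^j g$. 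A one-line computation then gives $\tfrac1{q_i}+\tfrac1{q_j}=\tfrac2p-\tfrac{2k-i-j}{d}<\tfrac1p$, the strict inequality being precisely $p(2k-i-j)>d$, which holds because $2k-i-j\ge k$ and $pk>d$. Hence $\tfrac1{q_i}+\tfrac1{q_j}\le\tfrac1p$, and H\"older's inequality yields $\big\|\,|\nabla^i f|\,|\nabla^j g|\,\big\|_{L^p}\le\|\nabla^i f\|_{L^{q_i}}\|\nabla^j g\|_{L^{q_j}}\le C\,\|f\|_{L^p_k}\,\|g\|_{L^p_k}$. Summing over $m\le k$ and over $i+j=m$ gives $\|fg\|_{L^p_k}\le C'\,\|f\|_{L^p_k}\,\|g\|_{L^p_k}$, which completes the argument; the same proof applies verbatim to sections valued in a Banach algebra, since every step only involves the scalar quantities $|\nabla^i f|$ and $|\nabla^j g|$.

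The only point requiring care — and where I expect the minor technical friction — is the bookkeeping of the Sobolev exponents in the borderline subcases: when $(k-i)p\ge d$, the factor $\nabla^i f$ lies in $C^0$ (if strict) or in every $L^q$ with $q<\infty$ (if equality), so the "natural" exponent $q_i$ is effectively infinite and one cannot pair it against an $L^p$ factor in H\"older with no room to spare. However, because $i,j\ge 1$ and $i+j\le k$ force $k-j\ge i\ge 1>0$, the other factor $\nabla^j g$ always lies in some $L^{q_j}$ with $q_j>p$ strictly, so one can choose a finite $q_i$ with $\tfrac1{q_i}+\tfrac1{q_j}\le\tfrac1p$; the strict inequality $pk>d$ is exactly what guarantees this slack in every case, so no genuine obstruction arises.
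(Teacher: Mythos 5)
Your proof is correct. The paper does not actually prove this theorem --- it is stated as a recalled standard fact with a reference to Taylor's text --- so there is no internal argument to compare against; your Leibniz-plus-Sobolev-plus-H\"older argument is the standard one, the exponent bookkeeping $\tfrac{1}{q_i}+\tfrac{1}{q_j}=\tfrac{2}{p}-\tfrac{2k-i-j}{d}<\tfrac{1}{p}$ (using $2k-i-j\geq k$ and $pk>d$, then $L^r\hookrightarrow L^p$ on the compact $M$) is right, and your handling of the borderline subcases $(k-i)p\geq d$ correctly exploits the slack coming from $i,j\geq 1$.
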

At times, it is useful to have a definition of Sobolev spaces for negative $k$:
\begin{definition}
Let $k$ be a negative integer and let $1/p+1/q=1$.  We set $$L^p_k(M)=(L^q_{-k}(M))^*$$ where $(L^q_{-k}(M))^*$ denotes the dual of $L^q_{-k}(M)$.  
\end{definition}

In the case $p=2$, a spectral definition of Sobolev norms is useful.  Let $\Delta=d^*d$ be the scalar Laplacian for functions on $M$  and let $\phi_\lambda$ be an orthonormal eigenbasis of $\Delta$.  For any smooth $f$, we have the decomposition $$f=\sum_\lambda c_\lambda \phi_\lambda$$ with $c_\lambda = \langle \phi_\lambda, f \rangle_{L^2}$.  We may define the $L^2_k$-norm by setting 
\begin{equation}
||f||^2_{L^2_k}=\sum_\lambda |c_\lambda|^2(|\lambda|^2+1)^{k/2}
\end{equation}
Standard elliptic estimates imply that this definition yields a norm equivalent to $(\ref{eq7})$ in the case $p=2$ (see \cite{Taylor1}).   We may therefore alternatively define $L^2_k(M)$ as the completion of smooth functions with respect to this norm.  One advantage of the spectral  definition is that it immediately extends to all $k\in \Ar$. \\\\
Let us now turn to the case of a manifold with boundary.  Let $M^+$ be a compact, oriented, Riemannian manifold with boundary $\partial M^+$.  We will assume that near the boundary, $M^+$ is isometric to $\partial M^+ \times [0,1)$. Let $M^-$ denote a copy of $M^+$ with the opposite orientation.  We let $${M}=M^+\cup_{\partial M^+} M^-$$ be the double manifold formed by gluing two copies $M^+$ and  $M^-$  along the boundary.  \\\\
One can easily extend the definition of Sobolev spaces to the case of a manifold with boundary.  Indeed, for $k \in \{0,1, 2 \dots \}$, one may define $L^p_k(M^+)$ by completing the space of smooth functions on $M^+$ with respect to the norm  $(\ref{eq7})$.  We will make use of the following basic extension lemma:
\begin{lemma}
\label{lem4}
 There exists a continuous linear extension map $$E:L^p_k(M^+)\rightarrow L^p_k(M)$$ such that $E(f)_{|M^+}=f$ for all $f\in L^p_k(M^+)$.  
\end{lemma}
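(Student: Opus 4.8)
The plan is to establish this via the classical Lichtenstein--Hestenes--Seeley reflection, carried out in the form adapted to a product collar. Because $M^+$ is isometric to $\partial M^+\times[0,1)$ near its boundary, the double $M=M^+\cup_{\partial M^+}M^-$ is isometric, in a neighbourhood of $\partial M^+$, to $\partial M^+\times(-1,1)$, with $M^+$ corresponding to $\{t\ge0\}$ and $M^-$ to $\{t\le0\}$. The reflection will act only on the normal coordinate $t$, so that, unlike in the general Stein extension theorem, no coordinate charts or interior partitions of unity are needed.

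First fix a positive integer $N\ge k$ and scalars $c_1,\dots,c_N$ with
\begin{equation}
\sum_{j=1}^N c_j\,(-1/j)^m=1,\qquad 0\le m\le k-1;
\end{equation}
such $c_j$ exist because the coefficient matrix of this linear system contains a nonsingular $k\times k$ Vandermonde minor, the nodes $-1/j$ being pairwise distinct. For $f$ smooth on $M^+$, define on the collar $\partial M^+\times(-1,0]$ of $M^-$
\begin{equation}
(Rf)(x,t)=\sum_{j=1}^N c_j\, f(x,-t/j),
\end{equation}
which makes sense since $-t/j\in[0,1)$ for $t\in(-1,0]$. Fix a smooth cutoff $\chi\colon(-1,1)\to[0,1]$ with $\chi\equiv1$ near $0$ and $\chi\equiv0$ near $-1$, and let $E(f)$ equal $f$ on $M^+$, equal $\chi(t)\,(Rf)(x,t)$ on the collar of $M^-$, and equal $0$ on the rest of $M^-$. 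Since $t\mapsto-t/j$ is a smooth change of variable with bounded derivatives on $(-1,0]$ and $\chi$ is smooth with support away from $t=-1$, one sees immediately that $E(f)$ is $L^p_k$ on each closed half of $M$ separately, with its norm on the $M^-$ side bounded by a constant (depending only on $k,p,N,\chi$) times $\|f\|_{L^p_k(M^+)}$, and clearly $E(f)|_{M^+}=f$.

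The one point that requires care is that $E(f)$ is a genuine element of $L^p_k(M)$, that is, that the two pieces glue with enough regularity; this is exactly what the choice of the $c_j$ is for. For smooth $f$, near $t=0$ one has $\chi\equiv1$, so there $E(f)=Rf$ on the $M^-$ side, and for any multi-index $\beta$ along $\partial M^+$ and any $m\le k-1$
\begin{equation}
\partial_x^{\beta}\partial_t^{m}(Rf)(x,0^-)=\Big(\sum_{j=1}^N c_j(-1/j)^m\Big)\,\partial_x^{\beta}\partial_s^{m}f(x,0)=\partial_x^{\beta}\partial_t^{m}f(x,0^+).
\end{equation}
Hence $E(f)$ is of class $C^{k-1}$ across $\partial M^+$ (the case $k=0$ being trivial) and smooth off $\partial M^+$; being in addition $L^p_k$ on each closed half, $E(f)$ then lies in $L^p_k(M)$, its $k$-th weak derivative being the locally $L^p$ piecewise one. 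Thus $E$ is a bounded linear map from the smooth functions on $M^+$, in the $L^p_k$ norms, into $L^p_k(M)$; since the smooth functions are dense in $L^p_k(M^+)$ by definition, $E$ extends uniquely to a bounded linear $E\colon L^p_k(M^+)\to L^p_k(M)$, and $E(f)|_{M^+}=f$ persists in the limit because restriction is continuous. Finally, since $Rf$ is built only from scalar coefficients and pointwise values of $f$, the whole argument is insensitive to the target: it applies verbatim to sections of a finite-dimensional bundle $V\to M^+$ (trivializing over $\partial M^+$ and using that $V$ may be taken to be pulled back near the boundary) and, word for word, to functions with values in an infinite-dimensional Banach space, which is the case needed in what follows.
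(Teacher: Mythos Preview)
Your proof is correct and follows essentially the same Seeley--Lichtenstein reflection as the paper's sketch: extend across the boundary by a weighted sum of dilated reflections, with coefficients determined by a Vandermonde system so that normal derivatives match at $\partial M^+$. The only cosmetic differences are that the paper uses the variant $\sum_{j=1}^{k+1} a_j f(x,-jy)$ (dilating outward rather than inward) and matches derivatives through order $k$, whereas you use $f(x,-t/j)$ with an explicit cutoff and match only through order $k-1$, which already suffices for membership in $L^p_k(M)$; your write-up is in fact more careful about the domain issues and the gluing argument than the paper's brief sketch.
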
 
\begin{proof}
The proof is contained in \cite{Taylor1} and we give a sketch of the construction for later use. For this, we construct $$E:L^p_l(M^+)\rightarrow L^p_l(M)$$  for all $l\leq k$ as follows. First, we locally identify $M^+$ with $(x,y)\in \Ar^{d-1}\times [0,1)$ where $x$ are the $\partial M^+$ coordinates and $y$ is the normal coordinate.  Fix some choice of coefficients $a_j$.  Let $E(f)$ be $f(x,y)$ for $y\geq 0$ and $$\sum_{j=1}^{k+1}
a_j f(x, -jy)$$ for $y<0$.  As explained in \cite{Taylor1}, there exists a unique choice of coefficients $a_j$ so that all derivatives up to order $k$ match up at the boundary for all $f$.
\end{proof}
In view of the previous lemma, we may alternatively define $L^p_{k}(M^+)$ as 
\begin{equation}
L^p_k(M^+)=L^p_k(M)/L^p_k(M)_{M^-}
\end{equation}
where $L^p_k(M)_{M^-}\subset L^p_k(M)$ consists of elements with support in $M^-$.  This definition allows one to extend $L^p_k(M^+)$ to all real $k$.\\\\ Let $$R:C^\infty (M^+)\rightarrow C^\infty (\partial M^+)$$  denote the restriction map.  We have the following trace theorem (see \cite{Taylor1}):
\begin{theorem}
If $k>1/2$, $R$ extends to a continuous surjective map 
\begin{equation} 
R: L^2_k(M^+) \rightarrow L^2_{k-1/2}(\partial M^+)   
\end{equation}
For any $p>1$, there is a continuous restriction map
\begin{equation} 
R: L^p_1(M^+)\rightarrow L^p(\partial M^+) 
\end{equation}
\end{theorem}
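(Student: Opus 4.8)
The plan is to reduce the statement to the half-space model and then to two elementary computations: a spectral one for the Hilbert case $p=2$, and one via the fundamental theorem of calculus for general $p$. First I would fix a finite atlas for $M^+$ consisting of interior charts, where the restriction statement is vacuous, and boundary charts in which, using the collar isometry $\partial M^+\times[0,1)$, the manifold looks like the half-space $\Ar^{d-1}\times[0,\infty)$ with $\partial M^+$ corresponding to $\{y=0\}$. Choosing a partition of unity subordinate to this cover, and using that multiplication by a fixed smooth compactly supported function is bounded on $L^2_k$, on $L^2_{k-1/2}(\partial M^+)$, and on $L^p_1$ (the fractional cases being standard, e.g.\ by duality and interpolation from the integer cases), it suffices to prove the two estimates for compactly supported functions on the half-space. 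Throughout write $\langle\zeta\rangle:=(1+|\zeta|^2)^{1/2}$.

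For the Hilbert case I would first apply the extension map $E$ of Lemma~\ref{lem4} to regard $f$ as an element of $L^2_k(\Ar^d)$ with values in the fixed target, which we take to be a Hilbert space so that Plancherel is available, and then take the full Fourier transform $\tilde f(\xi,\eta)$, with $\xi$ the tangential and $\eta$ the normal frequency. The trace satisfies $\widehat{Rf}(\xi)=c\int_{\Ar}\tilde f(\xi,\eta)\,d\eta$, so Cauchy--Schwarz against the weight $\langle(\xi,\eta)\rangle^{-k}$ gives
$$|\widehat{Rf}(\xi)|^2\le c^2\Big(\int_{\Ar}\langle(\xi,\eta)\rangle^{-2k}\,d\eta\Big)\Big(\int_{\Ar}\langle(\xi,\eta)\rangle^{2k}|\tilde f(\xi,\eta)|^2\,d\eta\Big).$$
After the substitution $\eta=\langle\xi\rangle s$ the first factor equals $C_k\langle\xi\rangle^{1-2k}$ with $C_k=\int_{\Ar}(1+s^2)^{-k}\,ds$, which is finite \emph{precisely} when $k>1/2$; this is exactly where the half-derivative loss enters. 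Multiplying through by $\langle\xi\rangle^{2k-1}$ and integrating in $\xi$ then yields $\|Rf\|_{L^2_{k-1/2}(\partial M^+)}^2\le C\|f\|_{L^2_k(\Ar^d)}^2\le C'\|f\|_{L^2_k(M^+)}^2$. For surjectivity I would produce a bounded right inverse: fix $\phi\in C_c^\infty(\Ar)$ with $\phi(0)=1$ and, given $g$ supported in a boundary chart, set $\widehat{Sg}(\xi,y)=\hat g(\xi)\,\phi(\langle\xi\rangle y)$; the same change of variables shows $\|Sg\|_{L^2_k}\le C\|g\|_{L^2_{k-1/2}}$ and $RSg=g$, and one patches these local extensions with the partition of unity.

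For general $p>1$ I would instead argue directly on the half-space with $f$ smooth and compactly supported. Since $|f(x,0)|^p=-\int_0^\infty\partial_y\big(|f(x,y)|^p\big)\,dy\le p\int_0^\infty |f(x,y)|^{p-1}\,|\partial_y f(x,y)|\,dy$, integrating over $x\in\Ar^{d-1}$ and applying H\"older with exponents $p/(p-1)$ and $p$ gives
$$\|Rf\|_{L^p(\partial M^+)}^p\le p\,\|f\|_{L^p(M^+)}^{p-1}\,\|\partial_y f\|_{L^p(M^+)}\le p\,\|f\|_{L^p_1(M^+)}^p,$$
and density of smooth functions extends $R$ to all of $L^p_1(M^+)$. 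Passing to a Banach-space-valued target changes nothing: the $p=2$ argument uses only the Hilbert-space Plancherel theorem and Cauchy--Schwarz, while the $L^p_1$ argument uses only the a.e.\ pointwise inequality $|(|f|^p)'|\le p|f|^{p-1}|f'|$ for the norm $|f|$ together with the scalar H\"older inequality. I expect the main obstacle to be organizational rather than deep: carefully tracking the constant $C_k$ and confirming that it diverges exactly at $k=1/2$, and verifying boundedness of the cutoff multiplications on the fractional space $L^2_{k-1/2}(\partial M^+)$ so that the localization step is legitimate; the surjectivity claim then costs only the explicit operator $S$ above.
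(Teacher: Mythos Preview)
Your argument is correct and is essentially the standard textbook proof of the trace theorem. Note, however, that the paper does not actually prove this statement: it simply records the theorem with a reference to \cite{Taylor1}. So there is no in-paper proof to compare against; your proposal supplies exactly the kind of self-contained argument one finds in that reference, namely the Fourier computation on the half-space for the $L^2_k$ case (with the $\langle\xi\rangle^{1-2k}$ weight arising from the one-dimensional integral $\int(1+s^2)^{-k}\,ds$, finite precisely for $k>1/2$), the explicit Poisson-type right inverse for surjectivity, and the fundamental theorem of calculus plus H\"older for $L^p_1\to L^p$. One minor remark: in the Banach-valued extension you invoke ``Hilbert-space Plancherel'' for the target, which covers $B=L^2(\Sigma)$ but not the general $B=L^p(\Sigma)$ targets used later in the paper; for those one would instead appeal to the identification $L^p_k(M^+;L^p(\Sigma))=L^p(\Sigma;L^p_k(M^+))$ and apply the scalar trace theorem fiberwise, as the paper does elsewhere.
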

\subsection{Some Nonlinear Estimates}
\label{Sobolev}
In dealing with nonlinear estimates, it will be convenient for us to introduce an alternative notation for Sobolev spaces.  To this end, we will often write $L^{1/q}(M)_k$ instead of $L^p(M)_k$ where $pq=1$.  Note the embedding 
\begin{equation}
L^{1/q}\rightarrow L^{1/r}
\end{equation} 
for $q\leq r$.  The multiplication lemma for Sobolev spaces may now be expressed as 
\begin{equation}
L^{1/q_1} \cdot L^{1/q_2}\rightarrow L^{1/(q_1+q_2)}
\end{equation}
as long as $q_1+q_2 \leq 1$, while the basic embedding theorem $\ref{thm1}$ is now expressed as
\begin{equation}
L^{1/q}_1\rightarrow L^{1/(q-d^{-1})}
\end{equation}
for $q-1/d>0$.  \\\\ We turn to some specialized results that we will need in the sequel.  Let $d=4$ and fix $1/4<p_0<1/2$.  We have the embeddings 
\begin{equation}
\label{eq13}
L^{1/p_0}_1\cdot L^{1/p_0}\rightarrow L^{1/(p_0-1/4)} \cdot L^{1/p_0} \rightarrow L^{1/(2p_0-1/4)} 
\end{equation}
\begin{equation}
\label{eqemb1}
L^{1/p_0}_1\cdot L^{1/p_0}_1\rightarrow L^{1/(p_0-1/4)}\cdot L^{1/(p_0-1/4)}\rightarrow L^{1/(2p_0-1/2)} 
\end{equation}
\begin{equation}
L_{2}^{1/(2p_0-1/4)}\rightarrow L_{1}^{1/(2p_0-1/2)}
\end{equation}
In addition, we claim that
\begin{equation}
L^{1/p_0}_1 \cdot L^{1/(2p_0-1/4)}_2\rightarrow L^{1/(2p_0-1/4)}_1
{\label{eqemb2}}
\end{equation}
Indeed, by applying derivatives, we obtain $$L^{1/p_0}\cdot L^{1/(2p_0-1/4)}_2\rightarrow L^{1/(3p_0-3/4)}$$ and $$L^{1/p_0}_1\cdot L^{1/(2p_0-1/4)}_1\rightarrow L^{1/(3p_0-3/4)}$$  Finally, observe that $2p_0-1/4> 3p_0-3/4$ by the fact that $1/2>p_0$. Therefore, we obtain the embedding $$L^{1/(3p_0-3/4)}\rightarrow L^{1/(2p_0-1/4)}$$ and thus $(\ref{eqemb2})$ as desired.
\subsection{Sobolev Spaces for Banach Valued Functions}
\label{SobBan}
Let $B$ be a separable Banach space and let $M$ be a closed Riemannian manifold of dimension $d$. In practice, we will always assume that $B=L^q_l(\Sigma) \times \Ar^N$ for some compact manifold $\Sigma$.   Much of the Sobolev space theory discussed in the previous sections directly generalizes to the case of $$f:M\rightarrow B$$  Below, we explain the bare minimum we will use in this work.  Since many of the proofs are identical to their finite dimensional analogues, we will present a very condensed account.  \\\\
Let $p >1$ and $k$ a nonnegative integer.   On smooth maps $f:M\rightarrow B$, we define the $L^p_k$-norm by $$\sum_{i=0}^k(\int_M |\nabla^i f|^p)^{1/p}$$  Let $L^p_k(M; B)$ denote the completion of the space of smooth functions with respect to the $L^p_k$-norm.  We have the following basic approximation lemma:
\begin{lemma}
Let $\Sigma$ be a compact Riemannian manifold and $B=L^q_l(\Sigma)$. The space $C^\infty(M\times \Sigma)$ is dense in $L^p_k(M;B)$.  
\end{lemma}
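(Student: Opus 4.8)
The plan is to reduce to approximating a single smooth map $f\colon M\to B$ and then regularize it in the $\Sigma$-direction. First I would record that $C^\infty(M\times\Sigma)$ genuinely sits inside $L^p_k(M;B)$: if $\phi\in C^\infty(M\times\Sigma)$ then, since $M$ and $\Sigma$ are compact, $x\mapsto \phi(x,\cdot)$ and all of its $M$-covariant derivatives take values in $C^\infty(\Sigma)\subset L^q_l(\Sigma)$ with $B$-norms bounded uniformly in $x$, so $\phi$ defines an element of $L^p_k(M;B)$ and is moreover a smooth $B$-valued map. By the very definition of $L^p_k(M;B)$ as a completion of smooth $B$-valued maps, it then suffices to show that an arbitrary smooth $f\colon M\to B$ can be approximated in the $L^p_k(M;B)$-norm by elements of $C^\infty(M\times\Sigma)$.

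To do this, fix a smoothing family on $\Sigma$: let $S_\ep=e^{-\ep\Delta_\Sigma}$ be the heat semigroup (or, just as well, convolution against a mollifier using a finite atlas and a partition of unity on $\Sigma$). The operators $S_\ep$ are bounded on $L^q_l(\Sigma)$ uniformly in $\ep\in(0,1]$, they map $L^q_l(\Sigma)$ into $C^\infty(\Sigma)$, and $S_\ep g\to g$ in $L^q_l(\Sigma)$ as $\ep\to 0$ for every fixed $g$. Set $f_\ep=S_\ep\circ f\colon M\to C^\infty(\Sigma)\subset B$. Since $S_\ep$ is a fixed bounded linear operator on $B$, $f_\ep$ is again a smooth $B$-valued map, and the $M$-covariant derivatives commute with it, so $\nabla^i f_\ep=S_\ep(\nabla^i f)$ for all $i$.

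Next I would verify that $f_\ep$ is in fact (represented by) a function in $C^\infty(M\times\Sigma)$. Writing $S_\ep$ via its smooth Schwartz kernel $H_\ep(\sigma,\sigma')$ on $\Sigma\times\Sigma$, one has $f_\ep(x)(\sigma)=\int_\Sigma H_\ep(\sigma,\sigma')\,f(x)(\sigma')\,d\mathrm{vol}(\sigma')$, which is the pairing of $f(x)\in L^q_l(\Sigma)$ against the smooth function $H_\ep(\sigma,\cdot)$ and, after differentiating in $\sigma$, against $\partial_\sigma^\beta H_\ep(\sigma,\cdot)$. Because $f\colon M\to L^q_l(\Sigma)$ is smooth, pairing it against a $\sigma$-family of fixed dual elements that depends smoothly on $\sigma$ produces a function that is jointly smooth in $(x,\sigma)$ with all derivatives locally bounded; this is the exponential-law statement $C^\infty(M,C^\infty(\Sigma))=C^\infty(M\times\Sigma)$, and it is the one genuinely technical point, although for the heat (or convolution) kernel it follows by straightforward differentiation under the integral sign together with the continuity of $x\mapsto f(x)$.

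Finally, the convergence: for each $i\le k$ and each $x\in M$, $\nabla^i f_\ep(x)=S_\ep(\nabla^i f(x))\to\nabla^i f(x)$ in $B$ as $\ep\to 0$; moreover $|\nabla^i f_\ep(x)|_B\le C\,|\nabla^i f(x)|_B$ uniformly in $\ep$ by the uniform bound on $\|S_\ep\|$, and $x\mapsto|\nabla^i f(x)|_B^p$ is integrable over the compact $M$ because $f$ is smooth. The dominated convergence theorem then gives $\int_M|\nabla^i f_\ep-\nabla^i f|_B^p\to 0$ for $i=0,\dots,k$, hence $f_\ep\to f$ in $L^p_k(M;B)$, which completes the argument. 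The principal obstacle, as indicated, is the passage from $M$-smoothness of a $C^\infty(\Sigma)$-valued map to joint smoothness on the product; everything else is a routine mollification estimate.
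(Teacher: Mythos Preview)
Your argument is correct, but the paper proceeds differently. The paper first uses a partition of unity to reduce to $M=T^d$, then expands a given smooth $f\colon T^d\to B$ in a Fourier series $f=\sum_\lambda c_\lambda e_\lambda$ with $c_\lambda\in B$, truncates to a finite sum to approximate in $C^k$, and finally replaces each of the finitely many coefficients $c_\lambda\in L^q_l(\Sigma)$ by a nearby smooth function on $\Sigma$. In other words, the paper discretizes on the $M$-side first and then smooths finitely many $\Sigma$-pieces, whereas you smooth the whole map in the $\Sigma$-direction at once via the heat semigroup and then invoke the exponential law $C^\infty(M,C^\infty(\Sigma))=C^\infty(M\times\Sigma)$. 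Your route avoids the torus reduction and the Fourier argument, at the cost of importing the smoothing-kernel machinery and the joint-smoothness statement you flag; the paper's route is more elementary in its ingredients (only Fourier truncation and ordinary density of $C^\infty(\Sigma)$ in $L^q_l(\Sigma)$) but requires the localization step. Both arrive at the same conclusion with comparable effort.
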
  
\label{lem12}

\begin{proof}
 Using a partition of unity, it suffices to prove the result when $M=T^d$, where $T^d$ is a $d$-dimensional torus with coordinates $x_i$.  Given $f\in L^p_k(T^d; B)$, we may assume that $f\in C^\infty(T^d; B)$ since such functions are dense in $L^p_k(T^d; B)$.  The Fourier inversion theorem (valid for Banach valued maps) implies that $$f=\sum_\lambda c_\lambda e_\lambda$$ where $c_\lambda \in B$ and $$e_\lambda=e^{ix_1\lambda_1+\dots ix_d\lambda_d}$$ is an eigenfunction of $\Delta_{T^d}=-\sum_{i=1}^d\partial_{x_i}^2$.   We may approximate $f$ in the $C^k$-norm by a finite sum $$\sum_\lambda^{|\lambda|<N} c_\lambda e_\lambda$$  Finally, each of the finitely many coefficients $c_\lambda \in L^q_l(\Sigma)$  may be approximated with respect to the $L^q_l$-norm on $\Sigma$ by $c'_\lambda \in C^\infty(\Sigma)$.   Therefore, $f$ is  arbitrarily close to an element in $C^\infty(T^d\times \Sigma)$.   
\end{proof}

\begin{lemma}
\label{lem1}
We have $L^p(M\times \Sigma)=L^p(M;L^p(\Sigma))$ and $L^p_k(M; L^p(\Sigma))=L^p(\Sigma;L^p_k(M))$.

\end{lemma}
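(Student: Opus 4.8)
The plan is to observe that, by the preceding density lemma, the single space $C^\infty(M\times\Sigma)$ sits as a dense subspace inside each of the spaces $L^p(M\times\Sigma)$, $L^p(M;L^p(\Sigma))$, $L^p_k(M;L^p(\Sigma))$ and $L^p(\Sigma;L^p_k(M))$, and then to check that on this common dense subspace the relevant norms agree (for the second identity: are equivalent, and coincide after passing to the standard $\ell^p$-type equivalent norm). Once this is done, both identities follow, since each side is by definition the completion of $C^\infty(M\times\Sigma)$ for the norm in question, and the inclusions are compatible with the norms.

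For the first identity, apply the density lemma with base manifold $M$, order $k=0$, and $B=L^p(\Sigma)$: it gives that $C^\infty(M\times\Sigma)$ is dense in $L^p(M;L^p(\Sigma))$, and it is of course also dense in $L^p(M\times\Sigma)$. For $f\in C^\infty(M\times\Sigma)$, regarded as the map $x\mapsto f(x,\cdot)\in L^p(\Sigma)$ — which is continuous, hence Bochner measurable since $L^p(\Sigma)$ is separable — Tonelli's theorem gives
$$\|f\|_{L^p(M;L^p(\Sigma))}^p=\int_M\|f(x,\cdot)\|_{L^p(\Sigma)}^p\,dx=\int_M\int_\Sigma|f(x,y)|^p\,dy\,dx=\|f\|_{L^p(M\times\Sigma)}^p .$$
Hence the two completions coincide.

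For the second identity, the point to isolate is that for smooth $f$ on $M\times\Sigma$, equipped with the product Riemannian structure, the $M$-covariant derivative of the $L^p(\Sigma)$-valued map $x\mapsto f(x,\cdot)$ is, pointwise in $y\in\Sigma$, the $M$-covariant derivative of $f(\cdot,y)$; that is, $y\mapsto(\nabla_M^i f)(x,y)$ represents $\nabla^i\big(x\mapsto f(x,\cdot)\big)$ at $x$. This is immediate from the definitions for smooth $f$ because for a product metric the $M$-part of the Levi-Civita connection acts fiberwise. Granting this, and using on all $L^p_k$-spaces the equivalent norm $\big(\sum_{i=0}^k\|\nabla^i(\cdot)\|_{L^p}^p\big)^{1/p}$, Tonelli's theorem gives, for $f\in C^\infty(M\times\Sigma)$,
$$\|f\|_{L^p_k(M;L^p(\Sigma))}^p=\sum_{i=0}^k\int_M\int_\Sigma|\nabla_M^i f|^p\,dy\,dx=\int_\Sigma\sum_{i=0}^k\int_M|\nabla_M^i f|^p\,dx\,dy=\int_\Sigma\|f(\cdot,y)\|_{L^p_k(M)}^p\,dy=\|f\|_{L^p(\Sigma;L^p_k(M))}^p .$$
Applying the density lemma twice — once with base $M$ and $B=L^p(\Sigma)$, once with base $\Sigma$ and $B=L^p_k(M)$ — shows $C^\infty(M\times\Sigma)$ is dense in both $L^p_k(M;L^p(\Sigma))$ and $L^p(\Sigma;L^p_k(M))$, so these completions coincide as well.

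I expect no genuine obstacle here: the content is exactly Tonelli's theorem for Banach-valued maps, and the preceding density lemma has already arranged that one dense subspace lies inside all the spaces involved. The only two points deserving a line of care are the separability/Bochner-measurability remark needed to write the Bochner norm of a smooth map as an iterated integral, and the identification of the $M$-covariant derivative of $x\mapsto f(x,\cdot)$ with the $\Sigma$-indexed family of $M$-covariant derivatives of $f$ — both of which are transparent for smooth $f$ and then transported to the completions by density.
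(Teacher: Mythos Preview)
Your proposal is correct and follows essentially the same approach as the paper: show that $C^\infty(M\times\Sigma)$ is dense in all the spaces (via the preceding lemma) and that the norms agree on this common dense subspace, so the completions coincide. The paper's proof is simply a terse two-line version of what you wrote; your additional care with Tonelli's theorem and the identification of the $M$-covariant derivative is exactly the content behind the paper's phrase ``the corresponding norms agree.''
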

\begin{proof}

The corresponding  norms agree on  $C^\infty(M\times \Sigma)$.  The previous lemma implies that  $C^\infty(M\times \Sigma)$ is dense in all the spaces considered.  Therefore, we get the desired conclusion by taking completions.
\end{proof}
\noindent \textbf{Remark.} As an application of lemma $\ref{lem1}$, note that the equality $$L^p_k(M;L^p(\Sigma))=L^p(\Sigma; L^p_k(M))$$ allows us to define $L^p_k(M;L^p(\Sigma))$ for negative $k$ by taking the right hand side as the definition of $L^p_k(M; L^p(\Sigma))$.  
\begin{lemma}
\label{lem3}
We have $L^p_k(M\times \Sigma)=L^p_k(M; L^p(\Sigma))\cap L^p(M; L^p_k(\Sigma))$.   
\end{lemma}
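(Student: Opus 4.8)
The plan is to prove both set-theoretic equalities of Banach spaces by checking that the norms in question agree on the dense subspace $C^\infty(M\times\Sigma)$ and then passing to completions, exactly as in the proof of Lemma \ref{lem1}, but now one direction of each equality is an inequality in only one sense so a small argument is needed. First I would observe that $L^p_k(M\times\Sigma)\hookrightarrow L^p_k(M;L^p(\Sigma))$ and $L^p_k(M\times\Sigma)\hookrightarrow L^p(M;L^p_k(\Sigma))$ are both continuous: given $f\in C^\infty(M\times\Sigma)$, the full $L^p_k$ norm on the product controls the norm obtained by taking only $M$-derivatives (with $\Sigma$ as the fiber of the Banach target) and, symmetrically by Lemma \ref{lem1} applied with the roles of $M$ and $\Sigma$ reversed, it controls the norm obtained by taking only $\Sigma$-derivatives. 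Hence $L^p_k(M\times\Sigma)$ embeds continuously into the intersection $L^p_k(M;L^p(\Sigma))\cap L^p(M;L^p_k(\Sigma))$, which I would topologize by the sum of the two norms.

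For the reverse inclusion, the key point is that the norm
$$
\|f\|_{L^p_k(M;L^p(\Sigma))}+\|f\|_{L^p(M;L^p_k(\Sigma))}
$$
dominates the full $L^p_k(M\times\Sigma)$ norm on $C^\infty(M\times\Sigma)$. This reduces to controlling a mixed covariant derivative $\nabla^a_M\nabla^b_\Sigma f$ with $a+b\le k$ in $L^p(M\times\Sigma)=L^p(M;L^p(\Sigma))$ (using Lemma \ref{lem1}), by the two pure-derivative norms. I would handle this by interpolation: on the torus model (via a partition of unity, as in Lemma \ref{lem12}), write $f=\sum_{\lambda,\mu}c_{\lambda,\mu}e_\lambda(x)e_\mu(y)$ and note that for $p=2$ the claim is the elementary inequality $(1+|\lambda|^2+|\mu|^2)^k\lesssim (1+|\lambda|^2)^k+(1+|\mu|^2)^k$ on Fourier side; for general $p$ one instead uses the complex interpolation identity $[L^p(M;L^p(\Sigma)),\,L^p_k(M;L^p(\Sigma))]_{\theta}=L^p_{\theta k}(M;L^p(\Sigma))$ together with the analogous statement in the $\Sigma$ variable, so that membership in both endpoint-type spaces forces membership in every mixed space with $a/k+b/k\le 1$. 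Either way, the mixed derivative is bounded by the two pure ones.

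Finally I would invoke Lemma \ref{lem12} to conclude that $C^\infty(M\times\Sigma)$ is dense in each of the three spaces $L^p_k(M\times\Sigma)$, $L^p_k(M;L^p(\Sigma))$, and $L^p(M;L^p_k(\Sigma))$, hence in the intersection; since the identity map on this common dense subspace is bi-Lipschitz for the two topologies by the previous two paragraphs, the completions agree, giving $L^p_k(M\times\Sigma)=L^p_k(M;L^p(\Sigma))\cap L^p(M;L^p_k(\Sigma))$ as claimed. The main obstacle is the interpolation step controlling the genuinely mixed derivatives $\nabla^a_M\nabla^b_\Sigma f$ with both $a,b>0$: the pure norms only see $a=0$ or $b=0$, and for $p\ne 2$ one cannot simply read this off from Fourier coefficients, so one must set up the complex interpolation of the Banach-valued Sobolev scale carefully (which is where the separability and the concrete form $B=L^q_l(\Sigma)$ are used).
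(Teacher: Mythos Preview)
Your argument has a genuine gap in the final step. You claim that density of $C^\infty(M\times\Sigma)$ in each of $L^p_k(M;L^p(\Sigma))$ and $L^p(M;L^p_k(\Sigma))$ separately implies density in their intersection (equipped with the sum norm). This does not follow: given $f$ in the intersection, Lemma~\ref{lem12} provides a sequence $g_n\to f$ in the first norm and a possibly different sequence $h_n\to f$ in the second, but you need a \emph{single} sequence converging in both simultaneously. The paper confronts this directly: it mollifies $f$ in the $M$-variable (on the torus model) to obtain $f_\epsilon\in C^\infty(M;L^p(\Sigma))\cap C^\infty(M;L^p_k(\Sigma))$ approximating $f$ in both norms at once, and only then truncates the Fourier series and smooths the coefficients. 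Without this step, your bi-Lipschitz identity on $C^\infty$ only identifies $L^p_k(M\times\Sigma)$ with the \emph{closure} of $C^\infty$ in the intersection topology, not with the intersection itself.

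Your route to the reverse inequality is also quite different from the paper's. You propose complex interpolation of the Banach-valued Sobolev scale to control the mixed derivatives $\nabla_M^a\nabla_\Sigma^b f$; the paper avoids interpolation entirely. It observes that for $f$ in the intersection one has $\Delta_{M\times\Sigma}f=\Delta_M f+\Delta_\Sigma f\in L^p(M\times\Sigma)$ (each summand lies in $L^p(M\times\Sigma)$ by Lemma~\ref{lem1}), and then invokes interior elliptic regularity for the order-$k$ elliptic operator $\Delta_M^{k/2}+\Delta_\Sigma^{k/2}$ (with a Dirac-type variant for odd $k$) to conclude $f\in L^p_k(M\times\Sigma)$. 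This uses only Theorem~\ref{thmellreg}, which is already in hand, whereas your interpolation argument for $p\neq 2$---as you yourself flag---requires building the complex interpolation theory of vector-valued Sobolev spaces, none of which is available in the paper.
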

\begin{proof}
First of all, we note that $$L^p_k(M\times \Sigma)\subset L^p_k(M; L^p(\Sigma))\cap L^p(M; L^p_k(\Sigma))$$ since $C^\infty(M\times \Sigma)$ are dense in all the spaces considered and the norm on $L^p_k(M\times \Sigma)$ controls the norm on $L^p_k(M; L^p(\Sigma))$ and $ L^p(M; L^p_k(\Sigma))$.   To establish the claim we first show that $f\in L^p_k(M; L^p(\Sigma))\cap L^p(M; L^p_k(\Sigma))$ can be simultaneously approximated by a single smooth function.  For this, we assume that $M=T^d$ as in the previous proof.   Let $g_\ep:T^d\rightarrow \Ar$ be a smooth mollification of the Dirac delta function (see \cite{Taylor1} for the details).  Let $$f_\ep=f*g_\ep$$ be the convolution of $f$ with $g_\ep$.  By taking $\ep$ sufficiently small, we may replace $f$ by $f_\ep \in C^\infty(M; L^p(\Sigma))\cap C^\infty(M; L^p_k(\Sigma))$ which approximates $f$ in both norms simultaneously.  Now, we develop $f_\ep$ in a Fourier series $$f_\ep=\sum_\lambda c_\lambda e_\lambda$$ As above, we approximate $f_\ep$ by a finite truncated series $f_\ep=\sum_\lambda^{|\lambda|\leq N} c_\lambda' e_\lambda$ that approximates $f$ arbitrary closely in both the $ L^p_k(M; L^p(\Sigma))$-norm as well as the $L^p(M; L^p_k(\Sigma))$-norm.  \\\\
To prove the claim, we argue as follows.  For $k=1$, the norms on the two sides agree.  For $k=2$, let $f\in  L^p_k(M; L^p(\Sigma))\cap L^p(M; L^p_k(\Sigma))$.  We apply the Laplacian $$\Delta_{M \times \Sigma}f=\Delta_M f+\Delta_\Sigma f \in L^p(M\times \Sigma)$$ By elliptic regularity, (see \cite{Taylor3} and the following section) we have $f\in L^p_2(M\times \Sigma)$.  To prove the general case we need to construct an appropriate analogue of $\Delta$.  For even $k$, we may take  $\Delta_M^{k/2}+\Delta_\Sigma^{k/2}$.  This is an elliptic differential operator of order $k/2$ and thus we may conclude that $f\in L^p_k(M\times \Sigma)$.  For odd $k$, one has a variant of this argument using Dirac operators.    
\end{proof}

Let $M^+$ be compact Riemannian manifold with boundary $\partial M^+$.  We will assume for simplicity that $M^+$ is isometric to a product $[0,1)\times \partial M^+$ near the boundary.    We would like to generalize the previous results to this case.

\begin{lemma}
  $L^p_k(M^+\times \Sigma)=L^p_k(\Sigma; L^p(M^+))\cap L^p(\Sigma; L^p_k(M^+))$.
\end{lemma}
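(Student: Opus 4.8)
The plan is to mimic the proof of Lemma \ref{lem3} (the closed case $L^p_k(M\times\Sigma)=L^p_k(M;L^p(\Sigma))\cap L^p(M;L^p_k(\Sigma))$), but carried out with $M$ replaced by the manifold-with-boundary $M^+$ and with the roles of the two factors swapped so that $\Sigma$ plays the role of the ``spectral'' factor. First I would establish the easy inclusion $L^p_k(M^+\times\Sigma)\subset L^p_k(\Sigma;L^p(M^+))\cap L^p(\Sigma;L^p_k(M^+))$: smooth functions are dense in all three spaces (by the density lemma together with the extension Lemma \ref{lem4} to pass from $M^+$ to its double $M$), and the $L^p_k(M^+\times\Sigma)$-norm visibly dominates each of the two norms on the right, so the inclusion follows by taking completions. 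For the reverse inclusion I would reduce to a simultaneous smooth approximation statement exactly as in Lemma \ref{lem3}: given $f$ in the intersection on the right, convolve in the $\Sigma$-directions (where there is no boundary) with a mollifier $g_\ep$ to make $f_\ep=f*_\Sigma g_\ep$ smooth along $\Sigma$ and close to $f$ in both norms, then expand $f_\ep$ in a Fourier series in the $\Sigma$-variable (using that $\Sigma$ is closed) and truncate to finitely many modes, obtaining a function that is smooth along $\Sigma$ with coefficients in $L^p_k(M^+)$ and still close to $f$ in both norms.

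Once the approximation is in place, the content is an elliptic regularity argument in the $\Sigma$-direction. For $k=1$ the two norms literally agree on smooth functions. For even $k$ I would apply the operator $\Delta_{M^+}^{k/2}+\Delta_\Sigma^{k/2}$: on the double $M$ this is elliptic of order $k$, and the hypothesis says $(\Delta_{M^+}^{k/2}+\Delta_\Sigma^{k/2})f\in L^p(M^+\times\Sigma)$ while $f\in L^p$ to start; interior elliptic regularity in the $\Sigma$-directions then upgrades the $\Sigma$-regularity to $L^p_k$, and combined with the hypothesis $f\in L^p_k(\Sigma;L^p(M^+))$ one recovers $f\in L^p_k(M^+\times\Sigma)$ by Lemma \ref{lem3} applied with $M^+$'s double, or more directly by the characterization already proved in that lemma on $M\times\Sigma$ followed by restriction. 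For odd $k$ one runs the same argument with a Dirac-type operator on $\Sigma$ in place of a power of the Laplacian, as in Lemma \ref{lem3}.

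The one genuine subtlety — and the step I expect to be the main obstacle — is that $M^+$ has a boundary, so one cannot naively invoke the closed-manifold elliptic estimates or the spectral/Fourier description in the $M^+$-variable. The fix is to be careful that \emph{all} of the hard analysis (mollification, Fourier truncation, elliptic regularity) is performed only in the $\Sigma$-directions, where everything is as on a closed manifold, while the $M^+$-direction is handled purely by the extension operator $E:L^p_k(M^+\times\Sigma)\to L^p_k(M\times\Sigma)$ of Lemma \ref{lem4} (applied with parameters in $\Sigma$, which is legitimate by Lemma \ref{lem1} type identities) and by the definition $L^p_k(M^+\times\Sigma)=L^p_k(M\times\Sigma)/L^p_k(M\times\Sigma)_{M^-\times\Sigma}$. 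Concretely, I would extend $f$ to $\tilde f$ on $M\times\Sigma$, note that $\tilde f$ lies in $L^p_k(M;L^p(\Sigma))\cap L^p(M;L^p_k(\Sigma))$ — here one must check the extension is bounded simultaneously on both mixed spaces, which follows because $E$ acts only in the normal $M^+$-variable and hence commutes with the $\Sigma$-operations — apply Lemma \ref{lem3} to get $\tilde f\in L^p_k(M\times\Sigma)$, and restrict back to conclude $f\in L^p_k(M^+\times\Sigma)$. This reduces the lemma to the already-proved closed case plus the boundedness of $E$ on mixed Sobolev spaces, which is routine given the explicit reflection formula in Lemma \ref{lem4}.
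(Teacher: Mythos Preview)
Your proposal is correct, and the concrete argument you land on in the final paragraph --- extend $f$ via $E$ to the double $M\times\Sigma$, check that $E(f)\in L^p_k(\Sigma;L^p(M))\cap L^p(\Sigma;L^p_k(M))$, invoke Lemma~\ref{lem3}, and restrict --- is exactly the paper's proof, essentially verbatim. The only point worth noting is that the paper's argument is three lines: it goes straight to the extension-and-restrict method and never considers reproving the mollification/Fourier/elliptic-regularity steps of Lemma~\ref{lem3} on $M^+\times\Sigma$ directly. Your first two paragraphs, while not wrong in spirit, are an unnecessary detour; you yourself identify the boundary as the obstacle and then abandon that route for the extension argument, so you could have started there.
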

\begin{proof}
Given $f\in L^p_k(\Sigma; L^p(M^+))\cap L^p(\Sigma; L^p_k(M^+))$, we use the extension lemma $\ref{lem4}$ to construct 
$$E(f)\in L^p_k(\Sigma; L^p(M))\cap L^p(\Sigma; L^p_k(M))$$  By lemma $\ref{lem3}$, we have $E(f)\in L^p_k(M\times \Sigma)$.  Restricting to $M^+\times \Sigma$ gives the desired result.

\end{proof}

\section{Linear Elliptic Estimates}

\subsection{Elliptic Operators on Closed Manifolds}
\label{SecReg}
We now briefly review regularity theory for elliptic differential operators on closed manifolds.  The proofs of all the results can be found in \cite{Taylor3}.  \\\\
Let $M$ be a closed Riemannian manifold and let $$D:\Gamma(E)\rightarrow \Gamma(F)$$ be a differential operator acting on sections of Hermitian vector bundles over $M$.  We assume that $D$ has smooth coefficients.  Let $L^p_k(\Gamma(E))$ stand for the completion of the space of smooth sections of $E$ with respect to the norm from equation $\ref{eq7}$.  If $D$ has order $m$, it extends to a continuous map $L^p_k(\Gamma(E))\rightarrow L^p_{k-m}(\Gamma(F))$ for all real $k$ and $p>1$. As before, we will often drop $\Gamma(E)$ from the notation.\\\\
 Let us assume that $D$ is elliptic.  By definition, this means that the principal symbol of $D$ is invertible (see \cite{Taylor1}).  We have the following fundamental result:
 \begin{theorem}
 \label{thmellreg}
 For $f\in L^p_k(\Gamma(E))$, we have $$||f||_{L^p_k}\leq C_{p,k}(||Df||_{L^p_{k-m}}+||f||_{L^p_{k-1}})$$
 Furthermore, if $f\in L^p_k$ and $D(f)\in L^{p'}_{k'}$ we have that $f\in L^{p'}_{k'+m}$.   
 \end{theorem}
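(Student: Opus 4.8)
The plan is to follow the classical parametrix method: localize on $M$, freeze coefficients, reduce to a constant-coefficient Fourier multiplier estimate on $\Ar^d$, and then invoke the $L^p$ boundedness of zeroth-order multipliers.

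First I would localize. Fix a finite cover of $M$ by coordinate charts with a subordinate partition of unity $\{\phi_\alpha\}$. Then $\|f\|_{L^p_k}$ is equivalent to $\sum_\alpha \|\phi_\alpha f\|_{L^p_k}$, and $D(\phi_\alpha f)=\phi_\alpha Df+[D,\phi_\alpha]f$, where the commutator has order $m-1$, so $\|[D,\phi_\alpha]f\|_{L^p_{k-m}}\leq C\|f\|_{L^p_{k-1}}$. Thus it suffices to prove the a priori estimate for $u\in C_c^\infty(\Ar^d;\C^N)$. Shrinking the support and writing $D=D(x_0)+(D-D(x_0))$, where $D(x_0)$ is the constant-coefficient operator obtained by freezing the coefficients at the center $x_0$ of a small ball, continuity of the top-order coefficients gives $\|(D-D(x_0))u\|_{L^p_{k-m}}\leq \ep\|u\|_{L^p_k}+C_\ep\|u\|_{L^p_{k-1}}$ on that ball; the $\ep$-term is absorbed into the left side, reducing everything to the constant-coefficient case.

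For a constant-coefficient elliptic $D_0$ with symbol $p_0(\xi)$, ellipticity gives $|p_0(\xi)|\geq c|\xi|^m$ for $|\xi|$ large. Writing $\widehat{u}(\xi)=p_0(\xi)^{-1}\widehat{D_0 u}(\xi)$ for large frequencies and controlling the bounded-frequency part by $\|u\|_{L^p}$, one checks that the multiplier $a(\xi)=(1+|\xi|^2)^{m/2}p_0(\xi)^{-1}\chi(\xi)$, with $\chi$ a cutoff vanishing near the origin, satisfies the Mikhlin--H\"ormander bounds $|\partial^\gamma a(\xi)|\leq C_\gamma(1+|\xi|)^{-|\gamma|}$. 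The Mikhlin--H\"ormander multiplier theorem (equivalently the Calder\'on--Zygmund $L^p$ boundedness of the associated singular integral) then yields $\|u\|_{L^p_k}\leq C(\|D_0 u\|_{L^p_{k-m}}+\|u\|_{L^p})$; reassembling through the localization gives the global a priori estimate. It is convenient to package the same computation as a parametrix $Q$ of order $-m$ with $QD=I+S$, $S$ smoothing, from which $\|f\|_{L^p_k}\leq \|QDf\|_{L^p_k}+\|Sf\|_{L^p_k}\leq C\|Df\|_{L^p_{k-m}}+C\|f\|_{L^p_{k-1}}$.

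For the regularity statement, suppose $f\in L^p_k$ and $Df\in L^{p'}_{k'}$. Applying the parametrix identity $f=QDf-Sf$: since $Q$ has order $-m$ it maps $L^{p'}_{k'}\to L^{p'}_{k'+m}$, while $Sf$ lies in every Sobolev space because $S$ is smoothing and $f\in L^p_k$; hence $f\in L^{p'}_{k'+m}$. Avoiding the full calculus, one can instead iterate the a priori estimate -- upgrading $f\in L^q_\ell$ to $L^q_{\ell+1}$ whenever $Df\in L^q_{\ell+1-m}$, the step being made rigorous via difference quotients -- and interleave this with the embedding Theorem \ref{thm1} to migrate from exponent $p$ to $p'$. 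The main obstacle throughout is the $L^p$ boundedness, for general $1<p<\infty$, of zeroth-order multipliers: for $p=2$ this is immediate from Plancherel, but for $p\neq 2$ it rests on Calder\'on--Zygmund singular-integral theory. A secondary point, needed for the Banach-valued extension discussed earlier, is that these multiplier bounds survive for operator-valued symbols acting on $L^q(\Sigma)$-valued functions, which holds because $L^q(\Sigma)$ is a UMD space for $1<q<\infty$ but requires the vector-valued form of the multiplier theorem rather than the scalar one.
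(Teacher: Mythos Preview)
The paper does not supply its own proof of this theorem: it is stated as a standard fact with the sentence ``The proofs of all the results can be found in \cite{Taylor3}.'' Your outline is the classical parametrix/freezing-of-coefficients argument and is correct as a sketch; it is precisely the sort of proof one finds in the reference cited.

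One small comment on your final paragraph. You remark that the Banach-valued extension requires the UMD property of $L^q(\Sigma)$ and a vector-valued multiplier theorem. That would indeed work, but it is not the route the paper takes later in Section~\ref{BanahEst}: there the extension to $B=L^p(\Sigma)$ is obtained much more cheaply via the identification $L^p_k(M;L^p(\Sigma))=L^p(\Sigma;L^p_k(M))$ (Lemma~\ref{lem1}), which lets one apply the scalar isomorphism $\Delta+1:L^p_k(M)\to L^p_{k-2}(M)$ fiberwise in the $\Sigma$ variable. So no vector-valued Calder\'on--Zygmund theory is actually invoked in the paper.
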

Here are some examples of elliptic operators that occur in this work:
\begin{itemize}
\item Given a Hermitian connection $\nabla$ on a vector bundle, we may form the connection Laplacian $\nabla^* \circ \nabla$.  This is an elliptic operator of order 2.
\item  Given a Hermitian connection $\nabla$ on a vector bundle $E$, we may form the first order operator $d_\nabla+d_\nabla^*$ acting on $\Lambda^*(E)=\Lambda^*(M)\otimes \Gamma(E)$.  
\item Given a holomorphic vector bundle $E$ on a Riemann surface, we have the operator $$\delbar: \Gamma(E)\rightarrow \Gamma(E\otimes K^{-1})$$ where $K^{-1}$ is the anticanonical bundle.  
\item If $dim(M)=4$, we have the first order elliptic operator (see \cite{DK} for a general discussion) $$d_\nabla^*+d^+_{\nabla}:\Lambda^1(E) \rightarrow \Lambda^0(E)\oplus \Lambda^+(E)$$  Here $\Lambda^+$ is the bundle of self-dual 2-forms.  
\end{itemize}

\subsection{Dirichlet and Neumann Problem}
\label{Dirichlet}
We discuss the regularity theory for the Dirichlet and Neumann boundary value problems with special emphasis on weak solutions.  Our goal is to explain how to reduce the various regularity statements to the interior cases.  There are many alternative treatments of this material (see \cite{Kat1}).    \\\\
Let $M^+$ be compact Riemannian manifold with boundary $\partial M^+$ and double $$M=M^-\cup_{\partial M^+} M^+$$  We will focus on the $L^p_k$-regularity theory for the case $k\leq 2$ as it is less standard. Let us begin by discussing the Dirichlet problem.  As always, we assume $p>1$.     
\begin{definition}
Let $f\in L^p(M^+)$ and $g\in L^1(M^+)$.  We say that $f$ is a \textbf{weak solution} to 
\begin{equation}
\Delta f=g  \text{ and } f_{|\partial M^+}=0
\end{equation}
 if we have 
\begin{equation}
\langle f, \Delta h \rangle_{M^+} =\langle g,  h \rangle_{M^+}
\end{equation} for all smooth $h\in C^\infty(M^+)$ that vanish on $\partial M^+$.  
\end{definition}
Note that if $f\in L^p_2(M^+)$, this condition coincides with the usual definition $$\Delta f=g \text{ and }f_{|\partial M^+}=0$$  This follows directly from Green's formula 

\begin{equation}
\langle \Delta u,v \rangle_{M^+} - \langle u,\Delta v \rangle_{M^+}  = \int_{\partial M^+} u \partial_\nu v-v\partial_\nu u 
\end{equation}  
where $\partial_\nu$ is the outward normal derivative at the boundary.  
Here is the basic regularity result:
\begin{lemma}
\label{lem9}
Let $f\in L^p$ be a weak solution to $\Delta f=g$, $f_{|\partial M^+}=0$.  If $g\in L^p$, then $f\in L^p_2$.  
\end{lemma}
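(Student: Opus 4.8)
The plan is to reduce the Dirichlet regularity statement on $M^+$ to the interior elliptic regularity on the closed double $M$, exploiting the reflection symmetry of the problem. First I would use the hypothesis that $M^+$ is isometric to $\partial M^+ \times [0,1)$ near the boundary, so that the double $M = M^- \cup_{\partial M^+} M^+$ carries a natural isometric involution $\tau$ fixing $\partial M^+$ pointwise and exchanging the two halves. Given the weak solution $f \in L^p(M^+)$ with $\Delta f = g \in L^p(M^+)$ and $f|_{\partial M^+} = 0$, I would form the odd extension $\tilde f$ on $M$: set $\tilde f = f$ on $M^+$ and $\tilde f = -f \circ \tau$ on $M^-$. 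Because $f$ vanishes on the boundary in the trace sense, $\tilde f$ is a well-defined element of $L^p(M)$; likewise the odd extension $\tilde g$ of $g$ lies in $L^p(M)$. The key claim is that $\tilde f$ is a weak solution of $\Delta_M \tilde f = \tilde g$ on the closed manifold $M$, i.e. $\langle \tilde f, \Delta_M \phi\rangle_M = \langle \tilde g, \phi\rangle_M$ for all $\phi \in C^\infty(M)$.

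To prove that claim I would decompose an arbitrary test function $\phi \in C^\infty(M)$ into its $\tau$-even and $\tau$-odd parts, $\phi = \phi_+ + \phi_-$. Pairing $\tilde f$ (which is odd) against the even part $\phi_+$ gives zero on both sides by the symmetry of the measure and of $\Delta_M$ (which commutes with $\tau$), so only the odd part contributes. For the odd part $\phi_-$, one has $\langle \tilde f, \Delta_M \phi_-\rangle_M = 2\langle f, \Delta_{M^+} \phi_-|_{M^+}\rangle_{M^+}$; and since $\phi_-$ is odd and smooth across $\partial M^+$, its restriction to $M^+$ is a smooth function vanishing on $\partial M^+$ (an odd smooth function across a hypersurface vanishes on it). Hence the weak-solution hypothesis for $f$ applies and yields $2\langle f, \Delta_{M^+}\phi_-\rangle_{M^+} = 2\langle g, \phi_-\rangle_{M^+} = \langle \tilde g, \phi_-\rangle_M = \langle \tilde g, \phi\rangle_M$. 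This establishes that $\Delta_M \tilde f = \tilde g$ weakly on $M$.

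Now I would invoke interior elliptic regularity on the closed manifold $M$. Since $\Delta_M$ is elliptic of order $2$ with smooth coefficients, and $\tilde f \in L^p(M) = L^p_0$ with $\Delta_M \tilde f = \tilde g \in L^p = L^p_0$, the second assertion of Theorem~\ref{thmellreg} (bootstrapping from $L^p_0$ with the equation in $L^p_0$) gives $\tilde f \in L^p_2(M)$. One small point: to apply the regularity statement one should first know $\tilde f$ solves the equation distributionally in the standard sense, which follows from the weak formulation just verified. Restricting to $M^+$ and using that the restriction map $L^p_2(M) \to L^p_2(M^+)$ is continuous (indeed this is how $L^p_2(M^+)$ was characterized via the extension lemma), we conclude $f = \tilde f|_{M^+} \in L^p_2(M^+)$, as desired.

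The main obstacle I anticipate is the verification that the odd extension genuinely produces a weak solution on the double — specifically, handling the behavior at the gluing hypersurface $\partial M^+$ rigorously. Since $f$ is only assumed in $L^p$ (not a priori in $L^p_1$ or continuous), one cannot naively integrate by parts across $\partial M^+$; the argument must stay at the level of the weak formulation and use only the given identity $\langle f, \Delta h\rangle_{M^+} = \langle g, h\rangle_{M^+}$ for $h$ vanishing on the boundary, together with the even/odd decomposition of test functions. Checking that an odd smooth function on $M$ restricts to a smooth function on $M^+$ vanishing on $\partial M^+$ (so that it is an admissible test function $h$), and that no boundary contribution is lost in passing between $M$ and $M^+$, is where care is required; the product-neighborhood assumption on $M^+$ is what makes the involution $\tau$ a genuine isometry and keeps $\Delta_M$ equivariant, which is essential for the cancellation of the even part.
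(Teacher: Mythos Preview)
Your proposal is correct and follows essentially the same route as the paper: odd (antisymmetric) extension of $f$ and $g$ to the double $M$, decomposition of test functions into symmetric and antisymmetric parts, observation that the antisymmetric part vanishes on $\partial M^+$ so the weak hypothesis applies, and then interior $L^p$ regularity (Theorem~\ref{thmellreg}) on the closed $M$ followed by restriction. One minor wording issue: you say the odd extension is well defined ``because $f$ vanishes on the boundary in the trace sense,'' but $f\in L^p$ has no trace; the extension is well defined simply because $\partial M^+$ has measure zero --- you in fact acknowledge this later when you stress that the argument must stay at the weak level.
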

\begin{proof}
The basic idea it to extend $f$ and $g$ to the double $M$ and then apply elliptic regularity for the closed manifold $M$.  We extend $f$ to $\tilde {f}\in L^p(M)$ by taking $-f$ on the $M^-$ piece of $M$. Similarly, we extend $g$ to $\tilde{g}\in L^p(M)$ by taking $-g$ on $M^-$.   
We need to show that $$ \langle \tilde{f}, \Delta h \rangle_M = \langle \tilde{g}, h \rangle_M $$ for all smooth $h$ on ${M}$. Given such a test function $h$, decompose $h$ as $$h=h_s+h_a$$ where $h_s$ is symmetric with respect to the reflection across $\partial M^+$ and $h_a$ is antisymmetric.  Since $\tilde{g}$ and $\tilde{f}$ are antisymmetric, we have $$\langle \tilde{g}, h_s \rangle_M =\langle \tilde{f}, h_s \rangle_M =0$$  Thus, we may assume $h=h_a$.  In particular, we may assume that $h$ vanishes on $\partial M$ and  $$\langle \tilde{g}, h \rangle_M =2 \langle \tilde{g}, h \rangle_{M^+}$$ while $$\langle \tilde{f}, \Delta h \rangle_M =2 \langle \tilde{f}, \Delta h \rangle_{M^+}$$  By hypothesis, these two integrals are equal.  Now, we may apply theorem $\ref{thmellreg}$ to deduce that $\tilde{f} \in L^p_2(M)$ and thus, by restriction, $f\in L^p_2(M^+)$.
\end{proof}
We now state the regularity results for higher norms:
\begin{lemma}
\label{lemhigherDir}
Let $k\geq 2$, $p>1$.  Let $f\in L^p$ be a weak solution to $\Delta f=g$, $f_{|\partial M^+}=0$.  If $g\in L^p_k$, then $f\in L^p_{k+2}$. 

\end{lemma}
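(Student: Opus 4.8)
The plan is to strengthen the statement slightly and induct on $k$: I would prove that for every integer $k\ge 0$ and every $p>1$, a weak solution $f\in L^p$ of $\Delta f=g$, $f_{|\partial M^+}=0$ with $g\in L^p_k$ necessarily lies in $L^p_{k+2}$. The base case $k=0$ is exactly Lemma \ref{lem9}, so fix $k\ge 1$ and assume the statement for $k-1$. Since $g\in L^p_k\subset L^p_{k-1}$, the inductive hypothesis already gives $f\in L^p_{k+1}$; note that $k+1\ge 2$, which is what makes the commutator manipulations below legitimate.

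Interior regularity is free: Theorem \ref{thmellreg} applied on any ball compactly contained in $\mathring{M}^+$ turns $\Delta f=g\in L^p_k$ into $f\in L^p_{k+2}$ there, so the entire issue is near $\partial M^+$. Cover a collar of $\partial M^+$ by finitely many product charts with coordinates $(x,y)\in\Ar^{d-1}\times[0,1)$ in which the metric is the product metric; there $\Delta=-\partial_y^2+\Delta_{\partial M^+}$, where $\Delta_{\partial M^+}$ is a second-order operator in the tangential variables $x$ alone. Fix a cutoff $\chi$ supported in one such chart; it is enough to show $w:=\chi f\in L^p_{k+2}$. The function $w$ is again a weak solution of a Dirichlet problem: $\Delta w=\chi g+[\Delta,\chi]f=:g_\chi$, $w_{|\partial M^+}=0$, and because $[\Delta,\chi]$ is first order and $f\in L^p_{k+1}$, we have $g_\chi\in L^p_k$.

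Now differentiate tangentially. For each coordinate field $\partial_{x_j}$ (tangent to $\partial M^+$), set $v:=\partial_{x_j}w$. Then $v_{|\partial M^+}=0$ since $w$ vanishes on $\partial M^+$ and $\partial_{x_j}$ is tangential; moreover $v$ is compactly supported in the chart, and a direct integration by parts against test functions $h$ vanishing on $\partial M^+$ — using that $\partial_{x_j}h$ then also vanishes there and that the tangential integrations by parts produce no boundary terms — exhibits $v$ as a weak solution of $\Delta v=\partial_{x_j}g_\chi+[\Delta,\partial_{x_j}]w$, whose right-hand side lies in $L^p_{k-1}$ (here $g_\chi\in L^p_k$, $w\in L^p_{k+1}$, and $[\Delta,\partial_{x_j}]$ has order $\le 2$ with smooth coefficients). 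By the inductive hypothesis, $v\in L^p_{k+1}$; that is, $\partial_{x_j}w\in L^p_{k+1}$ for each tangential $j$. Feeding this back into the equation, $\partial_y^2 w=\Delta_{\partial M^+}w-g_\chi\in L^p_k$, since $\Delta_{\partial M^+}$ is second order in the $x$-variables and $\partial_{x_j}w\in L^p_{k+1}$. Consequently $\partial_y^{k+2}w=\partial_y^k(\partial_y^2 w)\in L^p$. Finally, any order-$(k+2)$ coordinate derivative of $w$ is either $\partial_y^{k+2}w$ (just handled) or contains at least one $\partial_{x_j}$, hence equals $\partial^\gamma(\partial_{x_j}w)$ for some $|\gamma|=k+1$ and so lies in $L^p$ because $\partial_{x_j}w\in L^p_{k+1}$; together with $w\in L^p_{k+1}$ this yields $w\in L^p_{k+2}$. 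Combining all the charts with the interior balls completes the induction.

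I expect the delicate point to be verifying that the tangentially differentiated function $v=\partial_{x_j}(\chi f)$ is a genuine weak solution of a Dirichlet problem of the kind the inductive hypothesis addresses. This is what forces the localization by $\chi$: it guarantees that every integration by parts in the tangential directions is boundary-term-free, and it keeps the commutator $[\Delta,\partial_{x_j}]$ — which, since $\partial_{x_j}$ is a coordinate field and the metric is a product near $\partial M^+$, only involves $\Delta_{\partial M^+}$ and has order $\le 2$ — matched against $w\in L^p_{k+1}$ with $k+1\ge 2$, so that no regularity is lost illegitimately. Everything past that point is the routine combinatorics of splitting $k+2$ derivatives into a tangential part (absorbed by $\partial_{x_j}w\in L^p_{k+1}$) and the pure normal part (absorbed by the equation).
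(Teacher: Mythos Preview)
Your proof is correct and is precisely the standard inductive bootstrap that the paper has in mind: the paper's own proof is a one-line reference (``follows by induction from the previous lemma as explained for instance in \cite{Taylor1}''), and the tangential-differentiation-then-recover-the-normal-derivative argument you spell out is exactly the mechanism that reference contains. You have simply written out in detail what the paper leaves to Taylor.
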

\begin{proof}
This follows by induction from the previous lemma as explained for instance in \cite{Taylor1}.
\end{proof}
We turn now to the corresponding Neumann boundary value problem. Let $\partial_\nu f$ be outward normal derivative on $\partial M^+$.  
\begin{definition}
Let $f\in L^p(M^+)$ and $g\in L^1(M^+)$.  We say that $f$ is a weak solution to 
\begin{equation}
\Delta f=g  \text{ and } \partial_\nu f=0
\end{equation}
 if we have 
\begin{equation}
\langle f, \Delta h \rangle =\langle g,  h \rangle
\end{equation} for all smooth $h\in C^\infty(M^+)$ with $\partial_\nu h=0$.  
\end{definition}  
Here is the corresponding regularity result:
\begin{lemma}
\label{lemNeum1}
Let $f\in L^p$ be a weak solution to $\Delta f=g$, $\partial_\nu f=0$.  If $g\in L^p$, then $f\in L^p_2$.  
\end{lemma}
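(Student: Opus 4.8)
The plan is to mimic the proof of Lemma~\ref{lem9} for the Dirichlet problem, replacing the antisymmetric reflection used there by the symmetric reflection, which is the one adapted to Neumann conditions. Using the product structure near $\partial M^+$, I would write points near the boundary as $(x,y)\in\partial M^+\times[0,1)$ and extend $f$ to $\tilde f\in L^p(M)$ by setting $\tilde f=f$ on $M^+$ and letting $\tilde f$ on $M^-$ be the image of $f$ under the reflection $y\mapsto -y$; likewise extend $g$ to $\tilde g\in L^p(M)$ symmetrically. The reflection $M^+\leftrightarrow M^-$ is an isometric involution of the closed manifold $M$ fixing $\partial M^+$, so $\tilde f$ and $\tilde g$ are well-defined symmetric $L^p$ functions on $M$. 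The goal is then to show that $\tilde f$ is a distributional solution of $\Delta\tilde f=\tilde g$ on $M$: granting this, $\tilde f\in L^p=L^p_0$ together with $\Delta\tilde f=\tilde g\in L^p$ forces $\tilde f\in L^p_2(M)$ by Theorem~\ref{thmellreg}, and restricting to $M^+$ gives $f\in L^p_2(M^+)$.

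To establish the weak identity $\langle\tilde f,\Delta h\rangle_M=\langle\tilde g,h\rangle_M$ for all $h\in C^\infty(M)$, I would decompose an arbitrary test function as $h=h_s+h_a$ into its symmetric and antisymmetric parts under the reflection. Since $\tilde f$ and $\tilde g$ are symmetric, their pairings with $h_a$ and with $\Delta h_a$ (again antisymmetric) vanish, so one may assume $h=h_s$. The restriction $h_s|_{M^+}$ is smooth on $M^+$ and has vanishing normal derivative on $\partial M^+$, because a function even in the normal coordinate has $\partial_y=0$ at $y=0$; hence $h_s|_{M^+}$ is an admissible test function for the weak Neumann problem. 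Using that the reflection is an isometry and that $\Delta$ is local, one gets $\langle\tilde g,h_s\rangle_M=2\langle g,h_s|_{M^+}\rangle_{M^+}$ and $\langle\tilde f,\Delta h_s\rangle_M=2\langle f,\Delta(h_s|_{M^+})\rangle_{M^+}$, and these agree by the hypothesis that $f$ solves $\Delta f=g$, $\partial_\nu f=0$ weakly.

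The one point that needs care --- the analogue of the subtlety appearing in the Dirichlet case --- is precisely that the symmetric extension not produce a spurious distribution supported on $\partial M^+$ when $\Delta$ is applied; this is exactly what the vanishing of $\partial_\nu h_s$ provides, and it explains why the even reflection is paired with the Neumann rather than the Dirichlet condition. I do not expect any genuine obstacle beyond this bookkeeping: everything else is a routine transcription of Lemma~\ref{lem9}, and the corresponding higher-regularity statement (for $g\in L^p_k$) would follow by the same induction used in Lemma~\ref{lemhigherDir}.
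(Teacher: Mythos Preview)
Your proposal is correct and follows exactly the approach the paper intends: the paper's own proof merely states that the argument is virtually identical to Lemma~\ref{lem9}, using the symmetric extension of $f$ in place of the antisymmetric one. Your write-up fills in precisely the details this sketch leaves implicit, including the key observation that the symmetric part $h_s$ of a test function has $\partial_\nu h_s=0$ on $\partial M^+$ and is therefore admissible for the weak Neumann hypothesis.
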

\begin{proof}
The argument is virtually identical to lemma $\ref{lem9}$.  The main difference is that now one uses the symmetric extension of $f$ to $M$ instead of the antisymmetric extension of lemma $\ref{lem9}$.  
\end{proof}
We now state the regularity results for higher norms:
\begin{lemma}
\label{lemhigherDNeu}
Let $k\geq 2$, $p>1$.  Let $f\in L^p$ be a weak solution to $\Delta f=g$,  $\partial_\nu f=0$.  If $g\in L^p_k$, then $f\in L^p_{k+2}$. 
\end{lemma}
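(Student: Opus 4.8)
The plan is to prove the statement by induction on $k$, with Lemma \ref{lemNeum1} (the case $k=0$) as the base case; the argument parallels the reduction of Lemma \ref{lemhigherDir} to Lemma \ref{lem9}. Throughout I use the standing assumption that $M^+$ is isometric to a product $[0,1)\times\partial M^+$ near its boundary: in the product coordinates $(y,x)$ the metric is $dy^2+h(x)$, so the Laplacian decouples as $\Delta=\Delta_y+\Delta_h$, where $\Delta_y$ acts only in the normal variable $y$ and $\Delta_h$ is the (second order, smoothly varying) Laplacian of $\partial M^+$, and in particular $[\partial_y,\partial_{x_i}]=0$ in the collar.

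For the inductive step, assume the statement for $k-1$, and let $f\in L^p$ be a weak Neumann solution with $g\in L^p_k\subset L^p_{k-1}$; by the inductive hypothesis $f\in L^p_{k+1}$, so it remains to gain one more derivative. Away from $\partial M^+$ this is the interior estimate: cutting off by a function $\chi$ supported in $\mathring M^+$, the function $\chi f$ solves $\Delta(\chi f)=\chi g+[\Delta,\chi]f$ on a closed manifold (say a torus, or the double $M$), and since $[\Delta,\chi]f\in L^p_k$ (it involves only one derivative of $f\in L^p_{k+1}$), Theorem \ref{thmellreg} gives $\chi f\in L^p_{k+2}$. Near the boundary I differentiate tangentially: for each tangential direction $\partial_{x_i}$, a difference-quotient version of the formal identity
\begin{equation}
\Delta(\partial_{x_i}f)=\partial_{x_i}g+[\Delta_h,\partial_{x_i}]f,\qquad \partial_\nu(\partial_{x_i}f)\big|_{\partial M^+}=\partial_{x_i}\big(\partial_\nu f\big)\big|_{\partial M^+}=0,
\end{equation}
combined with a cutoff supported in the product collar, shows that $\partial_{x_i}f$ is again a weak Neumann solution, now with datum $\partial_{x_i}g+[\Delta_h,\partial_{x_i}]f\in L^p_{k-1}$ --- here one uses $f\in L^p_{k+1}$ and that $[\Delta_h,\partial_{x_i}]$ is an order $2$ operator with smooth coefficients. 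The inductive hypothesis then yields $\partial_{x_i}f\in L^p_{k+1}$, i.e.\ every derivative of $f$ of order at most $k+2$ that contains at least one tangential derivative lies in $L^p$.

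It remains to control the purely normal derivatives $\partial_y^m f$ for $2\le m\le k+2$, and for these one uses the equation itself: in the collar $\partial_y^2 f=\pm(g-\Delta_h f)$, hence $\partial_y^m f=\pm(\partial_y^{m-2}g-\partial_y^{m-2}\Delta_h f)$, where $\partial_y^{m-2}g\in L^p$ since $m-2\le k$ and $g\in L^p_k$, and $\partial_y^{m-2}\Delta_h f$ is a derivative of $f$ of order $m\le k+2$ containing two tangential derivatives, hence in $L^p$ by the previous paragraph. Together with $f,\partial_y f\in L^p$ (from $f\in L^p_{k+1}$), every derivative of $f$ of order at most $k+2$ lies in $L^p$, so $f\in L^p_{k+2}$. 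Only Lemma \ref{lemNeum1} (through the induction) and Theorem \ref{thmellreg} (for the interior) are needed; the higher Dirichlet lemma is not used. The one genuinely delicate point is the tangential differentiation near the boundary at the level of weak solutions --- namely verifying, via difference quotients and cutoffs, that $\partial_{x_i}f$ is again a weak Neumann solution with the stated datum and that its Neumann condition is inherited; the product structure of the metric in the collar is precisely what makes this commutator bookkeeping go through.
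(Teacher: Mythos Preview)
Your proof is correct and is precisely the approach the paper intends: the paper's own proof is a one-line ``This follows by induction from the previous lemma as explained for instance in \cite{Taylor1},'' and what you have written is exactly the standard inductive argument from Taylor (tangential differentiation in the collar, interior estimates via Theorem~\ref{thmellreg}, and recovery of pure normal derivatives from the equation) that the citation is meant to invoke. Your identification of the difference-quotient step as the only delicate point is accurate, and the product-collar hypothesis is indeed what makes it routine.
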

\begin{proof}
This follows by induction from the previous lemma as explained for instance in \cite{Taylor1}.
\end{proof}
We consider now the inhomogeneous case of these equations.  We focus on the Neumann case as is it less standard.  Here is the fundamental result of Nirenburg that we will use (see \cite{Kat1} for a proof):
\begin{theorem}
\label{thm4}
Let $p>1$ and $k \geq 1$.  There exists a continuous map 
\begin{equation}
T:L^p_k(M^+)\rightarrow L^p_{k+1}(M^+) 
\end{equation}
 such that for every $f\in L^p_k(M^+)$ we have
 \begin{equation}
  \partial_\nu T(f)=f_{|\partial M^+}
 \end{equation} 
\end{theorem}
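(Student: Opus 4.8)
The plan is to produce $T$ explicitly by a collar construction: restrict $f$ to $\partial M^+$, smooth the restriction at a length scale equal to the distance to $\partial M^+$, multiply by that distance, and truncate away from the collar. First I would fix the collar isometry $\partial M^+\times[0,1)$ with normal coordinate $y$ (so that $\partial M^+=\{y=0\}$ and the outward normal derivative at the boundary is $\partial_\nu=-\partial_y$), choose a cutoff $\chi=\chi(y)$ with $\chi\equiv 1$ near $y=0$ and $\operatorname{supp}\chi\subset[0,1)$, and fix a smooth approximate identity $\rho$ on $\partial M^+$ with rescalings $\rho_\tau$ to scale $\tau>0$ (defined through geodesic normal coordinates and a partition of unity on the compact manifold $\partial M^+$). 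Given $f\in L^p_k(M^+)$ with $k\geq 1$, let $g=Rf$ be its trace; since $p>1$ and $k\geq 1>1/p$, the classical trace theorem (whose $p=2$ and $k=1$ cases are recorded above) gives $g\in L^p_{k-1/p}(\partial M^+)$ together with $\|g\|_{L^p_{k-1/p}(\partial M^+)}\leq C\|f\|_{L^p_k(M^+)}$. Then set
\begin{equation*}
T(f)(x,y)=-\,\chi(y)\,y\,(\rho_y * g)(x),
\end{equation*}
extended by $0$ to all of $M^+$; this is linear in $f$ and supported in the collar.

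The boundary identity is immediate: upon differentiating the product $\chi(y)\,y\,(\rho_y*g)(x)$ in $y$, the two terms carrying an explicit factor of $y$ have vanishing trace on $\partial M^+$, while the remaining term $-\chi(y)(\rho_y*g)(x)$ tends to $-g$ in $L^p(\partial M^+)$ as $y\to 0^+$; hence the trace of $\partial_y T(f)$ is $-g$ and $\partial_\nu T(f)=-\partial_y T(f)|_{\partial M^+}=g=f|_{\partial M^+}$. Continuity of $T$ therefore reduces, via the trace bound above, to the estimate $\|T(f)\|_{L^p_{k+1}(M^+)}\leq C\|g\|_{L^p_{k-1/p}(\partial M^+)}$. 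Writing $s=k-1/p$ and expanding $\partial_x^\alpha\partial_y^j T(f)$ by the Leibniz rule, every term with $|\alpha|+j\leq k+1$ is, apart from cutoff factors supported away from $y=0$ (which are harmless), of the form $y^{1-j}\big((\partial_x^\alpha\sigma_y)*g\big)(x)$, where $\sigma_y$ is a bump at scale $y$ arising from the normal derivatives of the rescaled kernel, of vanishing mean when $j\geq 1$. The crude bound $\|(\partial_x^\alpha\sigma_y)*g\|_{L^p(\partial M^+)}\leq C\,y^{\,s-|\alpha|}\|g\|_{L^p_s}$ then yields $\int_0^1\|y^{1-j}(\partial_x^\alpha\sigma_y)*g\|_{L^p}^p\,dy\leq C\|g\|_{L^p_s}^p\int_0^1 y^{\,p(1-j-|\alpha|+s)}\,dy$, which converges for all $|\alpha|+j\leq k$ but is exactly (logarithmically) divergent when $|\alpha|+j=k+1$.

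The hard part is precisely this endpoint: the top-order derivatives $\partial_x^\alpha\partial_y^j T(f)$ with $|\alpha|+j=k+1$ must be shown to lie in $L^p(M^+)$ even though the naive pointwise-in-$y$ bounds place them only in a weak-type space. This is where the fractional smoothness $s=k-1/p$ of $g$ has to be exploited fully, and it requires a genuine harmonic-analytic description of $L^p_{k-1/p}(\partial M^+)$ --- a Littlewood--Paley or Besov-space characterization --- together with a square-function argument showing that the gain occurs only after integrating in $y$, never pointwise. For $p=2$ this is an elementary Plancherel computation; for general $p$ it is the substance of the theorem, which is why we simply cite \cite{Kat1}. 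Everything else --- the localization, the cutoff and partition-of-unity bookkeeping, linearity, and the verification $\partial_\nu T(f)=f|_{\partial M^+}$ --- is routine.
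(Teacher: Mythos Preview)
Your construction is correct in outline and honestly isolates the genuine content (the endpoint $|\alpha|+j=k+1$), but it follows a different route from the paper's.  The paper's sketch, which also defers to \cite{Kat1} for details, proceeds via the single-layer potential: locally on the half-plane $H^2=\{s\geq 0\}$ with fundamental solution $K(s,t)=\frac{1}{2\pi}\ln(s^2+t^2)$, one sets
\[
g(s,t)=-\int_{\Ar}K(s,t-\tau)\,f_{|\Ar}(\tau)\,d\tau,
\]
for which $\partial_s g(0,t)=-f_{|\Ar}$ is the jump relation for the single layer, and the estimate $\|g\|_{L^p_2}\leq C\|f\|_{L^p_1}$ is obtained from Calder\'on--Zygmund theory for the second derivatives of the logarithmic potential.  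One then multiplies by a bump whose normal derivative vanishes on $\partial M^+$ and patches by a partition of unity.

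Your collar mollification $T(f)(x,y)=-\chi(y)\,y\,(\rho_y*g)(x)$ is instead the Lions--Gagliardo approach to the inverse trace.  It has the advantage of being intrinsically geometric (no fundamental solution is needed) and of making the scaling transparent; the price is that the top-order estimate forces you to invoke the Besov/Littlewood--Paley description of $L^p_{k-1/p}(\partial M^+)$, exactly as you flag.  The paper's potential-theoretic construction hides the same analytic difficulty inside the singular-integral bound on $\nabla^2 g$.  Neither approach is ultimately more elementary: both are straightforward for $p=2$ and both delegate the general $p$ case to standard harmonic-analysis machinery, which is why both end up citing \cite{Kat1}.
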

\begin{proof}
This theorem is essentially contained in  \cite{Kat1} and thus we restrict ourselves to a brief sketch.  Let us first consider the local construction.  Let $$H^2=\{ (s,t)\in \Ar^2| s \geq 0\}$$ be the half space and consider $f\in L^p_1(H^2)$ with support on the standard unit disk $D_1$.  Let $K(s,t)$ be defined by $$K(s,t)=\frac{\ln(s^2+t^2)}{2\pi}$$  Let  $$g(s,t)=-\int_{\Ar}K(s,t-\tau)f_{|\Ar}(\tau) d\tau $$   By construction (see \cite{Kat1}), $$\partial_s g(0,t)=-f_{|\Ar}$$ and $$||g||_{L^p_2}\leq C ||f||_{L^p_1} $$   
Now, we will obtain a compactly supported modification of $g$  using a bump function.  Let $$\rho_1:\Ar\rightarrow \Ar$$ be a bump with support in $D_2$ such that $\rho_1=1$ on $D_1$.  Let $$\rho_2(s,t)=\rho_1(s)\rho_1(t)$$  By construction, $\partial_s \rho_2 =0$ on $\partial H^2$.  The compactly supported $\rho_2 g$ has $$\partial_s \rho_2 g =-f$$ on $\partial H^2$ as desired. To obtain the global operator $T$, we proceed as above using a partition of unity near $\partial M^+$.

\end{proof}

\begin{definition}
Let $f\in L^p(M^+)$, $r\in L^p_1(M^+)$   and $g\in L^1(M^+)$.  We say that $f$ is a weak solution to 
\begin{equation}
\Delta f=g  \text{ and } \partial_\nu f=r_{|\partial M^+}
\end{equation}
 if we have 
\begin{equation}
\langle f, \Delta h \rangle_{M^+} =\langle g,  h \rangle_{M^+}+\langle r,  h \rangle_{\partial M^+}
\end{equation} for all smooth $h\in C^\infty(M^+)$ with $\partial_\nu h=0$.  
\end{definition}
We summarize the results for the inhomogenous Neumann problem with the following:
\begin{lemma}
\label{leminNeum}
Let $f\in L^p$ be a weak solution to $\Delta f=g$, $\partial_\nu f=r_{|\partial M^+}$.  If $g\in L^p$, $r\in L^p_1$ then $f\in L^p_2$.  
\end{lemma}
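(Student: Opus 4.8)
The plan is to reduce the inhomogeneous problem to the homogeneous Neumann problem of Lemma \ref{lemNeum1} by subtracting off a correction term that absorbs the boundary data. First I would invoke Theorem \ref{thm4} with $k=1$: since $r\in L^p_1(M^+)$, the operator $T$ produces $w:=T(r)\in L^p_2(M^+)$ satisfying $\partial_\nu w = r_{|\partial M^+}$. Because $w$ genuinely lies in $L^p_2(M^+)$, its Neumann trace is the honest one and $\Delta w\in L^p(M^+)$.

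Next I would set $\tilde f := f-w\in L^p(M^+)$ and check that $\tilde f$ is a weak solution of the homogeneous Neumann problem $\Delta \tilde f = g-\Delta w$, $\partial_\nu \tilde f = 0$. For any smooth $h\in C^\infty(M^+)$ with $\partial_\nu h=0$, Green's formula applied to the $L^p_2$ function $w$ gives
\[
\langle w,\Delta h\rangle_{M^+} = \langle \Delta w, h\rangle_{M^+} + \langle r, h\rangle_{\partial M^+},
\]
where we have used $\partial_\nu h=0$ together with $\partial_\nu w = r_{|\partial M^+}$. Subtracting this identity from the weak-solution identity satisfied by $f$ cancels the boundary pairing $\langle r,h\rangle_{\partial M^+}$ and leaves
\[
\langle \tilde f,\Delta h\rangle_{M^+} = \langle g-\Delta w, h\rangle_{M^+},
\]
which is exactly the weak formulation of $\Delta \tilde f = g-\Delta w$ with $\partial_\nu \tilde f = 0$.

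Finally, since $g\in L^p(M^+)$ and $\Delta w\in L^p(M^+)$, the right-hand side $g-\Delta w$ lies in $L^p(M^+)$, so Lemma \ref{lemNeum1} yields $\tilde f\in L^p_2(M^+)$. Adding back $w\in L^p_2(M^+)$ gives $f=\tilde f+w\in L^p_2(M^+)$, which is the claim.

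I do not expect a genuine obstacle here; the proof is essentially bookkeeping. The one point requiring care is the middle step: one must be sure that Green's formula is legitimately applicable to $w$ — which it is, precisely because $w$ is honestly in $L^p_2$ rather than merely a weak solution — and that the boundary pairings match so that the $\langle r,h\rangle_{\partial M^+}$ terms cancel exactly. One also implicitly uses that the lift $T$ furnished by Theorem \ref{thm4} is compatible with the weak notion of Neumann trace, which follows from the explicit kernel construction sketched in its proof.
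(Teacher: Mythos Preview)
Your proposal is correct and follows essentially the same approach as the paper: use Theorem~\ref{thm4} to construct a correction $w\in L^p_2$ with $\partial_\nu w = r_{|\partial M^+}$, subtract it to reduce to the homogeneous Neumann problem, and apply Lemma~\ref{lemNeum1}. Your treatment is in fact more careful than the paper's, which simply asserts that $f-u$ satisfies the homogeneous weak Neumann problem without spelling out the Green's-formula verification.
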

\begin{proof}
By theorem $\ref{thm4}$, we may take $u\in L^p_2(M^+)$ with $\partial_\nu u=f_{|\partial M^+}$.  Let $f'=f-u$.  By construction, $f'$ satisfies the homogeneous (weak) Neumann problem: 
\begin{equation}
\Delta f'=g-\Delta u  \text{ and } \partial_\nu f'=0
\end{equation} 
Since the $L^p_2$-norm of $u$ is controlled by the $L^p_1$-norm of $r$, we may apply lemma $\ref{lemNeum1}$ to deduce $f\in L^p_2$ as desired.
\end{proof}
We also have a version of this result for higher Sobolev norms:
\begin{lemma}
Assume $k\geq 1$.  Let $f\in L^p_k$ be a solution to $\Delta f=g$, $\partial_\nu f=r_{|\partial M^+}$.  If $g\in L^p_k$, $r\in L^p_{k+1}$ then $f\in L^p_{2+k}$.  
\end{lemma}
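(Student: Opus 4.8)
The plan is to reduce to the homogeneous Neumann problem by subtracting a correction term, exactly as in the proof of Lemma \ref{leminNeum}, and then to invoke the higher-regularity theory for the homogeneous problem. First I would use Theorem \ref{thm4}: since $r\in L^p_{k+1}(M^+)$, it produces $u=T(r)\in L^p_{k+2}(M^+)$ with $\partial_\nu u=r_{|\partial M^+}$ and $\|u\|_{L^p_{k+2}}\leq C\|r\|_{L^p_{k+1}}$. Set $f'=f-u$; since $f\in L^p_k$ and $u\in L^p_{k+2}\subset L^p_k$, we have $f'\in L^p_k\subset L^p$.

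The next step is to check that $f'$ is a weak solution of the \emph{homogeneous} Neumann problem $\Delta f'=g-\Delta u$, $\partial_\nu f'=0$. For a test function $h\in C^\infty(M^+)$ with $\partial_\nu h=0$, Green's formula applied to $u\in L^p_2$ gives $\langle u,\Delta h\rangle_{M^+}=\langle \Delta u,h\rangle_{M^+}+\langle \partial_\nu u,h\rangle_{\partial M^+}=\langle \Delta u,h\rangle_{M^+}+\langle r,h\rangle_{\partial M^+}$; combined with the weak equation $\langle f,\Delta h\rangle_{M^+}=\langle g,h\rangle_{M^+}+\langle r,h\rangle_{\partial M^+}$ for $f$, this yields $\langle f',\Delta h\rangle_{M^+}=\langle g-\Delta u,h\rangle_{M^+}$, which is precisely the weak homogeneous Neumann condition. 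Moreover $g-\Delta u\in L^p_k$ because $g\in L^p_k$ and $u\in L^p_{k+2}$.

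Finally, applying the homogeneous Neumann regularity — Lemma \ref{lemhigherDNeu} when $k\geq 2$, and for $k=1$ the same conclusion obtained from Lemma \ref{lemNeum1} by the bootstrapping used in the proof of Lemma \ref{lemhigherDNeu} — we conclude $f'\in L^p_{k+2}$, hence $f=f'+u\in L^p_{k+2}$, with the estimate $\|f\|_{L^p_{k+2}}\leq C(\|g\|_{L^p_k}+\|r\|_{L^p_{k+1}}+\|f\|_{L^p})$ coming for free from the constituent estimates.

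I expect the only point requiring genuine care to be the Green's-formula manipulation in the second step: one must track the boundary terms and their signs so that the inhomogeneity in $\partial_\nu f$ cancels exactly against $\partial_\nu u$, leaving a genuinely homogeneous problem with $L^p_k$ right-hand side. A secondary caveat is making sure the homogeneous higher-regularity statement is available for $k=1$ and not merely $k\geq 2$. If one prefers to sidestep that gap, an alternative is a direct induction on $k$: differentiate $\Delta f=g$ in directions tangent to $\partial M^+$ (using the product collar $[0,1)\times\partial M^+$), apply the inductive hypothesis to the tangential derivatives $\partial_i f$, and recover the missing normal derivative algebraically from $\partial_\nu^2 f = g-(\text{tangential Laplacian})f-(\text{lower order terms})$.
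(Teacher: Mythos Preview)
Your argument is correct. You reduce to the homogeneous Neumann problem by subtracting $u=T(r)\in L^p_{k+2}$ and then invoke the homogeneous higher-regularity lemma; this is exactly the mechanism of Lemma~\ref{leminNeum}, lifted to regularity $k$.

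The paper's own proof is a single sentence---``a straightforward inductive argument using lemma~\ref{leminNeum} as the base case''---so the organization differs: the paper inducts on $k$ starting from the $k=0$ inhomogeneous case, whereas your primary route is a one-shot reduction at level $k$ followed by a call to the homogeneous result. Your alternative at the end (differentiate tangentially in the collar and induct) is in fact closer to what the paper's phrasing suggests. Both routes are standard and equally short; the inductive one has the minor advantage of never needing the homogeneous lemma at the specific index $k=1$, which is the cosmetic gap you correctly flag (Lemma~\ref{lemhigherDNeu} is stated for $k\geq 2$, though its proof by induction from Lemma~\ref{lemNeum1} plainly covers $k=1$ as well).
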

The proof is a straightforward inductive argument using lemma $\ref{leminNeum}$  as the base case.  For details, see \cite{Taylor1}.

\subsection{Elliptic Theory for Banach Space Valued Functions}
\label{BanahEst}
We now extend the regularity results discussed in the previous sections to the setting of Banach valued maps.  As before, let $M$ be a closed manifold of dimension $d$ and let  $B=L^p(\Sigma)$.
Much of the regularity theory for elliptic operators carries over to the setting of Banach valued maps.  Here is the basic result:
\begin{lemma}
\label{lemregBanach}
 For $k\geq 2$, we have an isomorphism $$\Delta+1:L^p_k(M; B)\rightarrow L^p_{k-2}(M; B)$$  In particular, 
 $$||f||_{L^p_{k}}\leq C_{p.k}(||\Delta f||_{L^p_{k-2}}+||f||_{L^p_{k-2}})$$ 
\end{lemma}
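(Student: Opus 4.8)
The plan is to reduce the Banach-valued statement to the scalar case already established in Theorem~\ref{thmellreg}, exploiting the product structure $B = L^p(\Sigma)$ together with the identification of mixed Sobolev spaces from Lemma~\ref{lem1} and its remark. First I would observe that, by Lemma~\ref{lem1}, $L^p_k(M;B) = L^p_k(M;L^p(\Sigma)) = L^p(\Sigma;L^p_k(M))$, so a function $f\colon M\to B$ may be viewed (for a.e.\ fixed $\sigma\in\Sigma$) as an ordinary scalar function $f(\cdot,\sigma)$ on $M$. Since the operator $\Delta+1$ here is the scalar Laplacian on $M$ plus the identity, acting only in the $M$-variable, it acts fiberwise: $(\Delta+1)f$ evaluated at $\sigma$ is $(\Delta_M+1)(f(\cdot,\sigma))$. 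The scalar estimate of Theorem~\ref{thmellreg}, applied to the elliptic operator $\Delta_M+1$ on the closed manifold $M$, gives $\|f(\cdot,\sigma)\|_{L^p_k(M)} \le C_{p,k}\bigl(\|(\Delta_M+1)f(\cdot,\sigma)\|_{L^p_{k-2}(M)} + \|f(\cdot,\sigma)\|_{L^p_{k-2}(M)}\bigr)$; in fact, since $\Delta_M+1$ has no kernel and has closed range, it is an isomorphism $L^p_k(M)\to L^p_{k-2}(M)$, so one even gets $\|f(\cdot,\sigma)\|_{L^p_k(M)} \le C\|(\Delta_M+1)f(\cdot,\sigma)\|_{L^p_{k-2}(M)}$ directly.

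Next I would raise both sides of the fiberwise estimate to the $p$-th power and integrate over $\sigma\in\Sigma$ with respect to the Riemannian measure. Using the identity $\|g\|_{L^p(\Sigma;L^p_j(M))}^p = \int_\Sigma \|g(\cdot,\sigma)\|_{L^p_j(M)}^p\,d\sigma$ — which is the definition of the mixed norm, valid for all $j$ including negative $j$ by the remark following Lemma~\ref{lem1} — this yields $\|f\|_{L^p(\Sigma;L^p_k(M))} \le C\|(\Delta+1)f\|_{L^p(\Sigma;L^p_{k-2}(M))}$. Re-applying Lemma~\ref{lem1} to rewrite both mixed norms as $L^p_k(M;B)$ and $L^p_{k-2}(M;B)$ norms respectively gives the stated inequality $\|f\|_{L^p_k}\le C_{p,k}(\|\Delta f\|_{L^p_{k-2}}+\|f\|_{L^p_{k-2}})$, and the isomorphism statement follows because the inverse is likewise the fiberwise application of the scalar inverse $(\Delta_M+1)^{-1}$, which is bounded $L^p_{k-2}(M)\to L^p_k(M)$ with $\sigma$-independent norm, hence extends to a bounded map on the mixed spaces; surjectivity and injectivity are immediate from the fiberwise statements. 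One should note the density issue: the identities of Lemma~\ref{lem1} are proved by completing from $C^\infty(M\times\Sigma)$, so it is cleanest to establish the estimate first on smooth $f$ and then pass to the completion, which is legitimate since all the spaces involved are defined as such completions (for $k-2<0$ one uses the right-hand-side definition from the remark).

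The main obstacle, and the only point requiring genuine care rather than bookkeeping, is the treatment of \emph{negative} regularity indices: when $k=2$ the target is $L^p_0=L^p$, which is unproblematic, but the lemma is stated for all $k\ge 2$ and one genuinely needs the $k=2$ case to behave well, and more importantly one must be sure the fiberwise scalar estimate is valid in the form of a clean isomorphism (no kernel) so that the constant is uniform in $\sigma$ and does not degrade. Since $\Delta_M+1$ is a positive self-adjoint elliptic operator, $\ker(\Delta_M+1)=0$ on every $L^p_k(M)$ and the open mapping theorem gives the isomorphism, so this is safe; the uniformity in $\sigma$ is automatic because the operator $\Delta_M+1$ does not depend on $\sigma$ at all. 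A secondary technical nuisance is justifying that measurability and the Fubini-type manipulations are legitimate for $B$-valued $L^p$ functions with $B$ separable — this is standard (Bochner integration theory) and I would simply cite it or reduce to the dense smooth subspace where it is transparent. Beyond these remarks the argument is routine.
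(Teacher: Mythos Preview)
Your proposal is correct and follows essentially the same route as the paper: both use Lemma~\ref{lem1} to identify $L^p_k(M;B)$ with $L^p(\Sigma;L^p_k(M))$ and then transport the scalar isomorphism $\Delta+1:L^p_k(M)\to L^p_{k-2}(M)$ fiberwise. The paper's version is terser, simply noting that a bounded isomorphism $X\to Y$ induces one $L^p(\Sigma;X)\to L^p(\Sigma;Y)$, whereas you spell out the fiberwise estimate and integration; your worry about negative indices is unnecessary here since $k\ge 2$ forces $k-2\ge 0$.
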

\begin{proof}
Note that  $$\Delta+1:L^p_k(M)\rightarrow L^p_{k-2}(M)$$ is an isomorphism for any $k$.  Since by lemma $\ref{lem1}$ $$L^p_k(M;B)=L^p(\Sigma; L^p_k(M))$$ we have the induced isomorphism $$\Delta+1:L^p(\Sigma; L^p_k(M))\rightarrow L^p(\Sigma; L^p_{k-2}(M))$$  
\end{proof}
Along with the elliptic estimate we have a regularity result.  Given $f, g \in L^p(M; B)$, we say that $f$ is a weak solution to $$\Delta f=g$$ if for all $h\in C^\infty(M\times \Sigma)$, we have $$\langle f , \Delta h\rangle_M = \langle g ,  h\rangle_M$$   Note that the pairings are well defined since $f,g \in L^p(M\times \Sigma)$. 
\begin{lemma}
\label{BanReg}
 Let $k\geq 0$.  Assume $f\in L^p(M; B)$ and $\Delta f\in L^p_{k}(M; B)$.  We have $f\in L^p_{k+2}(M; B)$.  
\end{lemma}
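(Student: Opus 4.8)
The plan is to prove the statement by ``slicing'' in the $\Sigma$-direction and invoking the finite-dimensional elliptic regularity of Theorem \ref{thmellreg} on the closed manifold $M$. Recall from Lemma \ref{lem1} that
$$L^p_{k+2}(M;B)=L^p_{k+2}(M;L^p(\Sigma))=L^p(\Sigma;L^p_{k+2}(M)),$$
so it suffices to establish three things: that for almost every $\sigma\in\Sigma$ the slice $f(\cdot,\sigma)$ lies in $L^p_{k+2}(M)$; that $\sigma\mapsto f(\cdot,\sigma)$ is strongly measurable into $L^p_{k+2}(M)$; and that $\int_\Sigma\|f(\cdot,\sigma)\|_{L^p_{k+2}(M)}^p\,d\sigma<\infty$.

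The first step is to show that for almost every $\sigma$ the slice $f(\cdot,\sigma)\in L^p(M)$ is a weak solution on $M$ of $\Delta f=(\Delta f)(\cdot,\sigma)$. Testing the Banach-valued weak equation against product functions $h(x,\sigma)=h_1(x)h_2(\sigma)$ with $h_1\in C^\infty(M)$ and $h_2\in C^\infty(\Sigma)$, and applying Fubini (legitimate since $f,\Delta f\in L^p(M\times\Sigma)$), one obtains
$$\int_\Sigma h_2(\sigma)\,\Big(\langle f(\cdot,\sigma),\Delta h_1\rangle_M-\langle(\Delta f)(\cdot,\sigma),h_1\rangle_M\Big)\,d\sigma=0$$
for all such $h_1,h_2$. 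For fixed $h_1$ the bracketed quantity is an $L^1(\Sigma)$ function of $\sigma$, and since $C^\infty(\Sigma)$ is dense in $L^q(\Sigma)$ with $1/p+1/q=1$ it must vanish for a.e. $\sigma$. Running $h_1$ over a countable set dense in $C^\infty(M)$ (say in the $C^2$-topology) produces a single full-measure set $\Sigma_0$ on which $f(\cdot,\sigma)$ is a weak solution for every admissible $h_1$.

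For $\sigma\in\Sigma_0$, the regularity half of Theorem \ref{thmellreg}, applied to the elliptic operator $\Delta$ on the closed manifold $M$, upgrades $f(\cdot,\sigma)\in L^p(M)$ with $\Delta f(\cdot,\sigma)\in L^p_k(M)$ to $f(\cdot,\sigma)\in L^p_{k+2}(M)$; iterating the a priori estimate of that theorem from order $k+2$ down to order $1$ (absorbing lower-order terms via the embeddings $L^p_k\hookrightarrow L^p_j$ for $j\le k$) gives
$$\|f(\cdot,\sigma)\|_{L^p_{k+2}(M)}\le C\big(\|(\Delta f)(\cdot,\sigma)\|_{L^p_k(M)}+\|f(\cdot,\sigma)\|_{L^p(M)}\big)$$
with $C$ independent of $\sigma$. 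Raising to the $p$-th power and integrating over $\Sigma$ yields
$$\int_\Sigma\|f(\cdot,\sigma)\|_{L^p_{k+2}(M)}^p\,d\sigma\le C'\big(\|\Delta f\|_{L^p(\Sigma;L^p_k(M))}^p+\|f\|_{L^p(\Sigma;L^p(M))}^p\big)<\infty.$$
Strong measurability of $\sigma\mapsto f(\cdot,\sigma)$ into $L^p_{k+2}(M)$ follows from the Pettis theorem, as $L^p_{k+2}(M)$ is separable and $\sigma\mapsto\langle f(\cdot,\sigma),\phi\rangle_M$ is measurable for each $\phi\in C^\infty(M)$, a family separating points of $L^p_{k+2}(M)$. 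Hence $f\in L^p(\Sigma;L^p_{k+2}(M))=L^p_{k+2}(M;B)$.

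I expect the main obstacle to be the slicing step: one must ensure that ``$f(\cdot,\sigma)$ is a weak solution'' holds on a single exceptional null set, valid simultaneously for all test functions, which is precisely where separability of $C^\infty(M)$ in a suitable norm and the density of product test functions in $C^\infty(M\times\Sigma)$ enter. Once the slices are known to satisfy a genuine elliptic equation on the closed manifold $M$, the remainder is a routine application of Theorem \ref{thmellreg} combined with Fubini and Pettis measurability.
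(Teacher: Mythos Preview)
Your argument is correct but proceeds differently from the paper. The paper's proof is a two-line functional-analytic argument: using the already-established isomorphism $\Delta+1:L^p_{k+2}(M;B)\to L^p_k(M;B)$ of Lemma~\ref{lemregBanach}, it produces $u\in L^p_{k+2}(M;B)$ with $(\Delta+1)u=(\Delta+1)f$, and then shows $f-u=0$ by a density argument (the range of $\Delta+1$ on smooth functions is dense, so $f-u$ pairs to zero against everything). Thus the paper separates the result into a surjectivity step (borrowed wholesale from Lemma~\ref{lemregBanach}) and an injectivity/uniqueness step.

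You instead re-derive the regularity directly by slicing in $\Sigma$: you verify that almost every slice $f(\cdot,\sigma)$ solves the scalar equation weakly, invoke Theorem~\ref{thmellreg} on $M$ slicewise, and reassemble via Fubini and Pettis measurability. This is essentially unwinding the proof of Lemma~\ref{lemregBanach} and applying it to regularity rather than to the isomorphism statement. Your route is more self-contained and makes the dependence on the scalar theory explicit, at the cost of the measure-theoretic bookkeeping (common null sets, Pettis, countable dense families of test functions) that the paper avoids entirely by working at the level of the Banach-valued isomorphism. Both arguments ultimately rest on the identification $L^p_k(M;B)=L^p(\Sigma;L^p_k(M))$ from Lemma~\ref{lem1}; the paper just packages it more efficiently.
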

\begin{proof}
We address the case $k=0$ as the other cases are similar.  By the previous lemma, we may take $u \in L^p_{2}(M; B)$ such that $(\Delta +1)u=(\Delta+1)f \in L^p$.  Thus, by considering $f-u$ we may assume that $$(\Delta+1) f=0$$ in the weak sense.  In particular, this implies that $$\langle f, (\Delta +1) h \rangle _{M}=0$$ for all smooth $h$.  Since $(\Delta+1)h$ is dense in $L^p(M;B)$ by lemma $\ref{lem1}$ we conclude that $f=0$ as desired.  
\end{proof}   
We now discuss how the elliptic estimates for the Dirichlet and Neumann problem carry over to Banach space valued functions.  Just as in the closed case, the proofs of the various regularity results can be reduced to their finite dimensional counterparts.  
\begin{definition}
Let $f\in L^p(M^+; B)$ and $g\in L^1(M^+; B)$.  We say that $f$ is a weak solution to 
\begin{equation}
\Delta f=g  \text{ and } f_{|\partial M^+}=0
\end{equation}
 if we have 
\begin{equation}
\langle f, \Delta h \rangle_{M^+} =\langle g,  h \rangle_{M^+}
\end{equation} for all smooth $h\in C^\infty(M^+\times \Sigma)$ that vanish on $\partial M^+ \times \Sigma$.  
\end{definition}
  Here is the corresponding regularity result:
\begin{lemma}
\label{lemDBanach}
Let $f\in L^p$ be a weak solution to $\Delta f=g$, $f_{|\partial M^+}=0$.  If $g\in L^p$, then $f\in L^p_2$.  
\end{lemma}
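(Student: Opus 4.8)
The plan is to mimic verbatim the proof of the finite-dimensional Dirichlet regularity result, Lemma \ref{lem9}, replacing the appeal to closed-manifold elliptic regularity by its Banach-valued counterpart, Lemma \ref{BanReg}. Concretely, I would pass to the double $M = M^- \cup_{\partial M^+} M^+$ and extend the data by odd reflection: set $\tilde{f} \in L^p(M; B)$ to equal $f$ on $M^+$ and $-f$ on $M^-$ under the reflection isometry, and define $\tilde{g} \in L^p(M; B)$ analogously from $g$. Since near $\partial M^+$ the manifold is isometric to a product $[0,1)\times \partial M^+$ and the reflection $r\colon M \to M$ is an isometry, $\Delta_M$ restricts to $\Delta_{M^\pm}$ on the two halves, so the gluing produces no interface or curvature terms.

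The main point is then to check that $\tilde{f}$ is a weak solution of $\Delta \tilde{f} = \tilde{g}$ on the closed manifold $M$, i.e.\ $\langle \tilde{f}, \Delta h\rangle_M = \langle \tilde{g}, h\rangle_M$ for every $h \in C^\infty(M \times \Sigma)$. Given such an $h$, I decompose $h = h_s + h_a$ into its symmetric and antisymmetric parts under $r$. Because $\Delta_M$ commutes with $r^*$, $\Delta h_s$ is symmetric and $\Delta h_a$ antisymmetric; since $\tilde{f}$ and $\tilde{g}$ are antisymmetric, both pair to zero against $h_s$ and against $\Delta h_s$, so it suffices to treat $h = h_a$. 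An antisymmetric smooth function vanishes on the fixed-point set $\partial M^+ \times \Sigma$ of $r$, so $h_a|_{M^+ \times \Sigma}$ is an admissible test function for the weak Dirichlet problem on $M^+$; moreover, splitting the integral over $M$ into the two halves and using the reflection symmetry gives $\langle \tilde{g}, h_a\rangle_M = 2\langle g, h_a\rangle_{M^+}$ and $\langle \tilde{f}, \Delta h_a\rangle_M = 2\langle f, \Delta h_a\rangle_{M^+}$, and these two right-hand sides are equal by hypothesis. Hence $\tilde{f}$ solves $\Delta \tilde{f} = \tilde{g}$ weakly on $M$. Finally, since $\tilde{g} \in L^p(M; B)$, Lemma \ref{BanReg} yields $\tilde{f} \in L^p_2(M; B)$, and restricting to $M^+ \times \Sigma$ gives $f \in L^p_2(M^+; B)$.

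I expect the only step requiring genuine care to be the verification of the weak equation on the double: one must confirm that the symmetric/antisymmetric decomposition of test functions works identically in the $B$-valued setting and that the collar product structure really does prevent any distributional contribution along $\partial M^+$. Everything else is bookkeeping reducible to the scalar case already recorded: density of $C^\infty(M \times \Sigma)$ among test functions (Lemma \ref{lem12}) justifies the weak formulation, and the identifications $L^p_k(M; B) = L^p(\Sigma; L^p_k(M))$ of Lemma \ref{lem1} reduce all Sobolev manipulations to statements over $M$ alone — indeed this is precisely the mechanism already used in the proof of Lemma \ref{BanReg}.
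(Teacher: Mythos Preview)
Your proposal is correct and follows exactly the paper's approach: the paper's proof of Lemma~\ref{lemDBanach} simply states that the reduction to the interior case is identical to the proof of Lemma~\ref{lem9}, this time using Lemma~\ref{BanReg}, which is precisely the odd-reflection-to-the-double argument you have spelled out.
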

\begin{proof}
The reduction to the interior case is identical to the proof of lemma $\ref{lem9}$, this time using lemma $\ref{BanReg}$.
\end{proof}
We now state the regularity results for higher norms:
\begin{lemma}
Let $k\geq 2$, $p>1$.  Let $f\in L^p$ be a weak solution to $\Delta f=g$, $f_{|\partial M^+}=0$.  If $g\in L^p_k$, then $f\in L^p_{k+2}$. 
\end{lemma}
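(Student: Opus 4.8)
The plan is to deduce this by induction on $k$ from the base case lemma~\ref{lemDBanach}, in exactly the way lemma~\ref{lemhigherDir} is deduced from lemma~\ref{lem9} in the scalar setting; the Banach target $B=L^p(\Sigma)$ plays no role beyond the two facts already recorded for it, namely interior regularity (lemma~\ref{BanReg}) and base Dirichlet regularity (lemma~\ref{lemDBanach}). Concretely, one localizes with a partition of unity: in an interior ball lemma~\ref{BanReg} gives $f\in L^p_{k+2}$ at once, so the work is in a boundary chart $(x,y)\in\Ar^{d-1}\times[0,1)$ where, by the assumed product structure near $\partial M^+$, one has $\Delta=-\partial_y^2+\Delta_x$ with $\Delta_x$ a smooth second-order operator in the tangential variables alone (so $\partial_y$ commutes with $\Delta$). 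A tangential derivative $\partial_{x_i}f$ still vanishes on $\partial M^+$, and testing the weak formulation against $-\partial_{x_i}h$ shows that $\partial_{x_i}f$ is a weak Dirichlet solution of $\Delta(\partial_{x_i}f)=\partial_{x_i}g+[\Delta_x,\partial_{x_i}]f$, the commutator having order two. Granting inductively that the statement holds one level lower (with lemma~\ref{lemDBanach} as the base), so that $f\in L^p_{k+1}$, the right-hand side lies in $L^p_{k-1}$ and the inductive hypothesis applied to $\partial_{x_i}f$ gives $\partial_{x_i}f\in L^p_{k+1}$; iterating over tangential multi-indices of length up to $k$ controls every mixed derivative $\partial_y^a\partial_x^\beta f$ with $a\le 2$. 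The remaining purely normal derivatives are recovered algebraically from $\partial_y^2 f=\Delta_x f-g$: applying $\partial_y^a\partial_x^\beta$ with $a+|\beta|\le k$ writes $\partial_y^{a+2}\partial_x^\beta f$ in terms of derivatives of $g$ of total order $\le k$ and derivatives of $f$ of total order $\le k+2$ but strictly smaller normal order, so a finite downward induction on normal order closes the estimate and gives $f\in L^p_{k+2}$ locally, hence globally.

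Alternatively, and perhaps more in the spirit of this section, one can bypass the bootstrap and reduce directly to the finite-dimensional lemma~\ref{lemhigherDir}. Since $B=L^p(\Sigma)$, testing the weak formulation of $\Delta f=g$, $f|_{\partial M^+}=0$ against functions of the product form $h(m)\psi(\sigma)$ with $h\in C^\infty(M^+)$ vanishing on $\partial M^+$ and $\psi\in C^\infty(\Sigma)$ arbitrary shows that, for almost every $\sigma\in\Sigma$, the slice $f(\cdot,\sigma)\in L^p(M^+)$ is a weak Dirichlet solution on $M^+$ with right-hand side $g(\cdot,\sigma)\in L^p_k(M^+)$. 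Lemma~\ref{lemhigherDir} then yields $f(\cdot,\sigma)\in L^p_{k+2}(M^+)$ together with the estimate $\|f(\cdot,\sigma)\|_{L^p_{k+2}(M^+)}\le C(\|g(\cdot,\sigma)\|_{L^p_k(M^+)}+\|f(\cdot,\sigma)\|_{L^p(M^+)})$ with $C$ independent of $\sigma$; raising to the $p$-th power and integrating over $\Sigma$ gives $f\in L^p(\Sigma;L^p_{k+2}(M^+))=L^p_{k+2}(M^+;B)$.

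The only point that genuinely needs care in either route is the Fubini-type identification $L^p_k(M^+;B)=L^p(\Sigma;L^p_k(M^+))$ for the manifold-with-boundary $M^+$ (used to pass between slices and the ambient statement); I expect this to be the main obstacle, though a minor one, and it is handled just as in lemmas~\ref{lem1}--\ref{lem3}: extend to the double $M$ via the operator $E$ of lemma~\ref{lem4}, invoke the corresponding identity for the closed manifold, and restrict back to $M^+$. The commutator bookkeeping in the first route, and the uniformity of the elliptic constant in $\sigma$ in the second, are routine and identical to the scalar case.
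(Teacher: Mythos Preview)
Your first route is exactly what the paper does: its proof reads, in full, ``This follows by induction from the previous lemma as in case of lemma~\ref{lemhigherDir},'' i.e.\ bootstrap from lemma~\ref{lemDBanach} via the same tangential-derivative scheme that \cite{Taylor1} supplies for lemma~\ref{lemhigherDir}. Your second route---slicing over $\Sigma$ and applying the scalar lemma~\ref{lemhigherDir} fiberwise, then reassembling via $L^p(\Sigma;L^p_{k+2}(M^+))=L^p_{k+2}(M^+;B)$---is a correct alternative and is actually closer in spirit to how the paper treats the closed case (lemma~\ref{lemregBanach}) and first-order operators (lemma~\ref{lem18}); it trades the commutator bookkeeping for the Fubini identification you flag, which is indeed handled by extending to the double via lemma~\ref{lem4} as you indicate. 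The one small caveat in the slicing argument is that the null set of $\sigma$ for which the weak equation fails may a priori depend on the test function $h$, so one should pass through a countable dense family of test functions; this is routine.
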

\begin{proof}
This follows by induction from the previous lemma as in case of lemma $\ref{lemhigherDir}$.
\end{proof}

We turn now to the corresponding Neumann boundary value problem. Let $\partial_\nu f$ be outward normal derivative on $\partial M^+$.  
\begin{definition}
Let $f\in L^p(M^+; B)$ and $g\in L^1(M^+; B)$.  We say that $f$ is a weak solution to 
\begin{equation}
\Delta f=g  \text{ and } \partial_\nu f=0
\end{equation}
 if we have 
\begin{equation}
\langle f, \Delta h \rangle_{M^+} =\langle g,  h \rangle_{M^+}
\end{equation} for all smooth $h\in C^\infty(M^+\times \Sigma)$ with $\partial_\nu h=0$.  
\end{definition}  
Here is the corresponding regularity result:
\begin{lemma}
Let $f\in L^p$ be a weak solution to $\Delta f=g$, $\partial_\nu f=0$.  If $g\in L^p$, then $f\in L^p_2$.  
\end{lemma}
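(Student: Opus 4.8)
The plan is to reduce the Banach-valued Neumann problem to the finite-dimensional Neumann regularity result (Lemma~\ref{lemNeum1}) by the same doubling trick used throughout this section. First I would form the double $M=M^-\cup_{\partial M^+}M^+$ and extend $f$ and $g$ to $\tilde f,\tilde g\in L^p(M;B)$ by \emph{symmetric} reflection across $\partial M^+$ (this is the Neumann analogue of the antisymmetric extension in Lemma~\ref{lem9}). The key point to verify is that $\tilde f$ is then a weak solution of $\Delta\tilde f=\tilde g$ on the closed manifold $M$ in the Banach-valued sense, i.e.\ $\langle \tilde f,\Delta h\rangle_M=\langle\tilde g,h\rangle_M$ for all $h\in C^\infty(M\times\Sigma)$. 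As in Lemma~\ref{lem9}, one decomposes a test function $h=h_s+h_a$ into its symmetric and antisymmetric parts under the reflection; since $\tilde f,\tilde g$ are symmetric, the pairing against $h_a$ vanishes on both sides, so it suffices to treat symmetric $h$. A symmetric $h$ restricts to a function on $M^+$ with $\partial_\nu h=0$ on $\partial M^+$, and $\langle\tilde g,h\rangle_M=2\langle g,h\rangle_{M^+}$, $\langle\tilde f,\Delta h\rangle_M=2\langle f,\Delta h\rangle_{M^+}$, which agree by the weak Neumann hypothesis.

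Once $\tilde f$ is known to be a weak solution of $\Delta\tilde f=\tilde g$ on the closed double $M$ with $\tilde g\in L^p(M;B)$, I would apply the Banach-valued interior regularity result, Lemma~\ref{BanReg}, with $k=0$, to conclude $\tilde f\in L^p_2(M;B)$. Restricting back to $M^+\times\Sigma$ then gives $f\in L^p_2(M^+;B)$, which is the assertion. In other words the proof is word-for-word the proof of Lemma~\ref{lemNeum1}, with the symmetric extension in place of the antisymmetric one and with Lemma~\ref{BanReg} substituted for Theorem~\ref{thmellreg}; this parallels exactly how Lemma~\ref{lemDBanach} was derived from Lemma~\ref{lem9}.

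The only genuinely delicate point — and the step I would flag as the main obstacle — is checking that symmetric reflection actually produces a \emph{weak} solution across the boundary, i.e.\ that no distributional term supported on $\partial M^+\times\Sigma$ is created. For the Dirichlet case the antisymmetric extension kills the boundary term because the test function vanishes on $\partial M$; here the mechanism is different: the symmetric extension of $f$ has vanishing normal derivative in the weak sense precisely because the original $f$ satisfies the weak Neumann condition, so that the "jump" of $\partial_\nu$ across $\partial M^+$ integrates to zero against every admissible $h$. Making this rigorous amounts to carefully tracking the Green's-formula boundary terms, exactly as in the finite-dimensional argument, and observing that all pairings remain well defined because $f,g\in L^p(M^+\times\Sigma)$ by Lemma~\ref{lem1}. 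Since every pairing used is the $L^2$ pairing on a product manifold and the reflection is an isometry, the Banach-target nature of $B=L^p(\Sigma)$ introduces no new difficulty, and the proof is otherwise routine.

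\begin{proof}
The reduction to the interior case is identical to the proof of lemma $\ref{lemNeum1}$, using the symmetric extension of $f$ to the double $M=M^-\cup_{\partial M^+}M^+$ and invoking lemma $\ref{BanReg}$ in place of theorem $\ref{thmellreg}$.
\end{proof}
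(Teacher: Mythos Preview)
Your proposal is correct and follows exactly the paper's approach: the paper's proof simply says the argument is the same as the Banach-valued Dirichlet case (lemma~\ref{lemDBanach}) but with the symmetric extension in place of the antisymmetric one, which is precisely what you do. Your reference to lemma~\ref{lemNeum1} together with lemma~\ref{BanReg} is an equivalent way to phrase the same reduction.
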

\begin{proof}
The argument is similar to lemma $\ref{lemDBanach}$.  The main difference is that now one uses the symmetric extension of $f$ to $M$ instead of the antisymmetric extension in lemma $\ref{lemDBanach}$.  
\end{proof}
We now state the regularity results for higher norms:
\begin{lemma}
Let $k\geq 2$, $p>1$.  Let $f\in L^p$ be a weak solution to $\Delta f=g$,  $\partial_\nu f=0$.  If $g\in L^p_k$, then $f\in L^p_{k+2}$. 
\end{lemma}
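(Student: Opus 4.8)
The plan is to argue by induction on $k$, with the base case supplied by the previous lemma (the statement for $g\in L^p$, which yields $f\in L^p_2$). Assume the result for every nonnegative exponent smaller than $k$ and let $f\in L^p(M^+;B)$ be a weak Neumann solution with $g\in L^p_k(M^+;B)$; applying the inductive hypothesis at level $k-1$ we already know $f\in L^p_{k+1}(M^+;B)$, so it remains to gain one derivative. With a partition of unity subordinate to a cover of $M^+$ one reduces to two models: on compact subsets of $M^+\setminus\partial M^+$ the interior estimate of Lemma \ref{BanReg} already gives $f\in L^p_{k+2}$, so the question is local near $\partial M^+$, where we use the collar isometry $M^+\cong\partial M^+\times[0,1)$ and write points as $(x,y)$ with $x$ tangential and $y\ge 0$ normal. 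There $\Delta=\Delta_{M^+}$ has the form $\partial_y^2+\Delta_x+(\text{first order, smooth coefficients})$, and the weak Neumann condition encodes $\partial_y f|_{y=0}=0$.

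The heart of the inductive step is that tangential derivatives of $f$ solve a Neumann problem with $k$ lowered by one. Since translation in a tangential direction preserves $M^+$ near the boundary and carries test functions $h\in C^\infty(M^+\times\Sigma)$ with $\partial_\nu h=0$ to such test functions, a tangential difference quotient $D_x^\tau f$ weakly solves $\Delta(D_x^\tau f)=D_x^\tau g+(\text{second order in }f\text{, smooth coefficients})$ with homogeneous Neumann condition; the right side is bounded in $L^p_{k-1}(M^+;B)$ uniformly in $\tau$ because $g\in L^p_k$ and $f\in L^p_{k+1}$, so the inductive hypothesis bounds $D_x^\tau f$ uniformly in $L^p_{k+1}(M^+;B)$ and, passing to a weak limit, $\partial_x f\in L^p_{k+1}(M^+;B)$ for each tangential direction. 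This controls every order-$(k+2)$ derivative of $f$ containing a tangential factor, and in particular $\Delta_x f$ and the first-order part of $\Delta f$ lie in $L^p_k$. The purely normal top derivatives are then recovered from the equation: $\partial_y^2 f=g-\Delta_x f-(\text{first order in }f)\in L^p_k(M^+;B)$, and differentiating this identity accounts for the remaining order-$(k+2)$ derivatives of $f$; combining with the tangential information and patching over the partition of unity gives $f\in L^p_{k+2}(M^+;B)$.

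The infinite-dimensionality of the target enters at exactly one place, the extraction of the weak limit of $D_x^\tau f$, which is legitimate because $L^p_{k+1}(M^+;B)$ is reflexive for $p>1$ (since $B=L^p(\Sigma)$ is). Everything else --- the base Neumann case, the interior estimate of Lemma \ref{BanReg}, the collar and commutator bookkeeping --- is already available in the Banach-valued framework, so the classical argument (cf.\ Lemma \ref{lemhigherDir} and \cite{Taylor1}) transfers with no essential change. I expect the only mildly delicate point to be the passage from control of all tangential derivatives to control of the purely normal ones via the equation, and this presents no difficulty beyond the closed-manifold case treated earlier.
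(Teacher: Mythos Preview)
Your proposal is correct and is precisely the standard induction-by-tangential-derivatives argument that the paper has in mind; the paper's own proof is a one-liner deferring to \cite{Taylor1} via Lemma~\ref{lemhigherDNeu}, and you have simply written out that argument. The only point you add beyond the scalar case is the reflexivity remark needed for the weak limit of the difference quotients, which is correct for $B=L^p(\Sigma)$.
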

\begin{proof}
This follows by induction from the previous lemma as in case of lemma $\ref{lemhigherDNeu}$.
\end{proof}
We consider now the inhomogeneous case of these equations.  As before, we focus on the Neumann case.  
\begin{lemma}
Let $p>1$ and $k \geq 1$.  There exists a continuous map 
\begin{equation}
T:L^p_k(M^+; B)\rightarrow L^p_{k+1}(M^+; B) 
\end{equation}
such that for every $f\in L^p_k(M^+; B)$ we have
\begin{equation}
  \partial_\nu T(f)=f_{|\partial M^+}
 \end{equation} 
\end{lemma}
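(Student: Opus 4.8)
The plan is to reduce the statement to its scalar counterpart, Theorem \ref{thm4}, by the same fibering argument over $\Sigma$ already used to prove Lemmas \ref{lemregBanach} and \ref{BanReg}. The point is that the operator $T$ of Theorem \ref{thm4} is, for each $k\ge 1$, a fixed bounded linear map $L^p_k(M^+)\to L^p_{k+1}(M^+)$ assembled from convolution against a scalar kernel, multiplication by scalar cutoffs, and a fixed partition of unity near $\partial M^+$ — operations that never see the target of the function — so it should suffice to apply $T$ on each fiber $\{\sigma\}\times M^+$. To make this precise one needs the analogue for a manifold with boundary of Lemma \ref{lem1}, namely
$$L^p_k(M^+;L^p(\Sigma))=L^p(\Sigma;L^p_k(M^+)).$$

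First I would establish this identification, exactly as in Lemma \ref{lem1}. The two norms agree on $C^\infty(M^+\times\Sigma)$: the $L^p_k(M^+;B)$-norm is a sum of $L^p(M^+)$-norms of the $\Sigma$-parametrized sections $\nabla^i_{M^+}f$, and interchanging the $M^+$- and $\Sigma$-integrations (Fubini, all exponents equal to $p$) produces the $L^p(\Sigma;L^p_k(M^+))$-norm. Density of $C^\infty(M^+\times\Sigma)$ in both spaces follows from the density Lemma \ref{lem12} together with the extension Lemma \ref{lem4}, just as in the closed case. Granting this, if $S\colon L^p_k(M^+)\to L^p_{k+1}(M^+)$ is bounded, then $f\mapsto\bigl(\sigma\mapsto S(f(\sigma))\bigr)$ is bounded from $L^p(\Sigma;L^p_k(M^+))$ to $L^p(\Sigma;L^p_{k+1}(M^+))$ with the same norm, since $\int_\Sigma\|S(f(\sigma))\|_{L^p_{k+1}(M^+)}^p\,d\sigma\le\|S\|^p\int_\Sigma\|f(\sigma)\|_{L^p_k(M^+)}^p\,d\sigma$.

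I would then take $S=T$ to be the operator of Theorem \ref{thm4} and let $\tilde T$ denote the induced fiberwise operator; by the previous paragraph it is the desired continuous map $L^p_k(M^+;B)\to L^p_{k+1}(M^+;B)$. For the Neumann trace identity, note that $\partial_\nu$ and restriction to $\partial M^+$ also act only in the $M^+$-directions, uniformly in $\sigma$, so $\partial_\nu\tilde T(f)(\sigma)=\partial_\nu T(f(\sigma))=f(\sigma)_{|\partial M^+}$ for a.e.\ $\sigma$, which says $\partial_\nu\tilde T(f)=f_{|\partial M^+}$ once both sides are read as elements of $L^p(\Sigma;L^p_k(\partial M^+))=L^p_k(\partial M^+;B)$ via Lemma \ref{lem1}. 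Alternatively, one could transcribe the local construction of Theorem \ref{thm4} verbatim, replacing the scalar integral $g(s,t)=-\int_\Ar K(s,t-\tau)f_{|\Ar}(\tau)\,d\tau$ by a Bochner integral and recovering $\|g\|_{L^p_2(H^2;B)}\le C\|f\|_{L^p_1(H^2;B)}$ by integrating the scalar bound over $\Sigma$; this produces the same $T$.

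The only genuine point to check is the manifold-with-boundary version of Lemma \ref{lem1} invoked in the second paragraph — that the Fubini interchange and the density of $C^\infty(M^+\times\Sigma)$ are not disturbed by the presence of $\partial M^+$ — and this is precisely what the extension Lemma \ref{lem4} is designed for. I would stress that no vector-valued singular-integral (UMD) theory is needed: every estimate is reduced to the scalar kernel bound of Theorem \ref{thm4} fiber-by-fiber over $\Sigma$, which works because $B=L^p(\Sigma)$ carries the same exponent $p$ as the base.
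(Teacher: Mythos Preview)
Your proposal is correct and follows exactly the paper's approach: the paper's proof is the single sentence ``Since $L^p_k(M^+; L^p(\Sigma))=L^p(\Sigma; L^p_k(M^+))$, we define $T:L^p(\Sigma; L^p_k(M^+))\rightarrow L^p(\Sigma; L^p_{k+1}(M^+))$ using theorem \ref{thm4},'' which is precisely your fibering argument. Your write-up simply fills in the details the paper leaves implicit, in particular the manifold-with-boundary analogue of Lemma~\ref{lem1} and the verification that the trace identity survives the fiberwise construction.
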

\begin{proof}
Since  $L^p_k(M^+; L^p(\Sigma))=L^p(\Sigma; L^p_k(M^+))$, we define $$T:L^p(\Sigma; L^p_k(M^+))\rightarrow L^p(\Sigma; L^p_{k+1}(M^+))$$ using theorem $\ref{thm4}$. 
\end{proof}
\begin{definition}
Let $f\in L^p(M^+; B)$, $r\in L^p_1(M^+; B)$   and $g\in L^1(M^+; B)$.  We say that $f$ is a weak solution to 
\begin{equation}
\Delta f=g  \text{ and } \partial_\nu f=r_{|\partial M^+}
\end{equation}
 if we have 
\begin{equation}
\langle f, \Delta h \rangle_{M^+} =\langle g,  h \rangle_{M^+}+\langle r,  h \rangle_{\partial M^+}
\end{equation} for all smooth $h\in C^\infty(M^+\times \Sigma)$ with $\partial_\nu h=0$.  
\end{definition}
We summarize the results for the inhomogenous Neumann problem with the following:
\begin{lemma}
Assume $k\geq 0$.  Let $f\in L^p_k$ be a solution to $\Delta f=g$, $\partial_\nu f=r_{|\partial M^+}$.  If $g\in L^p_k$, $r\in L^p_{k+1}$ then $f\in L^p_{2+k}$.  
\end{lemma}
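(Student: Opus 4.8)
The plan is to reduce this inhomogeneous Neumann statement to the \emph{homogeneous} Banach--valued Neumann regularity already established just above, by exactly the device used in the finite--dimensional Lemma~\ref{leminNeum}: subtract off a correction term that absorbs the boundary data.

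First I would apply the Banach--valued Neumann trace operator $T:L^p_k(M^+;B)\rightarrow L^p_{k+1}(M^+;B)$ constructed just above to the function $r$. Since $r\in L^p_{k+1}(M^+;B)$ (which is in the range of applicability, as $k+1\geq 1$), this yields $u:=T(r)\in L^p_{k+2}(M^+;B)$ with $\partial_\nu u=r_{|\partial M^+}$; in particular $\Delta u\in L^p_k(M^+;B)$. Setting $f':=f-u$, I would then verify that $f'$ is a weak solution of the \emph{homogeneous} Neumann problem $\Delta f'=g-\Delta u$, $\partial_\nu f'=0$ in the Banach--valued sense. Concretely, for a test function $h\in C^\infty(M^+\times\Sigma)$ with $\partial_\nu h=0$, Green's formula applied to the regular term $u$ gives
\begin{equation}
\langle u,\Delta h\rangle_{M^+}=\langle \Delta u,h\rangle_{M^+}+\langle r,h\rangle_{\partial M^+},
\end{equation}
so subtracting this from the weak equation $\langle f,\Delta h\rangle_{M^+}=\langle g,h\rangle_{M^+}+\langle r,h\rangle_{\partial M^+}$ for $f$ makes the boundary terms cancel and leaves $\langle f',\Delta h\rangle_{M^+}=\langle g-\Delta u,h\rangle_{M^+}$, which is precisely the weak homogeneous Neumann condition.

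Since $g-\Delta u\in L^p_k(M^+;B)$, I would finish by invoking the Banach--valued homogeneous Neumann regularity lemmas proved above --- the base case $g\in L^p\Rightarrow f'\in L^p_2$ together with the higher--norm versions, obtained inductively exactly as Lemmas~\ref{lemNeum1} and~\ref{lemhigherDNeu} are in the scalar setting, with the Banach interior result Lemma~\ref{BanReg} as the engine --- to conclude $f'\in L^p_{k+2}(M^+;B)$, and hence $f=f'+u\in L^p_{k+2}(M^+;B)$.

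The only step that requires genuine care is the transfer of Green's formula and of the homogeneous Neumann regularity to $B$--valued maps. As throughout this section, this is routine once one slices: by the Fubini--type identity $L^p_j(M^+;B)=L^p(\Sigma;L^p_j(M^+))$ (Lemma~\ref{lem1} together with the extension Lemma~\ref{lem4}), the scalar statements may be applied to $f(\sigma),g(\sigma),r(\sigma)$ for a.e.\ $\sigma\in\Sigma$, and because the elliptic estimates underlying Lemmas~\ref{lemNeum1} and~\ref{leminNeum} carry constants depending only on $(p,k,M^+)$ and not on $\sigma$, the resulting slicewise bound is uniform in $\sigma$ and reassembles to the $B$--valued conclusion. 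I do not expect any essential new difficulty beyond this bookkeeping: all the PDE content is already contained in the finite--dimensional Lemma~\ref{leminNeum} and in Lemma~\ref{BanReg}.
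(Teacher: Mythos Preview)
Your proposal is correct and follows exactly the approach the paper intends: the paper's own proof is the single sentence ``The proof is identical to the case when $B$ is finite dimensional,'' and your argument simply unwinds that reference by mimicking Lemma~\ref{leminNeum} with the Banach--valued trace operator $T$ and the Banach--valued homogeneous Neumann regularity in place of their scalar counterparts. Your additional remark about slicing via $L^p_j(M^+;B)=L^p(\Sigma;L^p_j(M^+))$ is in keeping with how the paper handles all of these Banach--valued lemmas.
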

\begin{proof}
The proof is identical to the case when $B$ is finite dimensional.
\end{proof}
Finally, we point out that the previous lemmas have an extension to the case $B=L^p(\Sigma)\oplus \Ar^N$.  In this case, we separate the finite dimensional part and treat it using the methods discussed above. \\\\
We now turn to regularity results for first order elliptic operators.  Consider some first order elliptic differential operator $$D:\Gamma(E)\rightarrow \Gamma(F)$$ acting on sections of bundles over $M$.  If we let $B_p=L^p(\Sigma)$, we get an induced operator 
$$D:L^p_k(M; B_p\otimes E)\rightarrow L^p_{k-1}(M; B_p \otimes F)$$ for all $k\geq 0$.  We have:

\begin{lemma}
Let $k\geq 1$.  Given $f\in L^p_k(M; B_p\otimes E)$, we have $$||f||_{L^p_{k}}\leq C(||Df||_{L^p_{k-1}}+||f||_{L^p_{k-1}})$$  Assume, $ Df\in L^q_{k-1}(M; B_q)$ and $f\in L^q_{k-1}(M; B_q)$ with $q>p$.  We have $f\in L^q_{k}(M; B_q)$.   

\end{lemma}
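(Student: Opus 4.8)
The plan is to reduce both assertions to their scalar analogues in Theorem \ref{thmellreg} via the Fubini-type identification of Lemma \ref{lem1}, applied fiberwise over $\Sigma$. The key point is that $D$ is a differential operator in the $M$-directions only, so after trivializing $E$ and $F$ locally and patching with a partition of unity, Lemma \ref{lem1} (and its obvious analogue with $B_p\otimes E$ in place of a scalar target) identifies $L^p_k(M;B_p\otimes E)$ with $L^p(\Sigma;L^p_k(M;E))$, with equivalent norms, and similarly for $F$; moreover for smooth $f$ one has $(Df)(\cdot,\sigma)=D\bigl(f(\cdot,\sigma)\bigr)$ pointwise in $\sigma$, and this persists on the completion by continuity of $D$ together with density of smooth functions.

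For the estimate, I would take $f\in L^p_k(M;B_p\otimes E)$ and use the identification above so that for a.e.\ $\sigma\in\Sigma$ the slice $f(\cdot,\sigma)$ lies in $L^p_k(M;E)$ with $Df(\cdot,\sigma)=D(f(\cdot,\sigma))$. Applying the scalar first-order elliptic estimate of Theorem \ref{thmellreg} to $f(\cdot,\sigma)$ gives, with $C$ independent of $\sigma$,
$$\|f(\cdot,\sigma)\|_{L^p_k(M)}\le C\bigl(\|Df(\cdot,\sigma)\|_{L^p_{k-1}(M)}+\|f(\cdot,\sigma)\|_{L^p_{k-1}(M)}\bigr).$$
Raising to the $p$-th power, integrating over $\Sigma$, using $(a+b)^p\le 2^{p-1}(a^p+b^p)$ and Minkowski's inequality, and then translating the resulting inequality for $L^p(\Sigma;L^p_k(M))$-norms back to $L^p_k(M;B_p)$-norms through Lemma \ref{lem1}, yields the claimed bound.

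For the regularity statement, I would apply Lemma \ref{lem1} with the exponent $q$ in place of $p$, so that $L^q_{k-1}(M;B_q)=L^q(\Sigma;L^q_{k-1}(M))$. The hypotheses then say that for a.e.\ $\sigma$ both $f(\cdot,\sigma)$ and $D(f(\cdot,\sigma))=(Df)(\cdot,\sigma)$ lie in $L^q_{k-1}(M)$, and in particular $D(f(\cdot,\sigma))\in L^q_{k-1}\subset L^q_{k-2}$. The second (regularity) part of Theorem \ref{thmellreg}, applied on the closed manifold $M$, then gives $f(\cdot,\sigma)\in L^q_k(M)$ for a.e.\ $\sigma$, together with $\|f(\cdot,\sigma)\|_{L^q_k}\le C(\|Df(\cdot,\sigma)\|_{L^q_{k-1}}+\|f(\cdot,\sigma)\|_{L^q_{k-1}})$. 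Raising to the $q$-th power and integrating over $\Sigma$ shows $\sigma\mapsto f(\cdot,\sigma)$ lies in $L^q(\Sigma;L^q_k(M))$, hence $f\in L^q_k(M;B_q)$ by Lemma \ref{lem1}. The case $B=L^p(\Sigma)\oplus\Ar^N$ is handled by splitting off the finite-dimensional summand and invoking Theorem \ref{thmellreg} for it directly, exactly as in the earlier lemmas of this section.

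The main obstacle is the bookkeeping of the fiberwise reduction: one must check that $\sigma\mapsto f(\cdot,\sigma)$ is a genuine strongly measurable $L^q_k(M)$-valued map with the asserted integrability, and that restriction to fibers commutes with $D$. Both follow by reducing to smooth $f$ via the density lemma of the previous subsection and passing to completions, but some care is needed because the conclusion is asserted at the higher exponent $q>p$: one should first select a representative of $f$ that is valid simultaneously in $L^p_k(M;B_p)$ and $L^q_{k-1}(M;B_q)$ before slicing, so that the fiberwise Theorem \ref{thmellreg} is being applied to the same function in both steps.
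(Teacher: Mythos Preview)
Your proposal is correct and uses the same key identification $L^p_k(M;B_p)=L^p(\Sigma;L^p_k(M))$ from Lemma~\ref{lem1}, but the implementation differs from the paper's. The paper first replaces $D$ by $D\oplus D^*$ to make it self-adjoint, then shifts by a generic constant $c$ so that $D+c:L^p_k(M)\to L^p_{k-1}(M)$ is an \emph{isomorphism}; applying the functor $L^p(\Sigma;-)$ then gives an isomorphism $L^p_k(M;B_p)\to L^p_{k-1}(M;B_p)$ at once, and the regularity statement follows by the same parametrix argument as in Lemma~\ref{lemregBanach}. Your approach instead applies the scalar estimate of Theorem~\ref{thmellreg} fiberwise in $\sigma\in\Sigma$ and then integrates.

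Both routes are valid. The paper's trick of forcing invertibility buys cleanliness: once $D+c$ is an isomorphism on $L^p_k(M)$, there is nothing to check about measurability, commutation of $D$ with restriction to slices, or selecting a common representative at two different exponents---the isomorphism transports everything automatically, and the regularity at exponent $q$ reduces (as in Lemma~\ref{lemregBanach}) to showing that a weak solution of $(D+c)h=0$ vanishes. Your approach is more elementary in that it does not require perturbing $D$, but you have correctly identified its cost: the measurability and ``same representative at $p$ and $q$'' bookkeeping must be done by hand, and while your density-and-completion sketch is the right idea, it is exactly the step the paper's formulation sidesteps.
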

\label{lem18}
\begin{proof}
By considering $D'=D\oplus D^*$, we may as well assume that $D$ is self-adjoint.  Furthermore, for a generic choice of $c\in \Ar$, $D+c$ is invertible  as a map $L^p_k\rightarrow L^p_{k-1}$.  As before, we identify $L^p_k(M;B_p\otimes E)=L^p(\Sigma; L^p_k(\Gamma(E)))$ and obtain the isomorphism 

$$D+c: L^p_k(M; B_p\otimes E)\rightarrow  L^p_{k-1}(M; B_p\otimes E)$$  The rest of the proof is identical to the proof of lemma $\ref{lemregBanach}$.
\end{proof}   
\noindent  \textbf{Remark.}  Let $B^0_p=L^p(N)\times \Ar^n$ and let $B_p=B^0_p\oplus B^0_p$ with complex structure $J$ that maps $(a,b)$ to $(-b,a)$.  We obtain the corresponding $\delbar$-operator $$L^p_k(T^2; B_p)\rightarrow L^p_{k-1}(T^2; B_p)$$ for all $k\geq 1$.  This example is a particular case of the previous construction where we take $D=\delbar$ and $E=\C^n$.  Note that $\delbar+1$ is an isomorphism on $L^p_k(T^2; B_p)$.  \\\\
\textbf{Remark.}  Let $D_R\subset \C$ be the closed disk of radius $R$. For any $k\geq 1$, let $L^p_{k;\Lag}(D_R; \C)$ be subspace of $L^p_{k}(D_R; \C)$ with imaginary values on $\partial D_R$.   Consider the operator $$\delbar: L^p_{k;\Lag}(D_R; \C)\rightarrow L^p_{k-1}(D_R;\C)$$ as discussed in \cite{MS}.  This operator is surjective with left inverse denoted by $T$.  Note that $$B_p=B^0_p\otimes_{\Ar}\C$$ and $ B^0_p\otimes_{\Ar}(i\Ar)$ is Lagrangian.   This implies that the operator $$\delbar: L^p_{k;\Lag}(D_R; B_p)\rightarrow L^p_{k-1}(D_R;B_p)$$ is surjective with a left inverse induced by $T$.

\section{The Moduli Space of Projectively Flat Connections on a Surface}
\label{secRep}
\subsection{Basic Construction}
Consider a compact Riemann surface $\Sigma$ with complex structure $j_\Sigma$ and Kahler metric $g_\Sigma$.  Let $E\rightarrow \Sigma$ be a 2-dimensional complex vector bundle with odd $$c_1(E)\in H^2(\Sigma; \mathbb{Z}) \cong \mathbb{Z}$$  Let $V \rightarrow \Sigma$ be the bundle of traceless endomorphisms of $E$.  Note that $V$ has structure group $SO(3)$.   We fix once and for all a unitary connection $\alpha_{det}$ on $det(E)$.  Once this choice of $\alpha_{det}$ is made, we have an identification between $SO(3)$-connections on $V$ and $U(2)$-connections on $E$ that induce $\alpha_{det}$ on $det(E)$.\\\\
Let $\GauS_E$ be the group of $U(2)$-gauge transformations of $E$ that descend to the identity on $det(E)$.  $\GauS$ has a natural induced action on the connections on $V$ and we let $\GauS\subset \GauS_V$ be its image.  We may identify the action of $\GauS$ with the action of $SO(3)$-gauge transformations on $V$ that lift to gauge transformations of $E$.      

\begin{definition}
$\Aff_{p,k}$ be the affine space of$SO(3)$-connections on $V$ completed with respect to the $L^p_k$-norm .  $\Aff_{p,k}$ is a space modeled on the space of traceless endomorhisms of $V$, $\Omega^0(\Sigma; \liea)$.     
\end{definition}

\begin{definition}
Let $\GauS_{p,k+1}$ be completion of $\GauS$ with respect to the $L^p_{k+1}$-topology.  $\GauS_{p,k+1}$ is a fibre bundle with fibre $SO(3)$.   
\end{definition}
We will assume that $p(k+1)>2$.   
$\GauS$ acts on $\Aff$ by $$g^*(\alpha)=\alpha+g^{-1}d_{\alpha}g$$
Following Atiyah and Bott \cite{AB}, we note that this action is Hamiltonian with moment map $$\Moment: \Aff_{p,k} \rightarrow \Omega^0_{p,k}(\Sigma; \liea)$$ given by $\Moment(\alpha)=*_2F_\alpha$, where $F_\alpha$ is the curvature of $\alpha$.  
\begin{lemma}
$\Moment$ is a smooth map $L^p_k\rightarrow L^p_{k-1}$ for $(k+1)p>2$.  Furthermore, $0\in \Omega^0(\Sigma; \liea)$ is a regular value of $\Moment$.   
\end{lemma}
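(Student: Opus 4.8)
The plan is to regard $\Moment$ as a degree‑two polynomial map after trivializing $\Aff_{p,k}$ by a smooth basepoint connection, and then to verify the regular value property by twisted Hodge theory on $\Sigma$, the essential point being that the oddness of $c_1(E)$ forbids reducible projectively flat connections.

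\textbf{Smoothness.} Fix a smooth $SO(3)$-connection $\alpha_0$ and write an arbitrary connection as $\alpha_0+a$ with $a\in L^p_k(\Omega^1(\Sigma;\liea))$; then $\Moment(\alpha_0+a)=*_2F_{\alpha_0}+*_2d_{\alpha_0}a+\tfrac12*_2[a\wedge a]$. The constant term lies in $L^p_{k-1}$ since $\alpha_0$ is smooth, the linear term $a\mapsto *_2d_{\alpha_0}a$ is a bounded operator $L^p_k\to L^p_{k-1}$ because $d_{\alpha_0}$ is first order with smooth coefficients, and the quadratic term $a\mapsto\tfrac12*_2[a\wedge a]$ is a bounded bilinear map $L^p_k\times L^p_k\to L^p_{k-1}$ by the Sobolev multiplication estimates of Section~\ref{Sobolev}, which on the surface $\Sigma$ hold precisely when $(k+1)p>2$ (for $k=1$, $p<2$ one uses $L^p_1\hookrightarrow L^{2p/(2-p)}$ and $L^{2p/(2-p)}\cdot L^{2p/(2-p)}\hookrightarrow L^{p/(2-p)}\hookrightarrow L^p$; larger $k$ and $p\ge 2$ are easier). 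Since each homogeneous component is bounded, $\Moment$ is a polynomial map of degree two between affine Banach spaces, hence $C^\infty$, with differential $d\Moment_{\alpha}(b)=*_2d_{\alpha}b$ at $\alpha=\alpha_0+a$.

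\textbf{The regular value.} Let $\alpha\in\Moment^{-1}(0)$, so $F_\alpha=0$. Since $\Moment$ is $\GauS$-equivariant and every flat $L^p_k$-connection is $L^p_{k+1}$-gauge equivalent to a smooth one (elliptic regularity in a Coulomb slice), it suffices to prove that $d\Moment_\alpha=*_2d_\alpha\colon L^p_k(\Omega^1(\liea))\to L^p_{k-1}(\Omega^0(\liea))$ is surjective with complemented kernel for smooth $\alpha$. Flatness gives $d_\alpha^2=F_\alpha\wedge(\cdot)=0$, so $(\Omega^\bullet(\Sigma;\liea_\alpha),d_\alpha)$ is an elliptic complex with finite-dimensional twisted cohomology $H^i_\alpha$; the cokernel of $d_\alpha\colon\Omega^1\to\Omega^2$ is $H^2_\alpha$, and Poincar\'e duality on the surface (the Killing form identifying $\liea_\alpha\cong\liea_\alpha^\ast$) gives $H^2_\alpha\cong(H^0_\alpha)^\ast$. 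Now $H^0_\alpha=\ker(d_\alpha\colon\Omega^0\to\Omega^1)$ is the Lie algebra of the $\GauS$-stabilizer of $\alpha$; because $c_1(E)$ is odd there are no reducible projectively flat connections on $E$ \cite{AB}, so this stabilizer is trivial, $H^0_\alpha=0$, and therefore $H^2_\alpha=0$. Consequently $d_\alpha\colon\Omega^1\to\Omega^2$ is onto and $\Delta_\alpha|_{\Omega^2}=d_\alpha d_\alpha^\ast$ is invertible; explicitly, given $g\in L^p_{k-1}(\Omega^0)$ one sets $\eta=*_2^{-1}g$ and $b=d_\alpha^\ast\Delta_\alpha^{-1}\eta\in L^p_k(\Omega^1)$, so that $*_2d_\alpha b=g$. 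The Hodge decomposition $L^p_k(\Omega^1)=\ker(d_\alpha)\oplus d_\alpha^\ast\big(L^p_{k+1}(\Omega^2)\big)$ supplies a closed complement to $\ker d\Moment_\alpha$, so $d\Moment_\alpha$ is a split surjection and $0$ is a regular value.

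\textbf{Main obstacle.} The genuinely nontrivial input is the vanishing $H^0_\alpha=0$: the regular value statement is reduced to the topological fact that odd $c_1(E)$ forces every projectively flat connection to be irreducible. A secondary technical point is running twisted Hodge theory at the low regularity $(k+1)p>2$, which is why the preliminary reduction to a smooth gauge representative — legitimate by equivariance of $\Moment$ — is convenient.
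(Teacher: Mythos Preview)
Your approach matches the paper's: write $\Moment$ as a degree-two polynomial in $a=\alpha-\alpha_0$ and deduce surjectivity of $D\Moment_\alpha=*_2d_\alpha$ from irreducibility of flat $\alpha$. Your regular-value argument is the same as the paper's, just unpacked (you spell out the Hodge/Poincar\'e-duality reason $H^2_\alpha\cong(H^0_\alpha)^*=0$, where the paper simply asserts that irreducible implies $*_2d_\alpha$ onto).

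There is, however, a gap in your smoothness argument. The hypothesis $(k+1)p>2$ allows $k=0$, $p>2$, and this is precisely the case the paper isolates as the nontrivial one. Your multiplication estimate covers only $k\ge 1$: the embedding $L^p_1\hookrightarrow L^{2p/(2-p)}$ you invoke has no counterpart at $k=0$, and the target $L^p_{k-1}=L^p_{-1}$ is a negative-index space. What is needed is that the bilinear map $(a,b)\mapsto *_2[a\wedge b]$ lands in $L^p_{-1}=(L^{p^*}_1)^*$, i.e.\ that the trilinear pairing $L^p\cdot L^p\cdot L^{p^*}_1\to L^1$ is bounded; this uses $L^{p^*}_1\hookrightarrow L^{2p^*/(2-p^*)}$ on the test-function factor together with $\tfrac{2}{p}+\tfrac{2-p^*}{2p^*}<1$. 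That computation is exactly what the paper supplies. Once you add it, your polynomial-map framework handles all $(k,p)$ with $(k+1)p>2$.
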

\begin{proof}
Let us assume that $k=0$, as the other cases are easier.   We examine the nonlinear part of the moment map that sends $\alpha$ to $\alpha \wedge \alpha$. Let $\frac{1}{p^*}+ \frac{1}{p}=1$.  In dimension 2, we have the embedding $L^{p^*}_1\rightarrow L^{\frac{2p^*}{2-p^*}}$ and $$L^p\cdot L^p \subset L^{p/2}$$  Therefore, $\alpha \wedge \alpha$ defines an element of $L^p_{-1}$ by the pairing $$L^p\cdot L^p \cdot L^{\frac{2p^*}{2-p^*}} \subset L^{1}$$ since $$\frac{2}{p}+\frac{2-p^*}{2p^*}=\frac{1}{p}+\frac{1}{2}<1$$  
If $\Moment(\alpha)=0$ then $\alpha$ is flat and thus must be irreducible.  This implies that $$D\Moment_\alpha=*_2 d_\alpha$$ is surjective.  
\end{proof}
Let $\Flat \subset \Aff$ be the set of projectively flat connections.  
\begin{definition}
Let $\Mod=\Moment^{-1}(0)/\GauS=\Aff // \GauS$
\end{definition}
In fact, $\Mod$ is a compact Kahler manifold (see \cite{AB}).  $\Mod$ has a concrete description in terms of representations of $\pi_1(\Sigma)$.  Pick a point $p\in \Sigma$.  The space $\Mod$ is the space of representations $\pi_1(\Sigma-p)\rightarrow SU(2)$ that have holonomy $-I$ around $p$, modulo conjugation by $SU(2)$.  If we pick a standard homology basis  $\{\alpha_i \}_{i=1}^g,\{\beta_i \}_{i=1}^g$ , we may identify $\Mod$ with the space 
\begin{equation}
\{ g_i\in SU(2), h_i\in SU(2)|\Pi_{i=1}^g[g_i,h_i]=-I \}/SU(2)
\end{equation}
We now describe a convenient local parametrization for $\Flat$ and $\Mod$.  Fix a flat connection $\alpha_0$.  Consider the map 
\begin{equation}
\label{Feq1}
\tilde{F}: \Aff_{p,k} \rightarrow \Omega^0_{p,k-1}(\Sigma; \liea)\oplus  \Omega^0_{p,k-1}(\Sigma; \liea) \oplus \harmo^1_{\alpha_0}(\Sigma)
\end{equation}
where $\tilde{F}=\Moment\oplus d_{\alpha_0}^*\oplus \Pi_0$ and $\Pi_0$ is the projection to the finite dimensional space of harmonic $(d_{\alpha}^*+d_{\alpha})$-forms.  This map is a diffeomorphism near $\alpha_0$ and gives us a local parametrization of $\Mod$ and $\Flat$.  The linearization of $\tilde{F}$ at a point $\alpha\in \Aff$ gives us the map $$D\tilde{F}_\alpha:\Omega^1_{p,k}(\Sigma; \liea)\rightarrow \Omega^0_{p,k-1}(\Sigma; \liea)\oplus \Omega^0_{p,k-1}(\Sigma; \liea) \oplus \harmo^1_{\alpha_0}(\Sigma)$$    
An important technical point that will be useful in establishing regularity is that for each $\alpha\in L^p$, $D\tilde{F}_\alpha$ extends to a continuous map $$L^q\rightarrow L^q_{-1}$$ for all $q \geq p^*$. This is immediate for $\Pi_0$ and $d_0^*$ and thus we need only treat the component given by $\mu$.  Since $$D\mu_a(v)=d_0 v+2[\alpha,v]$$ the continuity of the extension is a consequence of the fact that $$L^p\cdot L^q \rightarrow L^q_{-1}$$
To justify this embedding we argue as follows.  By definition, $L^{q}_{-1}=(L^{q^*}_{1})^*$ where $1/q+1/(q^*)=1$.  Thus, to justify the embedding we need to establish that $$L^p\cdot L^q\cdot L^{q^*}_1\rightarrow L^1$$  This follows by direct computation.  Alternatively, recasting this claim in the notation of section $\ref{Sobolev}$, we must establish $$L^{1/\overline{p}}\cdot L^{1/\overline{q}} \cdot L^{1/(1-1/\overline{q})}_1 \rightarrow L^1$$ where ${\overline{p}}=1/p$ and ${\overline{q}}=1/q$.  First assume $q>2$.  We have $ L^{1/(1-\overline{q})}_1\rightarrow  L^{1/(1-\overline{q}-1/2)}$ and
$$L^{1/\overline{p}}\cdot L^{1/\overline{q}}\cdot  L^{1/(1-\overline{q})}_1 \rightarrow L^{1/r}$$  where $r=\overline{p}+\overline{q}+1-\overline{q}-1/2=\overline{p}+1/2<1 $ since $p>1/2$.   Now take $q < 2$.  In this case $L^{q^*}_1\subset L^\infty$ and thus $$L^{1/\overline{p}}\cdot L^{1/\overline{q}} \subset L^{1/(\overline{p}+\overline{q})}\subset L^1$$ as long as $\overline{p}+\overline{q}\leq 1$.     The case $q=2$ is similar.

We also note that the map $$D\tilde{F}:\Omega_{p,0}^0(\Sigma)\rightarrow End(\Omega_{q,0}^0(\Sigma; \liea)_{q,0},\Omega_{q,-1}^0(\Sigma; \liea)\oplus \Omega_{q,-1}^0(\Sigma; \liea) \oplus \harmo^1_{\alpha_0}(\Sigma))$$ is a smooth map of $\alpha$.  Indeed, $D\tilde{F}$ is linear and continuous in $\alpha$ and therefore smooth.  To abstract the situation, let $V_p=L^p(\Sigma)$ be the space of $L^p$-sections of some bundle over $\Sigma$.  We say that a map $$T:V_p\rightarrow End(V_p,V_p)$$ is compatible with the underlying $L^q$-structure if it extends to a smooth map    $$T:V_p\rightarrow End(V_q,V_q)$$ for $p^*\leq q $.    
\label{sectLq}         
\subsection{The Canonical Lagrangian}
\label{Canlag}
Given a symplectic manifold $(M,\omega)$ with a Hamiltonian group action $G$ and corresponding moment map $$\mu:M\rightarrow Lie(G)$$ one may form the symplectic reduction at $0$ by $$M//G=\mu^{-1}(0)/G$$ Provided $G$ acts freely on $\mu^{-1}(0)$, $(M//G,\omega_{M//G})$ inherits a symplectic structure from $M$.  Let $(M//G) ^-$ denote the symplectic manifold with the opposite form. There is a canonical Lagrangian $$\Lag=\mu^{-1}(0)\subset (M//G) ^-\times M$$ defined as the set of pairs $([m],m)$.  Here, $[m]=mG$ denotes the orbit of $m$.  This general construction applies to the case of interest where $M=\Aff$ and $G=\GauS$.  
\begin{definition}
Let $\Lag_{p,k} \subset \Mod\times \Aff_{p,k}$ be the set of pairs $([\alpha],\alpha)$ where $\alpha$ is a flat connection on $\Sigma$.  
\end{definition}   
The goal of the present section is to construct a convenient choice of local charts for $\Lag$. \\\\
Let $U \subset  \harmo^1_{\alpha_0}$ be an open ball around the origin.  If $U$ is sufficiently small, we have a local diffeomorphism 
\begin{equation}
\label{lineqrep}
f:U \rightarrow \Mod
\end{equation}
 around a point $[\alpha_0]\in \Mod$.  We will let $j_0$ denote the induced complex structure on $U$.  Let $\Flat \subset \Aff$ denote the submanifold of flat connections.  We have that $\tilde{F}$ from equation $(\ref{Feq1})$ gives us a local identification of $\Flat$ with an small open ball $$V_{p,k}\subset \Omega^0_{p,k-1}(\Sigma)\oplus \harmo^1_{\alpha_0} $$ Let  $J=(j_0,*_\Sigma)$ denote the product complex structure on $U\times \Aff_{p,k}$. If we restrict the inverse of $F$ to $V_{p,k}$, we obtain a local embedding of the canonical Lagrangian $$G:V_{p,k} \rightarrow  U \times \Aff_{p,k}$$ where $G(v)=([F^{-1}(v)],F^{-1}(v))$.  We may extend $G$ to obtain a local chart for $  U \times \Aff_{p,k}$ by the map $$H:V_{p,k}   \oplus V_{p,k} \rightarrow U \times \Aff_{p,k} $$ that sends $$(u,v) \mapsto  G(u)+J(G(v))$$   This provides a local identification of $\Lag$ with $(u,0)$ and that along $\Lag$ the induced complex structure sends $(u,v)$ to $(-v,u)$.  As in the previous section, $DH$ preserves the $L^q$-structure on $V_{p,k} \oplus V_{p,k}$ in the case $k=0$ and extends to a smooth mapping between these spaces.

\section{A Priori Estimates}
\label{apriori}
\subsection{ASD Equation/J-Curve Equations}
Let us setup some basic conventions.  Let $X$ be a smooth oriented 4-manifold with metric $g_X$.  In this work we will be interested in $X \subset \C \times \Sigma$ with the product metric.   We will use $(x,y)$ for the local coordinates on $\Sigma$ and $(s,t)$ as coordinates on $\C$.   We have the Hodge star operator in dimension 4:
\begin{gather*}
*_4(dxdy)=dsdt\\
*_4(\alpha dt)=(*_2\alpha)ds\\
*_4(\alpha ds)=-(*_2\alpha)dt
\end{gather*}

 If $A$ is an $SO(3)$-connection, we have the gauge group action:
\begin{equation}
g^*(\nabla
_A)s=g^{-1}\nabla_A(gs)=g^{-1}(d+A)gs)=ds+g^{-1}dgs+g^{-1}Ags=\nabla_A s+g^{-1}\nabla_A g
\end{equation}
On $\C \times \Sigma$, we can decompose our connection $A$ as  $$A=\alpha+\phi ds+\psi dt$$ and its curvature as 
\begin{equation}
\begin{split}
F_A= & F_\alpha-\partial_t\alpha dt-\partial_s \alpha ds  +(d_2 \phi +[\alpha,\phi])ds+(d_2 \psi +[\alpha,\psi])dt \\
&+ (\partial_s\psi  -\partial_t \phi +[\phi,\psi])ds dt
\end{split}
\end{equation}
The anti-self-duality (ASD) equation 
\begin{equation}
\label{eq1}
F_A+*_4F_A=0 
\end{equation}
 becomes the pair of equations

\begin{equation}
\label{ASDcomp}
\begin{split}
\partial_s \alpha -d_\alpha \phi +*_{2}(\partial_t \alpha -d_\alpha \psi)= &0 \\
*_2F_\alpha+\partial_s\psi -\partial_t \phi+ [\phi,\psi]=&0
\end{split}
\end{equation}

In general, the energy of a connection $A$ is defined as 
\begin{equation}
\label{eq2}
\frac{1}{2}\int_{X}|F_A|^2d\mu_X
\end{equation}
where $d\mu_X$ is the volume element associated to $g_X$. On a closed 4-manifold $X$, the second Chern class is given by the formula 
\begin{equation}
c_2(P)=\frac{1}{8\pi^2}\int_Xtr(F_A^2)=\frac{1}{8\pi^2}\int_Xtr((F^+_A)^2)+tr((F^-_A)^2)==\frac{1}{8\pi^2}\int_X(|F^-_A|^2-|F^+_A|^2)d\mu_X
\end{equation}
where we use the convention that $|D|^2=tr(D^*D)=-tr(D^2)$ for any skew-hermitian endomorphism $D$.  Thus, for an ASD connection $A$, we have 
\begin{equation}
c_2(P)=\frac{1}{8\pi^2}\int_X|F_A|^2d\mu_X
\end{equation}

Let $\Mod=\Mod(\Sigma)$ be the representation variety as in section $\ref{secRep}$ and let let $D$ be the open unit disk.  A holomorphic curve $u:D\rightarrow \Mod$ with $C^0$ small image may be lifted to a map $$\alpha:D\rightarrow \Aff $$ satisfying 
\begin{equation}
\label{jcurveeq}
\begin{split}
\partial_s \alpha -d_\alpha \phi +*_{2}(\partial_t \alpha -d_\alpha \psi)= &0 \\
F_\alpha= &0 \\
d^*_{\alpha_0}(\alpha-\alpha_0)= &0
\end{split}
\end{equation}
where $\alpha_0=\alpha(0)$.
This is a consequence of the inverse function theorem applied to the map from equation $(\ref{lineqrep})$.  It is instructive to compare equation $(\ref{jcurveeq})$ to $(\ref{ASDcomp})$.

\subsection{Matching Boundary Conditions}
\label{MatchB}
Let $B_R(p) \subset \C$ be the closed disk of radius $R$ centered at $p$ and let $$H^+=\{(s,t)\in \C| s \geq 0  \}$$ be the positive half-plane.  We define $D_R(p)$ as $H^+\cap B_R(p)$.  Perhaps it is more natural to use $D_R^+$ as notation.  However, since we will work mostly on the positive half plane we drop the $+$ for simplicity. Let $D_R^-(p)$ be the reflection of this disk in the $t$-axis.   In general, $\partial D_R=I_R\cup S_R$ where $$I_R=\{(0,t)\in D_R \}$$ and $$S_R=\{(s,t)\in D_R(p)| s^2+t^2=R^2\}$$    We will often drop $p$ from the notation when the $p$ does not change in a particular discussion.  The \textbf{interior} of a disk $D_R$ is the set of points with $s^2+t^2 <R^2$ and will be denoted by $\mathring{D}_R$.  \\\\
Consider a holomorphic map $$u: D_R^- \rightarrow \Mod $$ and an ASD connection $A$ on $D_R \times \Sigma$. For each $t\in I_R$, we may restrict $A$ to the slice $(0,t)\times \Sigma$.  This gives us a map $$R_{A}:I_R\rightarrow \Aff(\Sigma)$$
\begin{definition}
The pair $(u,A)$ is said to be matched if at each $t\in I_R$, $u(0,t)=[R_{A}(t)]$ where $[R_{A}(t)]$ denotes the gauge orbit of $R_{A}(t)$.
\end{definition}
Note that this is precisely the condition that $(u(0,t),R_A(t))\in \Lag$ for each $t$.  If $A$ was a holomorphic curve, this would amount to a Lagrangian boundary condition for the pair $$(\tilde{u},A):D_R\rightarrow \Aff(\Sigma)$$ where $\tilde{u}(s,t)=u(-s,t)$.   For convenience, we will refer to the pair $(u,A)$ as defined on $D_R\times \Sigma$.  

\subsection{Statement of the Result}
 Let $f_{\pm}$ be the functions defined by $$f_+(s,t,x,y)=F_A(s,t,x,y)$$ and $$f_-(s,t,x,y)=|d u(-s,t)|$$  We will also make use of $$e:D_R \rightarrow \Ar $$ defined by $e(s,t)=e^+(s,t)+e^-(s,t)$ where $$e^+(s,t)=\frac{1}{2}\int_\Sigma |f_+(s,t,x,y)|^2d\mu_{\Sigma}$$ and $$e^-(s,t)=\frac{1}{2}f_-(s,t)^2$$  
with $$\partial_s e =\int_{\Sigma}\langle f_+ ,\nabla_s f_+\rangle +f_- \partial_sf_-$$ 
The estimates below will keep track of the radius $R$.  We will always assume that all the constants do not depend on $R$ or the choice of functions $f_{\pm}$.  The following result is key in our proof of compactness for matched pairs.  

\begin{theorem}
\label{thm2.1}
There exists $\hbar,C>0$ with the following property.  Let $(u,A)$ be a matched pair on some $D_R \times \Sigma$.  If $$\Energy(u,A)< \hbar$$ then 
$$e(p)\leq C\Energy(u,A) R^{-2}$$
\end{theorem}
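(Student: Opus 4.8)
The plan is to prove this "$\epsilon$-regularity" estimate by the standard elliptic bootstrapping-plus-contradiction scheme adapted to the matched-pair setting, where the main novelty is that the PDE is split across two domains — a holomorphic curve equation on $D_R^-$ and the ASD equation on $D_R^+ \times \Sigma$ — glued along the Lagrangian matching condition on $I_R$. First I would reduce to a fixed scale: by the rescaling $(s,t)\mapsto (Rs,Rt)$ (which multiplies the energy density by $R^2$ and leaves $\mathcal{E}$ invariant up to the conformal weight of the four-dimensional integral), it suffices to prove that there exist $\hbar, C>0$ so that a matched pair on $D_1^+\times\Sigma$ with $\mathcal{E}(u,A)<\hbar$ satisfies $e(0)\le C\,\mathcal{E}(u,A)$. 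So the real content is an estimate at a fixed scale, and the $R^{-2}$ in the statement is just the scaling weight.

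The core is a mean-value / monotonicity-type inequality for $e$. I would first establish that $e$ satisfies a differential inequality of the form $\Delta e \ge -c\,e^2 - c\,e$ in the interior (combining the Weitzenböck/Bochner formula for ASD connections on the $D_R^+\times\Sigma$ side — using the second Bianchi identity and the curvature decomposition written out in Section~\ref{apriori} — with the standard subharmonicity estimate $\Delta e^- \ge -c (e^-)^2$ for the energy density of a $j$-holomorphic curve on the $D_R^-$ side), together with a matching boundary condition along $I_R$ that lets these two inequalities be combined into one on the doubled disk. Here the key input is that along $I_R$ the pair lies on the canonical Lagrangian $\Lag$, and the local chart $H$ of Section~\ref{Canlag} — in which $\Lag$ is the "real" subspace and the complex structure is the standard one $(u,v)\mapsto(-v,u)$ — lets one reflect the holomorphic-curve data across $I_R$ and match it to the reflected ASD data, so that $e(s,t) := e^+(s,t)+e^-(s,t)$ (with $e^-$ built from $du(-s,t)$) is weakly subharmonic up to the quadratic error, with no uncontrolled boundary term on $I_R$. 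Given such a sub–mean-value inequality, one runs the usual argument: if $\int e$ is small, the quadratic term is absorbable (Moser iteration, or the Heinz-type trick of comparing with $\Delta G = -cG^2$), yielding $e(0)\le C\int_{D_1^+} e = C\,\mathcal{E}(u,A)$.

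To make the Bochner/subharmonicity step rigorous one needs a priori interior regularity of $(u,A)$, i.e. local smoothness; this is where I would invoke the regularity results of Section~\ref{secreg} (the Dirichlet/Neumann elliptic theory for Banach-valued functions and the $\delbar$ and $d^*+d^+$ estimates), applying them in a gauge — e.g. Coulomb/Uhlenbeck gauge relative to $\alpha_{det}$, valid since the local energy is small — so that $A$ and $u$ can be taken smooth on a slightly smaller disk with norms controlled by the energy. The matching condition is preserved by gauge transformations in $\GauS$, so choosing the gauge does not disturb the Lagrangian boundary condition. The main obstacle, I expect, is precisely this gluing of the two subharmonicity inequalities across $I_R$: one must show that the Lagrangian matching forces the normal derivatives of $e^+$ and $e^-$ to fit together (so that no positive boundary contribution appears when integrating $\Delta e$ against a test function), and this requires carefully exploiting the structure of the chart $H$ — that $D\Moment$ and the reflection $J$ intertwine the ASD equations \eqref{ASDcomp} on the half-plane with the $j$-curve equations \eqref{jcurveeq} — rather than just the abstract fact that $\Lag$ is Lagrangian. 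A secondary technical point is that $e^+$ is an integral over $\Sigma$ of a pointwise density, so "subharmonic" must be interpreted after integrating out the $\Sigma$ directions, which is where the product structure of the metric on $\C\times\Sigma$ and the Banach-valued elliptic theory of Section~\ref{BanahEst} are used.
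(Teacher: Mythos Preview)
Your overall architecture---Bochner/Weitzenb\"ock inequality for $e$, control of the boundary contribution via the matching, then a Heinz-type continuity argument---matches the paper's. But two points deserve correction or sharpening.

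\textbf{The rescaling reduction is not valid.} You propose to rescale $(s,t)\mapsto(Rs,Rt)$ and reduce to $R=1$. This works for the $J$-curve side, but on the ASD side the domain is $D_R^+\times\Sigma$ with the \emph{product} metric, and $\Sigma$ has fixed size. Rescaling only the $(s,t)$ directions does not send the ASD equation on $D_R^+\times\Sigma$ to the ASD equation on $D_1^+\times\Sigma$: the Hodge star $*_4$ mixes the two factors, and the resulting equation on the unit half-disk depends on $R$. Concretely, the Weitzenb\"ock term coming from the curvature of $\Sigma$ produces a linear term $C'|f_+|^2$ whose coefficient does not scale. The paper therefore does \emph{not} rescale; it tracks the $R$-dependence explicitly throughout (see the integration-by-parts lemma and the Green's-function argument in \S\ref{apriori}), and the continuity argument at the end is run on $D_R$ for variable $R$.

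\textbf{The boundary contribution does not vanish; it is cubic.} You write that reflection in the chart $H$ makes $e$ weakly subharmonic ``with no uncontrolled boundary term on $I_R$.'' The actual situation is more delicate: there is a nonzero normal derivative of $e$ on $I_R$, and the key lemma is the pointwise estimate
\[
|\partial_s e(0,t)| \le C\,e(0,t)^{3/2}.
\]
The paper proves this (\S\ref{secnormal}) by putting $A$ in a temporal gauge along $I_R$, lifting $u$ to a flat path $\alpha^-$ with $d_{\alpha^-}^*\partial_t\alpha^-=0$, and computing that both $\partial_s e^+$ and $-\partial_s e^-$ equal $\int_\Sigma tr(\partial_t\alpha\wedge\partial_t^2\alpha)$ for the respective connections. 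The matching $g_t^*\alpha^-=\alpha^+$ is then used to interpolate the two by a flat family $\tilde\alpha(t)$ on $[-1,1]\times\Sigma$, and Stokes' theorem converts the boundary difference $\int_{\partial([-1,1]\times\Sigma)}tr(\partial_t\tilde\alpha\wedge\partial_t^2\tilde\alpha)$ into a cubic bulk term $\int tr(\partial_t\tilde\alpha)^3$, bounded by $Ce^{3/2}$ via an $L^2_{3/2}$ extension estimate for $\xi=\partial_t g\,g^{-1}$. So the mechanism is not ``reflection cancels the boundary term'' but rather ``the boundary term is one order higher in $e$ and can be absorbed.'' This cubic bound is then fed into an explicit Green's-function computation on the half-disk (with case analysis on the position of the center relative to $I_R$), not a Moser iteration; the $e^{3/2}$ boundary term is handled there alongside the $e^2$ and $e^{1/2}v$ interior terms.

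Your identification of the matching as the main obstacle is correct, and the final continuity step you sketch is exactly the paper's. But you should drop the rescaling and instead carry $R$ through the estimates, and you should aim for the precise normal bound $|\partial_s e|\le Ce^{3/2}$ rather than a reflection that makes the boundary disappear.
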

The rest of the section is devoted to the proof of this theorem.

\subsection{Weitzenb\"{o}ck Formulae}
Given an $SO(3)$-connection on a 4-manifold $X$, we have the Weitzenb\"{o}ck formula
$$\nabla_A^*\nabla_A F_A+\{F_A, R_X\}+\{F_A,F_A\}=(d_A^*d_A+d_Ad_A^*) F_A$$
where the brackets denote some pointwise multiplication and the term $R_X$ depends only on the metric of $X$.   
See \cite{Lawson} for a detailed discussion.  What is important for our purposes is that the order zero terms are at most quadratic in $F_A$.  
If $F_A$ is ASD, we have that in particular  $F_A$ satisfies the Yang-Mills equation  $$d_A^* F_A=0$$  and therefore 
\begin{equation}
\nabla_A^*\nabla_A F_A=-\{F_A\,R_X\}-\{F_A,F_A\}
\label{eq2.2.0}
\end{equation}

An application of the Weitzenb\"{o}ck formula (see \cite{survey book} for the holomorphic curve case) leads to pointwise estimates:
\begin{equation}
\label{eq2.2.1}
\Delta_4 |f_+|^2 \leq C'(|f_+|^2+|f_+|^3)-|\nabla_A F_A|^2\leq  C'(|f_+|^2+|f_+|^3)
\end{equation}
\begin{equation}
\Delta_2 f_-^2=\Delta_4 f_- ^2 \leq C'(f_-^2+f^{4}_-)- |du|^2 \leq C'(f_-^2+f^{4}_-)
\end{equation}
For a given nonnegative function $g$,  the relation $$\Delta g^2=2g\Delta g -|\nabla g|^2 \leq 2 g\Delta g $$ leads to 
\begin{equation}
\Delta_4 |f_+| \leq C'(|f_+|+|f_+|^2)
\end{equation}
\begin{equation}
\Delta_2 f_-=\Delta_4 f_- \leq C'(f_-+f^{3}_-)
\end{equation}
Integration of ($\ref{eq2.2.1}$) on $\Sigma$ gives
$$\Delta_2\int_\Sigma |f_+|^2d\mu_{\Sigma} \leq C'(\int_\Sigma |f_+|^2d\mu_{\Sigma}+v(s,t)(\int_\Sigma |f_+|^2d\mu_{\Sigma})^{1/2})$$
where $$v(s,t)=(\int_\Sigma |f_+|^{4}(s,t)d\mu_{\Sigma})^{1/2}$$  is obtained from the Cauchy-Schwartz inequality:  
$$\int_\Sigma |f_+|^3d\mu_{\Sigma}\leq (\int_\Sigma |f_+|^2d\mu_{\Sigma})^{1/2} (\int_\Sigma |f_+|^4d\mu_{\Sigma})^{1/2} $$
Thus, we have 
\begin{equation}
\label{eq2.2.2}
\Delta_2 e\leq 2C'(e+e^2+e^{1/2}v)
\end{equation}

\subsection{Normal Estimates}
\label{secnormal}
So far, we have not used the matching boundary conditions and thus the  ASD connection and the holomorphic curve do not interact.  In this section we will demonstrate how the matching boundary conditions lead to a normal estimate for $e$.  In fact, we have the following result:
\begin{lemma}
For each $(0,t)\in D_R$ we have
$$\frac{1}{2}|\partial_s e|=|\int_\Sigma \langle f_+(0,t),\nabla_s f_{+}(0,t) \rangle+f_-(0,t) \partial_s f_{-}(0,t)| \leq C e^{3/2}(0,t)$$
\end{lemma}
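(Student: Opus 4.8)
The plan is to exploit the matching boundary condition, which forces the two "normal" quantities—the normal derivative $\nabla_s f_+$ of the curvature at the boundary slice and the normal derivative $\partial_s f_-$ of $|du|$—to be controlled in terms of tangential data, after which one invokes the quadratic/cubic nonlinear structure already recorded in the Weitzenb\"ock estimates. Concretely, along $I_R$ the pair $(u(0,t),R_A(t))$ lies on the Lagrangian $\Lag$, so the restriction $\alpha(0,t):=R_A(t)$ of the connection is a flat connection on $\Sigma$ whose gauge orbit is $u(0,t)$. I would use the local chart $H:V_{p,0}\oplus V_{p,0}\to U\times\Aff_{p,0}$ from Section \ref{Canlag} to write, in a neighborhood of a boundary point, both $u$ and $\alpha$ in terms of a single pair of functions $(a(s,t),b(s,t))$ with $b\equiv 0$ along $s=0$ (this is exactly the content of the sentence "This provides a local identification of $\Lag$ with $(u,0)$"). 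The ASD equation \eqref{ASDcomp} for $A$ and the $J$-holomorphic equation \eqref{jcurveeq} for $u$ then become a single $\delbar$-type system for $(a,b)$, which along $s=0$ is a genuine Lagrangian boundary-value problem of the form treated in the last Remark of Section \ref{BanahEst}.

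The key steps, in order, are: (1) pass to the chart $H$ and rewrite everything as a first-order elliptic system $\partiale(a,b)=Q(a,b)$ on a half-disk, where $Q$ is the quadratic error from the nonlinearity of the moment map and of $DH$, with Lagrangian boundary condition $b|_{s=0}=0$; (2) observe that the boundary condition, differentiated tangentially, gives $\partial_t b(0,t)=0$, and then use the equation itself (which near $s=0$ expresses $\partial_s b$ in terms of $\partial_t a$ plus quadratic terms, and $\partial_s a$ in terms of $\partial_t b$ plus quadratic terms) to convert all normal derivatives appearing in $\partial_s e$ into tangential derivatives of the boundary restriction plus order-zero quadratic/cubic terms; (3) express $\int_\Sigma\langle f_+(0,t),\nabla_s f_+(0,t)\rangle$ and $f_-(0,t)\partial_s f_-(0,t)$ through these substitutions, noting that the genuinely "linear-in-normal-derivative" pieces either vanish (because $f_\pm$ itself is the magnitude of a quantity that is forced to be small/zero to leading order at the boundary) or reorganize into a total tangential derivative whose $\Sigma$-integral is innocuous, leaving only terms that are cubic in $(f_+,f_-)$; (4) bound the cubic remainder pointwise in $t$ by $Ce^{3/2}(0,t)$ using Cauchy--Schwarz on $\Sigma$ (as in the derivation of \eqref{eq2.2.2}) together with the elliptic estimate of Lemma \ref{lem18}/Lemma \ref{lemregBanach} to absorb the one derivative that the chart error costs. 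One should remember here that $e=e^++e^-$ and that both $e^+$ and $e^-$ are quadratic in the basic fields, so each cubic term really is $O(e^{3/2})$.

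The main obstacle I expect is step (2)–(3): making precise \emph{why} the would-be leading term $\int_\Sigma\langle f_+,\nabla_s f_+\rangle$ is not merely $O(e\cdot\|\nabla_s f_+\|)$—which is only quadratic times a derivative and a priori not controlled by $e^{3/2}$—but genuinely cubic. The point must be that on the boundary slice the ASD equation and the matching condition together force the \emph{ASD connection's normal derivative to be tangentially determined}: the first line of \eqref{ASDcomp} reads $\partial_s\alpha - d_\alpha\phi = -*_2(\partial_t\alpha - d_\alpha\psi)$, and on $s=0$ the right side is the $\Sigma$-covariant data of the flat boundary connection, so $\nabla_s f_+$ at $s=0$ is, modulo gauge, controlled by tangential derivatives of $R_A$ and hence—because $R_A(t)$ traces out a path inside the finite-dimensional $\Lag$—by quantities already of size $O(e^{1/2})$. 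Feeding that back in turns the pairing into $O(e^{1/2})\cdot O(e^{1/2})\cdot O(e^{1/2})=O(e^{3/2})$. Pinning down this gauge-fixing argument cleanly (choosing the slice gauge $d^*_{\alpha_0}(\alpha-\alpha_0)=0$ compatibly along $I_R$, and checking the error terms from $\phi,\psi$ are themselves $O(e)$) is the delicate part; once it is done, the rest is the routine Cauchy--Schwarz estimate on $\Sigma$ plus the Sobolev multiplication embeddings \eqref{eq13}–\eqref{eqemb2} of Section \ref{Sobolev}.
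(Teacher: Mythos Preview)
Your outline correctly isolates the difficulty---why the pairing $\int_\Sigma\langle f_+,\nabla_s f_+\rangle + f_-\partial_s f_-$ should be cubic rather than quadratic-times-derivative---but the mechanism you propose to resolve it is not the one that works, and as stated it contains a real gap.

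The claim in your last paragraph that $\nabla_s f_+|_{s=0}$ is ``controlled by tangential derivatives of $R_A$'' and hence $O(e^{1/2})$ because $R_A(t)$ lives in a finite-dimensional space is not right on two counts. First, $R_A(t)$ lies in the \emph{infinite}-dimensional space $\Flat\subset\Aff$; only its projection $[R_A(t)]$ is finite-dimensional. Second, and more seriously, $\nabla_s f_+$ at the boundary is governed by \emph{second} $t$-derivatives of $\alpha$, not first. Concretely, in a temporal gauge (parallel transport along $t$, then $s$) one has $\phi\equiv 0$, $\psi|_{s=0}=0$, $F_\alpha|_{s=0}=0$, and the first ASD equation gives $\partial_s\partial_t\alpha=-*_2\partial_t^2\alpha$ on $s=0$; from this one computes
\[
\partial_s e^+(0,t)=\int_\Sigma \mathrm{tr}(\partial_t\alpha\wedge\partial_t^2\alpha),\qquad -\partial_s e^-(0,t)=\int_\Sigma \mathrm{tr}(\partial_t\alpha^-\wedge\partial_t^2\alpha^-),
\]
each of which is only of order (first derivative)$\cdot$(second derivative) and is \emph{not} individually $O(e^{3/2})$. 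So neither ``vanishes to leading order'' nor ``is a total tangential derivative'' applies to the separate pieces.

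What the paper does---and what your proposal is missing---is a \emph{cancellation via Stokes' theorem on a 3-manifold}. One writes $\alpha=g^*\alpha^-$ along $s=0$, extends $g$ to $\tilde g$ on $Y_0=[-1,1]\times\Sigma$ with controlled $L^2_{3/2}$-norm (so that $\|\partial_t\tilde\alpha\|_{L^3(Y_0)}\le Ce^{1/2}$), and sets $\tilde\alpha=\tilde g^*\alpha^-$. Because $\tilde\alpha$ is flat, $d_{\tilde\alpha}\partial_t\tilde\alpha=0$ and hence $d_{\tilde\alpha}\partial_t^2\tilde\alpha=-2\partial_t\tilde\alpha\wedge\partial_t\tilde\alpha$; Stokes then gives
\[
\int_{\partial Y_0}\mathrm{tr}(\partial_t\tilde\alpha\wedge\partial_t^2\tilde\alpha)
=-2\int_{Y_0}\mathrm{tr}(\partial_t\tilde\alpha\wedge\partial_t\tilde\alpha\wedge\partial_t\tilde\alpha).
\]
The left side is exactly $\partial_s e(0,t)$ (the two boundary pieces are the $\pm$ contributions above), while the right side is manifestly cubic and bounded by $C\|\partial_t\tilde\alpha\|_{L^3(Y_0)}^3\le Ce^{3/2}$. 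This Chern--Simons-type identity, together with the gauge extension, is the missing idea; without it your steps (2)--(3) cannot close.
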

The proof of this lemma will occupy the rest of this section. Since our estimate is local in $(s,t)$, we may assume that our disk $D_R$ is centered at the origin.  The size of the radius is not relevant and we can set it to $R=1$.    We begin by constructing a convenient gauge for $A=\alpha+\phi ds+\psi dt$.  First, we fix $\alpha(0,0)$.  Note that our matching condition implies that $\alpha(0,0)$ is flat and thus, in view of the compactness of $\Mod$, we can choose any such  $\alpha(0,0)$ to have a uniformly bounded $C^4$-norm.  We now construct a particular gauge for $A$ on $D_{1}\times \Sigma$.  For this, let us use $A$ to parallel transport from $(0,0)\times \Sigma$ to $(0,t)\times \Sigma$ for any $t\in (-1,1)$.  Now, extend to $D_1\times \Sigma$ along the $s$-direction.  In these coordinates, $\phi=0$ on $D_1\times \Sigma$ and $\psi=0$ on $(0,t)\times \Sigma$.  Equation $\ref{eq1}$ implies that
\begin{equation} 
\label{eq3}
\partial_s \alpha+*_2(\partial_t \alpha-d_{\alpha}\psi)=0
\end{equation}
and 
\begin{equation} 
*_2F_{\alpha}+\partial_s \psi =0
\end{equation} with $F_{\alpha}=0$ on $(0,t)\times \Sigma$.  This implies that $\partial_s \psi =0$ on $(0,t)\times \Sigma$.  Therefore, $$\partial_s (d_{\alpha}\psi)=d_{\alpha}\partial_s \psi+(\partial_s \alpha) \psi=0$$ on $(0,t)\times \Sigma$.  Now, the energy $(\ref{eq2})$ is given by $$|\partial_t\alpha-d_{\alpha}\psi|^2+|F_{\alpha}|^2$$ for an ASD-connection. Note that since $\nabla_s=\partial_s $ and $F_{\alpha}=0$ on $(0,t)\times \Sigma$ it follows that $$\partial_s \frac{1}{2}\int_\Sigma |f^+|^2d\mu_\Sigma=\langle \partial_s F_{A} , F_A \rangle_\Sigma = 2\langle \partial_t \alpha, \partial_t \partial_s \alpha \rangle_{\Sigma} $$ on $(0,t)\times \Sigma$.  We may apply $\partial_t$ to $(\ref{eq3})$ on $(0,t)\times \Sigma$ to obtain $$*_2\partial_t^2 \alpha + \partial_s \partial_t \alpha=0$$ Therefore, we obtain on $(0,t)\times \Sigma$ $$\partial_s \int_\Sigma |f^+|^2d\mu_\Sigma=-\langle \partial_t \alpha, *_2 \partial_t^2 \alpha \rangle_\Sigma=\int_{\Sigma} tr(\partial_t \alpha \wedge \partial_t^2 \alpha)$$  
We now establish normal estimates on $D_1^-$. We let $\alpha^-(0,0)=\alpha^+(0,0)$.  Along $(0,t)$ we lift $[\alpha^-(0,t)]$ to $\alpha^-(0,t)\in \Aff$ by requiring that $$d^*_{\alpha^-}\partial_t \alpha^-=0$$  
Now, we extend to $D^-$ by lifting along line segments in the $s$-direction with the requirement that $d^*_{\alpha^-}(\partial_s \alpha^-)=0$.  Thus, for sufficiently small neighborhood of $(0,0)$, we have a local lift such that
\begin{equation}
\label{eq4}
 (\partial_s \alpha^--d_{\alpha^-}\phi)+*_2(\partial_t \alpha^- -d_{\alpha^-}\psi)=0
\end{equation}
 The local existence of such a lift follows from existence of ODE as in \cite{DK}, Chapter 6. Since $F_{\alpha^-}=0$ on $D^-$, we have $$d_{\alpha^-}\partial_t \alpha^-=d_{\alpha^-}\partial_s \alpha^-=0$$ on $D^-$.  On $(0,t)$, applying $d_{\alpha^-}$ to $(\ref{eq4})$ we obtain that $d_{\alpha^-}^*d_{\alpha^-} \psi=0$ which implies that $\psi=0$ since $H^0_{\alpha^-}(\Sigma; \liea)=0$. Similarly, applying $d^*_{\alpha^-}$ on $(s,t)\times \Sigma$ we obtain that $\phi=0$ on $(s,t)\times \Sigma$. Therefore, along $(0,t)$ the energy is given by $$e^-=\int_\Sigma |\partial_t \alpha^-|^2$$ while,  $$\partial_s \partial_t\alpha^-+*_2\partial_t^2 \alpha^-=0$$  We have $$-\partial_s e^-= \int_{\Sigma}\langle \partial_t \alpha^-, \partial_t \partial_s \alpha^- -d_{\alpha^-} \partial_s \psi \rangle =  \int_\Sigma \langle \partial_t \alpha^-, \partial_t\partial_s \alpha^- \rangle $$ since $d^*_{\alpha}\partial_t \alpha^-=0$ on $(0,s)$.   We conclude that $$-\partial_s e^-=\int_\Sigma tr(\partial_t \alpha^- \wedge \partial_t^2 \alpha^-)$$ 
For the rest of this section we focus on connections defined on $\Sigma$ parametrized by points in $(0,t)$.  Thus, we will for instance write $\alpha(0,t)$ as $\alpha(t)$.  Take $t\in (-\ep, \ep)\subset (-1,1)$.  We will obtain a $t$-parameter family of flat connections $\tilde{\alpha}(t)$ on $Y_0=[-1,1]\times \Sigma$ with $\tilde{\alpha}(t)_{1 \times \Sigma}=\alpha(t)$ and  $\tilde{\alpha}(t)_{- 1 \times \Sigma}=\alpha^{-}(t)$. \\\\
First, by the matching condition, $\alpha(t)=g^*(t)\alpha^-(t)$.  Taking derivatives, we obtain that 
\begin{equation}
\label{eq5}
\partial_t \alpha= g^{-1}(\partial_t \alpha^-) g + g^{-1}(d_{\alpha^-}\xi) g
\end{equation}

 with $\xi = \partial_t g g^{-1}$.  

Now, for each $t\in (-\ep, \ep)$, we define the extension $\tilde{\xi}(t)$ on $Y_0=[-1,1]\times \Sigma$ with  $\tilde{\xi}_{-1\times \Sigma}(t)=0$, $\tilde{\xi}_{1\times \Sigma}(t)=\xi(t)$  and $$||\tilde{\xi}(t)||_{L^2_{3/2};Y_0}\leq C||{\xi}(t)||_{L^2_1; \Sigma}$$  The existence of such an extension is easy to deduce.  Indeed, we have the surjective restriction map $$\mathfrak{R}:L^2_{3/2}([-1,1]\times \Sigma)\rightarrow L^2_{1}(\partial [-1,1]\times \Sigma )$$  Taking a left inverse to $\mathfrak{R}$ provides such an extension for all $t\in (-\ep, \ep)$.   To extend the gauge transformation $g(t)$ to $Y_0$, we set $\tilde{g}(t)$ to be the unique solution to $\partial_t \tilde{g}(t)=\tilde{\xi}(t)\tilde{g}(t) $ with $\tilde{g}(0)=1$.  \\\\
By pullback, we can regard $\alpha^-(t)$ as a connection on $Y_0$,  Let $\tilde{\alpha}(t)=\tilde{g}^*(t)\alpha^-(t)$ for each $t\in (-\ep, \ep)$. 
By Stokes theorem, 
$$\int_{\partial ([-1,1]\times \Sigma)}tr(\partial_t \tilde{\alpha}\wedge \partial_t^2 \tilde{\alpha})= $$

$$\int_{[-1,1]\times \Sigma}d tr(\partial_t \tilde{\alpha}\wedge \partial_t^2 \tilde{\alpha})= -\int_{[-1,1]\times \Sigma} tr(\partial_t \tilde{\alpha}\wedge 2\partial_t \tilde{\alpha}\wedge \partial_t \tilde{\alpha} )$$

This follows from the fact that $$d_{\tilde {\alpha}} \partial_t \tilde{\alpha}=0$$ since $\tilde{\alpha}$ is flat.  Now, applying $\partial_t$ we obtain $$d_{\tilde \alpha} \partial_t^2 \tilde{\alpha}=-2\partial_t \tilde{\alpha}\wedge \partial_t \tilde{\alpha}$$ as desired.  We claim that $||\partial_t  \tilde{\alpha}(0)||_{L^3;Y_0}\leq C e^{1/2}(0,0)$ for some uniform $C$.  By equation $\ref{eq5}$, $$||\partial_t \tilde{\alpha}(0)||_{L^3;Y_0}\leq C(||\partial_t \alpha^-(0)||_{L^3;\Sigma}+||d_{\alpha^-}\tilde{\xi}(0)||_{L^3;l \Sigma})$$  We have $ ||\partial_t \alpha^-(0)||_{L^3} \leq C e^{1/2}(0,0)$ from the definition and our choice of lift. Now, $$||d_{\alpha^-}\tilde{\xi}(0)||_{L^3}\leq C ||d_{\alpha^-}\tilde{\xi}(0)||_{L^2_{1/2}}\leq C'||\tilde{\xi}(0)||_{L^2_{3/2}}\leq C''||{\xi(0)}||_{L^2_{1}} $$  On the other hand, $||{\xi(0)}||_{L^2_{1}}\leq C e^{1/2}(0,0)$ since we have the bound $$||\partial_t \alpha(0)||_{L^2; \Sigma}\leq e^{1/2}(0)$$ while $H^0_{\alpha^-}(\Sigma; \liea)=0$ and $\alpha^-(0)$ can be taken to vary in a precompact set in say the $C^2$-norm.  Thus, we obtain that $$\partial_t e(0,0)=\int_{\partial [-1,1]\times \Sigma}tr(\partial_t \tilde{\alpha}(0)\wedge \partial_t^2 \tilde{\alpha}(0))\leq C e^{3/2}(0,0)$$  

\subsection{Integration By Parts}
First, we recall some basic Sobolev embedding and restriction results in dimension 2:  

\begin{lemma}
Consider an $L^2_1$ function g on $D_{2R}(p)$, vanishing near the boundary given by $S_{2R}$.  For any $q\in [1,4]$ and large $C'$, we have:

$$||g||_{L^q,D_{R}(p)} \leq C' R^{2/q}||\nabla g||_{L^2, D_{2R}(p)}$$
$$||g||_{L^q, \partial D_{R}(p)} \leq C'R^{1/q} ||\nabla g||_{L^2, D_{2R}(p)}$$
\end{lemma}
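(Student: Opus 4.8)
The plan is to reduce both inequalities to the standard Sobolev embedding and trace theorems on a fixed unit disk, by means of an even reflection across $I_{2R}$ followed by a rescaling. First I would use the hypothesis that $g$ vanishes near $S_{2R}$ to reflect $g$ evenly across the segment $I_{2R}$, obtaining a function $\hat{g}$ on the full disk $B_{2R}(p)$. Even reflection preserves $L^2_1$-regularity across a hyperplane (this is precisely the mechanism behind the extension Lemma \ref{lem4}), and the corner of $D_{2R}(p)$ where $I_{2R}$ meets $S_{2R}$ is harmless because $g$ vanishes in a neighborhood of it. By construction $\hat g$ is $L^2_1$ on $B_{2R}(p)$, vanishes near $\partial B_{2R}(p)$, restricts to $g$ along $I_R$ and along $S_R$, and satisfies $\|\nabla \hat g\|_{L^2, B_{2R}(p)} = \sqrt{2}\,\|\nabla g\|_{L^2, D_{2R}(p)}$, together with the obvious comparison of its $L^q$-norms over $B_R(p)$ and over the curves $\partial B_R(p)$, $I_R$ with those of $g$.

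Next I would rescale to unit size. Setting $\hat g_R(z) = \hat g(p+Rz)$ produces a function on $B_2(0)$ that vanishes near $\partial B_2(0)$. The point particular to dimension two is that the Dirichlet energy is scale invariant: $\|\nabla \hat g_R\|_{L^2, B_2(0)} = \|\nabla \hat g\|_{L^2, B_{2R}(p)}$. Meanwhile $\|\hat g\|_{L^q, B_R(p)} = R^{2/q}\|\hat g_R\|_{L^q, B_1(0)}$ and, for a curve $\Gamma$ among $\partial B_R(p)$ and $I_R$ rescaling to $\Gamma'$, $\|\hat g\|_{L^q, \Gamma} = R^{1/q}\|\hat g_R\|_{L^q, \Gamma'}$. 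These identities produce exactly the factors $R^{2/q}$ and $R^{1/q}$ in the statement, so it remains only to prove the two inequalities at $R=1$, with $C'$ absorbing all fixed constants.

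At unit scale I would argue as follows. Since $\hat g_R$ vanishes near $\partial B_2(0)$, the Poincar\'e inequality gives $\|\hat g_R\|_{L^2, B_2(0)} \le C\|\nabla \hat g_R\|_{L^2, B_2(0)}$, hence $\|\hat g_R\|_{L^2_1, B_2(0)} \le C\|\nabla \hat g_R\|_{L^2, B_2(0)}$. For the area estimate, H\"older's inequality on the finite-measure disk $B_1(0)$ reduces the range $q\in[1,4]$ to the single exponent $q=4$, and $L^2_1(B_2(0)) \hookrightarrow L^4(B_1(0))$ by the two-dimensional Sobolev embedding $L^2_1 \hookrightarrow L^q$ valid for all finite $q$ (the borderline case of Theorem \ref{thm1}); composing the two bounds gives the first inequality. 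For the boundary estimate, the curve $\partial D_1(0) = I_1 \cup S_1$ has finite length, so the same H\"older reduction leaves only $q=4$; and the restriction of $L^2_1(B_2(0))$ to any fixed smooth curve sitting inside $B_2(0)$ is continuous into $L^2_{1/2}$ of that curve (the trace theorem recorded above, applied to $B_1(0)$ for the arc $S_1 \subset \partial B_1(0)$, and to an interior chord through $B_2(0)$ for $I_1$), while $L^2_{1/2} \hookrightarrow L^4$ in one dimension; composing again gives the second inequality. Undoing the rescaling and the reflection then yields both claims.

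The only real subtlety is the first step: ensuring that the even reflection genuinely lands in $L^2_1(B_{2R}(p))$ despite the non-smooth corner of the half-disk. This is exactly where the hypothesis ``vanishing near $S_{2R}$'' is used --- with it, the reflection is essentially the one carried out in Lemma \ref{lem4}, localized away from the corner, and no delta-function term appears on $I_{2R}$. Everything after that is bookkeeping of scaling weights together with direct appeals to the embedding and trace theorems already in hand, the single genuinely two-dimensional ingredient being the conformal weightlessness of $\|\nabla g\|_{L^2}$.
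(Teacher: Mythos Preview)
Your proposal is correct and follows essentially the same route as the paper: reduce to unit scale using the conformal invariance of $\|\nabla g\|_{L^2}$ in dimension two, then invoke Sobolev embedding (and, for the boundary term, the trace map) together with the compact-support hypothesis to replace the full $L^2_1$-norm by the Dirichlet energy alone. The paper's own argument is terser---it works directly on the half-disk and simply cites ``compact support $+$ Sobolev''---whereas you spell out the even reflection across $I_{2R}$ to land on a full disk, the Poincar\'e step, and the trace $L^2_1\to L^2_{1/2}\hookrightarrow L^4$ on the boundary curve; these are exactly the details the paper leaves implicit, not a different strategy.
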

\begin{proof}
First, let $R=1$.  Since $g$ is assumed to have compact support on $D_2(p)$, we have by the Sobolev embedding $L^2_1\subset L^q$ (see  \cite{Taylor1}):
$$||g||_{L^q, D_{1}(p)} \leq C' ||\nabla g||_{L^2, D_{2}(p)}$$
$$||g||_{L^q, \partial D_{2}(p)} \leq C' ||\nabla g||_{L^2, D_{2R}(p)}$$ for some $C'>0$.
Now, if we scale $g$ to $g_R$  on a disk of radius $R$, note that  $||\nabla g||_{L^2}$ is scale invariant.  On the other hand $$||g||_{L^q, D_{1}(p)}=R^{-2/q}||g_R||_{L^q, D_{R}(p)}$$ and $$||g||_{L^q, \partial D_{1}(p)}=R^{-1/q}||g_R||_{L^q, \partial D_{R}(p)}$$
\end{proof}
Given a matched pair on some $D_{2R}$ we define the integral energy as 
$$E_{2R}=\int_{D_{2R}}e$$

Here is the key result which allows one to bound the $L^2_1$-norm of $f_{\pm}$ in terms of energy:
\begin{lemma}
There exists $C,C'>0$ with the following property. If $e(s,t) \leq C R^{-2}$ on $D_{2R}$, we have $$||\nabla_Af_+||^2_{D_R }\leq C'R^{-2}E_{2R}$$
$$||\nabla f_-||_{D_R}^2\leq C'R^{-2}E_{2R}$$  
\end{lemma}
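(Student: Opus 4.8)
The plan is to run the standard Moser / mean-value-inequality argument for the subsolutions $|f_+|$ and $f_-$, but with the key improvement that the matching boundary condition supplies the \emph{normal} bound $|\partial_s e| \le C e^{3/2}$ on $I_R$ proved in the previous lemma. Concretely, I would work on $D_{2R}$ with the pair normalized so that $e \le C R^{-2}$ pointwise (this hypothesis we are given). First I would introduce a cutoff $\beta$ supported in $D_{2R}$, equal to $1$ on $D_R$, with $|\nabla\beta| \le C/R$ and $\partial_s\beta = 0$ on $I_R$ — such a $\beta$ exists because near $I_R$ one may take $\beta$ to depend only on $(s^2+t^2)$ away from $I_R$, or more simply take $\beta = \beta_1(s^2+t^2)$ which automatically has vanishing normal derivative along $\{s=0\}$. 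Then I would multiply the differential inequality $\Delta_4 |f_+|^2 \le C'(|f_+|^2 + |f_+|^3) - |\nabla_A F_A|^2$ (equation \eqref{eq2.2.1}) by $\beta^2$, integrate over $D_{2R}\times\Sigma$, and integrate by parts.

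The integration by parts produces three kinds of terms. The bulk term $\int \beta^2 \langle \nabla_A F_A, \nabla_A F_A\rangle$ is exactly what we want to bound; the cross term $\int \beta\,\nabla\beta\, (\cdots)$ is absorbed using Young's inequality at the cost of $\int |\nabla\beta|^2 |f_+|^2 \le C R^{-2} E_{2R}$; the zeroth-order terms $\int \beta^2 (|f_+|^2 + |f_+|^3)$ are controlled because $|f_+|^2 \le 2e \le CR^{-2}$, so $|f_+|^3 \le CR^{-2}|f_+|^2$ and the whole thing is $\le C R^{-2} E_{2R}$ after possibly shrinking the constant $C$ in the hypothesis $e \le CR^{-2}$ so that this term can be absorbed into the left side. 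The one genuinely new contribution is the boundary term over $I_R \times \Sigma$, which is $\int_{I_R\times\Sigma} \beta^2 \partial_\nu |f_+|^2$; here $\partial_\nu$ is $-\partial_s$ (outward normal to $D_R$ at $s=0$). This is precisely where the normal estimate enters: $\big|\int_{I_R\times\Sigma}\beta^2\,\partial_s |f_+|^2\big| \le \int_{I_R}\beta^2 |\partial_s e^+| \le C\int_{I_R} e^{3/2}$, and since $e \le CR^{-2}$ this is $\le C R^{-1}\int_{I_R} e$, which by the 2-dimensional trace/scaling lemma (the one just proved, applied to a cutoff of $e$, or directly to $e^{1/2}$) is $\le C R^{-1}\cdot R^{-1} E_{2R} = CR^{-2}E_{2R}$. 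The same computation run verbatim on $\Delta_2 f_-^2 \le C'(f_-^2 + f_-^4) - |du|^2$ gives the bound on $\|\nabla f_-\|_{D_R}$, using that along $I_R$ the matching condition means $e^-$ and $e^+$ satisfy compatible normal derivatives (the lemma bounds $|\partial_s e|$, hence both $|\partial_s e^+|$ and $|\partial_s e^-|$ up to the other, and in fact the proof of the normal lemma handles $e^-$ directly).

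The main obstacle, and the point requiring care, is the boundary term: one must make sure the cutoff $\beta$ can simultaneously be chosen with $\partial_s\beta=0$ on $I_R$ (so that integrating $\Delta_4(\beta^2|f_+|^2)$ by parts does not generate an uncontrolled $\int_{I_R}\beta\,\partial_s\beta\,|f_+|^2$ term) \emph{and} have $|\nabla\beta|\le C/R$, and that the trace of $e$ on $I_R$ is estimated with the correct power of $R$. The cleanest route is to integrate by parts in two stages — first in the $\Sigma$ directions (no boundary, $\Sigma$ closed), then in the $(s,t)$ directions over $D_{2R}$ — so that the only spatial boundary is $\partial D_{2R} = I_{2R}\cup S_{2R}$, with the $S_{2R}$ part killed by $\beta$ and the $I_{2R}$ part (in fact $I_R$, since $\beta$ localizes) handled by the normal lemma as above. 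A secondary subtlety is that $\Delta_4|f_+|^2$ versus $\Delta_2 e^+$: one should integrate the pointwise inequality \eqref{eq2.2.1} over $\Sigma$ first to reduce to a 2-dimensional problem for $e^+$ on $D_{2R}$, then the elliptic-estimate/integration-by-parts step is genuinely 2-dimensional and the scaling lemma applies directly. Once the boundary term is in hand, the rest is the routine absorption argument, and choosing the constant $C$ in the hypothesis small enough relative to $C'$ closes it.
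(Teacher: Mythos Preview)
Your overall plan (cutoff, integrate by parts, use the normal lemma for the $I_R$ boundary term, absorb) is the same as the paper's, but there is one genuine gap. You write that ``$|f_+|^2 \le 2e \le CR^{-2}$, so $|f_+|^3 \le CR^{-2}|f_+|^2$'', and use this to control the cubic term. This is false: $e(s,t)=\tfrac12\int_\Sigma |f_+|^2 + \tfrac12 f_-^2$ is already integrated over $\Sigma$, so the hypothesis $e\le CR^{-2}$ gives only $\int_\Sigma |f_+|^2(s,t)\le CR^{-2}$, not a pointwise bound on $|f_+|$. (Your argument is fine for $f_-$, where indeed $f_-^2\le 2e$.) The paper handles $\int \rho_R^2 |f_+|^3$ the hard way: Cauchy--Schwarz over $D_{2R}\times\Sigma$ gives $(\int |f_+|^2)^{1/2}(\int (\rho_R f_+)^4)^{1/2}$, then the four-dimensional embedding $L^2_1\subset L^4$ together with Kato's inequality $|\nabla|f_+||\le|\nabla_A f_+|$ bounds the second factor by $C(\|\nabla_A(\rho_R f_+)\|^2+\|\rho_R f_+\|^2)$. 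The prefactor $(\int |f_+|^2)^{1/2}\le E_{2R}^{1/2}$ is small, so this term is \emph{absorbed} into the left side rather than bounded outright.

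Two smaller points. First, the normal lemma only controls $|\partial_s e|=|\partial_s e^+ +\partial_s e^-|$, not each piece separately; you should add the two boundary integrals before invoking it (your parenthetical acknowledges this, but the argument as written treats them separately). Second, your claimed bound $\int_{I_R} e \le R^{-1}E_{2R}$ is not available a priori: the trace of $e$ on $I_R$ is itself controlled only through $\|\nabla(\rho_R f_\pm)\|$, so the boundary term must also be absorbed (with the small constant coming from $e^{1/2}\le C^{1/2}R^{-1}$), not bounded directly by $CR^{-2}E_{2R}$. The paper does exactly this: the $I_R$ contribution is estimated by $C^{1/2}C''(\|\nabla_A(\rho_R f_+)\|^2+\|\nabla(\rho_R f_-)\|^2)$ and moved to the left for $C$ small.
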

\begin{proof}
Let $0\leq \rho \leq 1$ be a radially symmetric bump function supported on $B_{2R}$ such that $\rho =1$ on $B_R$.  We can construct $\rho$ once and for all on $B_2$ and extend to $\rho_R$ for all $B_R$ by dilation.  We have a uniform bound on $||\rho_R||_{C^0}$ and $||\nabla \rho_R||_{C^0} \leq C'R^{-1}$ on $B_{2R}$.   Consider now $f_-$.  We  multiply
$$\nabla^* \nabla f_-=\Delta_2 f_- \leq C'(f_-+f_-^3)$$ on both sides by $\rho^2_Rf_-$ and integrate over $D_{2R}$ to obtain $$\langle \rho^2_Rf _-, \nabla^*  \nabla f_- \rangle_{D_{2R}}\leq C'\int_{D_{2R}}(\rho^2_Rf_-^2+\rho^2_Rf_-^4) $$ Now, we move $\nabla^*$ to the LHS obtaining 
 $$\langle\nabla (\rho^2_Rf _-),   \nabla f_- \rangle_{D_{2R}}\leq C'\int_{D_{2R}}(\rho^2_Rf_-^2+\rho^2_Rf_-^4) -\int_{I_R}\rho^2_R f_- \partial_s f_-$$ 
We move a $\rho_R$ to the RHS:
$$\langle\nabla (\rho^2_Rf _-),   \nabla f_- \rangle_{D_{2R}}-\langle\nabla (\rho_Rf _-),   \nabla (\rho_R f_- )\rangle_{D_{2R}}=-\langle\nabla (\rho_R)f _-,   \nabla (\rho_R) f_- \rangle_{D_{2R}}$$
Thus, we obtain after adjusting $C'$:
$$||\nabla (\rho_R f_-)||^2 \leq C'R^{-2}||f_-||^2_{D_{2R}}+C'\int_{D_{2R}}(\rho^2_Rf_-^2+\rho^2_Rf_-^4) -\int_{I_R}\int_{\Sigma} \rho_R^2 f_- \partial_s f_-$$
Finally, we use the hypothesis that $f_-\leq CR^{-2}$ to obtain the bound 
$$||\nabla (\rho_R f_-)||^2 \leq C'R^{-2}E_{2R}-\int_{I_R}\int_{\Sigma} \rho^2_R f_-  \partial_s f_-$$ after adjusting $C'$ once more.  \\\\
Now, we turn to $f_+$.   We multiply both sides of equation ($\ref{eq2.2.0}$) by $\rho_R^2f_+$ and proceeding as we did with $f_-$ we obtain 
$$||\nabla_A (\rho_R f_+)||^2 \leq C'(R^{-2}||f_+||^2+\int_{D_R\times \Sigma}f_+ (\rho_R f_+)^2)-\int_{I_R}\int_{\Sigma}\langle \rho_R^2 f_+, \nabla_s f_+\rangle $$
$$\leq C'(R^{-2}||f_+||^2+(\int_{D_{2R}} f^2_+)^{1/2}(\int_{D_R\times \Sigma}(\rho_R f_+)^4)^{1/2})-\int_{I_R}\int_{\Sigma}\langle \rho_R^2 f_+, \nabla_s f_+\rangle $$ 
We  use the Sobolev embedding $$L^2_1 \subset L^4$$ in dimension $4$ together with Kato's inequality 
\begin{equation}
\label{Kato}
\nabla |f|\leq |\nabla_A f|
\end{equation}
 to bound $$(\int_{D_{2R}} f^2_+)^{1/2}(\int_{D_R\times \Sigma}(\rho_R  f_+)^4)^{1/2}) \leq C'(\int_{D_{2R}} f^2_+)^{1/2}(||\nabla_A (\rho_R f_+)||^2+||\rho_R f_+||^2)$$ 
By assumption, $\int_{D_{2R}}f_+^2 \leq C$ and thus if $C<1/C'$ we can absorb the term to the LHS to obtain the bound $$||\nabla_A (\rho_R f_+)||^2 \leq C'R^{-2}||f_+||^2_{D_{2R}}-\int_{I_R}\int_{\Sigma}\langle \rho_R^2 f_+,\nabla_s f_+\rangle=C'R^{-2}E_{2R}-\int_{I_R}\int_{\Sigma}\langle \rho_R^2 f_+,\nabla_s f_+\rangle$$ after adjusting $C'$.
Since, by section $\ref{secnormal}$, we have $$|\int_{\Sigma}\langle f_+,\nabla_s f_+\rangle ++\int_{\Sigma}f_- \partial_s f_- | \leq C' e^{3/2}\leq C'C^{1/2} R^{-1}e$$
we can apply the Sobolev embedding lemmas in dimension 2 to conclude that $$\rho^2_R |\int_{(0,t)\in D_R}\int_{\Sigma}\langle f_+,\nabla_s f_+\rangle+\int_{\Sigma}f_- \partial_s f_- | \leq C^{1/2}C''(||\nabla_A(\rho_R f_+)||^2+||\nabla(\rho_R f_-)||^2)$$  Taking $C$ sufficiently small, we can absorb the term  $C^{1/2}C''(||\nabla_A(\rho_R f_+)||^2+||\nabla(\rho_R f_-)||^2)$ to the left hand side and obtain the desired inequality.
\end{proof}

\subsection{Inverting the Laplacian}
Let us summarize the situation so far.  We have a nonnegative function $$e:D_{R_0}\subset H^+ \rightarrow [0,\infty) $$ with energy functional $$E_R=\int_{D_R}e$$  The function $e$ satisfies the following properties:\\

There exists $C>0$,$C'>0$ such that if $e\leq CR^{-2}$ on $D_{2R}$ we have:
\begin{enumerate}
	\item $(\int_{D_R} e^2)^{1/2} \leq C' E_{2R}R^{-1} $
	\item $\int_{\partial D_R} e \leq C'E_{2R}R^{-1}$
	\item $(\int_{\partial D_R} e^2)^{1/2} \leq C' E_{2R}R^{-3/2}$
	\item $\Delta_2 e \leq C'(e+e^2 + e^{1/2}v) $ with $(\int_{D_R}v^2 )^{1/2}\leq C' E_{2R}R^{-2}$
 \item $| \partial_s e(0,t)| \leq C'e^{3/2}(0,t)$
\end{enumerate}
Our immediate objective is to use these assumptions to get a pointwise bound on $e$ in terms of energy:
\begin{lemma}
For some $C''>0$, and each $D_R$ in the domain, we have $$e(p)\leq C''E_{R}R^{-2}$$ where $p$ is the center of $D_R$
\end{lemma}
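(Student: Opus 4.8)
The plan is to run a standard Morrey/Moser iteration–type argument, except that the presence of the boundary term (item 5) and the inhomogeneous term $v$ (item 4) means I will bootstrap a "mean-value inequality with errors" rather than quote an off-the-shelf result. First I would set up the basic estimate: fix $D_R = D_R(q)$ in the domain and assume, for contradiction-free bookkeeping, that we are in the regime where $e \leq CR^{-2}$ on the relevant concentric disks (this will be justified a posteriori by choosing $\hbar$ small, since $E_R \leq \hbar$ and the hypotheses of the preceding lemmas feed into each other). I would Green's-function the differential inequality $\Delta_2 e \leq C'(e + e^2 + e^{1/2}v)$ against the Neumann Green's function $G$ for $D_R$ (so that the normal-derivative boundary contribution is exactly where item 5 enters), writing schematically
\begin{equation*}
e(q) \leq \fint_{D_R} e \;+\; \int_{D_R} G\,\bigl(e + e^2 + e^{1/2}v\bigr) \;+\; \int_{I_R} G\,|\partial_s e| \;+\; \text{(arc boundary terms)}.
\end{equation*}
The average term is $\leq C E_R R^{-2}$ for free. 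The arc term $S_R$ is controlled by item 2 and $|G| \lesssim \log$ there; after the $\log$-integrability in dimension $2$ this contributes $\lesssim E_R R^{-2}$ as well. The term $\int_{I_R} G |\partial_s e|$ is handled by item 5: $|\partial_s e| \leq C' e^{3/2} \leq C'(CR^{-2})^{1/2} e = C'' R^{-1} e$ on $I_R$, so this term is $\lesssim R^{-1}\int_{I_R} G\, e \lesssim R^{-1} \log R \cdot (E_R R^{-1})$ up to the precise Green's function bookkeeping — small compared to what we want once the $\log$ is absorbed by a dyadic scaling trick.

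The heart of the matter is the quadratic self-interaction $\int_{D_R} G(e^2 + e^{1/2}v)$. Here I would use items 1, 3, 4 together with $G \in L^r$ for every $r < \infty$ on a $2$-disk (with the scaling $\|G\|_{L^r(D_R)} \sim R^{2/r}$ up to constants), applying Hölder: $\int_{D_R} G e^2 \leq \|G\|_{L^2}\|e\|_{L^4}^2$, and then $\|e\|_{L^4(D_R)}^2 \leq \|e\|_{L^2(D_R)}\|e\|_{L^\infty(D_R)}$, so that this term is bounded by $C' R \cdot (E_{2R}R^{-1}) \cdot \sup_{D_R} e = C' E_{2R}\sup_{D_R}e$; similarly $\int G e^{1/2}v \lesssim \|G\|_{L^2}\|e\|_{L^2}^{1/2}\|e\|_{L^\infty}^{1/2}\cdot\|v\|_{L^2}^{?}$ — the precise Hölder split I would choose so that each factor is controlled by items 1 and 4, again producing a bound of the form $C'(E_{2R})^{?}\sup e$. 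The upshot is an inequality
\begin{equation*}
\sup_{D_{R/2}} e \;\leq\; C_0 E_{2R} R^{-2} \;+\; C_0\,\Theta(E_{2R})\,\sup_{D_{R}} e,
\end{equation*}
where $\Theta(E) \to 0$ as $E \to 0$. I would make this rigorous over a dyadic sequence of radii $R_n = R(1 + 2^{-n})$ so that the $\sup$ on the right is taken on a slightly larger disk than on the left; then a standard iteration lemma (à la the interior estimate in elliptic theory, e.g. the "hole-filling"/Giaquinta-style absorption lemma) converts this into $\sup_{D_{R/2}}e \leq C'' E_{2R}R^{-2}$, and then one more application of the differential inequality on $D_R$ (or simply choosing the center-disk radius to be $2R$ at the outset and relabeling) gives the stated bound $e(q) \leq C'' E_R R^{-2}$.

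The main obstacle I anticipate is organizing the absorption cleanly: the right-hand side must be genuinely a small multiple of $\sup e$ over a controllably larger disk, uniformly in $R$, and this forces careful attention to (a) the scale-invariant packaging of every estimate (every $\|G\|_{L^r}$, $\|e\|_{L^q}$, $\|v\|_{L^2}$ carries an explicit power of $R$, and these must cancel to leave precisely $R^{-2}$), and (b) ensuring the hypothesis "$e \leq CR^{-2}$ on $D_{2R}$" needed to even invoke items 1–5 is maintained throughout the iteration — this is where the global smallness $E < \hbar$ is spent, and I would prove the pointwise smallness on all sub-disks simultaneously by a connectedness/continuity argument in the radius (the set of radii for which it holds is open, closed, and nonempty). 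Once the self-improving inequality is in hand, the iteration itself is routine; the bookkeeping of constants is the only real labor, and since the problem only asks for the existence of $C$ and $\hbar$, none of the constants need be tracked explicitly.
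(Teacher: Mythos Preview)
Your overall strategy---Green's representation for $e$, then bound each piece---is the same as the paper's, and it would work, but you are making it harder than necessary and you are also folding in material that the paper defers to the next theorem.

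First, the lemma is stated \emph{under} the standing hypothesis $e\leq CR^{-2}$ on $D_{2R}$; the removal of that pointwise hypothesis via a continuity argument in the radius is precisely the content of the theorem that follows. So your closing paragraph about ``proving pointwise smallness on all sub-disks simultaneously by a connectedness/continuity argument'' belongs to the next result, not here.

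Second, because $e\leq CR^{-2}$ is assumed, there is no need to carry a factor of $\sup_{D_R}e$ on the right-hand side and then absorb it by a dyadic/hole-filling iteration. The paper simply uses the hypothesis to kill one power of $e$ directly: $e^2\leq CR^{-2}e$, $e^{3/2}\leq C^{1/2}R^{-1}e$, $e^{1/2}v\leq C^{1/2}R^{-1}v$, and then applies Cauchy--Schwarz against the explicit log kernel $g=\ln R-\tfrac12\ln((s-s_0)^2+t^2)$ (which vanishes on the arc $S_R$), together with the elementary integrals $\int_0^R(\ln R-\ln r)^2\,r\,dr=R^2/4$ and $\int_0^R(\ln R-\ln r)^2\,dr=2R$. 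Every term then comes out as a constant times $E_{2R}R^{-2}$ with no iteration at all.

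Third, the paper does a short case analysis depending on the position of the center $p=(s_0,0)$ relative to the flat boundary $I_R$: (i) $D_R$ is an interior full disk; (ii) $p\in I_R$; (iii) $0<s_0\leq R/2$, where the boundary terms on $I_R$ are bounded using case (ii) applied at nearby boundary points; (iv) $s_0\geq R/2$, which reduces to (i). Your Neumann Green's function approach is a legitimate alternative way to organize the boundary contributions, but you should be aware that the paper handles the $I_R$ term explicitly via item~5 in the form $|\partial_s e|\leq C'e^{3/2}\leq C'C^{1/2}R^{-1}e$ and then item~3, rather than through the Green's function normal derivative.

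In short: drop the $\sup e$-absorption and the continuity argument, use the pointwise hypothesis to linearize the quadratic and boundary terms, and the estimate follows from a single application of Green's formula plus Cauchy--Schwarz. Your route would eventually close, but it reproves the next theorem inside this lemma and introduces an iteration that the hypothesis makes unnecessary.
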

The proof of this lemma occupies the rest of this section.  First, given $C^2$ functions $f$, $g$, on $D_R$, Green's formula in dimension 2 gives
$$\int_{D_R} f \Delta_2 g -\int_{D_R} g \Delta_2 f =\int_{\partial D_R} (f\partial_\nu g -g\partial_\nu f )   $$
  In general, $\partial D_R=I_R\cup S_R$ where $$I_R=\{(0,t)\in D_R \}$$ and $$S_R=\{(s,t)\in \C| s^2+t^2=R^2, s\geq 0 \}$$
If $g=\ln(R)-\frac{1}{2}\ln((s-s_0)^2+t^2)$, which is a Green's function for $\Delta_2$ at $p=(s_0,0)$, we get 
\begin{equation}
\int_{\partial D_R} (f\partial_\nu g -g\partial_\nu f ) = \int_{I_R}(\frac{s_0f}{s_0^2+t^2}+(\ln(R)-\frac{1}{2}\ln(s_0^2+t^2))\partial_s f  )-R^{-1}\int_{S_R}f 
\end{equation}
\textbf{Case 1:} $D_R$ is the whole disk of radius $R$.\\\\
In this case $\partial D_R=S_R$.  For concreteness we may assume that the disk is centered at $p=0$ (one must shift $H$ as well but $H$ does not interact with $D_R$ in this case).   We multiply both sides of (4) by $\ln(R)-\ln(r)$ and use Green's formula to obtain $$2\pi e(p)  \leq C' (\int_{D_R} e (\ln(R)-\ln(r)) + \int_{D_R} e^2 (\ln(R)-\ln(r))+ \int_{D_R} e^{1/2} v (\ln(R)-\ln(r))+R^{-1}\int_{\partial D_R} e) $$
We now bound each of the four terms. For this, it will be convenient to recall the following definite integrals:
$$\int_0^R (\ln(r)-\ln(R))^2dr=2R$$ 
$$\int_0^R (\ln(r)-\ln(R))^2r dr=R^2/4$$
We can now use the property (1)-(4) of $e$ to bound the terms as follows:
 $$\int_{D_R} e (\ln(R)-\ln(r)) \leq (\int_{D_R} e^2)^{1/2} (2\pi \int_0^R (\ln(R)-\ln(r))^2rdr)^{1/2}\leq 2\pi  C'E_{2R}$$
 $$\int_{D_R} e^2 (\ln(R)-\ln(r)) \leq C R^{-2} (\int_{D_R} e^2)^{1/2} (2\pi \int_0^R (\ln(R)-\ln(r))^2rdr)^{1/2}\leq 2\pi C'R^{-2}E_{2R}$$
 $$\int_{D_R} e^{1/2} v (\ln(R)-\ln(r)) \leq (C^{1/2} R^{-1})(\int_{D_R} v^2)^{1/2} (2\pi \int_0^R (\ln(R)-\ln(r))^2rdr)^{1/2}\leq 2\pi C'R^{-2}E_{2R}$$
 $$R^{-1}\int_{\partial D_R}e \leq C' R^{-2}E_{2R} $$
 Here we bound $e^{1/2}$ using the fact that $e \leq CR^{-2}$ on $D_{2R}$.  Putting this together yields $$e(p) \leq C''E_{2R}R^{-2}$$ as desired.\\\\
 \textbf{Case 2:} $D_R$ is the half disk with center $p=0$.  In this case we have an extra contribution to Green's formula given by $$\int_{-R}^R (\ln(r)-\ln(R))\partial_s edr$$  Since property (4) of $e$ implies that $| \partial_s e(0,t)| \leq C' e^{3/2}(0,t)$ we get $$|\int_{-R}^R \partial_t e (\ln(r)-\ln(R))dr| \leq C' \int_{-R}^R e^{3/2} (\ln(r)-\ln(R))dr$$ $$ \leq C^{1/2}R^{-1} (\int_{-R}^R e^2dr)^{1/2}(\int_{-R}^R (\ln(r)-\ln(R))^2dr)^{1/2} \leq C'' E_{2R}R^{-2}$$
\textbf{Case 3:} $s_0\leq R/2$.  In this case there are two extra boundary terms in Green's formula $$\int_{I_R}(\frac{s_0e}{s_0^2+t^2}+(\ln(R)-\frac{1}{2}\ln(s_0^2+t^2))\partial_s e  )$$  However,  $e(0,t)$ lies on the boundary and is contained in a disk of radius $R/2$ centered at $0$.  Therefore, we may apply the previous case to conclude that $e(0,t) \leq C'' E_{2R}$ for $C''$ sufficiently large and independent of $R$ and $s_0$.   We have the bound  $$\int_{I_R}\frac{s_0^2}{s_0^2+t^2}dt\leq \int_\Ar \frac{1}{1+t^2}dt <\infty $$ which takes care of this term.  To bound the second term we must bound
$$\int_{I_R}(ln(R)-\frac{1}{2}\ln(s_0^2+t^2))^{2}dt$$ as in Case 2.  Rescaling $(s,t)$ to $(s/R,t/R)$ and note that by continuity and the fact $0\leq s_0 \leq 1/2$, $$\int_{-\sqrt{1-s_0^2}}^{\sqrt{1-s_0^2}} \ln(s_0^2+t^2)^2 dt<C''$$ for $C''$ is large.  This implies that 
$$\int_{I_R}(ln(R)-\frac{1}{2}\ln(s_0^2+t^2))^{2}dt \leq C''R$$ as desired.\\\\
\textbf{Case 4:}  $s_0 \geq R/2$.  In this case the whole disk of radius $R/2$ does not intersect $H$.  We may apply Case 1 to conclude that $e(p)\leq C''R^{-2}E_{2R}$, after adjusting $C''$. \\\\

 Thus, we have demonstrated that if $e(z)\leq C R^{-2}$ on $D_{2R}$, we have $e(p) \leq C''R^{-2}E_{2R}$. Once we replace $R$ by $R/2$ and rescale the constants by $4$, we may finally conclude that if $$e(z) \leq CR^{-2}$$  on $D_R$ we have $$e(p) \leq C''R^{-2}E_{R}$$ as desired.

\subsection{Completing the Proof -  A Continuity Argument}
We are now in position to remove the pointwise assumption on $e$  and replace it by an integral energy assumption.  This will allow us to complete the proof of theorem $\ref{thm2.1}$.  For convenience, let us abstract the relevant setup.  For a given $R_0>0$, let  $$e:D_{R_0}(p_0)\subset H^+\rightarrow [0,\infty)$$ be a continuous function and let $$E_{D_R(p)}=\int_{D_R(p)}e$$ for all $D_R(p) \subset D_{R_0}(p_0)$. Suppose,  $$e(p)\leq C''E_{D_R(p)}/R^2$$ whenever $e \leq C/R^2$ on $D_{R}(p)$.   
\begin{theorem}
Let   $\hbar =C/17C''$. For any $D_R(p)\subset D_{R_0}(p_0)$ with $E_{D_R(p)}\leq \hbar$, we have $$e(p) \leq 4CC'' E_{D_R(p)} /R^2 $$   
\end{theorem}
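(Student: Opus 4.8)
The plan is to run a continuity (or ``bootstrapping'') argument on the radius. Fix $D_R(p) \subset D_{R_0}(p_0)$ with $E_{D_R(p)} \leq \hbar$, and for $r \in (0,R]$ consider the function
$$
h(r) = r^2 \sup_{D_r(p)} e.
$$
Since $e$ is continuous and the disks $D_r(p)$ shrink to $p$ as $r \to 0$, $h$ is continuous and $h(r) \to 0$ as $r \to 0$. The idea is to show that $h(r) \leq 4CC'' E_{D_R(p)} \leq 4CC'' \hbar = C/17 \cdot 4 < C$ for all $r \leq R$, and then specialize to $r = R$ (in fact one only needs the bound at the full radius, but the intermediate radii are what the induction controls).

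The core step is the following dichotomy. Suppose $h(r_0) \leq C/2$ for some $r_0 \leq R$ (this is the ``good'' regime). Pick the point $q \in D_{r_0/2}(p)$ where $e$ is maximized on $\overline{D_{r_0/2}(p)}$, say $e(q) = m$. I want to apply the pointwise estimate from the previous subsection on a disk $D_\rho(q)$ centered at $q$. For this I need $e \leq C/\rho^2$ on $D_\rho(q)$; choosing $\rho$ so that $m = C/\rho^2$, i.e. $\rho = (C/m)^{1/2}$, and noting $D_\rho(q) \subset D_{r_0}(p)$ provided $\rho \leq r_0/2$ — which holds exactly when $m \geq 4C/r_0^2$, a case we can handle separately — the hypothesis $e \leq m = C/\rho^2$ on $D_\rho(q)$ is satisfied. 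The pointwise lemma then gives
$$
e(q) \leq C'' E_{D_\rho(q)} \rho^{-2} \leq C'' E_{D_R(p)} \rho^{-2} = C'' E_{D_R(p)} \frac{m}{C}.
$$
Since $e(q) = m$, this forces $m \leq C'' E_{D_R(p)} m / C$, which is absurd unless $m = 0$ — wait, that is too strong, so the actual argument must instead compare $h$ at two scales rather than derive a contradiction directly. Let me restate: the correct move is to bound $h(r_0/2) = (r_0/2)^2 m \leq (r_0/2)^2 \cdot C'' E_{D_\rho(q)} \rho^{-2}$ and control $\rho$ from below using $h(r_0)$; this yields an inequality of the shape $h(r_0/2) \leq (\text{const}) \cdot E_{D_R(p)}$ plus possibly a term involving $h(r_0)$ with a good constant, which is what powers the iteration.

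Concretely, I expect the argument to split according to whether $\rho \geq r_0/2$ or $\rho < r_0/2$. If $\rho \geq r_0/2$, then $m \leq 4C/r_0^2$, so $h(r_0/2) = (r_0/2)^2 m \leq C$, and moreover $m = e(q) \leq \sup_{D_{r_0}(p)} e$, and applying the pointwise lemma directly on $D_{r_0/2}(p)$ (whose hypothesis $e \leq 4C/r_0^2 = C/(r_0/2)^2$ is met) gives $e(p) \leq C'' E_{D_{r_0/2}(p)} (r_0/2)^{-2} \leq 4C'' E_{D_R(p)} r_0^{-2}$, essentially the desired bound. If $\rho < r_0/2$, then $D_\rho(q) \subset D_{r_0}(p) \subset D_R(p)$, and the computation above gives $m \leq C'' E_{D_R(p)} m/C$; since $E_{D_R(p)} \leq \hbar = C/(17C'')$ this gives $m \leq m/17$, hence $m = 0$, contradicting that we are in the regime where $e$ is not already tiny — so this case cannot arise once $\hbar$ is small, which is precisely the point of the hypothesis $E_{D_R(p)} \leq \hbar$. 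Thus the only surviving case delivers $e(p) \leq 4CC'' E_{D_R(p)}/R^2$ after taking $r_0 = R$ and tracking constants (the factor $17$ is chosen so the absorption closes with room to spare, giving the clean constant $4CC''$).

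The main obstacle I anticipate is bookkeeping the constants and the geometry of off-center subdisks: the pointwise lemma was proved for a disk centered at a point of $H^+$, and one must check that $D_\rho(q)$, with $q$ possibly in the interior or on $I_R$, is again of the admissible form $D^+_\rho$ (half-disk or full disk) sitting inside $D_R(p)$ — this is where the constant $17$ versus some other numerology gets pinned down, and where one has to be careful that the center $p_0$ of the ambient half-plane is fixed while the subdisk centers vary. I would handle this by noting that any disk $B_\rho(q) \cap H^+$ with $B_\rho(q) \subset B_R(p)$ is exactly a set of the type to which the previous subsection's lemma applies, so no new estimate is needed — only the elementary containment $B_\rho(q) \subset B_{r_0}(p)$ when $|q - p| \leq r_0/2$ and $\rho \leq r_0/2$.
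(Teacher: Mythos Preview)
Your proposal has a genuine gap in the core step, and the weight function you chose is the reason it does not close.

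You take $h(r)=r^2\sup_{D_r(p)}e$ and, in the ``bad'' case $\rho<r_0/2$, you assert that the hypothesis $e\le m=C/\rho^2$ holds on $D_\rho(q)$. But $q$ was chosen to maximize $e$ over $D_{r_0/2}(p)$, while $D_\rho(q)$ is only contained in $D_{r_0}(p)$, not in $D_{r_0/2}(p)$; on the annulus $D_{r_0}(p)\setminus D_{r_0/2}(p)$ you have no bound on $e$ in terms of $m$, so the pointwise lemma cannot be invoked there. If instead you work inside the ``good regime'' $h(r_0)\le C/2$, then $\sup_{D_{r_0}}e\le C/(2r_0^2)$ forces $\rho>r_0/2$ automatically, so the second case is vacuous --- but then your first case only yields $e(p)\le 4C''E/r_0^2$, and to get the theorem you would need $r_0=R$, i.e.\ $h(R)\le C/2$, which is exactly what is not known a priori. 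The continuity in $r$ does not help, because your good-regime output bounds only $e(p)$, not $\sup_{D_r(p)}e$, so it cannot propagate the bound on $h$.

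The paper fixes this with a different weight: set $\rho(r)=(R-r)^2\sup_{D_r(0)}e$ and let $r_0$ be where it attains its maximum, with maximizing point $z_0$. The crucial gain is that for any $y$ with $|y|\le r_0+s$ one has $\rho(|y|)\le\rho(r_0)$, hence
\[
e(y)\le e(z_0)\,\frac{(R-|z_0|)^2}{(R-|y|)^2}\le 4\,e(z_0)
\]
once $s<(R-|z_0|)/2$. This gives a bound on $e$ over the whole subdisk $D_s(z_0)$ in terms of $e(z_0)$ itself, which is precisely what your weight $r^2$ cannot deliver. With $s^2=1/(16e(z_0))$ one then applies the pointwise lemma on $D_s(z_0)$ and gets $e(z_0)\le 16C''E_{D_R}e(z_0)$, a contradiction under $E_{D_R}\le\hbar$; so $\rho(r_0)\le 1/4$, and specializing to $r=R/2$ gives the bound at the center. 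Replace your $h$ with this $(R-r)^2$-weighted quantity and the argument goes through.
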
\begin{proof}
The proof of this theorem is variant of a continuity argument which we reproduce for completeness.  For notational simplicity we will shift $H^+$ so that $p=0$ and rescale $e$ so that $C=1$. On $D_{R}(0)$, let $$\rho(r)=(R-r)^2 \sup_{z \in D_{r}(0)}e(z) $$  for $r\in [0,R]$.  Since $\rho$ is a continuous function on a compact domain it must have a maximum at some $r_0>0$.  Let $z_0\in D_{r_0}$ be a point where $\rho(r_0)=(R-r_0)^2e(z_0)$. Note that $r_0=|z_0|$.  If $\rho(r_0)\leq 1/4$, then on $D_{R/2}(0)$, $$\sup_{z\in D_{R/2}}e(z) \leq 1/R^2$$ and from our hypothesis we can deduce that $$e(0) \leq C''E_{D_{R/2}(0)}/(R/2)^2\leq 4C''E_{D_{R}(0)}/R^2 $$ as desired.  Thus, we may assume $$\rho(r_0)=(R-|z_0|)^2 e(z_0) >1/4$$  Let $s^2=\frac{1}{16e(z_0)}$.  We have $$s< (R-|z_0|)/2$$  Therefore, $D_s(z_0) \subset D_R(0)$. \\\\ Since $\rho(|y|) \leq \rho(|z_0|)$ for any $y \in D_s(0)$, we have $$e(y) \leq \frac{e(z_0) (R-|z_0|)^2}{(R-|y|)^2} \leq  \frac{e(z_0) (R-|z_0|)^2}{(R-|z_0|-R/2+|z_0|/2)^2} = 4e(z_0)=\frac{1}{4s^2}$$
  Therefore, for $y\in D_{s}(z_0)$ the hypothesis $$e(y)\leq \frac{1}{s^2}$$ is valid and implies  $$e(z_0) \leq C''E_{D_s(z_0)}/s^2=16C''e(z_0)E_{D_s(z_0)} \leq 16C''e(z_0)E_{D_R(0)} < e(z_0)$$ since by hypothesis  $E_{D_R(0)}\leq 1/17C''$.  This is a contradiction and thus $\rho(z_0) \leq 1/4$.    
\end{proof}

Note that $\hbar$ above is independent of $R_0$.  Therefore, we have completed the proof of theorem $\ref{thm2.1}$.     

\section{Regularity and Convergence}
\label{secreg}
\subsection{Interior Regularity for the ASD equation}
The goal of the present section is to establish regularity results for the matched equations.  We will begin with a review of the proof of interior regularity for the ASD equation.  This material is rather standard and covered in many sources (see \cite{DK}).  We have chosen to include a brief discussion to facilitate the treatment of the rather involved regularity result for the matched equations.   Let $X$ be a smooth Riemannian 4-manifold and let $A_0$ be some fixed smooth $SO(3)$-connection.  Let $A$ be an ASD connection:  $$F_{A}^{+}=0$$ As a stationary point of the Yang-Mills functional, $A$ is automatically a Yang-Mills connection $$d_{A}^*F_{A}=0$$
Recall that $A$ is in \textbf{Coulomb gauge} with respect to $A_0$ if $$d_{A_0}^*(A-A_0)=0$$ 
Assume that $A$ is ASD and in Coulomb gauge with respect to some fixed smooth $A_0$. We will tailor the dicussion to the case when $X=D_R\times \Sigma$ although the results have direct generalization to any $X$.  Since in this section we are dealing with the interior case, we assume that $D_R$ does not intersect the boundary. \\\\ 
Fix some $p>2$.  Let $p_0=1/p$, $q_1=2p_0-1/4$ and $p_1=2p_0-1/2$. Note the embedding $$L^{1/q_1}_2\subset L^{1/p_1}_1$$
Here is the main result we will need:
\begin{lemma}
If  $A$ is $L^{1/p_0}_1$, then $A$ is in $L^{1/q_1}_2$.  If $A\in L^p_k$ for $k>1$, then $A\in L^p_{k+1}$. \\ Take any $R'<R$.  Suppose we are given a sequence of ASD connections $A_i$ on $X$ in Coulomb gauge with respect to some fixed $A_0$.   If  $A_i$ converges in $L^{1/p_0}_1$ on $D_R\times \Sigma$ then $A_i$ converges in $L^{1/q_1}_2$ on any $D_{R'}\times \Sigma$.   For any $k>1$, suppose $A_i$ converges in $L^p_k$ on $D_R\times \Sigma$.  Then, $A_i$ converges in $L^p_{k+1}$ on any $D_{R'}\times \Sigma$. 
\end{lemma}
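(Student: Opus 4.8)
The plan is to carry out the standard elliptic bootstrap for the ASD equation written in Coulomb gauge; the only nonstandard ingredient is the dimension-four multiplication estimates of Section \ref{Sobolev}, which were arranged precisely to absorb the low initial regularity $L^{1/p_0}_1$. Write $A=A_0+a$. Since $A$ is anti-self-dual and in Coulomb gauge relative to $A_0$, the one-form $a$ solves
\[
(d_{A_0}^*\oplus d_{A_0}^+)\,a=\bigl(0,\;-(a\wedge a)^+-F_{A_0}^+\bigr),
\]
whose left-hand operator $L=d_{A_0}^*\oplus d_{A_0}^+\colon\Omega^1\to\Omega^0\oplus\Omega^+$ is one of the elliptic operators listed in Section \ref{SecReg}. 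To localize I would fix $R'<R$, choose a cutoff $\beta$ that is a function of the $D_R$-coordinate alone, supported in $\mathring{D}_R$ and equal to $1$ on $D_{R'}$, embed $D_R$ into a closed surface and pass to the resulting closed $4$-manifold $X'\supset D_R\times\Sigma$, extend $A_0$ (hence $L$) smoothly, and extend $\beta a$ by zero. Then $L(\beta a)=\beta\bigl(-(a\wedge a)^+-F_{A_0}^+\bigr)+(d\beta)\wedge a$ on $X'$, and Theorem \ref{thmellreg} reduces every assertion to controlling this right-hand side.

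For the base step, assume $a\in L^{1/p_0}_1$ with $1/4<p_0<1/2$. The dimension-four Sobolev embedding gives $a\in L^{1/(p_0-1/4)}$, so $a\wedge a\in L^{1/(2p_0-1/2)}=L^{1/p_1}\subset L^{1/q_1}$, while $\nabla(a\wedge a)$ is a sum of terms of the form $(\nabla a)\cdot a\in L^{1/p_0}\cdot L^{1/(p_0-1/4)}=L^{1/q_1}$; hence $(a\wedge a)^+\in L^{1/q_1}_1$. Since $p_0\le q_1$ one also has $(d\beta)\wedge a\in L^{1/p_0}_1\subset L^{1/q_1}_1$, and $\beta F_{A_0}^+$ is smooth, so $L(\beta a)\in L^{1/q_1}_1$. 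As $\beta a$ lies in $L^{1/p_0}_1$ to begin with, the regularity clause of Theorem \ref{thmellreg} promotes it to $\beta a\in L^{1/q_1}_2$, that is, $a\in L^{1/q_1}_2$ on $D_{R'}\times\Sigma$; this is the first assertion.

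I would then iterate. Using $L^{1/q_1}_2\subset L^{1/p_1}_1$ with $p_1=2p_0-\tfrac12<p_0$, the base step can be rerun with $p_1$ in place of $p_0$, and so on; the resulting exponents satisfy $p_n-\tfrac12=2^n(p_0-\tfrac12)$ and therefore decrease to $-\infty$, so after finitely many steps either the two-derivative regularity has integrability exponent $\le p_0$, giving $a\in L^p_2$, or $a$ has become continuous, in which case $a\in L^p_1\cap C^0$ forces $a\wedge a\in L^p_1$ and again $a\in L^p_2$. From $a\in L^p_2$ onward one is in the Banach-algebra range: for $k\ge2$ and $p>2$ one has $pk>4$, so $a\in L^p_k$ forces $a\wedge a\in L^p_k$, whence $L(\beta a)\in L^p_k$ and Theorem \ref{thmellreg} yields $a\in L^p_{k+1}$ (on a slightly smaller disk); this proves the second assertion and, by induction, interior smoothness. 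The convergence statements come from the very same estimates applied to a difference $a_i-a_j$: because $a\mapsto a\wedge a$ is quadratic it is Lipschitz on bounded subsets of each Sobolev space in play, so a uniform bound on the $A_i$ converts convergence of the right-hand side into convergence of $\beta(a_i-a_j)$ in the next space, and one iterates exactly as above.

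The main obstacle is pure bookkeeping: tracking the Sobolev exponents through the base step and checking that the finitely many iterations genuinely push the two-derivative regularity past $L^p_2$ — for $p_0$ near $1/2$ this takes several rounds, with the threshold improving towards $1/2$ each time — after which everything is the textbook bootstrap of \cite{DK}. A minor additional point is that $\beta$ must depend only on the disk coordinate, so that extension by zero respects the product structure and $L$ extends with an unchanged symbol.
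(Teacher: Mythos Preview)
Your argument is correct and is the standard elliptic bootstrap, matching the paper's approach; the only cosmetic difference is that you work directly with the first-order operator $d_{A_0}^*\oplus d_{A_0}^+$ whereas the paper first applies $2d_{A_0}^*+d_{A_0}$ to obtain the second-order equation $\Delta_0 B=B'\cdot B+g_0$ and then invokes Laplacian regularity. The nonlinear estimate is identical in both presentations (your $\nabla(a\wedge a)\in L^{1/q_1}$ is exactly the paper's $B'\cdot B\in L^{1/q_1}$ from equation~(\ref{eq13})), and your iteration paragraph, while correct, goes beyond what the lemma actually asserts---the lemma only claims the single step $L^{1/p_0}_1\to L^{1/q_1}_2$ and the stable-range step $L^p_k\to L^p_{k+1}$, leaving the chaining to the user.
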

Using this lemma, one may immediately deduce regularity and convergence properties of a sequences of ASD connections on $X$.  We will discuss this in more detail at the end of the section.  The proof of this lemma occupies the rest of this subsection. Let $A=A_0+B$ be an ASD connection in Coulomb gauge with respect to $A_0$.  Thus 
\begin{equation}
d_{A_0}^* B=0
\end{equation}
\begin{equation}
F_{A_0+B}=F_{A_0}+d_{A_0}B+B\wedge B
\end{equation}
If we project to the self-dual part of the curvature, we obtain 
\begin{equation}
(d_{A_0}^++d_{A_0}^*)B=-F_{A_0}-(B\wedge B)^+
\end{equation}
Now, applying 
\begin{equation}
2d_{A_0}^*+d_{A_0}:\Omega^+(X; \liea)\oplus \Omega^0(X; \liea)\rightarrow \Omega^1(X; \liea)
\end{equation}
 we obtain that 
\begin{equation}
\Delta_0 B=B'\cdot B+g_0
\end{equation} 
Where $\Delta_0=d_{A_0}^*d_{A_0}+d_{A_0}d_{A_0}^*$ is the Hodge Laplacian, $g_0$ is a smooth function that depends only on $A_0$, $$B\mapsto B'$$ is the action of a smooth first order operator acting on $B$ and $B'\cdot B$ is some algebraic multiplication.  Since we will be concerned with estimates on $D_{R'}\times \Sigma$, let $$\rho: D_R\rightarrow \Ar$$ be a bump function with support in $D_R$ such that $\rho=1$ on $D_{R'}$.  We have 
\begin{equation}
\Delta_0 (\rho B)=B'\cdot (\rho B)+g_0+L(B)
\end{equation}
where $L$ is some first order differential operator that depends on $\rho$.  
 In the case when $k=1$, this equation must be interpreted in the weak sense.  In other words, given any smooth section $s$ of $\Omega^0(X; \liea)$ with support in $D_{R}\times \Sigma$, we have $$\langle \rho B,
\Delta_0 s  \rangle =\langle B'\cdot (\rho B)+g_0+L(\rho B), s \rangle $$  

\noindent First, we obtain $L_{2}^{1/q_1}$-regularity for $\rho B$.  For this, we use the embedding in equation $(\ref{eq13})$ to obtain an $L^{1/q_1}$ bound on $B'\cdot (\rho B)$.  Now, we apply the regularity results of theorem $\ref{thm4}$ to obtain $L_{2}^{1/q_1}$-bounds on $\rho B$.  \\\\
Now, we assume that we are in the stable range $pk>4$ and $k>1$.    The embedding  $$L^p_k\cdot L^p_{k-1}\rightarrow L^p_{k-1}$$ implies an $L^p_{k-1}$ bound on $B'\cdot \rho B$ and hence elliptic estimates give an $L^p_{k+1}$-bound on  $\rho B$. \\\\ The sequential version of this argument follows a similar pattern.  First,  $B_i-B_j$ satisfies    

\begin{equation}
\begin{split}
\Delta_0 (\rho (B_i-B_j))&=B_i'\cdot (\rho B_i)-B_j'\cdot (\rho B_j)+L(B_i)-L(B_j)\\
&=B_i'\cdot (\rho (B_i-B_j))-(B_j'-B_i')\cdot (\rho B_j)+L(B_i)-L(B_j)
\end{split}
\end{equation}

Now, arguing as above using the Sobolev embeddings, we may conclude that $A_i$ converges on $D_{R'}\times \Sigma$ as desired.  

\subsection{Regularity For J-Curves in a Banach Space}
\label{JRegSec}
We now turn to the discussion of regularity for holomorphic curves with values in a Banach space.  Let $$B^0_p=L^p(\Sigma)\times \Ar^N$$ and let $B_p=B^0_p\oplus B^0_p$.  We will assume that $B_p$ has a smooth almost complex structure $$J:B_p\rightarrow End(B_p,B_p)$$  Furthermore, we assume that, along $\Lag=0\oplus B_p^0$, $J$ is given by $J_0$ where $$J_0(b_0,b_1)=(-b_1, b_0)$$  Thus, $\Lag$ is totally real with respect to $J$.  \\\\
In this section, take $D_R$ the be centered at the origin and let $p>2$, $\geq 1$ be a map
\begin{equation}
u:D_R\rightarrow B_p 
\end{equation}
such that $u\in L^p_k(D_R; B_p)$.  Furthermore, assume that $u_{|\partial D_R}$ maps to $\Lag$.  Such a map $u$ is said to be $J$-holomorphic if $$\delbar_J(u)=\partial_tu+J(u)\partial_s u =0$$  Here is the basic technical result we will need:
\begin{lemma}
\label{lem19}
Let $k > 1$.  
Given $u\in L^p_k(D_R; B_p)$ with $\delbar_Ju=g\in L^p_k(D_R; B_p)$ we have $u\in L^p_{k+1}(D_R; B_p)$.  Given a sequence $u_i\in L^p_k$ converging on ${L^p_k(D_R)}$  we have that $u_i$ converges in  ${L^p_{k+1}(D_{R'}; B_p)}$ for any $R'\subset R$.
\end{lemma}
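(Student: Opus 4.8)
The plan is to follow the template of the previous subsection (interior regularity for the ASD equation), but the essential new point is that $\delbar$ is a \emph{first}-order operator while the nonlinearity $J(u)\partial_s u$ also costs one derivative, so inserting $J(u)\partial_s u$ into the right-hand side of the constant-coefficient operator $\delbar_{J_0}$ produces no gain. The gain is recovered by treating the linear operator $L_u(v):=\partial_t v+J(u)\partial_s v$ (the operator $\delbar_J$ with its $u$-dependent coefficient) as the elliptic operator and absorbing the top-order part of the resulting commutator term, which is only legitimate after a preliminary normalization making $J(u)-J_0$ small in $C^0$.

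First I would localize and normalize $J$. Cover $D_R$ by finitely many pieces: a small disk $D_\delta(z_0)\subset\mathring{D}_R$ around each interior point, and a thin collar $\mathcal{C}=D_R\setminus D_{R-\delta}$ of the boundary. On $D_\delta(z_0)$ put $J_1=J(u(z_0))$; on $\mathcal{C}$ put $J_1=J_0$, which is legitimate since $u(z)\in\Lag$ and $J$ equals $J_0$ along $\Lag$ for $z\in\partial D_R$. Because $pk>2=\dim D_R$, we have $L^p_k(D_R;B_p)\hookrightarrow C^0$, so $u$ is uniformly continuous; taking $\delta$ small, $N:=J(u)-J_1$ has $\|N\|_{C^0}\le\varepsilon$ on the relevant piece, with $\varepsilon$ as small as we wish, and $L_u=\delbar_{J_1}+N\partial_s$. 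The constant-coefficient operator $\delbar_{J_1}$ splits, after a linear change of coordinates, into standard $\delbar$-operators, so it has a bounded right inverse gaining one derivative on compactly supported maps (Calderón–Zygmund); on the collar, where $J_1=J_0$, the operator $T$ of the Remark at the end of Section~\ref{BanahEst} plays this role while respecting the Lagrangian boundary values on $\partial D_R$. Now pick a cutoff $\rho$ equal to $1$ on a slightly smaller piece and supported in the given one (supported in the interior in the first case; equal to $1$ near $\partial D_R$ and supported in $\mathcal{C}$ in the second, so $\rho u$ still satisfies the Lagrangian boundary condition). Then
\[
L_u(\rho u)=\rho g+(\partial_t\rho+J(u)\partial_s\rho)\,u,
\]
which lies in $L^p_k$ since $J(u)\in L^p_k$ (composition with the smooth $J$, valid as $pk>2$) and $L^p_k$ is a Banach algebra. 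Writing $\delbar_{J_1}(\rho u)=L_u(\rho u)-N\partial_s(\rho u)$ and expanding $\nabla^k\!\big(N\partial_s(\rho u)\big)$ by Leibniz, the one term carrying all $k+1$ derivatives on $\rho u$ is bounded by $C\|N\|_{C^0}\|\rho u\|_{L^p_{k+1}}$, and every remaining term uses at most $k$ derivatives of each factor and is controlled by $C\|N\|_{L^p_k}\|\rho u\|_{L^p_k}$ via the multiplication estimates of Section~\ref{Sobolev}; hence, applying the right inverse of $\delbar_{J_1}$ (resp. $T$),
\[
\|\rho u\|_{L^p_{k+1}}\le C\big(\|L_u(\rho u)\|_{L^p_k}+\|N\|_{C^0}\|\rho u\|_{L^p_{k+1}}+(1+\|N\|_{L^p_k})\|\rho u\|_{L^p_k}\big).
\]
Choosing $\varepsilon$ so small that $C\varepsilon<1/2$, absorb the middle term; patching over the finite cover gives $u\in L^p_{k+1}(D_R)$.

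For the sequential statement, note that the same estimate applied to a sequence $u_i$ bounded in $L^p_k(D_R)$ with $\delbar_J u_i=g_i$ bounded in $L^p_k$ produces, for each $R'<R$, a \emph{uniform} bound on $\|u_i\|_{L^p_{k+1}(D_{R'})}$ (the constant depending only on the uniform $L^p_k$-bounds, since $\|N_i\|_{C^0}$ is uniformly small for $i$ large and $\|N_i\|_{L^p_k}$ is uniformly bounded). For differences,
\[
L_{u_i}(u_i-u_j)=(g_i-g_j)-\big(J(u_i)-J(u_j)\big)\partial_s u_j .
\]
If $u_i$ and $g_i$ are Cauchy in $L^p_k$, then $g_i-g_j\to0$ in $L^p_k$, $J(u_i)-J(u_j)\to0$ in $L^p_k$ (continuity of composition with $J$ on bounded sets), and $\partial_s u_j$ is bounded in $L^p_k$ on a neighborhood of $D_{R'}$ by the uniform bound just obtained, so the right-hand side tends to $0$ in $L^p_k$ there; feeding this into the localized estimate above for $\rho(u_i-u_j)$ (with the uniformly small coefficients $N_i$) shows $u_i$ is Cauchy in $L^p_{k+1}(D_{R'})$.

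The main obstacle is the linear estimate with absorption: since $\delbar$ and the nonlinearity both have order one, one cannot gain a derivative by moving $J(u)\partial_s u$ to the right-hand side of $\delbar_{J_0}$, so one is forced to use $L_u=\delbar_{J_1}+N\partial_s$ as the operator and absorb the top-order part of $N\partial_s(\rho u)$, which in turn forces the decomposition into small patches on which $\|N\|_{C^0}$ is small and the separate handling of the boundary collar through the operator $T$. The secondary technical point is checking that the Leibniz remainder terms genuinely lie in $L^p_k$; this is the same Sobolev bookkeeping (now with $d=2$ and $pk>2$) carried out in Section~\ref{Sobolev} and in the ASD case, together with the special embeddings used there when $k$ is small.
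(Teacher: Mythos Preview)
Your argument is correct, but it takes a genuinely different route from the paper's. The paper does \emph{not} stay at first order: it applies $\partial_t - J(u)\partial_s$ to the equation $\delbar_J u = g$, obtaining
\[
-\Delta u = -J(u)g' + g' + \bigl(J'(u)(u')\bigr)\cdot u',
\]
and then observes that the right-hand side involves at most \emph{first} derivatives of $u$ and hence lies in $L^p_{k-1}$ by the algebra property of $L^p_k$ for $pk>2$. Writing $u=(u_0,u_1)$ with respect to $B_p=B_p^0\oplus B_p^0$, the Lagrangian condition gives Dirichlet data for $u_0$, while $\delbar_J u=g$ restricted to the boundary gives inhomogeneous Neumann data $\partial_\nu u_1 = g_{|\partial D_R}$ for $u_1$; the paper then invokes the Banach-valued Dirichlet/Neumann regularity of Section~\ref{BanahEst} directly. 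Your belief that ``one cannot gain a derivative by moving $J(u)\partial_s u$ to the right-hand side'' is true at first order but is exactly what squaring to the Laplacian circumvents: after squaring, the top-order nonlinearity cancels and what remains is genuinely lower order. The paper's approach therefore avoids the localization/absorption machinery entirely and plugs straight into the second-order theory it has already built; your approach, by contrast, is the standard quasilinear first-order bootstrap and would work equally well in situations where $J$ is not assumed to equal $J_0$ along the boundary set (so that no clean Dirichlet/Neumann splitting is available).
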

\begin{proof}
Our  strategy is to reduce the problem to a regularity result for the Laplacian.  Write $u=(u_0,u_1)$ using the decomposition $B_p=B_p^0\oplus B^0_p$.  By assumption, $u_0$ vanishes on $\partial D_R$.  On the other hand, $u_1$ satisfies the normal boundary condition $\partial_t u_1=g_{|\partial D_R}$.  Applying $\partial_t-J(u)\partial_s$ to $$\delbar_Ju=g$$ we obtain $$(\partial_s^2+\partial_t^2)u=-\Delta u=-J(u)g'+g'+(J'(u)(u'))\cdot u'$$ Here, as well as in the sequel, we will use $u'$ to denote some first order differential operator on $u$ with smooth coefficients.  By assumption, $J(u)g'\in L^p_{k-1}$ and $$(J'(u)(u'))\cdot u' \in L^p_{k-1}$$ in view of the product theorem $$L^p_l\cdot L^p_l\rightarrow L^p_l$$ as long as $p>2$ and $l\geq 1$.  Elliptic estimates from section $\ref{BanahEst}$, imply that $$u \in L^p_{k+1}(D_R; B_p)$$ as desired.  Consider now the sequential version.  For any given point $p\in D_R$ We will produce a uniform bound on a neighborhood $D_r(p)$ of $p$.  By assumption, $u_i(p)$ converge in $B_p$.  We may therefore, take $r$ sufficiently small that $J$ is uniformly bounded in $C^{k+3}$ on the image of each $(u_i)_{|D_{2r}(p)}$.  As above, we have $$\Delta u_i=-J(u_i)g'_i+g_i'+(J'(u_i)(u'_i))\cdot u'_i$$ In view of the uniform bound on $J(u_i)$ in $C^{k+3}$, we have uniform bounds on the $L^p_{k-1}$-norm of $(J'(u_i)(u'_i))\cdot u'_i$.  We now apply the regularity estimates to obtain a uniform bound on $D_{R'}$. 
\end{proof}
We now address the case when $k=1$.  We have:
\begin{lemma}
\label{Jreg2}
Take $p>2$.  Given $u\in L^p_1(D_R; B_p)$ with $\delbar_Ju=g\in L^p_1(D_R; B_p)$ then $u\in L^{p/2}_{2}(D_R; B_{p/2})$.  Given a sequence $u_i\in L^p_1$ converging on ${L^p_1(D_R)}$,  then $u_i$ converges on  ${L^{p/2}_{2}(D_{R'}; B_{p/2})}$. 
\end{lemma}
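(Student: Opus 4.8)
The plan is to follow the reduction used for $k>1$ in Lemma \ref{lem19}, the one genuinely new point being that with only a single derivative of $u$ in $L^p$ the quadratic nonlinearity can be placed only in $L^{p/2}$, which is precisely what forces the loss of integrability in the statement. Decompose $u=(u_0,u_1)$ with respect to $B_p=B_p^0\oplus B_p^0$. Since $u_{|\partial D_R}$ maps into $\Lag=0\oplus B_p^0$, the trace of $u_0$ vanishes on $\partial D_R$, so $u_0$ will be a weak solution of a Dirichlet problem; and since $J(u)=J_0$ along $\partial D_R$, straightening the boundary near each of its points by a conformal chart (so that $\delbar_J$ retains its form, $\delbar$ being conformally natural) and using $u_0|_{\partial D_R}=0$, the restriction of $\delbar_J u=g$ to $\partial D_R$ gives $\partial_\nu u_1=\pm g_0$, so $u_1$ will be a weak solution of an inhomogeneous Neumann problem. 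Applying $\partial_t-J(u)\partial_s$ to $\delbar_J u=\partial_t u+J(u)\partial_s u=g$ and using $J(u)^2=-\mathrm{Id}$ to cancel the mixed second derivatives, one obtains, as in Lemma \ref{lem19}, \[ -\Delta u=-J(u)g'+g'+\bigl(J'(u)(u')\bigr)\cdot u', \] where $u',g'$ denote first-order derivatives with smooth coefficients. Since $p>2=\dim D_R$, Theorem \ref{thm1} gives $u\in C^0(D_R;B_p)$ with relatively compact image, hence $J(u),J'(u)$ are bounded and $J(u)\in L^p_1$; exactly as in the $k=1$ case of the interior ASD argument in \S\ref{secreg}, the displayed identity is to be read weakly, which is justified by mollification.

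The crux is now placing the right-hand side. The terms $-J(u)g'+g'$ are $\nabla g$ against a bounded ($C^0$) coefficient, and $\nabla g\in L^p(D_R;B_p)\subset L^{p/2}(D_R;B_{p/2})$ on a bounded domain; the quadratic term is a product of two elements of $L^p(D_R;B_p)$ with a bounded coefficient, and since $u$ carries no derivative in the $\Sigma$-directions the only available multiplication is the H\"older product $L^p(\Sigma)\cdot L^p(\Sigma)\to L^{p/2}(\Sigma)$, which together with Lemma \ref{lem1} gives $L^p(D_R;B_p)\cdot L^p(D_R;B_p)\to L^{p/2}(D_R\times\Sigma)=L^{p/2}(D_R;B_{p/2})$. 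Hence $\Delta u\in L^{p/2}(D_R;B_{p/2})$. As $p/2>1$, applying the Banach-valued Dirichlet regularity (Lemma \ref{lemDBanach}) to $u_0$ and the Banach-valued inhomogeneous Neumann regularity of \S\ref{BanahEst} to $u_1$ — the latter with Neumann datum a cutoff of $g_0\in L^p_1\subset L^{p/2}_1$ — near $\partial D_R$, together with the interior estimate (Lemma \ref{BanReg}) away from it, yields $u\in L^{p/2}_2(D_R;B_{p/2})$.

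For the sequential statement, if $u_i\to u$ in $L^p_1(D_R)$ (with the $g_i$ converging in $L^p_1$ as well) then Theorem \ref{thm1} gives a uniform $C^0(D_R;B_p)$ bound, whence $J(u_i),J'(u_i)$ are uniformly bounded in $C^0$. Fix an interior subdisk $D_{R'}\subset D_R$, localize with a cutoff $\rho$ equal to $1$ on $D_{R'}$ and supported in $D_R$, and write the second-order equation for $\rho(u_i-u_j)$; the differences of the linear-in-$g'$ terms tend to $0$ since $g_i'\to g'$ in $L^{p/2}$, and the difference of the quadratic terms splits into summands each carrying a factor tending to $0$, namely $u_i'-u_j'\to0$ in $L^p$ and $J'(u_i)-J'(u_j)\to0$ in $C^0$ (from $u_i\to u$ in $C^0$). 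The interior Banach elliptic estimate (Lemma \ref{BanReg}) then forces $u_i-u_j\to0$ in $L^{p/2}_2(D_{R'};B_{p/2})$.

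I expect the main obstacle to be the low-regularity bookkeeping at the boundary: verifying that $u\in L^p_1$ is a genuine weak solution of the second-order system with the \emph{precise} Dirichlet and Neumann boundary conditions — in particular straightening $\partial D_R$ conformally in order to extract $\partial_\nu u_1$ from $\delbar_J u=g$, and matching this against the weak formulations of \S\ref{BanahEst}. The second delicate point is the product count producing exactly $L^{p/2}(D_R;B_{p/2})$ and no worse, using that $u$ has derivatives only along $D_R$; this is routine given Lemma \ref{lem1} and the multiplication theorems of \S\ref{Sobolev}.
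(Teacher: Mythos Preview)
Your proof is correct and follows essentially the same approach as the paper: reduce to the second-order equation as in Lemma~\ref{lem19}, observe that with only one derivative available the quadratic term $(J'(u)(u'))\cdot u'$ lands in $L^{p/2}(D_R;B_{p/2})$ via the H\"older product, and then invoke the Banach-valued Dirichlet/Neumann regularity of \S\ref{BanahEst}. The paper's own proof is extremely terse---it merely says to imitate Lemma~\ref{lem19} with the product map $L^p(D_R;B_p)\times L^p(D_R;B_p)\to L^{p/2}(D_R;B_p)\hookrightarrow L^{p/2}(D_R;B_{p/2})$---so you have in fact filled in more of the argument (boundary conditions, weak formulation, cutoff for the sequential version) than the paper does.
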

\begin{proof}
We imitate the proof of the result above.  However, this time the product theorem maps
$$L^p(D_R; B_p)\times L^p(D_R; B_p)\ \rightarrow L^{p/2}(D_R; B_p) \rightarrow  L^{p/2}(D_R; B_{p/2}) $$
The last embedding is necessary since we have not developed regularity theory for mixed spaces such as $L^p(D_R;B_q)$ where $p\neq q$.  
\end{proof}
\textbf{Remark.} The lemma above will be most useful to us when $p>4$.  To address the case when $4\geq p>2$ we utilize a specialized argument:
\begin{lemma}
Fix some $p'>p$.  Given $u\in L^p_1(D_R; B_p)\cap  L^{p'}(D_R; B_{p'})$ with $\delbar_Ju=g\in L^{p'}(D_R; B_{p'})$ then $u\in L^{p'}_{1}(D_R; B_{p'})$. Given a sequence $u_i\in L^p_1(D_R; B_p)\cap  L^{p'}(D_R; B_{p'})$ converging in $ L^p_1(D_R; B_p)\cap  L^{p'}(D_R; B_{p'})$  then $u_i$ converges in  ${L^{p'}_{2}(D_{R'}; B_{p'})}$.   
\end{lemma}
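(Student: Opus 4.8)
The plan is to treat the equation in the form $\delbar_{J(u)}u=g$, with $\delbar_{J(u)}:=\partial_t+J(u(\,\cdot\,))\partial_s$, and to bootstrap it against the constant-coefficient operator $\delbar_{J_0}$ of the Remarks closing Section~\ref{BanahEst}. Since $p>2$, the Banach-valued Sobolev embedding of Section~\ref{SobBan} gives $u\in C^0(\overline{D_R};B_p)$, so $u$ has precompact image and $z\mapsto J(u(z))$ is a continuous family of complex structures which, because $u_{|\partial D_R}$ lands in $\Lag=0\oplus B_p^0$ (along which $J=J_0$), reduces to $J_0$ on $\partial D_R$. Passing to small coordinate patches — on interior patches conjugating $\delbar_{J_0}$ by a fixed $\Ar$-linear automorphism of the target so as to freeze $J$ at the centre of the patch, on patches meeting $\partial D_R$ keeping $J_0$ itself — one arranges that the perturbation $J_0-J(u)$ has arbitrarily small operator norm on $B_q$, \emph{uniformly in} $q\ge p^*$; here one uses the $L^q$-compatibility of $J$ from Section~\ref{sectLq} (the $\operatorname{End}(B_q)$-norm of $J(b)-J_0$ is bounded by a function of $\|b\|_{B_p}$) together with the precompactness just noted. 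A Neumann series then produces, from the solution operators for $\delbar_{J_0}$, bounded solution operators $T^{(q)}$ for $\delbar_{J(u)}$ on the spaces $L^q_{1;\Lag}(D_R;B_q)$, all built from the same integral kernel and hence compatible under the inclusions $L^{q'}(B_{q'})\hookrightarrow L^q(B_q)$ for $q'\ge q$; a partition of unity glues the patchwise operators.

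Granting this, I would argue as follows. Because $g\in L^{p'}(D_R;B_{p'})$, compatibility of the $T^{(q)}$'s gives $T^{(p)}(g)=T^{(p')}(g)\in L^{p'}_1(D_R;B_{p'})$, so it remains to control $w:=u-T^{(p)}(g)$. This $w$ lies in $L^p_{1;\Lag}(D_R;B_p)$ and — this is where the hypothesis $u\in L^{p'}(D_R;B_{p'})$ enters — also in $L^{p'}(D_R;B_{p'})$, being the difference of $u$ and $T^{(p')}(g)$; moreover $\delbar_{J(u)}w=0$. Thus $w$ is a $J(u)$-holomorphic map, with the linear boundary condition $w_{|\partial D_R}\in\Lag$, of class $L^{p'}(D_R;B_{p'})$, and the goal is $w\in L^{p'}_1(D_R;B_{p'})$. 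On a patch where $J_0-J(u)$ is small this should follow by writing $\delbar_{J_0}w=(J_0-J(u))\partial_s w$ and inverting $\delbar_{J_0}$: the "constant" part of the solution is forced into $B_{p'}^0$ because $w$ is already valued in $B_{p'}$, and one must then show the correction $T_0^{(p')}\big((J_0-J(u))\partial_s w\big)$ improves the $D_R$-derivative. Summing the patches then gives $u=T^{(p')}(g)+w\in L^{p'}_1(D_R;B_{p'})$.

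The sequential statement is obtained by running this construction uniformly in $i$: from $u_i\to u_\infty$ in $L^p_1(B_p)\subset C^0$ one gets $J(u_i)\to J(u_\infty)$ in $C^0$, hence the operators $T^{(p')}_i$ converge in operator norm, and together with convergence of $\delbar_J u_i$ in $L^{p'}(D_R;B_{p'})$ this yields $u_i\to u_\infty$ in $L^{p'}_1(D_{R'};B_{p'})$, whence a final pass of the interior regularity Lemma~\ref{lem19} (legitimate now that $p'>2$) promotes the convergence to $L^{p'}_2(D_{R'};B_{p'})$. I expect the genuine obstacle to be precisely the control of $w$ in the previous paragraph: the nonlinear term $(J_0-J(u))\partial_s u$ lies a priori only in $L^p(D_R;B_p)$, not in the target class $L^{p'}(D_R;B_{p'})$, so the gain $p'>p$ cannot be produced by a direct bootstrap — it has to be wrung out of the hypothesis $u\in L^{p'}(D_R;B_{p'})$ itself, using the compatibility of the solution operators across the entire Banach scale $\{B_q\}_{q\ge p^*}$ and the $C^0$-control of $u$, and the delicate point is pinning down the infinite-dimensional, "constant" kernel of $\delbar_{J_0}$ inside $B_{p'}^0$ after the perturbation to $J(u)$.
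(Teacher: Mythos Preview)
Your approach is the paper's approach: localize with a cutoff, regard $\delbar_{J(u)}=\delbar_{J_0}+\bigl(J(u)-J_0\bigr)\partial_s$ as a small perturbation of the model operator with Lagrangian boundary condition, use the compatible right inverses across the scale $L^q_{1;\Lag}(B_q)\to L^q(B_q)$, and reduce to an element $w$ in the kernel which is known to lie in $L^{p'}(D_R;B_{p'})$ by the hypothesis on $u$. Where you diverge is that you stop short and flag the passage $w\in L^{p'}_1$ as an unresolved ``genuine obstacle''.

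It is not one. The observation you are missing is that the kernel of $\delbar_{J_0}+\bigl(J(u)-J_0\bigr)\partial_s$ on $L^p_{1;\Lag}$ is \emph{still exactly the constants valued in} $\Lag$. One direction is immediate: the perturbation $\bigl(J(u)-J_0\bigr)\partial_s$ annihilates constants. For the other, write $T_0$ for the right inverse of $\delbar_{J_0}$ and $P=I-T_0\delbar_{J_0}$ for the projection onto constants; from $\delbar_{J(u)}w=0$ you get $(I+T_0S)w=Pw$ with $S=(J(u)-J_0)\partial_s$, and since $\|T_0S\|$ is small on $L^p_1$ this gives $w=(I+T_0S)^{-1}Pw$. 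But $S$ kills constants, so $(I+T_0S)^{-1}$ fixes $Pw$, and $w=Pw$ is constant. Now a constant function that lies in $L^{p'}(D_R;B_{p'})$ and satisfies the Lagrangian condition takes its value in $B_{p'}^0$, hence lies in $L^{p'}_1$ for free --- there is no correction term to bootstrap. This is exactly what the paper records as ``the operator $\partial_{J_0}+T(s,t)\partial_t$ is surjective $L^{p'}_1\to L^{p'}$ and the kernel consists of constant solutions on $\Lag$; this implies $v\in L^{p'}_1$.'' With this in hand your sequential argument goes through as written.
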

\begin{proof}
We will prove regularity around an arbitrary point $x$ in $\partial(D_R)$.  For convenience, we take $x=0$.  Let $T(s,t)=J_0-J(u(s,t))$.  By construction $T(0,0)=0$. Let $\rho$ be a bump function with support in $D_{R}$ and $\rho=1$ on some $D_{R'}$. Let $v=\rho u$.   We have that $$\delbar_J (\rho u)\in L^{p'}$$ with support in $D_R$.  Since $v$ has support away from $S_R$ of $D_R$, we may view $v$ as a function on the closed disk $D_R$ with Lagrangian boundary conditions.  For this one needs to "round" the corners of $D_R$ but since the support of $v$ vanishes around there it does not affect the argument.    On $D_R$, $v$ satisfies $$\partial_{J_0}v + T(s,t)\partial_t v =0$$  Assuming that the support of $\rho$ is sufficiently small, the norm of $$T:L^{p'}(D_R;B_{p'})\rightarrow L^{p'}(D_R;B_{p'})$$ as well as $$T:L^{p}(D_R;B_{p})\rightarrow L^{p}(D_R;B_{p})$$ is small.  It follows that the operator $\partial_{J_0} + T(s,t)\partial_t $ is surjective as an operator $L^p_1\rightarrow L^p$ as well as $L^{p'}_1\rightarrow L^{p'}$ and the kernel consists of constant solutions on the Lagrangian $\Lag$.  This implies that $v\in L^{p'}_1$ as desired.      The convergence argument is similar. 
\end{proof}

\subsection{Regularity for the Matched Equations}
\label{regsec}
We now turn to the regularity results for the case of matched boundary conditions. Let $D_R\subset \C$ be a disk centered at the origin.  
For some $p>2$, consider a $J$-curve  $$u:D^-_R\rightarrow \Mod$$ with $u \in L^p_1(D^-_R)$.
and an ASD connection $A\in L^p_1(D_R\times \Sigma)$ on $D_R\times \Sigma$.  We decompose $A$ as $$A=\alpha+\phi ds+\psi dt$$ with $\alpha(s,t) \in \Aff$.  We will assume that $(u,A)$ are matched at the boundary $$u(s,0)=[\alpha(s,0)]$$
Here $[\alpha(s,t)]$ denotes the equivalence class in $\Mod$.   By the Coloumb slice theorem (see \cite{Kat1}), there exists a smooth connection $A_0$ on $D_R\times \Sigma$  and a gauge transformation $g\in L^p_{2}$  with the following properties.  If $B=A-A_0$, then, $$d_{A_0}^*(B)=0$$ and $$B(s,0)(\partial_t)=0$$ Here is our main regularity theorem:
\begin{theorem}
\label{thmreg}
Let $(u,A)$ be a matched pair on $D_R\times \Sigma$.  Assume $u\in L^p_1$ and $A\in L^p_1$ for $p>2$.  Furthermore, assume that $u$ is $J$-holomorphic and $A$ is ASD.  If $A$ is in Coulomb gauge with respect to a smooth connection $A_0$, then $(u,A)$ is smooth.  
\end{theorem}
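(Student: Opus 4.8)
The plan is a bootstrap: starting from $u\in L^p_1$, $A\in L^p_1$ with $p>2$, I would raise the regularity of the pair one derivative at a time until reaching $C^\infty$, using a partition of unity to separate the interior of $D_R$ and of $D_R^-$, where the holomorphic curve and the ASD connection decouple, from a neighborhood of the matching edge $I_R$, where the coupling occurs. The coupled case near $I_R$ is the real content.

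Away from $I_R$ nothing new is needed. On $\mathring D_R\times\Sigma$ the connection $A$ is ASD and in Coulomb gauge with respect to the smooth $A_0$, so the interior ASD regularity lemma of the previous subsection (first $L^{1/p_0}_1\to L^{1/q_1}_2$, then $L^p_k\to L^p_{k+1}$ for $k>1$, on slightly smaller disks) shows $A$ is smooth there; on $\mathring D_R^-$ the map $u$ is $J$-holomorphic into the finite-dimensional K\"ahler manifold $\Mod$, so the Banach-valued $J$-curve regularity of Section \ref{JRegSec} (Lemmas \ref{lem19}, \ref{Jreg2} and the mixed-space lemma following Lemma \ref{Jreg2}, all of which cover a finite-dimensional target as a special case) shows $u$ is smooth there. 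It remains to prove regularity near an arbitrary point $z_0\in I_R\setminus\partial I_R$.

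Near $z_0$ I would first make the energy density $e$ pointwise small: since $A\in L^p_1$ and $u\in L^p_1$ with $p>2$ one has $e\in L^{p/2}\subset L^1$ on bounded sets, so after passing to a small sub-disk $D_r(z_0)$ the local energy drops below $\hbar$ and Theorem \ref{thm2.1} bounds $e$ there, forcing $u$ to have $C^0$-small image and $\alpha(s,t):=A|_{(s,t)\times\Sigma}$ to stay $C^0$-close to the flat connection $\alpha_0:=\alpha(z_0)$. Then I would invoke the chart $H:V_{p,1}\oplus V_{p,1}\to U\times\Aff_{p,1}$ of Section \ref{Canlag}: setting $\tilde u(s,t)=u(-s,t)$, the pair $(\tilde u(s,t),\alpha(s,t))$ lies in $U\times\Aff_{p,1}$, and $w:=H^{-1}(\tilde u,\alpha):D_r(z_0)\to B_p$ takes values in a Banach space of the form $(L^p(\Sigma)\oplus\Ar^N)^{\oplus 2}$ with the almost complex structure $J$ of Section \ref{JRegSec}; by the matching condition $u(0,t)=[R_A(t)]$ we have $(\tilde u(0,t),\alpha(0,t))\in\Lag$, so $w(I_R)\subset\Lag$, which is totally real for $J$ by the construction of $H$. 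The key claim — the step I expect to be the main obstacle — is that in this chart the common first line of the ASD system $(\ref{ASDcomp})$ and of the lifted $J$-curve system $(\ref{jcurveeq})$, namely $\partial_s\alpha-d_\alpha\phi+*_2(\partial_t\alpha-d_\alpha\psi)=0$, together with the remaining equations and the gauge-fixing conditions on both sides, presents $w$ as a solution of a perturbed Cauchy-Riemann equation $\delbar_{J_0}w=T(s,t)\partial_t w+\Theta$, with $T$ the chart-dependent correction to the complex structure (small on the small sub-disk, as in the lemma following Lemma \ref{Jreg2}) and $\Theta$ a genuine lower-order inhomogeneity built from $\phi$, $\psi$ and $F_\alpha$. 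The corner where $I_R$ meets $S_R$ is excised by a bump function and rounded in the support-free region, exactly as in the proofs of Section \ref{JRegSec}.

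Granting this normal form, the regularity lemmas of Section \ref{JRegSec} apply to $w$ and raise the regularity of $u$ and of $\alpha$ (in the range $2<p\leq 4$ one first passes to a space $L^{p'}(D_r;B_{p'})$, $p'>p$, as in the lemma following Lemma \ref{Jreg2}); the remaining connection components $\phi,\psi$ are then controlled because the gauge-fixing conditions express them through second-order elliptic equations — on $\Sigma$ slicewise, with Neumann data $\psi|_{I_R}=0$ along the matching edge — whose right sides involve only $\alpha$ and its first $(s,t)$-derivatives, so the Banach-valued elliptic estimates of Section \ref{BanahEst} (Lemmas \ref{lemregBanach}, \ref{BanReg}) promote $\phi,\psi$ whenever $\alpha$ is promoted, after which the second line of $(\ref{ASDcomp})$ pins down the moment-map direction $*_2F_\alpha$. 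Iterating — upgrade $(u,\alpha)$ via the perturbed $\delbar$-equation, then $(\phi,\psi)$ via the slicewise Laplacian — yields $u\in L^p_k$ and $A\in L^p_k$ for all $k$, hence $u$ and $A$ are $C^\infty$ by Sobolev embedding, and the partition of unity glues the interior and boundary estimates. The principal difficulty throughout is verifying the normal form of the third paragraph: the $J$-curve side is gauge-fixed by the slice condition $d^*_{\alpha_0}(\alpha-\alpha_0)=0$ built into $(\ref{Feq1})$ while the four-dimensional side is in Coulomb gauge with respect to $A_0$ with $B(s,0)(\partial_t)=0$, and one must check these are compatible along $I_R$ up to a slicewise gauge transformation of the needed regularity, so that the two half-solutions assemble into one $w$ without costing a derivative, and that the $z_0$-dependence of $H$ along $I_R$ is smooth enough not to spoil the induction; the Sobolev bookkeeping for $p$ just above $2$ is a second, more routine, difficulty handled by the mixed-space devices already in place.
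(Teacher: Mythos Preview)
Your high-level plan --- separate the interior (standard) from a neighborhood of $I_R$, and near $I_R$ use the chart $H$ of Section~\ref{Canlag} to turn $(\tilde u,\alpha)$ into a Banach-valued $J$-curve with totally real boundary in $\Lag$ --- is exactly the paper's strategy for the $(u,\alpha)$ piece (Lemma~\ref{lemreg4}).  But the bootstrap as you have written it would not close, for two structural reasons.

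\emph{First, the order is inverted.}  The inhomogeneity in your perturbed $\delbar$-equation is $\Theta=d_\alpha\phi+*_2 d_\alpha\psi$ in the new coordinates.  Starting only from $A\in L^p_1$, you have $\phi,\psi\in L^p_1(D_R\times\Sigma)$, hence $\Theta\in L^p(D_R;B_p)$, which is \emph{not} enough input for Lemmas~\ref{lem19}, \ref{Jreg2}, or the mixed-space lemma that follows; those require the right-hand side already one step better.  The paper resolves this by upgrading $\psi$ and $\phi$ \emph{first}, via the four-dimensional Hodge Laplacian $\Delta_0$ on $D_R\times\Sigma$: the Coulomb--Neumann gauge gives $\psi-\psi_0=0$ on $I_R\times\Sigma$ (a Dirichlet condition, Lemma~\ref{lemreg1}), and the second ASD equation together with $F_\alpha|_{I_R}=0$ gives $\partial_t\phi=-[\psi,\phi]$ on $I_R\times\Sigma$ (an inhomogeneous Neumann condition, Lemma~\ref{lemreg2}).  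Only after $\phi,\psi$ are promoted does $\Theta$ land in the space needed to run the $\delbar$-step.  Your ``slicewise on $\Sigma$'' equations for $\phi,\psi$ cannot substitute here: $\Sigma$ is closed, so there is no slicewise boundary problem, and the condition $\psi|_{I_R}=0$ you cite is a boundary condition in the $D_R$-direction (and is Dirichlet, not Neumann).

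\emph{Second, the $\delbar$-step gives only half the regularity of $\alpha$.}  The Banach-valued $J$-curve lemmas of Section~\ref{JRegSec} upgrade $(u,\alpha)$ in $L^p_k(D_R;L^p(\Sigma))$, i.e.\ only in the $(s,t)$-direction.  To conclude $\alpha\in L^p_k(D_R\times\Sigma)$ one also needs $\alpha\in L^p_k(\Sigma;L^p(D_R))$ and then invokes the identity $L^p_k(M\times\Sigma)=L^p_k(M;L^p(\Sigma))\cap L^p(M;L^p_k(\Sigma))$ of Lemma~\ref{lem3}.  The paper obtains this missing half from the observation that the ASD plus Coulomb equations give $(d_0+d_0^*)(\alpha-\alpha_0)=B\cdot B+\phi'+\psi'+g$ with no $(s,t)$-derivative of $\alpha$ on the right, so slicewise elliptic regularity on $\Sigma$ (with values in $L^p(D_R)$) applies; this is Lemma~\ref{lemreg3}, and it is absent from your outline.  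Without it, even a successful $\delbar$-step would leave the $\Sigma$-regularity of $\alpha$ stuck.

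In short: the paper's loop at each stage is $\psi\to\phi\to\alpha$ (in $\Sigma$) $\to(u,\alpha)$ (in $(s,t)$) $\to$ combine, whereas yours is $(u,\alpha)\to(\phi,\psi)$; reversing the order and inserting the slicewise $\alpha$-step (and correcting the $\phi,\psi$ boundary problems to the four-dimensional Dirichlet/Neumann ones) would bring your argument in line with the paper's.
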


The proof this result occupies the rest of this section.  Our strategy is to prove regularity for the different components of $A$ separately.  A variant of this strategy with for a different boundary value problem appears in \cite{Kat2}.

\subsubsection{Estimates on $\psi$}
Let us decompose $A_0$ as $$A_0=\alpha_0+\psi_0 +\phi_0$$  Since $\Delta_0B+B\cdot B' =0$, we may project to the $dt$-component to deduce that $$\Delta_0(\psi-\psi_0)+L(B\cdot B')=0$$
 where $L(B\cdot B')$ is the projection of $B\cdot B'$ to the $dt$-component. By assumption, $\psi(0,s)=\psi_0(0,s)$.  Since $\psi_0$ is smooth we are in position to apply the Dirichlet boundary value problem estimates to deduce regularity of $\psi$.  We summarize the bootstrapping estimates with the lemma below:

\begin{lemma}
\label{lemreg1}
 Let $p_0=1/p$, $q_1=2p_0-1/4$ and $p_1=2p_0-1/2$. If $A\in L^p_1(D_R\times \Sigma)$, then $$||\psi||_{L^{1/p_1}_1(D_{R'}\times \Sigma)}\leq C_{p,1}||A||_{L^{1/p_0}_1(D_{R}\times \Sigma)}$$ and  
 $$||\psi||_{L^{1/q_1}_2(D_{R'}\times \Sigma)}\leq C_{p,1}||A||_{L^{1/p_0}_1(D_{R}\times \Sigma)}$$ 
 If $A\in L^p_k(D_R \times \Sigma)$ for $k>1$, then $$||\psi||_{L^{p}_{k+1}(D_{R'}\times \Sigma)}\leq C_{p,k}||A||_{L^{p}_{k}(D_{R}\times \Sigma)}$$ 
 Furthermore, the constants $C_{p,k}$ do not depend on the choice of $A$. 
 \end{lemma}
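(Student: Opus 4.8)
The plan is to run an elliptic bootstrap for the equation $\Delta_0(\psi-\psi_0)+L(B\cdot B')=0$ derived above (from the ASD equation together with the Coulomb condition $d_{A_0}^*B=0$), viewing $\psi-\psi_0$ as an $L^q(\Sigma)$-valued function on the two-dimensional disk $D_R$ and treating it as a \emph{Dirichlet} boundary value problem on $D_R\times\Sigma$. Indeed, the matching condition together with the normalization $B(s,0)(\partial_t)=0$ gives $(\psi-\psi_0)|_{I_R\times\Sigma}=0$, and $\psi_0$ is smooth, so the only analytic input is the Banach-valued Dirichlet regularity of Section~\ref{BanahEst} (Lemma~\ref{lemDBanach} and the higher-norm analogue of Lemma~\ref{lemhigherDir}). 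To localize to $D_{R'}$, fix a cutoff $\rho$ depending only on the $D_R$-variables, with $\rho\equiv1$ on $D_{R'}$, $\mathrm{supp}\,\rho\subset D_R$, and $\rho\equiv0$ near $S_R$; then $w=\rho(\psi-\psi_0)$ vanishes on all of $\partial D_R\times\Sigma$ (on $I_R\times\Sigma$ by matching, near $S_R\times\Sigma$ by the cutoff), the corner of $D_R$ is harmless since $\rho\equiv0$ there (round it off), and, writing the Hodge Laplacian $\Delta_0$ as the scalar product Laplacian $\Delta$ plus a first-order operator with smooth ($A_0$-dependent) coefficients, $w$ is a weak solution of $\Delta w=h$, $w|_{\partial}=0$, with $h=-\rho L(B\cdot B')-(\text{first order smooth})(\psi-\psi_0)-[\Delta,\rho](\psi-\psi_0)+(\text{smooth})$.

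\textbf{Base case $k=1$.} This is the delicate step, since for $2<p<4$ the space $L^{1/p_0}_1$ on the $4$-manifold is not a Banach algebra and the nonlinearity must be placed in a weaker space. Writing $B'$ for the smooth first-order operator applied to $B$, we have $B'\in L^{1/p_0}$ and $B\in L^{1/p_0}_1\hookrightarrow L^{1/(p_0-1/4)}$ by the dimension-$4$ Sobolev embedding, so by the multiplication chain~(\ref{eq13}) of Section~\ref{Sobolev}, $L(B\cdot B')\in L^{1/(2p_0-1/4)}=L^{1/q_1}$; the linear terms in $h$ lie in $L^{1/p_0}\hookrightarrow L^{1/q_1}$ because $p_0>1/4$, and the smooth terms are in every space. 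Using Lemma~\ref{lem1} to identify $L^{1/q_1}(D_R\times\Sigma)=L^{1/q_1}(D_R;L^{1/q_1}(\Sigma))$, we apply Lemma~\ref{lemDBanach} with exponent $1/q_1$ and fiber $L^{1/q_1}(\Sigma)$ to conclude $\psi\in L^{1/q_1}_2(D_{R'}\times\Sigma)$, with the accompanying elliptic estimate $\|\psi\|_{L^{1/q_1}_2(D_{R'}\times\Sigma)}\le C(\|L(B\cdot B')\|_{L^{1/q_1}}+\cdots)$; the asserted $L^{1/p_1}_1$ bound then follows from the embedding $L^{1/q_1}_2\hookrightarrow L^{1/p_1}_1$ (Section~\ref{Sobolev}). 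Since $\|B\cdot B'\|_{L^{1/q_1}}\le C\|A\|_{L^{1/p_0}_1}^2$, the stated estimate is to be read with $C$ depending only on $p$, the fixed data ($A_0$, the metrics), and an a priori upper bound for $\|A\|_{L^{1/p_0}_1}$ --- equivalently, $C$ is uniform over any bounded family of such $A$, which is all the compactness arguments use.

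\textbf{Case $k>1$.} Now $pk>4$, so $L^p_k$ is a Banach algebra and the product map $L^p_k\cdot L^p_{k-1}\to L^p_{k-1}$ puts $L(B\cdot B')\in L^p_{k-1}(D_R\times\Sigma)$; the Banach-valued higher-norm Dirichlet regularity (the analogue of Lemma~\ref{lemhigherDir}, established by the same doubling argument as Lemma~\ref{lemDBanach}) then upgrades this to $\psi\in L^p_{k+1}(D_{R'}\times\Sigma)$, with constant controlled by $\|A\|_{L^p_k}$ and the fixed data. A single such step is all the present lemma claims; iterating it alongside the parallel estimates for $\phi$ and $\alpha$ is what will prove Theorem~\ref{thmreg}.

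\textbf{Main obstacle.} The one genuinely delicate point is the $k=1$ case: one must check that $L(B\cdot B')$ lands precisely in $L^{1/q_1}$ rather than in some space too large for the Dirichlet estimate to close --- this is exactly what forces the mixed exponent $q_1=2p_0-1/4$ and relies on the tailored $4$-dimensional Sobolev multiplication and embedding inequalities of Section~\ref{Sobolev}. The remaining points --- that the matching plus Coulomb conditions give a Dirichlet rather than a Neumann condition for $\psi-\psi_0$, that $\Delta_0$ differs from the scalar product Laplacian only by a lower-order smooth operator that can be moved into the source, and that the corner of $D_R$ is harmless after the cutoff --- are routine.
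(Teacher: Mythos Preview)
Your approach is essentially the paper's: project the interior equation $\Delta_0 B = B'\cdot B + g_0$ to the $dt$-component, observe that the Coulomb--Neumann gauge normalization forces $\psi-\psi_0$ to vanish on the matched boundary, control the quadratic term $L(B\cdot B')$ via the multiplication chain~(\ref{eq13}) exactly as in the interior case, and then apply Dirichlet regularity in place of interior regularity. The paper's own proof says nothing more than this.

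One technical correction: the Dirichlet estimate you want here is Lemma~\ref{lem9} (and its higher-order version) applied directly on the $4$-manifold $M^+=D_{R}\times\Sigma$ with boundary $I_R\times\Sigma$, not the Banach-valued Lemma~\ref{lemDBanach} with $M^+=D_R$ and fibre $L^{1/q_1}(\Sigma)$. The operator $\Delta_0$ (after Weitzenb\"ock) contains $\Delta_\Sigma$, and if you take the two-dimensional domain with Banach target the term $\Delta_\Sigma w$ cannot be absorbed into the source since it lands only in $L^{1/q_1}(D_R;L^{1/q_1}_{-2}(\Sigma))$. The doubling argument of Lemma~\ref{lem9} works verbatim on the $4$-manifold and gives $w\in L^{1/q_1}_2(D_{R'}\times\Sigma)$ in one stroke; the Banach-valued machinery is reserved for the later slicewise $\alpha$-estimates (Lemma~\ref{lemreg3}), where it is genuinely needed. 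Also, minor point: the Dirichlet condition for $\psi-\psi_0$ comes purely from the Coulomb--Neumann normalization $B(\partial_\nu)=0$, not from the matching condition (the latter enters only in the Neumann analysis for $\phi$).
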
 
\begin{proof}
The proof of this proposition is identical to that of the interior case aside from the fact that now we base our linear elliptic estimates on the Dirichlet problem which we discussed in section $\ref{SecReg}$.  The nonlinear estimates on $B'\cdot B$ is identical to the one for the interior case and gives rise to the same estimates.  
\end{proof}

\subsubsection{Estimates on $\phi$}
We now address the regularity of $\phi$.  This case is a bit more subtle in view of the boundary condition.  Since $A$ is ASD, we have $$*_2 F_\alpha+\partial_s \psi+\partial_t \phi+[\psi,\phi]=0$$  By assumption, $\psi$ is smooth at the boundary. In addition, since $(u,A)$ are matched, $*_2 F_\alpha$ vanishes at the boundary.  The basic strategy now is to apply the elliptic theory for the Neumann problem to obtain regularity for $\phi$.  However, we must be careful since we are initially starting with an $L^p_1$-configuration and we must discuss a weak version of the Neumann boundary value problem.  Let us first introduce some notation.  Let $I_\tau$ the set  of points $(s,\tau)\in D_R$.  Thus, $I_0\times \Sigma$ is the matched boundary of $D_R\times \Sigma$.  Let $f$ be a smooth function with support in $D_R\times \Sigma$ such that $\partial_t f=0 $ on $I_0 \times \Sigma$.  We now check that $\phi$ satisfies a weak version of the Neumann problem:  
\begin{lemma}
$$\langle \phi , \Delta_{0} f \rangle_{D_R\times \Sigma}  =  \langle \Delta_{0} \phi , f \rangle_{D_R\times \Sigma} +\langle [\phi, \psi], f \rangle_{I_0\times \Sigma}$$ 
where $\Delta_{0} \phi $ is the linear projection onto the $ds$-component of $B\cdot B'$.  
\end{lemma}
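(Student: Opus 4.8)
The content of the lemma is that $\phi$ is a \emph{weak} solution of an inhomogeneous Neumann problem, and the plan is to obtain it by an integration by parts in which one carefully tracks the boundary contributions over the matched boundary $I_0\times\Sigma$; there will be no contribution over the remaining boundary $S_R\times\Sigma$ of $D_R\times\Sigma$ since the test function $f$ is supported away from it. First I would record the second order equation satisfied by $B=A-A_0$: exactly as in the interior case, applying $2d_{A_0}^*+d_{A_0}$ to the relation $(d^+_{A_0}+d^*_{A_0})B=-F_{A_0}^+-(B\wedge B)^+$ — which encodes both the self-dual part of the ASD equation and the Coulomb condition $d^*_{A_0}B=0$ — one gets $\Delta_0 B=B\cdot B'$ modulo a smooth term. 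Projecting onto the $ds$-component and isolating the scalar Laplacian acting on $\phi$, \emph{all} the remaining terms (the $ds$-component of $B\cdot B'$, the smooth inhomogeneity, and the cross terms coming from the curvature of $A_0$ and the non-product part of the geometry) are collectively what is denoted $\Delta_0\phi$ on the right hand side. Green's formula on $D_R\times\Sigma$ then yields
$$\langle\phi,\Delta_0 f\rangle_{D_R\times\Sigma}-\langle\Delta_0\phi,f\rangle_{D_R\times\Sigma}=\int_{I_0\times\Sigma}\big(\phi\,\partial_\nu f-f\,\partial_\nu\phi\big),$$
because the tangential $\partial_s^2$ and the fibrewise $\Sigma$-Laplacian contribute no boundary integral, and $\partial_\nu=\pm\partial_t$ is the normal derivative along $I_0\times\Sigma$.

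The remaining step is to evaluate this boundary integral using the three facts available on $I_0\times\Sigma$: (i) the test function satisfies $\partial_t f=0$, which kills the term $\phi\,\partial_\nu f$; (ii) the Coulomb normalization gives $B(s,0)(\partial_t)=0$, i.e. $\psi=\psi_0$ on $I_0\times\Sigma$ with $\psi_0$ smooth; and (iii) the matching condition forces $\alpha(s,0)$ to be flat, hence $*_2F_\alpha=0$ on $I_0\times\Sigma$, so that the $\Sigma$-volume-form component of the ASD equation (cf.\ $(\ref{ASDcomp})$) restricted to $I_0\times\Sigma$ reads $\partial_t\phi=-\partial_s\psi+[\phi,\psi]$. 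Substituting (iii) into $-f\,\partial_\nu\phi=\pm f\,\partial_t\phi$ produces the commutator term $\langle[\phi,\psi],f\rangle_{I_0\times\Sigma}$ (with the sign fixed by the orientation convention) together with a term proportional to $\langle\partial_s\psi,f\rangle_{I_0\times\Sigma}$; by (ii) this last term equals $\langle\partial_s\psi_0,f\rangle_{I_0\times\Sigma}$, a pairing of $f$ against a smooth tangential derivative along $I_0\times\Sigma$, which one integrates by parts in $s$ and then converts, via a divergence identity in the normal direction, into interior pairings of $f$ against smooth functions. These are absorbed into $\langle\Delta_0\phi,f\rangle_{D_R\times\Sigma}$, just as the smooth inhomogeneity $g_0$ was. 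What survives is exactly $\langle[\phi,\psi],f\rangle_{I_0\times\Sigma}$, which is the assertion.

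A word on hypotheses and rigor, which is where the actual work lies. Since $A\in L^p_1$ with $p>2$, one has $\phi\in L^p_1(D_R\times\Sigma)$ only, so the identity must be read weakly; nonetheless every pairing that appears is finite: the trace theorem gives $\phi|_{I_0\times\Sigma},\psi|_{I_0\times\Sigma}\in L^p(I_0\times\Sigma)$, so $[\phi,\psi]$ lies in $L^{p/2}\subset L^1$ of the $3$-manifold $I_0\times\Sigma$ and pairs against the smooth $f$; $\langle\Delta_0\phi,f\rangle$ is the pairing of a distribution of the form $B\cdot B'+(\text{smooth})$ against a smooth function, controlled by the nonlinear embeddings of Section~\ref{Sobolev}; and the improved regularity of $\psi$ already established in Lemma~\ref{lemreg1} makes $\partial_s\psi|_{I_0\times\Sigma}$ meaningful. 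The integrations by parts are justified by first proving the identity for smooth configurations, where Green's formula applies verbatim, and then passing to the $L^p_1$ limit using the continuity of each term just noted — or, equivalently, by arguing directly from the weak forms of the ASD equation and the Coulomb condition. This lemma is precisely the input needed to apply the Banach-valued analogue of the inhomogeneous weak Neumann regularity (Lemma~\ref{leminNeum}) and thereby bootstrap $\phi$. The main obstacle is not any single estimate but the bookkeeping of the boundary terms — in particular checking that the tangential $\partial_s\psi$ contribution on $I_0\times\Sigma$ genuinely reabsorbs, leaving only the commutator term.
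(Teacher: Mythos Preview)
Your outline captures the right structure — Green's formula plus the ASD equation and the matching condition to identify the surviving boundary term — and this is indeed what the paper does. But your proposed justification for the integration by parts is the wrong one, and this is not a minor technicality.

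You suggest proving the identity first for smooth configurations and then passing to an $L^p_1$ limit. This does not work: an arbitrary smooth $L^p_1$-approximation $(u_n,A_n)$ of $(u,A)$ will satisfy neither the ASD equation nor the matching condition, so on the boundary you cannot replace $\partial_\nu\phi_n$ by $*_2F_{\alpha_n}+[\phi_n,\psi_n]+\cdots$, and the argument collapses. Your fallback phrase ``arguing directly from the weak forms'' is not a proof either. The paper instead exploits the \emph{interior} regularity already established: away from the matched boundary $I_0\times\Sigma$ the solution is genuinely smooth, so Green's formula applies rigorously on the shrunken region $\{t\ge\tau\}\times\Sigma$ for each $\tau>0$, producing boundary terms on the slice $I_\tau\times\Sigma$; one then sends $\tau\to 0$.

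This limiting argument also forces you to confront the $*_2F_\alpha$ contribution more carefully than you do. You simply assert $*_2F_\alpha=0$ on $I_0\times\Sigma$, but with $A\in L^p_1$ the curvature $F_\alpha$ is only $L^p$ in four dimensions and has no a~priori trace on a hypersurface. The paper's point is that, viewing $\alpha$ as a continuous map $D_R\to L^p(\Sigma)$ (Sobolev embedding in the two disk variables, since $p>2$) and using that $\alpha\mapsto *_2F_\alpha$ is continuous $L^p(\Sigma)\to L^p_{-1}(\Sigma)$, one gets $*_2F_\alpha\in C^0(D_R;L^p_{-1}(\Sigma))$; the pairing $\langle *_2F_\alpha,f\rangle_{I_\tau\times\Sigma}$ then makes sense (since $f$ is smooth, hence in $C^0(D_R;L^{p^*}_1(\Sigma))$) and tends to zero because the matching forces $F_{\alpha(0,t)}=0$ in $L^p_{-1}(\Sigma)$. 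Your writeup misses this step. Likewise, your plan to dispose of the $\partial_s\psi$ boundary contribution by ``integrating by parts in $s$ and converting, via a divergence identity in the normal direction, into interior pairings'' is hand-waving; in the paper's setup this term is absorbed from the outset because one is doing Green's formula for the Hodge Laplacian $\Delta_0=d_{A_0}^*d_{A_0}+d_{A_0}d_{A_0}^*$ (not the bare scalar Laplacian), whose boundary terms already encode the first-order structure of the ASD $+$ Coulomb system.
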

\begin{proof}
Let $$D_r \{ (s,t) \in D_R| t \geq r \}$$  Since $\phi$ is smooth on $D_r$ for $r>0$, we obtain 
$$\langle \phi , \Delta_0 f \rangle_{D_r\times \Sigma} =\langle \Delta_0 \phi , f \rangle_{D_r\times \Sigma} + \langle \phi , \partial_t f \rangle_{I_\tau \times \Sigma} + \langle *_2 F_{\alpha}^0  ,  f \rangle_{I_\tau \times \Sigma}+ \langle \ [\phi,\psi] , f \rangle_{I_\tau \times \Sigma} $$
\end{proof}

We need to argue that the last 3 terms approach $0$ as $ \tau \rightarrow 0$.  For $\langle \phi , \partial_t f \rangle_{I_\tau \times \Sigma} + \langle \ [\phi,\psi] , f \rangle_{I_\tau \times \Sigma}$ this is straightforward since $\partial_t f=0$ on $I_0\times \Sigma$ and $\phi=0$ on   $I_0\times \Sigma$.  We need to examine the term $\langle *_2 F_{\alpha}  ,  f \rangle_{I_\tau \times \Sigma}$.  Since $A\in L^p_1$, we have $$\phi \in L^p_1(D; L^p(\Sigma))\subset C^0(D; L^p(\Sigma))$$    The moment map sending $\alpha$ to $F_\alpha$ is continuous as a map $L^p(\Sigma)\rightarrow L^p_{-1}(\Sigma)$.  Therefore, $$*_2F_\alpha \in C^0(D, L^p_{-1}(\Sigma))$$  By assumption, $F_\alpha^0=0$ on $I_0\times \Sigma$.  \\
Since $f$ is smooth, it gives a well defined element of $C^0(D; L^p_1(\Sigma))$.  And $$\langle *_2 F_{\alpha}  ,  f \rangle_{I_\tau \times \Sigma} \leq C\cdot \max_{I_{\tau}}|F_\alpha|_{L^p_{-1}}$$ where $C$ depends only of $f$.  Therefore, $$\langle *_2 F_{\alpha}  ,  f \rangle_{I_\tau \times \Sigma}\rightarrow 0 $$  as $\tau \rightarrow 0$ as desired.

\begin{lemma}
\label{lemreg2}
 Let $p_0=1/p$, $q_1=2p_0-1/4$ and $p_1=2p_0-1/2$. If $A\in L^p_1(D_R\times \Sigma)$, then $$||\phi||_{L^{1/p_1}_1(D_{R'}\times \Sigma)}\leq C_{p,1}||A||_{L^{1/p_0}_1(D_{R}\times \Sigma)}$$ and  
 $$||\phi||_{L^{1/q_1}_2(D_{R'}\times \Sigma)}\leq C_{p,1}||A||_{L^{1/p_0}_1(D_{R}\times \Sigma)}$$ 
 If $A\in L^p_k(D_R\times \Sigma)$ for $k>1$, then $$||\phi||_{L^{p}_{k+1}(D_{R'}\times \Sigma)}\leq C_{p,k}||A||_{L^{p}_{k}(D_{R}\times \Sigma)}$$ 
 Furthermore, the constants $C_{p,k}$ do not depend on the choice of $A$. 
 \end{lemma}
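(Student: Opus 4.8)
The plan is to run exactly the bootstrapping scheme of Lemma \ref{lemreg1} and of the interior ASD argument, with the single change that the Dirichlet elliptic estimates are replaced by the estimates for the inhomogeneous Neumann problem, the weak version of which was just verified for $\phi$. Recall that projecting $\Delta_0 B + B\cdot B' = 0$ onto the $ds$-component gives $\Delta_0(\phi-\phi_0) = (\text{projection of } B\cdot B')$, which by the preceding lemma is a weak solution of an inhomogeneous Neumann problem on $D_R\times\Sigma$ whose boundary datum on $I_0\times\Sigma$ is $[\phi,\psi]$ together with terms built from $\psi$ and $\psi_0$ — and $\psi$ is already smooth by Lemma \ref{lemreg1}.

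First I would introduce a bump function $\rho$ supported in $D_R$ with $\rho\equiv 1$ on $D_{R'}$ and vanishing near $S_R$ (this also ``rounds off'' the corner where $I_0\times\Sigma$ meets $S_R\times\Sigma$), and check that $\rho\phi$ solves the corresponding weak inhomogeneous Neumann problem with source $\rho\,(B\cdot B') + L(B)$, where $L$ is first order with coefficients from $\rho$. For the base case $k=1$: the nonlinear source $B\cdot B'$ lies in $L^{1/q_1}$ by the embedding (\ref{eq13}), exactly as in the interior case, while $L(B)\in L^{1/p_0}\subset L^{1/q_1}$; the Neumann datum $[\phi,\psi]$ lies in $L^{1/p_0}_1\subset L^{1/q_1}_1$ since $\psi$ is smooth and $\phi\in L^{1/p_0}_1$. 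The Banach-valued inhomogeneous Neumann estimate of section \ref{BanahEst} (built, via Theorem \ref{thm4}, on Lemma \ref{leminNeum}) then gives $\rho\phi\in L^{1/q_1}_2(D_{R'}\times\Sigma)$, and the Sobolev embedding $L^{1/q_1}_2\subset L^{1/p_1}_1$ yields the two stated bounds. For the inductive step $k>1$ one has $pk>4$, so $B\cdot B'\in L^p_{k-1}$ by the multiplication theorem $L^p_k\cdot L^p_{k-1}\to L^p_{k-1}$, and $[\phi,\psi]$ has the regularity of $\phi$ because $\psi$ is smooth; the higher-norm Banach Neumann estimate then gives $\phi\in L^p_{k+1}$. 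All constants depend only on $\rho$, on $A_0$, on the metric and on Sobolev constants, hence not on $A$; the convergence assertions follow by applying the same argument to the differences $\phi_i-\phi_j$, using that the source splits as in the interior argument.

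The main obstacle, and the only genuine difference from the $\psi$-case, is controlling the inhomogeneous Neumann boundary term: whereas the Dirichlet datum for $\psi$ is smooth outright, here the datum $[\phi,\psi]$ is only as regular as $\phi$, so at each stage of the bootstrap one must confirm — via the multiplication theorems and taking the half-derivative trace loss into account — that it lands in the Sobolev class the inhomogeneous Neumann lemma requires; in the very first step this forces one to use the embedding $L^{1/p_0}_1\subset L^{1/q_1}_1$, which is exactly where the hypothesis $1/4<p_0<1/2$ (i.e. $2<p<4$) enters. A secondary point is to make sure the passage to the weak inhomogeneous Neumann formulation is compatible with the Banach-valued regularity theory of section \ref{BanahEst} once one localizes by $\rho$; this is routine given that $\psi$ and the corner terms are already under control.
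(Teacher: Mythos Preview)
Your proposal is correct and follows essentially the same route as the paper: replace the Dirichlet estimates used for $\psi$ by the inhomogeneous Neumann estimates of section~\ref{Dirichlet} (and their Banach-valued analogues in section~\ref{BanahEst}), with interior source $B'\cdot B$ and boundary datum $[\phi,\psi]$, then bootstrap using the same Sobolev products as in Lemma~\ref{lemreg1}. One small clarification: Lemma~\ref{lemreg1} alone does not make $\psi$ smooth; what the paper actually invokes is that $\psi=\psi_0$ on $I_0\times\Sigma$ by the Coulomb--Neumann gauge condition, so the boundary datum is $[\phi,\psi_0]$ with $\psi_0$ smooth --- which is precisely the regularity your estimate on $[\phi,\psi]$ needs.
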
 
 
 \begin{proof}
This time our estimates are based on the solution to the Neumann problem that we described in section $\ref{Dirichlet}$.  Given $\phi$, we assume that $\phi$ satisfies $$\langle \phi, \Delta_0 f  \rangle_{D\times \Sigma}= \langle g , f \rangle_{D\times \Sigma} +\langle \rho , f \rangle_{I_0\times \Sigma}$$ for all smooth $f$ with $\partial_tf=0$ on $I_0\times \Sigma$ and support in $D\times \Sigma$.  One obtains estimates on $\phi$ from the regularity on $g$ and $\rho$.  On our situation, $\rho=[\psi, \phi]_{I_0\times \Sigma}$ and $g=B'\cdot B$.  In view of the regularity results we obtained on $\psi$, the estimates on $\phi$ from combining more details.
\end{proof}

\subsubsection{Slicewise Estimates on $\alpha$}
The starting observation is that the ASD equation together with the Coulomb gauge condition imply that $$(d_0+d_0^*)(\alpha-\alpha_0)=B\cdot B+\phi'+\psi'+g$$
where $g$ is some fixed smooth function.  In other words, $(d_0+d_0^*)(\alpha-\alpha_0)$ does not involve any $(s,t)$-derivatives of $\alpha$.    We have the following estimates:

\begin{lemma}
\label{lemreg3}
If $A\in L^p_1(D_R\times \Sigma)$ then $$||\alpha||_{L^{1/p_1}_1(\Sigma; L^{1/p_1}(D_R))} \leq C_{p,1}||A||_{L^p_1(D_R\times \Sigma)} $$ and 
 $$||\alpha||_{L^{1/q_1}_2(\Sigma; L^{1/q_1}(D_R))} \leq C_{p,1}||A||_{L^p_1(D_R\times \Sigma)} $$
If $A\in L^p_k(D_R\times \Sigma)$ for $k>1$ then $$||\alpha||_{L^{p}_{k+1}(\Sigma; L^p(D_{R'}))} \leq C_{p,k}||A||_{L^p_k(D_R\times \Sigma)} $$ 
Furthermore, the constants do not depend on $A$.  
\end{lemma}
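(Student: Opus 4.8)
The proof rests on the equation displayed just before the lemma: combining the anti-self-duality equations $(\ref{ASDcomp})$ with the Coulomb condition $d_{A_0}^*(A-A_0)=0$ one obtains
$$(d_0+d_0^*)(\alpha-\alpha_0)=B'\cdot B+\phi'+\psi'+g,$$
where $d_0=d_{\alpha_0}$, so that $d_0+d_0^*$ is a \emph{first-order elliptic operator on the closed surface} $\Sigma$ (after identifying $\Omega^2(\Sigma)\cong\Omega^0(\Sigma)$ via $*_2$), $B'\cdot B$ is quadratic and algebraic in $\alpha$, $\phi',\psi'$ are first-order differential operators applied to $\phi,\psi$, and $g$ is smooth and depends only on $A_0$. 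The decisive structural fact is that the right side involves \emph{no} $(s,t)$-derivative of $\alpha$; consequently the equation may be treated one slice $\{(s,t)\}\times\Sigma$ at a time, which is exactly why the conclusion records $\Sigma$-regularity of $\alpha$ but no regularity in the disc directions (the latter is controlled by the other component equation in $(\ref{ASDcomp})$ and lies outside the scope of this lemma).

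Because $d_0+d_0^*$ carries no disc derivatives, it induces bounded operators on the mixed spaces $L^{1/r}_m(\Sigma;L^{1/r}(D_R))$, and by Lemma \ref{lem1} these coincide with $L^{1/r}(D_R;L^{1/r}_m(\Sigma))$; one may therefore invoke the closed-manifold elliptic estimate and regularity statement of Theorem \ref{thmellreg} fibrewise over $D_R$ and integrate the resulting bounds, which is precisely the first-order Banach-valued elliptic result of Section \ref{BanahEst} (Lemma \ref{lem18}) with $\Sigma$ as base and $L^{1/r}(D_R)$ as the coefficient Banach space. Now bootstrap. Initially $\alpha\in L^{1/p_0}_1(D_R\times\Sigma)\subset L^{1/p_0}_1(\Sigma;L^{1/p_0}(D_R))$, and by Sobolev embedding in dimension $4$, $\alpha\in L^{1/(p_0-1/4)}(D_R\times\Sigma)\subset L^{1/p_1}(D_R\times\Sigma)$ (using $p_0-1/4\le p_1$). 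The right side lies in $L^{1/p_1}(D_R\times\Sigma)$: one has $B'\cdot B\in L^{1/p_1}$ by $(\ref{eqemb1})$, while $\phi',\psi'\in L^{1/p_1}$ follow by differentiating once the $L^{1/p_1}_1$-bounds on $\phi,\psi$ of Lemmas \ref{lemreg1} and \ref{lemreg2}. Slicewise regularity then yields $\alpha\in L^{1/p_1}_1(\Sigma;L^{1/p_1}(D_{R'}))$, the first assertion.

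For the second assertion, since $p_1\le q_1$ we now have $\alpha\in L^{1/q_1}_1(\Sigma;L^{1/q_1}(D_{R'}))$, and the right side lies in $L^{1/q_1}_1(\Sigma;L^{1/q_1}(D_{R'}))$: one $\Sigma$-derivative of $B'\cdot B$ lands in $L^{1/q_1}$ by $(\ref{eq13})$ (via $L^{1/p_0}_1\cdot L^{1/p_0}\to L^{1/q_1}$), and $\phi',\psi'\in L^{1/q_1}_1$ from the $L^{1/q_1}_2$-bounds of Lemmas \ref{lemreg1} and \ref{lemreg2}; a second application of slicewise regularity promotes $\alpha$ to $L^{1/q_1}_2(\Sigma;L^{1/q_1}(D_{R'}))$. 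For $k>1$ we are in the Banach-algebra range $pk>4$, so $B'\cdot B\in L^p_k(D_R\times\Sigma)\subset L^p_k(\Sigma;L^p(D_R))$, Lemmas \ref{lemreg1} and \ref{lemreg2} give $\phi,\psi\in L^p_{k+1}$ hence $\phi',\psi'\in L^p_k$, and $\alpha\in L^p_k(\Sigma;L^p(D_R))$; one application of slicewise regularity gives $\alpha\in L^p_{k+1}(\Sigma;L^p(D_{R'}))$. In each step the constant is that of the elliptic estimate for $d_0+d_0^*$ together with the universal multiplication constants, so it depends only on $A_0$ and the metric, not on $A$.

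The routine but essential point to watch is the bookkeeping in the first paragraph: one must identify exactly which first-order derivatives of $\phi,\psi$ and which quadratic terms in $\alpha-\alpha_0$ occur on the right side of the displayed equation, confirm that each is covered at the stated integrability and order by Lemmas \ref{lemreg1} and \ref{lemreg2} and by the multiplication theorems of Section \ref{Sobolev}, and — most importantly — verify that no $(s,t)$-derivative of $\alpha$ appears, which is what legitimizes the fibrewise elliptic argument. The index inequalities ($p_0-1/4\le p_1$, $p_1\le q_1$, and the exponents arising in the products above) are immediate from $1/4<p_0<1/2$.
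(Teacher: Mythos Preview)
Your argument is correct and follows essentially the same route as the paper: use the structural equation $(d_0+d_0^*)(\alpha-\alpha_0)=B\cdot B+\phi'+\psi'+g$, bound the right-hand side via the multiplication theorems of Section~\ref{Sobolev} together with Lemmas~\ref{lemreg1} and~\ref{lemreg2}, and then apply the Banach-valued first-order elliptic estimate on $\Sigma$ with coefficients in $L^{1/r}(D_R)$. One notational slip: the quadratic term on the right is $B\cdot B$, purely algebraic, not $B'\cdot B$; you in fact treat it as such (calling it ``quadratic and algebraic'' and citing $(\ref{eqemb1})$), so this is just a transcription error. The paper is terser, asserting $B\cdot B\in L^{1/p_1}\cap L^{1/q_1}_1$ at once and reaching both conclusions in a single step, while you spell out the two-stage bootstrap explicitly; the content is the same.
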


\begin{proof}
The proof of this proposition combines slicewise elliptic regularity with estimates on the nonlinear terms.  To begin, if $B\in L^p_1$, then by equations $(\ref{eqemb1})$ and ($\ref{eqemb2}$)  $$B\cdot B \in L^{1/p_1}(D_R\times \Sigma)\cap L^{1/q_1}_1(D_R\times \Sigma)$$  We have already established in lemma $\ref{lemreg1}$ and $\ref{lemreg2}$ that $$\psi , \phi \in L^{1/p_1}(D_R\times \Sigma)\cap L^{1/q_1}_1(D_R\times \Sigma)$$  Thus, we have $$(d_0+d_0^*)(\alpha-\alpha_0)\in L^{1/p_1}(D_R\times \Sigma)\cap L^{1/q_1}_1(D_R\times \Sigma) \subset   L^{1/p_1}(\Sigma; L^{1/p_1}(D_R))\cap L^{1/q_1}_1(\Sigma; L^{1/q_1}(D_R))$$ 
We now apply elliptic regularity for $d_0+d_0^* $ with values in the Banach space $ L^{1/p_1}(D)$ (or $ L^{1/q_1}(D_R)$) to deduce that $$\alpha-\alpha_0 \in   L^{1/p_1}_1(\Sigma; L^{1/p_1}(D_R))\cap L^{1/q_1}_2(\Sigma; L^{1/q_1}(D_+))$$
To obtain the higher estimates, one proceeds in a similar fashion. We have $B\cdot B \in L^p_k(D_R\times \Sigma)$ and $\phi, \psi \in L^p_{k+1}(D_R\times \Sigma)$.  Therefore,  $$(d_0+d_0^*)(\alpha-\alpha_0)\in  L^p_k(D_R\times \Sigma)\subset L^p_k(\Sigma; L^p(D_R))$$ and by lemma $\ref{lem19}$ we get $\alpha \in L^p_{k+1}(\Sigma; D_R)$.  Finally, the uniform bounds on $\psi$, $\phi$ as well as $A$ on $D_{R'}\times \Sigma$ yield inform bounds on $\alpha$ on $D_{R'}\times \Sigma$.  
\end{proof}

\subsubsection{$(t,s)$-Estimates on $u$, $\alpha$}
Consider now the equations 
\begin{equation}
\delbar u=0
\end{equation}
\begin{equation}
\partial_t \alpha +*_2\partial_s \alpha=d_\alpha \phi+*_2d_\alpha \psi 
\end{equation}
It will be convenient at this point to treat the pair $(u,\alpha)$ as a map from $D_R$.  For this, define $$v:D_R\rightarrow \Mod$$ as $v(s,t)=u(s,-t)$.  We may now view the pair $(v,\alpha)$ as a map $$D_R\rightarrow \Mod^- \times \Aff$$ with the Lagrangian boundary condition $(v(0,s),\alpha(0,s))\in \Lag$.  The estimates below will follow by applying our regularity results for Banach valued holomorphic curves.  

\begin{lemma}
\label{lemreg4}
Assume $(u,A) \in L^{p}_k$ with $p>2$ and $k>1$.  We have $(u,A)\in L^p_{k+1}$.   \\
Assume $(u,A) \in L^{p}_1$ with $p>4$.  We have $(u,A)\in L^{p/2}_{2}$. \\
Assume $(u,A) \in L^{p}_1$ with $p>2$.  We have $(u,A)\in L^{1/p_1}_{1}$.    
\end{lemma}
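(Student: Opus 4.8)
The plan is to bundle $(u,\alpha)$ into a single Banach-space-valued map and reduce to the $J$-holomorphic regularity results of Section $\ref{JRegSec}$. Setting $v(s,t)=u(s,-t)$, the two equations $\delbar u=0$ and $\partial_t\alpha+*_2\partial_s\alpha=d_\alpha\phi+*_2d_\alpha\psi$ say precisely that $w:=(v,\alpha):D_R\to\Mod^-\times\Aff$ satisfies $\delbar_Jw=g$ with $g=(0,\,d_\alpha\phi+*_2d_\alpha\psi)$, for the relevant product almost complex structure $J$ on $\Mod^-\times\Aff$, subject to the Lagrangian boundary condition that $w$ carries the straight part of $\partial D_R$ into $\Lag$. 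I would first compose with the local chart $H$ for $\Lag$ from Section $\ref{Canlag}$; in those coordinates $J$ becomes an almost complex structure equal to $J_0$ along $\Lag=0\oplus B_p^0$, with $\Lag$ totally real, so $w$ fits exactly the setting of Section $\ref{JRegSec}$ — a $B_p$-valued map, $J$-holomorphic up to the inhomogeneity $g$, with totally real boundary values. As in the $p\le4$ argument there, one localizes near the boundary point of interest by a cutoff vanishing near $S_R$, after which the corners of $D_R$ may be harmlessly rounded.

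Granting this reduction, the lemma reduces to checking that in each of the three ranges the already-established regularity of $\phi$ and $\psi$ (Lemmas $\ref{lemreg1}$ and $\ref{lemreg2}$) puts $g$ in the space needed to invoke the corresponding result. For $k>1$ and $p>2$: $\alpha\in L^p_k$ and $\phi,\psi\in L^p_{k+1}$, so by the Banach algebra property $g\in L^p_k(D_R\times\Sigma)\subset L^p_k(D_R;B_p)$, and Lemma $\ref{lem19}$ gives $w\in L^p_{k+1}$; together with $\phi,\psi\in L^p_{k+1}$ this is $(u,A)\in L^p_{k+1}$. For $p>4$ and $k=1$: the embedding $(\ref{eqemb2})$ applied to the terms $[\alpha,\phi],[\alpha,\psi]$, together with the $\Sigma$-derivative bounds of Lemma $\ref{lemreg2}$, gives $g\in L^p_1(D_R;B_p)$, so Lemma $\ref{Jreg2}$ yields $w\in L^{p/2}_2(D_R;B_{p/2})$, and since $\phi,\psi$ already enjoy the requisite regularity we conclude $(u,A)\in L^{p/2}_2$. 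For $p>2$ and $k=1$: combining $(\ref{eqemb2})$ with the $4$-dimensional Sobolev embedding $L^{1/q_1}_1\subset L^{1/p_1}$ (note $q_1-1/4=p_1$) gives $g\in L^{1/p_1}(D_R\times\Sigma)$, and feeding this into the first-order elliptic regularity for the appropriate small perturbation of $\partial_{J_0}$ with Lagrangian boundary condition — as in the $4\ge p>2$ lemma of Section $\ref{JRegSec}$, which applies since $L^p_1\subset C^0$ keeps the perturbation small on a small disk — gives $w\in L^{1/p_1}_1$. In all three cases the uniform/sequential versions follow by running the same argument on differences $w_i-w_j$, exactly as in the proofs of Lemmas $\ref{lemreg1}$--$\ref{lemreg3}$.

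The main obstacle, and essentially the only real work, is the bookkeeping in the second paragraph: for each factor of $g=d_\alpha\phi+*_2d_\alpha\psi$ one must keep precise track of which Banach space — i.e. which Lebesgue exponent on $\Sigma$ — and which mixture of $D_R$-derivatives and $\Sigma$-derivatives it carries, in particular that $d_\alpha\phi$ spends one $\Sigma$-derivative of $\phi$ which must be paid for out of the two derivatives supplied by Lemma $\ref{lemreg2}$, and then verify, via the embedding chains of Section $\ref{Sobolev}$ and the regularity of $\phi,\psi$, that the resulting class is no worse than the target in each range. The chart-compatibility of $H$ with the underlying $L^q$-structure and the corner-rounding point are routine given Section $\ref{Canlag}$ and the arguments already carried out in Section $\ref{JRegSec}$.
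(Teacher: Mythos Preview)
Your proposal is correct and follows essentially the same route as the paper: package $(v,\alpha)$ as a $J$-holomorphic map into $\Mod^-\times\Aff$ with boundary on $\Lag$, pass to the straightening chart $H$ of Section~\ref{Canlag}, and then invoke the Banach-valued $\delbar$-regularity results of Section~\ref{JRegSec} once the inhomogeneity $g=d_\alpha\phi+*_2d_\alpha\psi$ has been placed in the right space via Lemmas~\ref{lemreg1}--\ref{lemreg2}. One small caveat: the embedding $(\ref{eqemb2})$ is stated in Section~\ref{Sobolev} under the standing hypothesis $1/4<p_0<1/2$, so for the middle case $p>4$ you should note that the analogous product estimate holds (and is easier, since then $L^{1/q_1}_2\subset C^1$) rather than citing $(\ref{eqemb2})$ verbatim.
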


\begin{proof}
To illustrate the proof let us prove the second claim.  The proofs of the other parts are similar.  By lemma $\ref{lemreg1}$ and $\ref{lemreg2}$, we have $\psi, \phi \in L_{1}^{1/p_1}$.  We have $$d_\alpha \phi+*_2d_\alpha \psi\in L^{1/p_1}$$ Applying the change of coordinates from section $\ref{Canlag}$  gives as a map $$\beta=(v',\alpha'):D_R \rightarrow B_p$$ such that $$\partial_t \beta +J(\beta) \partial_s \beta=\gamma $$ where $\gamma $ is $ d_\alpha \phi+*_2d_\alpha \psi$ in the new coordinates.  Since the change of coordinates preserves the $L^{1/p_1}$-structure,  we may use the elliptic regularity lemma  $\ref{Jreg2}$ to deduce that $\beta \in L_{2}^{1/q_1}$.  Applying the change of coordinates in the other direction, we obtain $(u,\alpha) \in L^{1/q_1}_2(D_R; L^{1/q_1}(\Sigma))$ as desired.  
\end{proof}

\subsubsection{Synthesis of Regularity Arguments}

We are now in position to put together the estimates from the previous parts to obtain regularity for the matched equations.  Assume we have a matched pair $(u,A)$ on $D_R\times \Sigma$.  We will obtain regularity and bounds on various Sobolev norms in a small neighborhood of the center of $D_R$.  Note that the size of the neighborhood will depend on the specific norm in question.  In fact, at each elliptic estimate we need to shrink the size of $R$. Since we are interested in a regularity/compactness statement on a given compact set, it suffices to prove that each point $p\in D_R$ has a neighborhood where any given $L^p_k$-norm is bounded.  \\\\
\textbf{Step 1}: Assume $(u,A) \in L^p_1=L^{1/p_0}_1$ where $p_0=1/p<1/2$.  We claim that $(u,A)\in L_{1}^{1/(2p_0-1/2)}$ as long as  $p_0>1/4$.  Let $$p_1=2p_0-1/2$$ and $$q_1=2p_0-1/4$$  First, we apply lemma $\ref{lemreg1}$ to deduce that $\psi\in L^{1/q_1}_2$ and $\psi \in L^{1/p_1}_1$. Now, we use lemma $\ref{lemreg2}$ to deduce that $\phi \in  L^{1/q_1}_2$ and $\phi \in L^{1/p_1}_1$.  Next, we use lemma $\ref{lemreg3}$ to deduce that $\alpha \in L^{1/p_1}_1(\Sigma; L^{1/p_1}(D_R))$.  Finally, we use lemma $\ref{lemreg4}$ to deduce that $\alpha \in   L^{1/p_1}_1(D_R; L^{1/p_1}(\Sigma))$ and $u\in L^{1/p_1}_1$. We obtain, using section $\ref{SobBan}$ that $\alpha \in L_1^{1/p_1}(D_R\times \Sigma)$ as desired.  Let $$\delta_1=p_0-p_1=1/2-p_0>0$$  If we set $$p_k=2p_{k-1}-1/2$$ we obtain that $$\delta_k=1/2-p_{k-1}> \delta_1$$  Thus, after finitely many steps, $1/8< p_k <1/4$.  \\\\      
\textbf{Step 2}: Assume $(u,A) \in L^{1/p_0}_1$ where $1/8<p_0=1/p<1/4$.  Let $q_1=2p_0-1/4$. We claim that $(u,A)\in L_{2}^{p/2}$. First, we apply lemma $\ref{lemreg1}$ to deduce that $\psi\in L^{1/q_1}_2$. Now, we use lemma $\ref{lemreg2}$ to deduce that $\phi \in  L^{1/q_1}_2$.  Next, we use lemma $\ref{lemreg3}$ to deduce that $\alpha \in L^{1/q_1}_2(\Sigma; L^{1/q_1}(D_R))$.  Finally, we use lemma $\ref{lemreg4}$ to deduce that $\alpha \in   L^{p/2}_2(D_R; L^{p/2}(\Sigma))$ and $u\in L_2^{p/2}$.  We have $(u, A)\in L^{p/2}_2$ as desired.  \\\\
\textbf{Step 3}: Assume $(u,A) \in L^p_k$ where $pk>4$, $p>2$ and $k>1$.  We claim that $(u,A)\in L^p_{k+1}$.   First, we apply lemma $\ref{lemreg1}$ to deduce that $\psi\in L^{p}_{k+1}$. Now, we use lemma $\ref{lemreg2}$ to deduce that $\phi \in  L^p_{k+1}$.  Next, we use lemma $\ref{lemreg3}$ to deduce that $\alpha \in L^{p}_{k+1}(\Sigma; L^{p}(D_R))$.  Finally, we use lemma $\ref{lemreg4}$ to deduce that $\alpha \in   L^{p}_{k+1}(D_R; L^p(\Sigma))$ and $u\in L^p_{k+1}$. We have $(u, A)\in L^p_{k+1}$ as desired.  \\\\
 This completes the proof of theorem $\ref{thmreg}$. \\\\
\textbf{Remark.} Given a sequence $(u_i, A_i)$ converging uniformly in $L^p_1$ we may conclude that, for some subsequence, $(v(0,0),\alpha(0,0))$ converges strongly in $L^p$ to a limit $x\in \Mod \times \Aff$.  We may therefore choose a universal chart for all sufficiently large $i$ where we can straighten the Lagrangian boundary conditions using section $\ref{Canlag}$.  Finally, we use elliptic estimates from section $\ref{JRegSec}$ to obtain uniform bounds on the almost complex structure for all $i$ sufficiently large.  This provides a sequential version of theorem $\ref{thmreg}$.

\section{Compactness}
\subsection{Review of Weak Compactness}
\label{weakcom}
Our proof compactness will use the fundamental results of Uhlenbeck (see \cite{Uhlenbeck} as well as the refinements in \cite{Kat1}).  Let $X$ be an oriented, compact, Riemannian 4-manifold possibly with boundary.  Let $A_i$ be a $L^p_1$ sequence of connections on some principal bundle (or associated vector bundle) with a compact structure group $G$.
\begin{theorem}
Take $p>2$, and assume that $||F_{A_i}||_{L^p}$ is bounded.  Then, there exists an $L^p_1$ connection $A$, a subsequence ${A_j}\subset {A_i}$ and $L^p_2$ gauge transformations $g_j$ with the following properties:\\\\
1.  $g_i(A_i)$ are in some fixed $L^p_1(X)$ neighborhood of $A$ and converge $L^p_1$-weakly to $A$ \\
2.  $g_i(A_i)$ converge strongly to $A$ in $L^q$ for  any $4<q<\frac{4p}{4-p}$  \\
\end{theorem}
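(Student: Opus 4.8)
The plan is to deduce this from Uhlenbeck's local gauge-fixing theorem, a patching argument, and two pieces of soft functional analysis: weak sequential compactness in reflexive Banach spaces and the Rellich--Kondrachov compactness theorem. The first step is to localize. Cover $X$ by finitely many geodesic balls (and, near $\partial X$, half-balls) $B_\alpha$ of a common radius $r$, with controlled overlaps. Under the dilation carrying a ball of radius $r$ to the unit ball, a connection rescales so that the $L^p$-norm of its curvature on the unit ball equals $r^{2-4/p}$ times the $L^p$-norm of $F_{A_i}$ on $B_\alpha$; since $\dim X=4$ and $p>2$ the exponent $2-4/p$ is positive, so, using only the uniform bound $\|F_{A_i}\|_{L^p(X)}\le C$, these rescaled curvatures have $L^p$-norm on the unit ball as small as we wish once $r$ is fixed small enough, uniformly in $i$. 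Uhlenbeck's theorem (\cite{Uhlenbeck}, with the boundary version and refinements in \cite{Kat1}) then furnishes, on each $B_\alpha$, an $L^p_2$ gauge transformation $u_{i,\alpha}$ putting $A_i$ into Coulomb gauge relative to a fixed local trivialization, with $\|u_{i,\alpha}(A_i)\|_{L^p_1(B_\alpha)}\le C'\|F_{A_i}\|_{L^p(B_\alpha)}\le C''$, the constant $C''$ independent of $i$ and $\alpha$.

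The second step is to glue the local gauges into a global one. Following the standard inductive patching construction over the nerve of $\{B_\alpha\}$ (see \cite{Uhlenbeck}, \cite{Kat1}, and the exposition in \cite{DK}), the transformations $u_{i,\alpha}$ assemble into a single $L^p_2$ gauge transformation $g_i$ of the bundle over $X$ such that each $g_i(A_i)$ lies in a fixed $L^p_1(X)$-ball about a smooth reference connection $A_0$, the radius of this ball depending only on $C''$ and the geometry of the cover and hence being independent of $i$. I expect this patching step to be the main obstacle: one must control the transition gauge transformations on the overlaps in $L^p_2$ (which is a Banach algebra in dimension $4$ precisely because $p>2$) and show that the errors introduced by successive amalgamations do not degrade the uniform $L^p_1$ bound; everything else in the proof is routine.

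For the final step, note that since $1<p<\infty$ the affine space of $L^p_1$-connections on $X$ is modeled on the reflexive Banach space $L^p_1(X;\Lambda^1\otimes\liea)$, so the uniformly bounded sequence $\{g_i(A_i)\}$ has a subsequence $\{g_j(A_j)\}$ converging weakly in $L^p_1(X)$ to some $L^p_1$-connection $A$; by weak lower semicontinuity of the norm, $A$ lies in the same fixed $L^p_1$-ball, which gives assertion~1. For assertion~2, the Rellich--Kondrachov refinement of the Sobolev embedding of Theorem~\ref{thm1} gives, in dimension $4$, that the inclusion $L^p_1(X)\hookrightarrow L^q(X)$ is compact for every $q<\tfrac{4p}{4-p}$ when $p<4$ (and for every finite $q$ when $p\ge 4$); moreover $\tfrac{4p}{4-p}>4$ exactly because $p>2$, so the interval $4<q<\tfrac{4p}{4-p}$ is nonempty. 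Compactness of this embedding turns the weakly convergent subsequence into a strongly convergent one in every such $L^q$, and uniqueness of limits identifies the strong $L^q$-limit with $A$, completing the proof.
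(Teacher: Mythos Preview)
The paper does not prove this theorem at all: it is stated in the ``Review of Weak Compactness'' section as a background result, with the proof deferred entirely to the references \cite{Uhlenbeck} and \cite{Kat1}. Your sketch is the standard argument found in those sources (and in \cite{DK}): Uhlenbeck's local Coulomb gauge on small balls via the scaling observation that $\|F_A\|_{L^p}$ scales with a positive power of the radius when $p>2$ in dimension $4$, the inductive patching over a finite cover, and then Banach--Alaoglu plus Rellich--Kondrachov. So your proposal is correct and aligned with the cited literature; there is simply no in-paper proof to compare against.

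One small remark: you correctly flag the patching step as the substantive part, and indeed this is where the work lies in \cite{Kat1}. If you intend this as more than a sketch, you would need to say why the transition gauges $u_{i,\beta}u_{i,\alpha}^{-1}$ on overlaps are uniformly bounded in $L^p_2$ and close to constants, so that the amalgamation procedure converges; everything else in your write-up is self-contained.
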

The compactness theorem is useful in conjunction with the following gauge fixing result: 
\begin{theorem}
Let $A_i$ be a sequence of $L^p_1(X)$ connections converging to $A$ in the weak $L^p_1$ topology.  Then, there exist $L^p_2(X)$ gauge transformations $g_i$, such that $g_i(A_i)$ are in Coloumb-Neumann gauge with respect to $A$.  In other words,
$$d_A^*(g_i(A_i)-A)=0$$
$$*(g_i(A_i)-A)_{|\partial X}=0$$
 \end{theorem}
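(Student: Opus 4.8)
The plan is to produce each $g_i$ in the form $g_i=\exp(\xi_i)$ with $\xi_i$ a small section in $L^p_2(X;\liea)$, obtained by solving the gauge--fixing condition for $\xi_i$ via the Banach fixed point theorem. Write $b_i=A_i-A$ and $a_i(\xi)=g_\xi(A_i)-A$; a direct expansion gives
$$a_i(\xi)=b_i-d_A\xi+[b_i,\xi]+Q(\xi),$$
where $Q$ collects the terms that are at least quadratic, built algebraically from $\xi$ and $d_A\xi$, with $Q(0)=0$ and $DQ(0)=0$. Since for a $1$-form the condition $*a|_{\partial X}=0$ is exactly the vanishing of the normal component $a_\nu|_{\partial X}$, imposing $d_A^*a_i(\xi)=0$ on $X$ together with $a_i(\xi)_\nu=0$ on $\partial X$ turns the Coulomb--Neumann condition into the inhomogeneous Neumann boundary value problem
$$\Delta_A\xi=d_A^*\bigl(b_i+[b_i,\xi]+Q(\xi)\bigr),\qquad \partial_\nu^A\xi=\bigl(b_i+[b_i,\xi]+Q(\xi)\bigr)_\nu\big|_{\partial X},$$
where $\Delta_A=d_A^*d_A=\nabla_A^*\nabla_A$ on $\Omega^0(X;\liea)$ and $\partial_\nu^A$ is the covariant normal derivative.

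The first step is to invert the linearized operator. Just as in the scalar case, $(\Delta_A,\partial_\nu^A)\colon L^p_2(X;\liea)\to L^p(X;\liea)\oplus L^p_{1-1/p}(\partial X;\liea)$ is Fredholm with kernel and cokernel both equal to the space $H^0_A$ of covariantly constant sections; for the connections occurring here $H^0_A=0$, so it is an isomorphism whose inverse $G$ is bounded with norm depending only on $A$. One builds $G$ by first absorbing the boundary term with the operator $T$ of Theorem \ref{thm4} and then applying the homogeneous Neumann regularity of Lemma \ref{leminNeum}; since $A$ is smooth, the zeroth- and first-order terms $[A,\cdot\,]$ entering $\Delta_A$ and $\partial_\nu^A$ are controlled by the same estimates. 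The equation then becomes a fixed point problem $\xi=\mathcal T_i(\xi)$ with $\mathcal T_i(\xi)=G\bigl(d_A^*(b_i+[b_i,\xi]+Q(\xi)),\,(b_i+[b_i,\xi]+Q(\xi))_\nu|_{\partial X}\bigr)$ on a small ball in $L^p_2(X;\liea)$.

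The second step is the contraction estimate. Since $p>2$ and $\dim X=4$, $L^p_2(X)$ is a Banach algebra and $L^p_1(X)$ is a module over it by the multiplication theorems of Section \ref{Sobolev}, so on a ball of radius $\rho$ one has $\|Q(\xi)-Q(\xi')\|_{L^p_1}\le C\rho\,\|\xi-\xi'\|_{L^p_2}$, while $\mathcal T_i(0)$ has $L^p_2$-norm at most $C\|b_i\|_{L^p_1}$ and the mixed term obeys $\|[b_i,\xi-\xi']\|_{L^p_1}\le C\|b_i\|_{L^p_1}\|\xi-\xi'\|_{L^p_2}$. Hence, once $\|A_i-A\|_{L^p_1}$ is small enough (depending only on $A$), $\mathcal T_i$ maps a small ball into itself contractively, yielding a unique small $\xi_i$ with $\|\xi_i\|_{L^p_2}\le C\|A_i-A\|_{L^p_1}$; setting $g_i=\exp(\xi_i)$, which lies in $L^p_2$ again by the Banach algebra property, gives a gauge transformation with $g_i(A_i)$ in Coulomb--Neumann gauge relative to $A$, and unwinding the construction shows $g_i(A_i)\to A$ strongly in $L^p_1$.

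The main obstacle is exactly this smallness requirement: weak $L^p_1$-convergence only yields $b_i\to 0$ strongly in $L^q$ for subcritical $q$ (compact Sobolev embedding), not $\|b_i\|_{L^p_1}\to 0$. This is the classical subtlety in Uhlenbeck gauge fixing and is handled as in \cite{Uhlenbeck, DK}: one localizes to small balls, on which the $\epsilon$-regularity of Section \ref{apriori} forces the local $L^p_1$-norm of $A_i$ (in a local Coulomb gauge, where curvature is pointwise small) to be small, and then patches; the bound $\|b_i\|_{L^4}\to 0$ is what absorbs the genuinely dangerous term $[b_i,d_A\xi]$, via the critical multiplication $L^4\cdot L^{4p/(4-p)}\to L^p$. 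The remaining points — the weak-to-strong passage, uniformity of the inverse $G$, and the patching of local slices — follow Uhlenbeck's original argument with the Neumann boundary condition in place of the closed-manifold case, using the boundary elliptic theory of Section \ref{Dirichlet}.
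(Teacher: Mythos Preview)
The paper does not give its own proof of this theorem: it is stated in Section~\ref{weakcom} as a known result of Uhlenbeck, with the reader referred to \cite{Uhlenbeck} and \cite{Kat1} (Wehrheim's book). So there is no argument in the paper to compare your sketch against; what you have written is essentially the standard implicit-function/contraction argument that appears in \cite{Kat1}, and in outline it is correct.

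Two points deserve tightening. First, your appeal to ``the $\epsilon$-regularity of Section~\ref{apriori}'' is out of place: that section concerns matched pairs $(u,A)$ and the specific energy density $e$, and has nothing to do with gauge-fixing a general weakly convergent sequence. For the theorem as stated you do not need to localize and patch at all; the whole point is that once you already have weak $L^p_1$-convergence to a fixed $A$, the compact embedding $L^p_1\hookrightarrow L^q$ for $q<4p/(4-p)$ gives $\|b_i\|_{L^q}\to 0$, and as you correctly note this is exactly what is needed to make the map $\xi\mapsto G\bigl(d_A^*[b_i,\xi],\,[b_i,\xi]_\nu\bigr)$ a contraction (the term $[\nabla b_i,\xi]$ is handled by the uniform $L^p_1$-bound on $b_i$ together with $\|\xi\|_{C^0}\le C\|\xi\|_{L^p_2}$). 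The patching argument you allude to belongs to the \emph{first} Uhlenbeck theorem quoted in the paper, not this one.

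Second, the claim that $H^0_A=0$ ``for the connections occurring here'' is not justified by the hypotheses as stated: the theorem is phrased for an arbitrary weak $L^p_1$-limit $A$, which may well be reducible. This is easily fixed---one works orthogonally to $\ker\Delta_A$ and observes that the right-hand side automatically lies in the image (integrate $d_A^*b_i$ against a parallel section)---but it should be said rather than assumed.
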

   
\subsection{Interior Compactness}
Let us briefly recall the compactness results of Gromov and Uhlenbeck.  These are well known and discussed in detail in many texts (see for instance \cite{survey book} and \cite{DK}).  It is worth mentioning that the a priori estimates of this work give an independent proof of these compactness results.  \\\\
Let us first discuss the case of Uhlenbeck compactness.  Fix $X$ as above, and let ${X}^\circ=X-\partial X$.  Let $A_i$ be a sequence of ASD connections on ${X}^\circ$.  By the regularity results, we may assume that, after a gauge transformation, the $A_i$ are smooth.  Suppose that $A_i$ have uniformly bounded energy.  

\begin{theorem}
  There exists a finite sequence of points $p_k\in {X}^\circ$ and a subsequence $A_j$ with the following properties:\\\\
  1.  $A_j$ converge (in any $C^k$-norm) on compact  subsets of ${X}^\circ -\cup_kp_k$ to an ASD connection $A_\infty$. \\\\
2. If $\cup_k p_k$ is nonempty, there exists $\hbar>0$, independent of $A_i$ such that $E(A_\infty)<\liminf E(A_j)-\hbar$ 
\end{theorem}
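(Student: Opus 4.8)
The plan is to carry out the standard Uhlenbeck bubbling argument, with the interior $\epsilon$-regularity estimate as the key analytic input. This estimate is the interior analogue of Theorem \ref{thm2.1}: there exist $\hbar, C > 0$ such that if $A$ is an ASD connection on $B_R(x)\times\Sigma$ (a product chart around a point of $X^\circ$; a geodesic ball suffices for general $X$) with $\int_{B_R(x)\times\Sigma} |F_A|^2 < \hbar$, then $|F_A|^2(x) \le C R^{-2} \int_{B_R(x)\times\Sigma} |F_A|^2$ at the center; this is exactly the argument of Section \ref{apriori} with all boundary terms deleted, and is strictly simpler since only the curvature energy is present. First I would pass to the limiting energy measure: the densities $\mu_i = |F_{A_i}|^2\, d\mathrm{vol}$ are Radon measures on $X^\circ$ with total mass $\le C_0 := \sup_i E(A_i) < \infty$, so after passing to a subsequence $\mu_i \rightharpoonup \mu$ weakly-$*$. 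I then set
$$S = \{\, x \in X^\circ : \mu(\{x\}) \ge \hbar \,\},$$
which is finite, of cardinality $n \le C_0/\hbar$, since $\mu(X^\circ) \le C_0$; write $S = \{p_1, \dots, p_n\}$.

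Away from $S$ I would produce a uniform curvature bound and then smooth convergence. For $x \notin S$ there is $r_x > 0$ with $\mu(\overline{B_{2r_x}(x)}) < \hbar$, and weak-$*$ convergence (which gives $\limsup_i \mu_i(K) \le \mu(K)$ for compact $K$) yields $\int_{B_{2r_x}(x)\times\Sigma} |F_{A_i}|^2 < \hbar$ for all large $i$; applying $\epsilon$-regularity on balls of radius $r_x$ centered at points of $B_{r_x}(x)$ then gives a uniform $C^0$ bound — hence a uniform $L^p$ bound for every $p$ — on $F_{A_i}$ over $B_{r_x}(x) \times \Sigma$. On such a chart I would invoke the Uhlenbeck weak compactness and Coulomb gauge-fixing theorems of Section \ref{weakcom} to obtain gauge transformations $g_i$ with $g_i(A_i)$ converging weakly in $L^p_1$ to a limit in Coulomb gauge, and then the interior elliptic bootstrapping of Section \ref{secreg} (applied iteratively, exactly as in the interior ASD regularity lemma) upgrades weak $L^p_1$ convergence to $C^\infty$ convergence on $B_{r_x/2}(x) \times \Sigma$. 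Covering $X^\circ - S$ by countably many such charts and running a diagonal argument — reconciling the local Coulomb gauges on overlaps, where the transition functions take values in a compact group and converge in $C^\infty$ because they solve a first order equation whose inhomogeneous term is the difference of two convergent connections — produces a connection $A_\infty$ on $X^\circ - S$ with $A_j \to A_\infty$ in every $C^k$-norm on compact subsets of $X^\circ - S$; $A_\infty$ is ASD since the equation is closed under $C^0$ limits.

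For the energy drop, assume $S \ne \emptyset$ and fix pairwise disjoint balls $B_\rho(p_k)$ together with a compact set $X' \subset X^\circ$ having all $p_k$ in its interior and all $\overline{B_\rho(p_k)} \subset X'$. Since $\bigcup_k B_\rho(p_k)$ is open, $\liminf_i \mu_i\big(\bigcup_k B_\rho(p_k)\big) \ge \mu\big(\bigcup_k B_\rho(p_k)\big) \ge \sum_k \mu(\{p_k\}) \ge n\hbar$, while
$$\int_{X' \setminus \bigcup_k B_\rho(p_k)} |F_{A_i}|^2 \;\le\; E(A_i) - \mu_i\Big(\bigcup_k B_\rho(p_k)\Big).$$
The set $X' \setminus \bigcup_k B_\rho(p_k)$ is a compact subset of $X^\circ - S$, so the left side converges to $\int_{X' \setminus \bigcup_k B_\rho(p_k)} |F_{A_\infty}|^2$; taking $\liminf$ on the right and using $\liminf(a_i - b_i) \le \liminf a_i - \liminf b_i$ gives $\int_{X' \setminus \bigcup_k B_\rho(p_k)} |F_{A_\infty}|^2 \le \liminf_i E(A_i) - n\hbar$. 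Letting $\rho \to 0$ and exhausting $X^\circ$ by such $X'$, monotone convergence yields $E(A_\infty) \le \liminf_i E(A_j) - n\hbar \le \liminf_i E(A_j) - \hbar$; replacing the constant $\hbar$ in the statement by $\hbar/2$ promotes this to the strict inequality claimed.

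I expect the main obstacle to be the gauge-patching in the second paragraph: assembling the local Coulomb-gauge limits into a single smooth connection $A_\infty$ over the punctured manifold and checking that the result (and the energy-loss bound) is independent of all the choices. The $\epsilon$-regularity input is supplied by the methods of Section \ref{apriori}, and the weak-$*$/measure-theoretic bookkeeping that isolates $S$ is elementary; it is the transition-function reconciliation and the diagonal extraction that require care, though both are by now entirely standard.
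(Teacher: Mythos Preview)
The paper does not prove this theorem; it is stated as a known result recalled from \cite{DK} and \cite{survey book}, with the remark that the a priori estimates of Section~\ref{apriori} furnish an independent proof. Your proposal carries out exactly that standard Uhlenbeck bubbling argument---concentration set via the weak-$*$ limit of energy measures, interior $\epsilon$-regularity (the boundary-free analogue of Theorem~\ref{thm2.1}) to get uniform curvature bounds off $S$, then local Coulomb gauge plus bootstrapping and gauge-patching---and it is correct.
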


Now, we turn the Gromov compactness.  Let $D$ be any compact Riemann surface (possibly with boundary).  Let $(M,\omega,J)$ be a compact symplectic manifold with compatible almost complex structure $J$.  Consider a sequence of $J$-holomorphic maps $$u_i:{D}^\circ \rightarrow M$$ with uniformly bounded energy.  As in the ASD case, such maps are automatically smooth as soon as they are $L^p_1$ for $p>2$.   Here is the version of Gromov compactness we need:
\begin{theorem}

There exists a finite sequence of points $p_k\in {D}^\circ$ and a subsequence $u_j$ with the following properties:\\\\
 1. $u_j$ converge (in any $C^k$-norm) on compact  subsets of ${D}^\circ -\cup_kp_k$ to a holomorphic curve $u_\infty$. \\\\
2. If $\cup_k p_k$ is nonempty, there exists $\hbar>0$, independent of $u_i$ such that $E(u_\infty)<\liminf E(u_j)-\hbar$ 

\end{theorem}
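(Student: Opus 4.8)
The plan is to reduce everything to the $\epsilon$-regularity (mean value) inequality for $J$-holomorphic curves: there exist universal constants $\hbar_0, C_0>0$ such that any $J$-holomorphic $u\colon D_r\to M$ with $\int_{D_r}\tfrac12|du|^2<\hbar_0$ satisfies $\tfrac12|du(\text{center})|^2\le C_0 r^{-2}\int_{D_r}\tfrac12|du|^2$. This is the interior, purely holomorphic special case of the a priori estimate of Section~\ref{apriori} (compare Theorem~\ref{thm2.1} with the ASD component suppressed), proved by the same Weitzenb\"ock-plus-continuity argument but with no boundary terms. Fix the uniform energy bound $\sup_i E(u_i)=:\Lambda<\infty$, where $E(u)=\int_{D^\circ}\tfrac12|du|^2$. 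Working in local conformal coordinates on $D^\circ$ (energy is conformally invariant in dimension two), call a point $p\in D^\circ$ a \emph{bubbling point} if $\limsup_i\int_{D_r(p)}\tfrac12|du_i|^2\ge\hbar_0$ for every small $r>0$. Since disjoint neighborhoods of distinct bubbling points each eventually carry energy $\ge\hbar_0$, there are at most $\Lambda/\hbar_0$ of them; list them as $p_1,\dots,p_k$.

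Next I establish $C^\infty_{\mathrm{loc}}$-convergence on $D^\circ\smallsetminus\{p_1,\dots,p_k\}$. Let $K$ be compact in this set. Each $q\in K$ has a coordinate disk $D_{2r_q}(q)$ contained in $D^\circ\smallsetminus\{p_l\}$ with $\limsup_i\int_{D_{2r_q}(q)}\tfrac12|du_i|^2<\hbar_0$; cover $K$ by finitely many $D_{r_q}(q)$ and pass to a subsequence so that $\int_{D_{2r_q}(q)}\tfrac12|du_i|^2<\hbar_0$ for all large $i$ and all these $q$. The mean value inequality then gives a uniform pointwise bound on $|du_i|$ over $K$; since $M$ is compact, the $u_i|_K$ are uniformly Lipschitz, hence uniformly bounded in $L^p_1(K)$ for every finite $p>2$, so Arzel\`a--Ascoli yields a $C^0$-convergent subsequence. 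Working in local charts on $M$, the equation $\delbar_J u_i=0$ gives $\Delta u_i$ quadratic in $du_i$ with coefficients smooth in $u_i$, which is uniformly bounded; the elliptic bootstrapping for $\delbar_J$ (the finite-dimensional case of Lemmas~\ref{Jreg2} and \ref{lem19}) then gives uniform $L^p_m$-bounds on slightly smaller disks for every $m$, hence uniform $C^m$-bounds. A diagonal argument over a compact exhaustion of $D^\circ\smallsetminus\{p_l\}$ produces a subsequence $u_j$ converging in $C^\infty_{\mathrm{loc}}(D^\circ\smallsetminus\{p_l\})$ to a limit $u_\infty$, which is $J$-holomorphic because $\delbar_J$ is continuous with respect to $C^1$-convergence. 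This is conclusion~1.

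For conclusion~2, pass to a further subsequence so that $E(u_j)$ converges and the energy measures $\mu_j:=\tfrac12|du_j|^2\,d\mathrm{vol}$ converge weak-$*$ to a finite measure $\mu_\infty$ on $D^\circ$ (total mass $\le\Lambda$). Off $\{p_1,\dots,p_k\}$ the $C^\infty_{\mathrm{loc}}$-convergence forces $\mu_\infty=\tfrac12|du_\infty|^2\,d\mathrm{vol}$ there, so $\mu_\infty=\tfrac12|du_\infty|^2\,d\mathrm{vol}+\sum_{l=1}^{k}m_l\,\delta_{p_l}$ with $m_l\ge0$; in particular $u_\infty$ has finite energy. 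I claim $m_l\ge\hbar_0$ at each bubbling point: if $m_l<\hbar_0$ then $\mu_\infty(\overline{D_\rho(p_l)})<\hbar_0$ for some small $\rho$, so by upper semicontinuity of weak-$*$ limits on compact sets $\limsup_j\mu_j(\overline{D_\rho(p_l)})<\hbar_0$, contradicting that $p_l$ is a bubbling point. Hence, when $\{p_l\}$ is nonempty, $E(u_\infty)=\mu_\infty(D^\circ)-\sum_l m_l\le\liminf_j E(u_j)-\hbar_0$, and setting $\hbar:=\hbar_0/2$ in the statement gives the strict inequality $E(u_\infty)<\liminf_j E(u_j)-\hbar$.

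The substantive input — the a priori estimate of Section~\ref{apriori} and the $\delbar_J$-regularity of Section~\ref{JRegSec} — is already available, so the argument above is mostly assembly; the one step requiring genuine care is the energy-quantization claim $m_l\ge\hbar_0$, which is precisely where one exploits that the threshold $\hbar_0$ in the mean value inequality is a fixed constant independent of the sequence, combined with the upper semicontinuity of weak-$*$ limits on closed sets. The finiteness of the bubbling set and the $C^\infty_{\mathrm{loc}}$-bootstrapping away from it are then routine.
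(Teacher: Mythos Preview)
The paper does not give its own proof of this statement; it is recalled as a classical result with references to \cite{survey book} and \cite{DK}, together with the remark that the a priori estimates of the paper furnish an independent proof. Your argument is precisely that independent proof and matches both what those references do and the paper's own sketch for the subsequent matched-pair compactness theorem (the iterative extraction of singular points followed by the mean value inequality and elliptic bootstrapping on the complement).

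There is one organizational gap worth fixing. You define a bubbling point via $\limsup_i$ over the \emph{original} sequence and then assert there are at most $\Lambda/\hbar_0$ of them. That bound is false with the $\limsup$ definition: a sequence whose energy alternates concentration between two fixed points along even and odd indices has both points as bubbling points in your sense while $\Lambda$ need only be $\hbar_0$. The same issue resurfaces in your quantization step: after passing to the subsequence $u_j$, you need each $p_l$ to satisfy $\limsup_j \int_{D_r(p_l)}\tfrac12|du_j|^2\ge\hbar_0$, but $\limsup$ can only drop along subsequences, so the contradiction you set up does not close. The clean repair is to pass \emph{first} to a subsequence with weak-$*$ convergent energy measures and then \emph{define} the $p_l$ as the atoms of the limit; your argument that each such atom has mass at least $\hbar_0$ (via the mean value inequality and upper semicontinuity on closed balls) then goes through verbatim, finiteness is immediate since the total mass is at most $\Lambda$, and the complement is exactly the set where the a priori estimate applies uniformly. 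The paper's sketch for the matched-pair theorem uses the equivalent iterative extraction instead. Either way the fix is routine and the remainder of your proof is correct.
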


\begin{definition}
\label{SingSet1}
A \textbf{singular set} $S$ on $D_R\times \Sigma$ is a finite collection of points $x_i\in (D_R^--\partial D_R^-)$, $y_i\in (D_R-\partial D_R)\times \Sigma$, $z_i\times \Sigma \in (I_R-\partial I_R)\times \Sigma$.  The $z_i\times \Sigma$ are the \textbf{boundary slices} of $S$.      

\end{definition}

\begin{theorem}
Assume that we have a uniform bound $E(u_i,A_i) <C$.  There exists a subsequence $(u_j,A_j)$ and a singular set $S$ with the following properties.  Let $K_0$ be a compact set in $\mathring{D}^-_R-S$ and $K_1$ be a compact set in $\mathring{D}_R \times \Sigma -S$.    We have that $u_j$ converges in any $C^k$ norm on $K_0$ and $A_j$ converges in any $C^k$-norm on $K_1$.   Finally, the energy loss at each singular point is at least  $\hbar$ for some sufficiently small $\hbar>0$ independent of the choice of sequence.  
\end{theorem}   

\begin{proof}
Our first task is to argue that outside some finite singular set we have  $E(D_{R_1}) \leq \hbar$ for all sufficiently small $R_1<R$.  Let us call $p\in D_R $ singular if for any $D_{r}$ with center $p$ we have $$\liminf E_{D_r}(u_i,A_i)\geq \hbar$$  Suppose $p_1$ is such a point.  Pass to a subsequence $(u_j,A_j)$ where $$\lim E_{D_r(p_1)}(u_j,A_j)\geq \hbar$$ for any $r>0$.  Now, consider a different singular point for $(u_j,A_j)$.  Let us call it $p_2$.  We may pass to a subsequence $(u_k,A_k)$ such that $\lim E_{D_r(p_2)}(u_j,A_j)\geq \hbar$ for any $r>0$.  Repeating this $N$ times yeilds $N$ singular points as well as a subsequence which has at least $\hbar$ energy near each singular point.  Since $E(u_i,A_i)$ is bounded, there can be at most a finite number of such singular points.  Thus, we may restrict to proving compactness away from these singular points.  We therefore, consider a disk $D_R\times \Sigma$ where $E(u_i,A_i)\leq \hbar$.  Now, we may apply the a priori estimates of section $\ref{apriori}$ to obtain an $L^\infty$ bound on $du_i$ as well as bounds on $||F_{A_{i}}||_{L^4}$,  $||\nabla_{A_i}F_{A_{i}}||_{L^2}$.  The results of Uhlenbeck in section \ref{weakcom}  imply that we can  put $A_i$ in a Coloumb-Neumann gauge with respect to some smooth connection $A_\infty$. The lemma above implies convergence of $A_i$ in any $L^p_1$ with $p<4$.  Since in dimension 2 the map $L^p_1\rightarrow C^0$ is compact for any $p>2$, we obtain a $C^0$ convergent subsequence for $u_i$. We are now in position to apply the regularity and convergence results of section \ref{regsec} to conclude that we have uniform $C^k$-bounds on $(u_i, A_i)$ in a neighborhood of each point.  This implies that after passing to a subsequence we have $C^{k-1}$-convergence of $(u_i, A_i)$ on any compact set.    
\end{proof}

\subsection{Gromov-Uhlenbeck Compactness} 
We can now state and prove our generalization of the results above.  We will consider a sequence of matched pairs $(u_i,A_i)$ on $D_R \times \Sigma$.  By our regularity results, we may assume that the sequence consists of smooth elements. 
The following convergence result is useful in our discussion of Gromov-Uhlenbeck compactness.  
\begin{lemma}
Suppose $A_i$ are in Coloumb-Neumann gauge with respect to some fixed smooth $A_0$ on $D_R\times \Sigma$.  Furthermore, assume that we have a uniform bound on $||\nabla_{A_i}F_{A_i}||_{L^2}$ and $||A_i-A_0||_{L^4_1}$.  We have that $A_i$ has a strongly $L^p_1$-convergent subsequence for any $2\leq p <4$ on $D_{R/2}\times \Sigma$.    
\end{lemma}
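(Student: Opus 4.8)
The plan is to set $B_i=A_i-A_0$ and to feed the ASD equation into the Hodge Laplacian, just as in the interior ASD regularity of Section \ref{secreg}. By hypothesis $\|B_i\|_{L^4_1(D_R\times\Sigma)}$ is uniformly bounded, $d_{A_0}^*B_i=0$, and $*(B_i)|_{\partial(D_R\times\Sigma)}=0$; the latter two are the Coulomb and (absolute, Neumann-type) boundary conditions for $\Delta_{A_0}$. Projecting the curvature identity $F_{A_0+B_i}=F_{A_0}+d_{A_0}B_i+B_i\wedge B_i$ onto its self-dual part and using $d_{A_0}^*B_i=0$ gives $(d_{A_0}^++d_{A_0}^*)B_i=-F_{A_0}^+-(B_i\wedge B_i)^+$, and applying $2d_{A_0}^*+d_{A_0}$ yields an equation
\begin{equation}
\Delta_{A_0}B_i=B_i'\cdot B_i+g_0 ,
\end{equation}
where $g_0$ is a fixed smooth form depending only on $A_0$ and $B_i\mapsto B_i'$ is a fixed smooth first-order operator, so that $B_i'\cdot B_i$ is bilinear in $(\nabla_{A_0}B_i,B_i)$. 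Since $B_i$ is a priori only in $L^4_1$, this has to be read as a weak Neumann problem; the verification that the weak formulation of Section \ref{Dirichlet} applies is word-for-word the one carried out for $\phi$ in Section \ref{regsec}. (The hypothesised bound on $\|\nabla_{A_i}F_{A_i}\|_{L^2}$ is available from the a priori estimates but will not be needed for the argument sketched here.)

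The first step is a uniform nonlinear estimate. Fix $p$ with $2\le p<4$ and put $q=4p/(4-p)<\infty$. In dimension $4$ one has the critical embedding $L^4_1\hookrightarrow L^q$, while $B_i'$ lies in $L^4$; hence the multiplication $L^4\cdot L^q\to L^p$ gives
\begin{equation}
\|B_i'\cdot B_i+g_0\|_{L^p(D_R\times\Sigma)}\le C\bigl(1+\|B_i\|_{L^4_1}+\|B_i\|_{L^4_1}^2\bigr)\le C' ,
\end{equation}
uniform in $i$. The second step is linear elliptic regularity for the Neumann problem. I would pick a cutoff $\rho$ supported in $\mathring D_R\cup(I_R-\partial I_R)$, hence away from $S_R$ and its corners, with $\rho\equiv1$ on $D_{R/2}$; then $\rho B_i$ solves $\Delta_{A_0}(\rho B_i)=\rho\,(B_i'\cdot B_i)+\rho g_0+L(B_i)$, with $L$ a first-order operator coming from $\rho$ and with the same Neumann condition on $I_R\times\Sigma$. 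Near $I_R\times\Sigma$ the ambient metric is a product, so the boundary estimates of Sections \ref{Dirichlet} and \ref{BanahEst} apply there, and away from the boundary this is ordinary interior elliptic regularity; together they give a uniform bound
\begin{equation}
\|B_i\|_{L^p_2(D_{R/2}\times\Sigma)}\le C\bigl(\|B_i'\cdot B_i+g_0\|_{L^p(D_R\times\Sigma)}+\|B_i\|_{L^p(D_R\times\Sigma)}\bigr)\le C'' ,
\end{equation}
for each $p\in[2,4)$.

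To finish, for each fixed $p\in[2,4)$ the embedding $L^p_2(D_{R/2}\times\Sigma)\hookrightarrow L^p_1(D_{R/2}\times\Sigma)$ is compact, so a subsequence of $(B_i)$ converges strongly in $L^p_1$. Carrying this out along a sequence $p_n\uparrow4$ and diagonalising produces a single subsequence converging strongly in $L^{p_n}_1$ for every $n$, hence in $L^p_1(D_{R/2}\times\Sigma)$ for every $2\le p<4$ (on the bounded domain $D_{R/2}\times\Sigma$ convergence in $L^{p'}_1$ implies convergence in $L^p_1$ whenever $p\le p'$). I expect the only genuine obstacle to be the borderline nature of the nonlinear estimate: since $B_i$ is controlled only at the critical exponent $L^4_1$, the product $\nabla_{A_0}B_i\cdot B_i$ lands in $L^p$ precisely for $p<4$ and not for $p=4$, which is exactly why the statement stops short of $p=4$. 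The rest — checking the weak Neumann formulation as in Section \ref{regsec}, and disposing of the corner locus $(\partial I_R)\times\Sigma$ with the cutoff $\rho$ — is bookkeeping.
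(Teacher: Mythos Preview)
Your route is genuinely different from the paper's, and it has two gaps.

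First, the lemma as stated does not assume the $A_i$ are ASD; it assumes only the Coulomb--Neumann gauge together with uniform bounds on $\|\nabla_{A_i}F_{A_i}\|_{L^2}$ and $\|B_i\|_{L^4_1}$. Your derivation of $\Delta_{A_0}B_i=B_i'\cdot B_i+g_0$ uses $F_{A_i}^+=0$, so you are proving a weaker statement than the one asserted, and your remark that the covariant-derivative bound on the curvature ``will not be needed'' is precisely backwards. The paper's argument uses that hypothesis essentially and never invokes ASD: from $d_{A_0}B_i=F_{A_i}-B_i\wedge B_i-F_{A_0}$, Kato's inequality together with $\|\nabla_{A_i}F_{A_i}\|_{L^2}\le C$ gives $F_{A_i}$ bounded in $L^2_1$, the product estimate $L^4_1\cdot L^4_1\to L^2_1$ controls $B_i\wedge B_i$, and hence $d_{A_0}B_i$ is bounded in $L^2_1$ and therefore precompact in $L^p$ for every $p<4$. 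One then closes with the \emph{first-order} Hodge estimate
\[
\|B_i-B_j\|_{L^p_1(D_{R/2}\times\Sigma)}\le C\bigl(\|B_i-B_j\|_{L^p}+\|d_{A_0}^*(B_i-B_j)\|_{L^p}+\|d_{A_0}(B_i-B_j)\|_{L^p}\bigr),
\]
valid for the single boundary condition $*B|_\partial=0$ (see \cite{Kat1}). No second-order boundary value problem enters.

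Second, even if one grants ASD, the appeal to ``the weak Neumann formulation of Section~\ref{Dirichlet} \dots\ word-for-word the one carried out for $\phi$'' does not go through for the full $1$-form $B_i$. The $\phi$ argument in Section~\ref{regsec} treats a \emph{scalar} (the $ds$-component), and its inhomogeneous Neumann datum $\partial_\nu\phi$ is identified using the second ASD equation together with the \emph{matching} condition $F_\alpha|_{I_R\times\Sigma}=0$; the present lemma carries no matching hypothesis. For the Hodge Laplacian on $1$-forms the natural (absolute) boundary conditions are $\iota_\nu B=0$ \emph{and} $\iota_\nu d_{A_0}B=0$; Coulomb--Neumann supplies only the first, and you have neither derived the second nor exhibited and controlled the inhomogeneous boundary term that would replace it. The paper sidesteps this entirely by staying with the first-order system $d_{A_0}\oplus d_{A_0}^*$, for which the single condition $*B|_\partial=0$ already gives an elliptic boundary problem.
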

\begin{proof}
By Kato's inequality (see equation ($\ref{Kato}$)), the bound on  $||\nabla_{A_i}F_{A_i}||_{L^2}$ gives a uniform bound on $||F_{A_i}||_{L^4}$.  The embedding $$L^4_1\cdot L^4_1 \rightarrow L^2_1$$ implies that the $L^4_1$-bound on $A_i$ gives us an $L^2_1$-bound on $A_i \wedge A_i$.  Now, $$\nabla_0 F_{A_i}= \nabla_{A_i} F_{A_i}+(A_0-A_i)\cdot F_{A_i}$$ In view of the $L^4$-bound on $F_{A_i}$, we obtain an $L^2$-bound on $\nabla_0 F_{A_i}$.  Since $$d_{0} (A_i-A_0)= F_{A_i}-(A_i-A_0)\wedge(A_i-A_0)-F_{A_0}$$ we obtain a uniform bound on $||d_0 A_i||_{L^2_1}$.  The embedding $$L^2_1\rightarrow L^p$$ is compact for all $p<4$, therefore $d_0 A_i$ is strongly precompact in $L^p$.  Now, since $A_i$ are in Coloumb-Neumann gauge, we use the estimate $$||A_i-A_j||_{L^p_1}\leq C(||A_i-A_j||_{L^p}+||d^*_0 (A_i-A_j)||_{L^p}+||d_0(A_i-A_j)||_{L^p})$$ on $D_{R/2}\times \Sigma$ to deduce that $A_i$ is strongly precompact in $L^p_1(D_{R/2}\times \Sigma)$.   

\end{proof}

\section{Removal Of Singularities}

\subsection{Statement of results}

In previous sections we have discussed a compactness theorem in the context of matched pairs  $(u,A)$.  As demonstrated, a sequence of pairs with a uniform energy bound converges outside a set of singularities.  Thus, such a sequence gives rise to a matched pair $(u_\infty, A_\infty)$ that has finite energy but is not defined on the entire domain.   For interior singular points $x_i$ and $y_i$ of definition $\ref{SingSet1}$, we can complete the pair $(u_\infty, A_\infty)$ using removal of singularities for $J$-curves and ASD equations (see \cite{MS} and \cite{DK}).  It remains to address the singularities at the boundary slices  $z_i$.  \\\\
Let $D_R$ be the disk $$D_R=\{ (s,t)\in \C | s^2+t^2 \leq R^2, s \geq 0\}$$ and let $D_R^*=D_R-(0,0)$.
Consider a matched pair $(u,A)$ defined on $D_R^*$ as in section $\ref{MatchB}$.  We assume $\Energy(u,A) <\infty$.  
\begin{theorem}
\label{1223}
There exists a matched pair $(u',A')$ on $D_R$ that is gauge equivalent to $(u,A)$ on $D_R^*$.  The pair $(u',A')$ is said to \textbf{extend} $(u,A)$. 
\end{theorem}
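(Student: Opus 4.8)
The plan is to follow the standard removal of singularities strategy as in \cite{DK} and \cite{MS}, but adapted to the matched boundary condition. The key input is the a priori estimate of Theorem \ref{thm2.1}. First I would establish energy decay near the puncture: for the sequence of annular regions $A_r = (D_{2r}^* - D_r^*)$, I want to show that $\Energy(u,A; A_r) \to 0$ as $r \to 0$. This is immediate from finiteness of $\Energy(u,A)$ on $D_R^*$, since the regions $A_{2^{-n}}$ are essentially disjoint and their energies sum to something finite. Consequently, for $r$ small enough we have $\Energy(u,A; D_{2r}^*) < \hbar$, where $\hbar$ is the constant from Theorem \ref{thm2.1}.

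Next, I would upgrade this to a pointwise bound on the energy density. For a point $z$ with $|z| = \delta$ small, apply Theorem \ref{thm2.1} on the disk $D_{\delta/2}(z)$ — which, up to the cornering issue near $I_R$, is a legitimate matched disk contained in $D_R^*$ — to conclude $e(z) \leq C \Energy(u,A; D_{\delta/2}(z)) \delta^{-2}$. Combined with the decay of the annular energies, this shows $|z|^2 e(z) \to 0$ as $z \to 0$, i.e. $e(z) = o(|z|^{-2})$. From here the classical argument gives that $\Energy(u,A; D_r^*)$ decays like a positive power of $r$: integrating the differential inequality coming from the monotonicity-type estimate, or more directly using that $e(z) \le \eta(r)|z|^{-2}$ with $\eta(r)\to 0$ on $D_r$, one obtains $\Energy(u,A;D_r^*) \le C r^{2\mu}$ for some $\mu > 0$ once $\eta$ is small. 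This power decay is what makes the removable singularity work.

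With power-law energy decay in hand, I would now put $A$ into a good gauge on the punctured disk and pass to extendability. Using the Coulomb--Neumann gauge fixing of Section \ref{weakcom} on the dyadic annuli and patching (the standard Uhlenbeck procedure, e.g. \cite{DK} Chapter 4), the power decay of $\int |F_A|^2$ forces $A - A_0$ to have $L^p_1$ norm on $D_r^* \times \Sigma$ decaying in $r$; similarly, writing $v(s,t) = u(s,-t)$ as a $J$-holomorphic map into $\Mod^- \times \Aff$ with Lagrangian boundary values on $\Lag$ and using the change of coordinates of Section \ref{Canlag}, the power decay of $\int |du|^2$ together with interior/boundary elliptic estimates for $\delbar_J$ (Lemmas \ref{lem19}--\ref{Jreg2}) gives that $u$ extends continuously across the puncture with value in $\Lag$, hence $A$ extends continuously across the boundary slice $z=0$. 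Then bootstrapping via Theorem \ref{thmreg} (the matched-equations regularity) promotes the continuous extension $(u',A')$ to a smooth matched pair on all of $D_R$, gauge-equivalent to $(u,A)$ on $D_R^*$ by construction of the patching gauge transformations.

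The main obstacle I anticipate is the gauge-patching step near the matched boundary: on a closed four-manifold or its interior the dyadic annulus argument is routine, but here each annular piece $A_r \times \Sigma$ has part of its boundary on $I_R \times \Sigma$ carrying the matching (Lagrangian) condition, so one must run the Coulomb--Neumann gauge fixing respecting that boundary and verify the transition functions converge — essentially the same corner/boundary bookkeeping that appears throughout Section \ref{regsec}. A secondary technical point is making precise the interaction of the $\Mod$-valued curve $u$ and the $\Aff$-valued connection $\alpha = A|_{I_R}$ at the puncture: one needs the limits to match, i.e. $u(0) = [\alpha(0)]$, which follows once both continuous extensions exist because the matching condition is closed. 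I would handle both by localizing: prove extendability of $\psi$, $\phi$, $\alpha$, and $u$ separately exactly as in the synthesis of Section \ref{regsec}, each time using the power-law energy decay to get the base-level decaying Sobolev bound, then apply the elliptic estimates already established.
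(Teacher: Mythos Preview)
Your proposal has a genuine gap at the step where you claim power-law energy decay $\Energy(D_r^*) \leq C r^{2\mu}$. Neither justification you offer works. The ``more direct'' one --- that $e(z) \leq \eta(r)|z|^{-2}$ with $\eta \to 0$ forces power decay of the integrated energy --- is simply false: integrating that pointwise bound over $D_r^*$ gives only $\Energy(D_r^*) \lesssim \int_0^r \eta(\rho)\rho^{-1}\,d\rho$, which need not decay like any positive power of $r$ (take $\eta(\rho) = 1/\log(1/\rho)$). The other justification, a ``monotonicity-type differential inequality'', is precisely the substantive content of the proof; you allude to it but do not produce it, and Theorem~\ref{thm2.1} alone does not supply it.

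The paper obtains this differential inequality by a Chern--Simons / isoperimetric argument whose execution is genuinely tied to the matched boundary condition. For each small $r$ one constructs a \emph{closed} $3$-manifold $Y_r \cong S^1 \times \Sigma$ by joining $S_r^+ \times \Sigma$ (carrying $A$) to $S_r^- \times \Sigma$ (carrying a lift of $u|_{S_r^-}$ to $\Aff$) at one endpoint, and closing the remaining gap with a cylinder $[0,1]\times\Sigma$ on which an extended gauge transformation $\tilde g$ interpolates between $\alpha^-$ and $\alpha^+$; controlling $\tilde g$ uses a Hang--Lin type extension with $\|\tilde g\|_{L^3_1} \leq C\|g\|_{L^2_1}$. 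One then proves both $CS(A'|_{Y_r}) = \Energy(r)$ (by filling in a $4$-manifold $X_{\delta,r}$ and using $\int F\wedge F = CS(\partial)$, with the cylinder contribution vanishing since the connection there is flat) and $CS(A'|_{Y_r}) \leq C r\,\partial_r \Energy(r)$ (by estimating each of the three pieces directly). Together these give $\Energy(r) \leq C r\,\Energy'(r)$, which integrates to $\Energy(r) \leq C' r^\beta$. None of this machinery --- the glued $3$-manifold, the Chern--Simons functional, the gauge-extension lemma --- appears in your outline, and it is not bypassable by the mean-value inequality alone. Once power decay is established, the paper's endgame also differs from your dyadic-patching plan: it derives weighted $L^2$ and $L^\infty$ bounds on $F_A$ and invokes a theorem from \cite{Kat3} to extend $A$, then extends $u$ as a H\"older map from $|du| \leq C r^{\beta-1}$.
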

This theorem completes our framework of Gromov-Uhlenbeck compactness.  We see that a sequence of pairs weakly converges to a limiting pair.  In case some $\hbar>0$ of energy concentrates at singular points we get strong (in any norm) convergence outside this set.    The limiting object has strictly smaller energy in case the singular set is nonempty.  The remainder of this section is devoted to a proof of theorem $\ref{1223}$.  The proof is reminiscent of removal of singularities for ASD connections with Lagrangian boundary conditions discussed in \cite{Kat3}.

\subsection{The Chern-Simons Functional}
Given a closed 3-manifold $Y$ with a $U(2)$-bundle $P$ and connection (on the associated SO(3)-bundle) $A$ we may define the Chern-Simons invariant as follows.  Pick a flat base connection $A_0$ and let $B=A-A_0$.  Chern-Simons invariant is defined as 
\begin{equation}
CS(A)=tr(\int_YB\wedge d_{A_0}B +\frac{2}{3}B^3)=tr(\int_YB\wedge F_A -\frac{1}{3}B^3)
\end{equation}
The importance of Chern-Simons for us comes from the following.  Let $Y=\partial X$ and assume $A_0$ extends to a flat connection on $X$ and $A$ extends to a connection on $X$. We have
\begin{equation}
\begin{split}
CS(B)&=tr(\int_X d_{A_0}A\wedge d_{A_0} A+\frac{2}{3}(( d_{A_0}A)A^2-A( d_{A_0}A)A+A^2 d_{A_0}A)) \\
&=tr(\int_X  d_{A_0}A\wedge  d_{A_0} A+2A d_{A_0}A)\\
&=tr(\int_XF_A\wedge F_A)
\end{split}
\end{equation}

 Now, assume that $Y=\Sigma\times S^1$ and take some flat connection $\alpha_0$ on $\Sigma$.  We can pull it back to obtain $A_0$ on $Y$.  If $B=\alpha +\phi ds$  then,
\begin{equation*}
 \begin{split}
 B^2&=\alpha^2+[\alpha,\phi ]ds \\
 B^3&=\alpha[\alpha,\phi ]ds+\phi \alpha^2ds \\
  d_{A_0}B&= d_{\alpha_0}\alpha + d_{\alpha_0}\phi ds -\partial_s \alpha ds \\
  d_{A_0} B\wedge B&=( d_{\alpha_0}\alpha) \phi ds +\partial_s \alpha \wedge \alpha ds- (d_{\alpha_0}\phi) \alpha ds
 \end{split}  
 \end{equation*}
 Since $tr(B^3)=3tr(\phi \alpha^2)ds$  and  $$ d_{\alpha_0}(\phi \alpha )=( d_{\alpha_0}\phi )\alpha ds =\phi  d_{\alpha_0}\alpha ds $$ and we get
\begin{equation}
CS(\alpha+\phi ds)=\int_{S^1}\int_\Sigma 2tr(\phi F_\alpha)+tr(\partial_s \alpha \wedge \alpha) 
\end{equation}  
Thus, if $F_\alpha=0$ or $\phi=0$, we are reduced to the canonical 1-form of symplectic geometry.  \\\\
Consider now a matched pair $(u,A)$.  By taking $R$ small, we may assume $\Energy(u,A)$ is as small as we wish.  We regard $A$ as living on $D_R^+\times \Sigma$ where $D_R^+$ is the positive half disk and $u$ as defined on $D_R^-$.   Take $(r,\phi)$ to be polar coordinates on $\C$.   Given $(r,\phi) \in D^*_{R/2}$ we take $D_{r}(r,\phi)$ centered at $(r,\phi)$ that is completely contained in $D_{R}^*$.   If the energy for $D_R$ is small, we apply theorem $\ref{thm2.1}$ to obtain  $$||F_A(r,\phi)||_{L^2(\Sigma)}\leq Cr^{-1}(\Energy(u,A))^{1/2}$$ and $$|du(r,\phi)|\leq Cr^{-1}(\Energy(u,A))^{1/2}$$  Thus, given $\ep>0$ for $r$ sufficiently small, $||F_A(r,\phi)||_{L^2(\Sigma)}\leq \ep/r $ and $|du(r,\phi)|\leq \ep/r$.   It follows that on $S_r^-$, $u$ is contained in a single chart where it can be written as $\alpha+\alpha_0$ with $\alpha_0$ a fixed flat connection and $\alpha$ some 1-form.  We choose a chart where $\alpha(r,\pi/2)=0$ and $d^*_{\alpha_0}\alpha=0$.   With this choice of lift we have $$|du|\geq C'|\nabla_{\alpha_0}\alpha|$$  We may trivialize $A$ to have the form $\alpha_0 +\alpha^++\beta dr$ with no $d \phi$ component.  By the matching condition, we can assume $\alpha^+(0,\pi/2)=0$.  Thus, the connections on the two sides coincide at that point.  On the other hand, we have $$g^*_r(\alpha_0+\alpha^-(r,-\pi/2))=\alpha_0+\alpha^+(r,-\pi/2)$$ where $g_r$ is a gauge transformation of connections on $\Sigma$.    Energy of the connection is then expressed as $$\int_{D_r^+}||F_A||^2_{\Sigma}+\rho^{-2}||\partial_\phi A||^2_{\Sigma}\rho d \rho d\phi $$  This implies that $||\partial_\phi A||\leq \ep$.  Thus, $$||A(\pi/2,r)-A(-\pi/2,r)||_{L^2} \leq \ep $$ for all $r$ small.  Similarly, the energy of $u$ is exwe have $$||\alpha^-(\pi/2,r)-\alpha^-(-\pi/2,r)||_{L^2} \leq \ep $$

\begin{lemma}
Given $g\in \Gau(\Sigma)$ there exists an extension $\tilde{g}\in \Gau(\Sigma\times [0,1])$ such that $\tilde{g}_{|\{0\}}=g$,  $\tilde{g}_{|\{1\}}=Id$ and $||\tilde{g}||_{L^3_1}\leq C||g||_{L^2_1}$ for some universal $C>0$.  
\end{lemma}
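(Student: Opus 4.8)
The plan is to reduce the construction to a linear extension problem in a Euclidean space and then project back to the group. Fix a faithful matrix representation $G\hookrightarrow\mathbb{R}^M$ (here $G$ is the compact structure group through which $\Gau$ acts, so a gauge transformation is a $G$-valued map into $\mathbb{R}^M$). Then $|g(x)|$ equals a fixed positive constant for all $x$ (the dimension of the representation), so $\|g\|_{L^2_1(\Sigma)}\ge c_0>0$ with $c_0$ universal, and $\|g-\mathrm{Id}\|_{L^2_1(\Sigma)}\le 2\,\|g\|_{L^2_1(\Sigma)}$. Since $G$ is a compact embedded submanifold of $\mathbb{R}^M$, fix once and for all a globally Lipschitz retraction $\Pi:\mathbb{R}^M\to G$ with $\Pi|_G=\mathrm{id}_G$ (nearest-point projection on a tubular neighborhood of $G$, extended to a bounded Lipschitz map which is constant far from $G$); its Lipschitz constant $L$ and its sup norm depend only on $G$.

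First I would set $h=g-\mathrm{Id}\in L^2_1(\Sigma;\mathbb{R}^M)$ and extend it across the collar. By the trace theorem, exactly as used in Section \ref{secnormal} (see also \cite{Taylor1}), the restriction map
$$\mathfrak{R}:L^2_{3/2}(\Sigma\times[0,1])\longrightarrow L^2_1\big(\Sigma\times\{0\}\big)\oplus L^2_1\big(\Sigma\times\{1\}\big)$$
is bounded and surjective, hence admits a bounded right inverse $\mathcal{E}$. Put $\tilde h=\mathcal{E}(h,0)$, so that $\tilde h|_{\Sigma\times\{0\}}=h$, $\tilde h|_{\Sigma\times\{1\}}=0$, and $\|\tilde h\|_{L^2_{3/2}(\Sigma\times[0,1])}\le C\,\|h\|_{L^2_1(\Sigma)}$. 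Since $\Sigma\times[0,1]$ is three-dimensional, the Sobolev embedding $L^2_{3/2}\hookrightarrow L^3_1$ applies (same differential dimension, $2\le 3$), giving $\|\tilde h\|_{L^3_1(\Sigma\times[0,1])}\le C\,\|\tilde h\|_{L^2_{3/2}(\Sigma\times[0,1])}\le C\,\|g\|_{L^2_1(\Sigma)}$.

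Finally I would set $\tilde g=\Pi(\mathrm{Id}+\tilde h):\Sigma\times[0,1]\to G$. On $\Sigma\times\{0\}$ one has $\mathrm{Id}+\tilde h=g\in G$, hence $\tilde g=\Pi(g)=g$; on $\Sigma\times\{1\}$ one has $\mathrm{Id}+\tilde h=\mathrm{Id}$, hence $\tilde g=\mathrm{Id}$. For the norm, $\tilde g$ is bounded (its values lie in the compact set $G$), and the chain rule for composition with the Lipschitz map $\Pi$ gives $|\nabla\tilde g|\le L\,|\nabla\tilde h|$ almost everywhere; therefore $\|\tilde g\|_{L^3_1(\Sigma\times[0,1])}\le C\big(1+\|\tilde h\|_{L^3_1(\Sigma\times[0,1])}\big)\le C\,\|g\|_{L^2_1(\Sigma)}$, where in the last step the additive constant is absorbed using $\|g\|_{L^2_1(\Sigma)}\ge c_0$. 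This yields $\tilde g\in\Gau(\Sigma\times[0,1])$ with $\tilde g|_{\{0\}}=g$, $\tilde g|_{\{1\}}=\mathrm{Id}$, and the required bound with universal $C$.

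The only genuine point requiring care is keeping $\tilde g$ exactly $G$-valued while retaining a bound with a constant independent of $g$: extending $g$ coordinatewise in $\mathbb{R}^M$ destroys the group constraint, and this is repaired by the fixed Lipschitz retraction together with the compactness of $G$ (which is also what forces $\|g\|_{L^2_1}$ away from zero). The exponents in the statement are the natural ones: $L^2_1(\Sigma)$ and $L^3_1(\Sigma\times[0,1])$ both sit just below the $C^0$ threshold in dimensions two and three respectively, and the chain ``$L^2_1(\Sigma)$ is the trace of $L^2_{3/2}(\Sigma\times[0,1])$, which embeds in $L^3_1(\Sigma\times[0,1])$'' closes up precisely; in particular $\tilde g$ is only a Sobolev, not a continuous, map, which is exactly what one must allow when $g$ is topologically nontrivial on $\Sigma$.
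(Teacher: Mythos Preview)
Your argument has a genuine gap at the retraction step. You claim there is a globally Lipschitz map $\Pi:\mathbb{R}^M\to G$ with $\Pi|_G=\mathrm{id}_G$, built by taking nearest-point projection on a tubular neighborhood and extending to be constant far away. But such a retraction cannot exist when $G$ is not contractible: $\mathbb{R}^M$ is contractible, so a continuous retraction $r:\mathbb{R}^M\to G$ would make $\mathrm{id}_G=r|_G$ null-homotopic, forcing $G$ to be contractible. Here $G$ is $SO(3)$ (or $SU(2)\cong S^3$), which is certainly not contractible. Concretely, your ``interpolation'' between nearest-point projection near $G$ and a constant far from $G$ cannot be carried out continuously, let alone Lipschitz-continuously, while keeping values in $G$. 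Without a Lipschitz $\Pi$ the chain-rule bound $|\nabla\tilde g|\le L|\nabla\tilde h|$ fails, and without $G$-values $\tilde g$ is not a gauge transformation.

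This obstruction is exactly why the lemma is nontrivial and why the paper invokes the theorem of Hang--Lin (and the detailed treatment in Wehrheim's paper). The linear extension $\tilde h\in L^2_{3/2}\hookrightarrow L^3_1$ you wrote down is fine, and your observation that the exponents line up in the borderline case is correct; the hard part is precisely forcing the extension to take values in the target manifold. In the borderline regime $L^3_1$ on a $3$-manifold there is no $C^0$ embedding, so one cannot argue that $\mathrm{Id}+\tilde h$ stays in a tubular neighborhood of $G$ on which nearest-point projection is defined. The Hang--Lin type results supply an honest $G$-valued (indeed $S^3$-valued) Sobolev extension with the stated norm control, by methods that go beyond ``extend linearly and project''; your last paragraph already anticipates that discontinuity is unavoidable when $g$ is topologically nontrivial, and that same topology is what blocks the retraction you need.
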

\begin{proof}
This result is based on a theorem of Hang and Lin \cite{HL} and is discussed in detail in \cite{Kat3}.   
\end{proof}
Setting $\tau=\tilde{g}^*(\alpha^-+\alpha^-(\rho,-\pi/2))$ we obtain a flat connection on $[0,1]\times \Sigma$ such that $$||\tau-\alpha^-(\rho,-\pi/2)||_{L^3;[0,1]\times \Sigma}\leq C' ||\alpha^-(\rho,-\pi/2)-\alpha^+(\rho,-\pi/2)||_{L^2;\Sigma} $$ By taking a close approximation (in $L^3_1$) of $\tilde{g}$, we may assume that it is constand near the boundary in the transverse direction.  This is helpful for patching connections together.  \\\\
We built a closed 3-manifold $Y_r$ as follows.  Join $S_r^+\times \Sigma$ with $S_r^-\times \Sigma$ along $(r,\pi/2)\times \Sigma$.  Glue in $Z=[0,1] \times \Sigma$ by identifying $0\times \Sigma$ with $(r,-\pi/2)\times \Sigma$ on $S^-_r$ and  $1\times \Sigma$ with $(r,-\pi/2)\times \Sigma$ on $S^+_r$.  By construction we obtain a connection $A'$ on the 3-manifold $Y_r$.  The Chern-Simons on $Y_r$ with respect to $\alpha_0$ is given by $$\frac{-1}{3}\int_{[0,1]\times \Sigma }tr(\tau-\alpha_0)^3+\int_{-\pi/2}^{\pi/2}\int_{\Sigma} tr(\alpha^+\wedge \partial_\phi\alpha^+)d\phi d\Sigma+\int_{\pi/2}^{3\pi/2}\int_{\Sigma} tr(\alpha^-\wedge \partial_\phi\alpha^-)d\phi d \Sigma$$

We have $$|\int_{[0,1]\times \Sigma }tr(\tau-\alpha_0)^3|\leq (\int_{-\pi/2}^{\pi/2}\int_\Sigma|\partial_\phi \alpha^+|)^3+(\int_{\pi/2}^{3\pi/2}\int_\Sigma|\partial_\phi \alpha^-|)^3$$ since $|du|$ controls any $L^p$-norm of the lift $\alpha$.  
Since $\alpha^\pm(\pi/2,r)=0$ we obtain

$$\int_{\pi/2}^{3\pi/2}\int_{\Sigma} tr(\alpha^-\wedge \partial_\phi\alpha^-)d\phi d \Sigma=\int_{\pi/2}^{3\pi/2}\int_{\Sigma} \int_{\pi/2}^\phi tr(\partial_v\alpha^-\wedge \partial_\phi\alpha^-)dvd\phi d \Sigma$$
$$\int_{-\pi/2}^{\pi/2}\int_{\Sigma} tr(\alpha^+\wedge \partial_\phi\alpha^+)d\phi d\Sigma=-\int_{-\pi/2}^{\pi/2}\int_{\Sigma} \int^\phi_{-\pi/2}tr(\partial_v\alpha^+\wedge \partial_\phi\alpha^+)dvd\phi d\Sigma$$ 

Thus, we can bound $CS(A')$ by $$Cr(\int_{S^+_r}||F_A||^2_{\Sigma}+r^{-2}||\partial_\phi A||^2_{\Sigma}+\int_{S_r^-}|du|^2)=Cr\partial_r\Energy(r)\leq  \ep^2$$

\subsection{Isoperimetric Inequality}
From the previous section we have concluded that for the specific choice of Chern-Simons we have the estimate 
\begin{equation}
\label{isop}
CS(A')\leq  Cr^{}\partial_r\Energy(r)
\end{equation}
 
   To obtain the Isomperimetric Inequality we must now relate $CS(A')$ to $\Energy(\rho)$.
Let $$D_{\delta \rho}^{\pm}=\{(s,t)\in D_{\rho}^+| s^2+t^2 \geq \delta\}$$
Given $0< \delta< \rho$ we define a 4-manifold $X_{\delta \rho}$ by taking the union of all $Y_t$ for $t\in [\delta, \rho]$.  Thus,  $X_{\delta \rho}$ consists of 3 pieces. $D_{\delta \rho}^{\pm}\times \Sigma$ and $[0,1]\times  [\delta, \rho] \times \Sigma$. We define a connection $\tilde{A}$ on $X_{\delta \rho}$ as follows. On $D_{\delta \rho}^{+}\times \Sigma$ take $A$ in the gauge where $A=\alpha^++\beta dr$.  On  $D_{\delta \rho}^{-}\times \Sigma$ we take a lift $\alpha^-$ of $u$ as above such that $\alpha^+(r,\pi/2)=\alpha^-(r,\pi/2)$.  We have $g_r^*\alpha^-=\alpha^+$ at $\phi=-\pi/2$.  We take any smooth extension $\tilde{g_r}$ of $g_r$ to $[0,1]\times [\delta,\rho]\times \Sigma$ as above with the condition that at $r=\rho$ the extension agrees with the one for $Y_r$.  On $[0,1]\times [\delta,\rho]\times \Sigma$ we set the connection to be $\tilde{g_r} \alpha^-$ and extend $\beta$ arbitrarily.
On $X_{\delta, \rho}$ we have $$\int_{X_{\delta, \rho}}F_{\tilde{A}}^2=\Energy(A_{D^+_{\delta, \rho}})+\Energy(u_{D^-_{\delta, \rho}})$$ since $\tilde{A}$ is flat on $[0,1]\times [\delta, \rho]\times \Sigma$.  Relating the energy to Chern-Simons of the boundary, we obtain $$\int_{X_{\delta, \rho}}F_{\tilde{A}}^2=CS(A'_\rho)-CS(A''_\delta)$$ where $A''$ is the restriction of $\tilde{A}$ to $[0,1]\times \delta, \times \Sigma$.   A priori, $CS(A''_{\rho})$ may differ the the definition of the previous section by a multiple of $4\pi^2$.  However, we see that $CS(A''_\delta)=\Energy_{X_{\delta, \rho}}-CS(A'_\rho)$ is arbitrarily small when $R$ is small and thus is specified uniquely.    Thus, by taking the limit as $\delta \rightarrow 0$, $CS(A''_\delta)\rightarrow 0$ and we obtain the desired formula $CS(A'_r)=\Energy(r)$.  This gives the desired inequality $\ref{isop}$ and thus $\Energy(r)\leq C'r^{\beta}$ where $\beta>0$.

\subsection{Completing the Proof}
So far we have deduced an energy decay $\Energy(r)\leq Cr^{2\beta}$ for a matched pair on a punctured disk.  Let us now use this decay to complete theorem $\ref{1223}$.  First, let us focus on the connection $A$:
\begin{lemma}
There exists $C>0$ such that for all $r$ sufficiently small:\\
$a)  \sup_{\phi}||F_A(r,\phi)||_{L^2(\Sigma)}\leq C'r^{\beta-1}$\\
$b)  \sup_{\phi}||F_A(r,\phi)||_{L^\infty(\Sigma)}\leq C'r^{\beta-2}\cos(\phi)^{-2}$
\end{lemma}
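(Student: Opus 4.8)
The plan is to obtain both bounds from the a priori estimate of Theorem~\ref{thm2.1} together with the energy decay $\Energy(\rho)\le C\rho^{2\beta}$ proved above: part~(a) will be a direct application, and part~(b) will upgrade the slicewise $L^2(\Sigma)$ control of part~(a) to a pointwise bound on $\Sigma$ by feeding the Weitzenb\"ock inequality~(\ref{eq2.2.1}) into a standard mean value (``$\varepsilon$-regularity'') inequality for $|F_A|$.

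For part~(a), fix a point $(r,\phi)$ in the punctured half-disk; after a translation in the $t$-direction we may assume it lies on the $s$-axis, so that the puncture sits at distance exactly $r$ from it. Restricting $(u,A)$ to $D_{r/2}(r,\phi)\times\Sigma$ produces a matched pair (a half-, truncated, or full disk according to $\cos\phi$), and since that disk lies in the annular region $D_{3r/2}^+-\mathring D_{r/2}^+$ around the puncture, its energy is at most $\Energy(3r/2)\le Cr^{2\beta}$, which is below $\hbar$ once $r$ is small. Theorem~\ref{thm2.1} then gives $\tfrac12\|F_A(r,\phi)\|_{L^2(\Sigma)}^2=e^+(r,\phi)\le e(r,\phi)\le C\,\Energy(3r/2)\,(r/2)^{-2}\le C'r^{2\beta-2}$, hence $\|F_A(r,\phi)\|_{L^2(\Sigma)}\le C''r^{\beta-1}$ with $C''$ independent of $\phi$.

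For part~(b), fix $(r,\phi,x)\in D_R^*\times\Sigma$ and set $\delta=\tfrac14 r\cos\phi$. In the $(s,t)$-plane the distance from $(r,\phi)$ to the matched boundary $I_R$ is $r\cos\phi$, its distance to the puncture is $r$, and its distance to $S_R$ is $R-r$; so for $r$ small the metric $\delta$-ball $B_\delta\subset D_R^+\times\Sigma$ centered at $(r,\phi,x)$ is compactly contained in the interior, where $A$ is smooth and ASD. On $B_\delta$ inequality~(\ref{eq2.2.1}) reads $\Delta_4|F_A|^2\le C'(|F_A|^2+|F_A|^3)$, while the curvature energy there is controlled by $\int_{B_\delta}|F_A|^2\le 2\,\Energy(2r)\le Cr^{2\beta}\to 0$. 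Rescaling $B_\delta$ to the unit ball — under which $\int|F_A|^2$ is scale invariant, the cubic term of~(\ref{eq2.2.1}) is conformally invariant, and $\Delta$ scales like $\delta^{-2}$ so that the quadratic coefficient only shrinks — places us in the hypotheses of the standard mean value inequality for this nonlinearity, the required smallness of $\|F_A\|_{L^2(B_\delta)}$ being exactly the bound just displayed; it yields $\sup_{B_{\delta/2}}|F_A|^2\le C\delta^{-4}\int_{B_\delta}|F_A|^2\le C(\tfrac14 r\cos\phi)^{-4}r^{2\beta}$. Evaluating at the center and taking the supremum over $x\in\Sigma$ gives $\|F_A(r,\phi)\|_{L^\infty(\Sigma)}\le C'r^{\beta-2}\cos(\phi)^{-2}$, as claimed. (Using the sharper slicewise bound of part~(a) in place of $\Energy(2r)$ would in fact improve the power of $\cos\phi$ to $-1$.)

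The main obstacle is the mean value step in part~(b): one must check that $\|F_A\|_{L^2(B_\delta)}$ lies below the $\varepsilon$-regularity threshold attached to~(\ref{eq2.2.1}) — which is precisely what the energy decay buys — and carry out the $\delta$-rescaling carefully enough (using bounded geometry of $D_R^+\times\Sigma$, conformal invariance of the cubic term, and scale invariance of $\int|F_A|^2$) that the constant $C$ in the final bound is genuinely uniform in $r$ and $\phi$. Part~(a), by contrast, is essentially immediate once Theorem~\ref{thm2.1} and the energy decay are in hand.
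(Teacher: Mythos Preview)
Your proposal is correct and follows essentially the same route as the paper: part~(a) is the identical application of Theorem~\ref{thm2.1} to a disk of radius comparable to $r$ centered at $(r,\phi)$ combined with the energy decay $\Energy(r)\le Cr^{2\beta}$, and part~(b) is the same interior $4$-ball estimate of radius $\sim r\cos\phi$, which the paper obtains by citing \cite{DK} while you spell out the underlying Weitzenb\"ock-plus-mean-value mechanism. The only differences are cosmetic (your radii $r/2$ and $\tfrac14 r\cos\phi$ versus the paper's $r$ and $\tfrac12 r\cos\phi$), and your parenthetical remark about sharpening the $\cos\phi$ power is a correct bonus observation.
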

\begin{proof}
To show a),  begin by taking $r_0$ small, we may assume that $\Energy(A)\leq \hbar$.  Given $(r,\phi) \in D^*_{r_0/2}$ we take $D_{r}(r,\phi)$ that is completely contained in $D_{2r}^*$.   We now apply theorem $\ref{thm2.1}$ to obtain  $$||F_A(r,\phi)||_{L^2(\Sigma)}\leq Cr^{-1}(\Energy(A_{|D_{2r}^*}))^{1/2}\leq C'r^{\beta-1}$$ as desired.  For b), we take a point $(x,y)$ on $\Sigma$ and fix $(r, \phi)$.  The 4 dimensional ball $B_{r\cos(\phi)/2}$ centered at $(x,y,r,\phi)$ is contained in $D^*_{r}\times \Sigma$ and by 4-dimensional  analysis of the ASD equation on a ball (see \cite{DK})  we obtain $$||F_A(r,\phi)||_{L^\infty(\Sigma)}\leq C r^{-2}\cos(\phi)^{-2}\Energy(A_{|D_{2r}^*}) \leq Cr^{\beta-2} \cos(\phi)^{-2}$$
\end{proof}
We now cite the following result from \cite{Kat3}:
\begin{theorem}
Let $A$ satisfy a) and b) from the previous lemma.  For some $p>2$, there exists a gauge transformation $g\in L^p_1(D^*\times \Sigma)$ such that $g^*A$ extends to an $L^p_1$ connection on $D\times \Sigma$.  
\end{theorem}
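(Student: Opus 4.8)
The plan is to adapt Uhlenbeck's removable-singularity argument to the present boundary situation, using the energy decay $\Energy(r)\le Cr^{2\beta}$ established in the previous subsection together with estimates (a) and (b) as the quantitative input. Working in polar coordinates $(r,\phi)$ on the half-plane, one covers a punctured neighbourhood of $(0,0)$ by the dyadic half-annuli $N_k=\{\,2^{-k-1}\le r\le 2^{-k}\,\}\times\Sigma\subset D^{*}\times\Sigma$, $k\gg 1$. Rescaling $N_k$ by the factor $2^{k}$ produces a fixed model half-annulus $N\times\Sigma$ carrying a connection whose $L^2$-curvature is $\le C2^{-\beta k}\to 0$; by (b) away from the two boundary arcs lying in $I_R$, and by a boundary version of the same bound near those arcs — here one uses that the matching condition forces the $\Sigma$-slices of $A$ over $I_R$ to be flat, so $A$ is uniformly close to a flat connection there — one also controls $\|F_A\|_{L^{p}(N_k)}$, uniformly after rescaling, for $p>2$ chosen sufficiently close to $2$.

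On each $N_k$ I would apply the Coulomb slice theorem in the version (see \cite{Kat1}) compatible with the Lagrangian/matching condition along the $I_R$-part of $\partial N_k$ and a Neumann-type condition on the two artificial circular boundaries: this yields a gauge transformation $g_k$ with $g_k^{*}A=A_0+B_k$, $d_{A_0}^{*}B_k=0$, and $\|B_k\|_{L^{p}_1(N_k)}\le C\|F_A\|_{L^{p}(N_k)}$, with $C$ independent of $k$ by scale-invariance of this estimate in dimension $4$. On the overlaps $N_k\cap N_{k+1}$ the transition $h_k=g_kg_{k+1}^{-1}$ relates two Coulomb representatives of the same connection, so by the uniqueness part of the slice theorem $h_k$ differs from a covariant-constant gauge transformation $h_k^{\infty}$ by $\|h_k-h_k^{\infty}\|_{L^p_1}\le C(\|F_A\|_{L^{p}(N_k)}+\|F_A\|_{L^{p}(N_{k+1})})$. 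Since the right-hand side is a geometric series in $k$ (this is where $\Energy(r)\le Cr^{2\beta}$ and (b) feed in), one corrects the $g_k$ inductively by the convergent infinite product of the $h_k$, patching them into a single $g\in L^{p}_1(D^{*}\times\Sigma)$ with $\sum_k\|g^{*}A-A_0\|^{p}_{L^{p}_1(N_k)}<\infty$; hence $B:=g^{*}A-A_0\in L^{p}_1(D^{*}\times\Sigma)$ with finite norm.

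It remains to remove the puncture. Since $p>2$, the Banach-valued Morrey embedding (the $pk>d$ case of Theorem \ref{thm1}, with $d=2$ and target $L^{p}(\Sigma)$ resp. $L^{p}_1(\Sigma)$) shows $B$ is a bounded, H\"older-continuous map of $(s,t)\in D^{*}$; it therefore extends continuously across $(0,0)$, and the standard cut-off computation — the boundary terms over the circles $\{r=\varepsilon\}$ are $O(\varepsilon)$ and vanish as $\varepsilon\to 0$ — shows the extension lies in $L^{p}_1$. Combined with Lemma \ref{lem3} and its boundary version this gives $B\in L^{p}_1(D\times\Sigma)$, so $g^{*}A=A_0+B$ extends to an $L^{p}_1$ connection on $D\times\Sigma$ as claimed.

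The main obstacle I expect is the behaviour near $I_R$: estimate (b) degenerates like $\cos(\phi)^{-2}$ as $\phi\to\pm\pi/2$, so the pointwise curvature bound is not directly $L^{p}$-integrable up to the boundary, and one must instead exploit the matching condition (flatness of the boundary slices) to obtain $L^{p}$ curvature control there and to run the gauge fixing with the correct mixed boundary conditions. A secondary delicate point is establishing the uniform, scale-invariant Coulomb estimate and the corresponding uniqueness statement in this mixed-boundary setting, which is precisely what makes the telescoping patching of the local gauges converge.
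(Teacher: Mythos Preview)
The paper does not supply its own proof of this theorem: it is quoted as a citation from \cite{Kat3} (Wehrheim, \emph{Anti-self-dual instantons with Lagrangian boundary conditions II: Bubbling}), so there is no in-paper argument to compare your proposal against. Your outline---dyadic half-annuli, scale-invariant Coulomb gauge on each, patching via transition functions whose deviation from constants is controlled by a geometrically decaying curvature norm, then extension across the puncture---is exactly the Uhlenbeck removable-singularities strategy as adapted to the boundary setting in \cite{Kat3}, and you have correctly identified the two genuine difficulties (the $\cos(\phi)^{-2}$ blow-up in (b) near $I_R$, and the need for a slice theorem with mixed Neumann/Lagrangian boundary conditions).

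One caution on hypotheses: as stated, the theorem assumes only (a) and (b), yet your cure for the boundary degeneration invokes the matching condition (flatness of the $\Sigma$-slices over $I_R$). In \cite{Kat3} the corresponding input is the Lagrangian boundary condition, which in the present paper's setup \emph{is} the flatness condition, so in context your argument is consistent; but you should be aware that you are using the ambient setup, not deducing the result from (a) and (b) alone. A second, smaller point: your final extension step via a Banach-valued Morrey embedding in the $(s,t)$ variables is optimistic---continuity of $B$ as a map $D^*\to L^p(\Sigma)$ does not by itself give $B\in L^p_1(D\times\Sigma)$ in the full $4$-dimensional sense. In \cite{Kat3} this is handled more directly by showing the local Coulomb representatives converge to a fixed flat reference connection and then summing the $L^p_1$-norms of the $B_k$ over the dyadic shells; your summability statement $\sum_k\|g^*A-A_0\|^p_{L^p_1(N_k)}<\infty$ is already the right conclusion, and the H\"older-extension paragraph can simply be dropped in favour of it.
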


By continuity such an extension $A'$ must satisfy the ASD equation on $D\times \Sigma$.  We now turn to extending $u$.  By our energy decay, we have $$|du(r,\phi)|\leq Cr^{\beta-1}$$ as in the lemma above.  This implies that $u$ extends to $u'$ on $D$ as a H\"{o}lder map and we have $du'\in L^p$ for some $p>2$.  This implies that in fact $u'\in L^p_1$ and thus, by continuity, $u$ is $J$-holomorphic on $D$.  Finally $(u,A)$ is a matched pair since the Lagrangian matching condition is a closed condition on such pairs.

\newpage


\begin{thebibliography}{99999}
\bibitem{Atiyah}{\bf M Atiyah }{\em New Invariants of 3- and 4-Dimensional Manifolds }, Proceedings of Symposia in Pure Mathematics, volume 48.
\bibitem{AB}{\bf M Atiyah and R Bott }{\em The Yang-Mills equations over Riemann surfaces }, Philos. Trans. Roy. Soc. London Ser. A 308 (1983), no. 1505, 523�615.
\bibitem{DK} {\bf S Donaldson and P Kronheimer } {\em The Geometry of Four-Manifolds}, Oxford Mathematical Monographs. 
\bibitem{Kat1}{\bf K Wehrnheim }{\em Uhlenbeck Compactness}, {European Mathematical Society.}
\bibitem{Kat2}{\bf K Wehrnheim } {\em Lagrangian boundary conditions for anti-self-dual instantons and the Atiyah-Floer conjecture} J.Symp.Geom. 3 (2005), no.4, 703-747.
\bibitem{Kat3}{\bf K Wehrnheim }{\em Anti-self-dual instantons with Lagrangian boundary conditions II: Bubbling}, Comm.Math.Phys. 258 (2005), no.2, 275-315. 
\bibitem{Sal}{\bf S Dostoglou and D Salamon }{\em Self-dual instantons and holomorphic curves}, Ann. of Math. , 139 (1994) pp. 581�640.
\bibitem{Fukaya}{\bf K Fukaya } {\em Floer homology for 3 manifold with boundary I}, preprint.
\bibitem{Taylor1}{\bf M Taylor }{\em Partial differential equations I. Basic theory.},  Applied Mathematical Sciences, 115. Springer, New York, 2011.
\bibitem{Taylor2}{\bf M Taylor }{\em Partial differential equations II. Qualitative Studies of Linear Equations.},  Applied Mathematical Sciences, 115. Springer, New York, 2011.
\bibitem{Taylor3}{\bf M Taylor }{\em Partial differential equations III. Nonlinear equations.},  Applied Mathematical Sciences, 115. Springer, New York, 2011. 
\bibitem{Ng}{\bf T Nguyen }{\em The Seiberg-Witten equations on manifolds with boundary I, II}, preprint.
\bibitem{Lawson}{\bf J-P Bourguignon and H. B. Lawson }{\em Stability and isolation phenomena for Yang-Mills fields},  Comm. Math. Phys. Volume 79, Number 2 (1981), 189-230.
\bibitem{survey book}{\bf B. Aebischer et al. }{\em Symplectic Geometry}, Birkhauser, 1994.
\bibitem{MS}{\bf D McDuff and D Salamon }{\em $J$-holomorphic Curves and Symplectic Topology},  American Mathematical Society, 2004.
\bibitem{Casson}{\bf S Akbulut and J McCarthy }{\em Casson's invariant for oriented homology 3-spheres - an exposition} Mathematical Notes, 36. Princeton University Press, Princeton, NJ, 1990.
\bibitem{Taubes}{\bf C Taubes} {\em Casson's Invariant and Gauge Theory} J. Differential Geom. Volume 31, Number 2 (1990), 547-599.
\bibitem{Uhlenbeck}{\bf K Uhlenbeck } {\em Connections with $L^P$ bounds on Curvature} Comm. Math. Phys., 83: 11-29, 1982.
\bibitem{HL}{\bf F Hang and F Lin}{\em A Liouville Type Theorem For Minimizing Maps} Methods and Applications of Analyis, Vol. 9, No. 3, pp. 407-424, 2002.
\end{thebibliography}
\end{document}